\newtheorem{theorem}{Theorem}[section] 
\newtheorem{lemma}[theorem]{Lemma}
\newtheorem{definition}[theorem]{Definition} 
\newtheorem{proposition}[theorem]{Proposition}
\newtheorem{example}[theorem]{Example}
\newtheorem{corollary}[theorem]{Corollary}
\newtheorem{remark}[theorem]{Remark}
\newcommand{\argmin}[1]{\underset{#1}{\operatorname{argmin}}}
\newcommand{\IR}{\mathbb{R}}
\newcommand{\IE}{\mathbb{E}}
\newcommand{\IP}{\mathbb{P}}
\newcommand{\Ind}{\mathbbm{1}}
\newcommand{\IN}{\mathbb{N}}
\newcommand{\IZ}{\mathbb{Z}}
\newcommand{\IQ}{\mathbb{Q}}
\newcommand{\Kappa}{\mathcal{K}}
\newcommand{\IT}{\mathbbm{T}}
\renewcommand{\tilde}{\widetilde}
\renewcommand{\epsilon}{\varepsilon}
\renewcommand{\phi}{\varphi}
\renewcommand{\bar}{\overline}
 \numberwithin{equation}{section}
\begin{document}

\begin{titlepage}
\title{Correlation bounds, mixing and $m$-dependence under random time-varying network distances with an application to Cox-Processes}

\author{Alexander Krei{\ss} \\
KU Leuven \\
ORSTAT KU Leuven \\
Naamsestraat 69 \\
3000 Leuven\\
Belgium\\
alexander.kreiss@kuleuven.be}

\date{\today}

\maketitle

\begin{abstract}
We will consider multivariate stochastic processes indexed either by vertices or pairs of vertices of a dynamic network. Under a dynamic network we understand a network with a fixed vertex set and an edge set which changes randomly over time. We will assume that the spatial dependence-structure of the processes conditional on the network behaves in the following way: Close vertices (or pairs of vertices) are dependent, while we assume that the dependence decreases conditionally on that the distance in the network increases. We make this intuition mathematically precise by considering three concepts based on correlation, $\beta$-mixing with time-varying $\beta$-coefficients and conditional independence. These concepts allow proving weak-dependence results, e.g. an exponential inequality, which might be of independent interest. In order to demonstrate the use of these concepts in an application we study the asymptotics (for growing networks) of a goodness of fit test in a dynamic interaction network model based on a Cox-type model for counting processes. This model is then applied to bike-sharing data.
\end{abstract}
\end{titlepage}

\setcounter{page}{1} 
\setcounter{section}{0}

\section{Introduction}
Data indexed by vertices or pairs of vertices of networks has become popular in recent times (see e.g. \citet{BNS18}, \citet{DDLY16}, \citet{B08} for recent applications) when also the availability of such data sets increases, see e.g. websites of SNAP (Stanford University) or KONECT (University of Koblenz-Landau). In order to illustrate the contribution of this paper, we consider the following example of network data: Suppose we observe on the interval $[0,T]$ a network with vertex set $V_n:=\{1,...,n\}$ and random, dynamic adjacency matrix $C_n$, i.e., for all $i,j\in V_n, i\neq j$ we have a stochastic process $C_{n,ij}:[0,T]\to\{0,1\}$, where $C_{n,ij}(t)=1$ means that $i$ and $j$ are connected by an edge at time $t$. We consider the vertices to be actors who can interact with each other whenever they are connected by an edge. As an example, the actors could be users of a social media platform and an interaction is sending a private message. Then, we observe for all pairs $(i,j)$ a counting process $N_{n,ij}$ which counts the interactions between $i$ and $j$ and a multivariate process $X_{n,ij}$ which carries information about $i$ and $j$, e.g. the number of interactions in the past or information about mutually shared interests. In this situation it is intuitive that the tuples $(N_{n,ij},X_{n,ij},C_{n,ij})$ cannot be modelled as independent. Instead, we adopt the following heuristic: For any two pairs $(i,j),(k,l)$ and time points $t\in[0,T]$, we suppose that on a small neighbourhood $U_t$ around $t$ the dependence is influenced by the closeness of $(i,j)$ and $(k,l)$ where \emph{closeness} is to be understood relative to the random adjacency matrix $C_n(t)$ (we will be more precise later):
\begin{enumerate}
\item The processes $(N_{n,ij},X_{n,ij},C_{n,ij})$ and $(N_{n,kl},X_{n,kl},C_{n,kl})$ restricted to $U_t$ are dependent conditional on $(i,j)$ and $(k,l)$ being close at time $t$ in $C_n(t)$.
\item The processes $(N_{n,ij},X_{n,ij},C_{n,ij})$ and $(N_{n,kl},X_{n,kl},C_{n,kl})$ restricted to $U_t$ are almost independent conditional on $(i,j)$ and $(k,l)$ being far apart in $C_n(t)$.
\end{enumerate}
Note that we implicitly allow that the dependence structure may randomly change over time by allowing that the adjacency matrix $C_n$ is a random function of time. In order to use this intuition we will have to assume that in large networks a given pair $(i,j)$ is at a given time $t$ most likely not close to too many other pairs.

The main contribution of this work is to make the above heuristic precise. We do this by formalizing time-varying spatial dependence concepts for multivariate processes indexed by pairs of vertices in a network. More precisely, we extend the concept of \emph{asymptotic uncorrelation} which was used in the previous work \citet{KMP19} to \emph{momentary-$m$-dependence} and \emph{$\beta$-mixing on networks}. In contrast to asymptotic uncorrelation the two new concepts take the random network structure into account. Thus, time-varying \emph{$\beta$-Mixing} coefficients will allow us to prove exponential inequalities. Moreover, by using \emph{Momentary-$m$-Dependence} we can adapt a technique from \citet{MN07} to networks in order to handle predictability problems related to counting processes which were also noted e.g. in \citet{NLB98}. In order to illustrate the necessity of these concepts in a specific situation we study a goodness of fit test in a counting process based network model. In the derivation of the asymptotic distribution of the test statistic under the null we require uniform control of the whole estimated parameter function. This cannot be handled by simple second order conditions (e.g. asymptotic uncorrelation as in \citet{KMP19}). However, more generally, these concepts can be used to provide interpretable conditions to transfer inference results for multivariate counting processes from the iid case (cf. \citet{AG83}) to the case of random network data. Thus, Section \ref{sec:main} is of independent interest for the literature on multivariate (counting) processes on networks (see e.g. \citet{B08,PW13,FSSCB16,VLMP17} for such models).

For an overview of statistical methods in network analysis we refer to the books \citet{K09}, \citet{J08} and \citet{N10}. The general situation that the relational structure of network data is different from other dependent-data scenarios like time-series and spatial data analysis is for example mentioned in the beginning of Chapter 2 in \citet{K17}. Classical results about dependent processes can e.g. be found in the books \citet{D94} and \citet{R17}. Some models which are used in the context of mixing, particularly in econometrics, are mentioned in \citet{ND04}. Further asymptotic normality results based on local dependence can be found in \citet{CS04} for random fields and in \citet{SH15,KMS19} for random, non-dynamic networks. Other approaches for modelling dependence in random networks are for example the extension of the concept of stationarity to random (but not time-changing) networks as in \citet{V18} or Bayesian networks (cf. \citet{P88} and \citet{FMR98} for an extension to time series and \citet{GHEGM08} for an application). In the application we will study a Cox-type proportional hazard model (cf. \citet{ABGK93,MS06,C72,AG82}). Generalisations and variations of such models have been studied outside a network context e.g. in \citet{NLB98,NL95,LMNvK11,LNvdG03}. For network interactions, parameter estimation in models of this type has been considered e.g. in \citet{B08} and \citet{PW13}. The goodness of fit test which we will consider is based on an $L^2$-type test statistic as in \citet{HM93}. Particular references for smooth testing in survival analysis are \citet{MvK18} (use the same type of test statistic) and \citet{KB03} (use a local likelihood approach), however, not within a network context.

After collecting some notation and briefly reviewing asymptotic uncorrelation in Sections \ref{sec:notation} and \ref{subsec:asymptotic_uncorrelation}, we introduce in the main part of Section \ref{sec:main} the concepts of momentary-$m$-dependence (Section \ref{subsec:m_dependence}) and $\beta$-mixing on networks (Section \ref{subsec:mixing_networks}). In the end of Section \ref{sec:main}, in Section \ref{subsec:example}, we provide examples of data generating processes and motivate why they exhibit these properties. In Section \ref{sec:model} we apply the methods established in Section \ref{sec:main} to a goodness of fit test problem. The whole procedure is then illustrated on bike sharing data in Section \ref{sec:bikes}. In the Appendix (Section \ref{sec:appendix}) we collect missing proofs from the main part of the paper as well as some additional technical results. R-code which is used for the bike-data illustration is available on \href{https://github.com/akreiss/Estimate-Event-Network}{https://github.com/akreiss/Estimate-Event-Network}.

\section{Describing Dependence on Dynamic Networks}
\label{sec:main}
In this section we introduce the dependence concepts. For ease of exposition we stick to a model for relational event data which was also used e.g. in \citet{B08,PW13,KMP19}. In Section \ref{sec:notation} we will briefly review the basics of this framework and in Sections \ref{subsec:asymptotic_uncorrelation}-\ref{subsec:mixing_networks} we introduce the dependence concepts. Section \ref{subsec:example} provides examples. We finish in Section \ref{subsec:vertex_processes} with a short note on processes indexed by vertices.

\subsection{Preliminaries and Notation}
\label{sec:notation}

We use the following notation from graph theory. We consider directed (undirected), dynamic, random networks $G_{n,t}=(V_n,E_{n,t})$ for $n\in\IN$ and $t\in[0,T]$ which are comprised of a fixed vertex set $V_n:=\{1,...,n\}$ and a random dynamic edge set $E_{n,t}\subseteq L_n$, where $L_n:=\{(i,j):i,j\in V_n, i\neq j\}$ is the set of all directed (undirected) pairs (we exclude loops). The adjacency matrix of $G_{n,t}$ at time $t$ is denoted by $(C_{n,ij}(t))_{i,j\in V_n}$. Furthermore we denote by $r_n:=|L_n|$ the number of directed (undirected) pairs of vertices.

We study stochastic processes $(N_{n,ij},X_{n,ij},C_{n,ij})_{(i,j)\in L_n}$ with the following properties.

\textbf{Measurability}\\
\emph{For all $n\in\IN$ there is a filtration $(\mathcal{F}_t^n)_{t\in[0,T]}$ such that for all $i,j\in V_n$ the processes $N_{n,ij}:[0,T]\to\IN$ are counting processes which are adapted to $\mathcal{F}_t^n$ and such that for all $i,j\in V_n$ the processes $X_{n,ij}:[0,T]\to\IR^q$ and $C_{n,ij}:[0,T]\to\{0,1\}$ are predictable with respect to $\mathcal{F}_t^n$. Moreover, the intensity function of $N_{n,ij}$ is given by $\lambda_{n,ij}(t)=C_{n,ij}(t)\lambda(t,X_{n,ij}(t))$ for some link function $\lambda:[0,T]\times\IR^q\to[0,\infty)$.}

\begin{remark}
\begin{enumerate}
\item We choose here to index the processes with pairs of vertices. Similarly one could also index the processes with the vertices directly (cf. Section \ref{subsec:vertex_processes}). We choose pairs here because we imagine observations to be driven by the interplay of two actors.
\item The processes $C_{n,ij}$ are indicators which indicate whether the pair $(i,j)$ is currently \emph{active} at time $t$ ($C_{n,ij}(t)=1$) or not ($C_{n,ij}(t)=0$). Our understanding is that, for a given $(i,j)\in L_n$, the process $N_{n,ij}$ is only interesting (i.e. useful for inference) on the set $\{t\in[0,T]:\,C_{n,ij}(t)=1\}$.
\item We are not too much concerned about the existence of a filtration as required in the above definition. One possibility would be to assume that $C_{n,ij}$ and $X_{n,ij}$ are continuous from the left and let $\mathcal{F}_t^n:=\sigma(N_{n,ij}(s),X_{n,ij}(s),C_{n,ij}(s):\,(i,j)\in L_n,s\leq t)$ be the filtration generated by the processes $(N_{n,ij},X_{n,ij},C_{n,ij})$ for all $(i,j)\in L_n$.
\end{enumerate}
\end{remark}

It is intuitively reasonable to assume that relabelling the vertices is not going to change the distribution of the processes. Hence, we will assume that
\begin{equation}
\label{eq:exchangeability}
(N_{n,ij},X_{n,ij},C_{n,ij})_{(i,j)\in L_n}\sim (N_{n,\sigma(i)\sigma(j)},X_{n,\sigma(i)\sigma(j)},C_{n,\sigma(i)\sigma(j)})_{(i,j)\in L_n}
\end{equation}
holds for all permutations $\sigma:V_n\to V_n$ and all $n\in\IN$. This property is also called joint exchangeability of arrays (cf. \citet{OR15}). Note that for any two different pairs $(i,j),(k,l)\in L_n$ we can construct a permutation $\sigma$ with $\sigma(i)=k$ and $\sigma(j)=l$ (recall that we consider networks without loops). Hence, $(N_{n,ij},X_{n,ij},C_{n,ij})$ and $(N_{n,kl},X_{n,kl},C_{n,kl})$ are identically distributed. This notion allows for the concept of hubs but every vertex has a priory the same potential of becoming a hub. Moreover, we assume that all possible interactions between vertices are observed. Therefore we do not have to worry about edge sampling issues as mentioned in \citet{CD18}. Note lastly that the permutations $\sigma$ from above are deterministic and thus in particular they may not be chosen dependent on the actual observed network structure. This will be similar in Section \ref{subsec:asymptotic_uncorrelation} when discussing asymptotic uncorrelation. Thus, these two properties do not take the actually observed network into account. However, in Sections \ref{subsec:m_dependence} and \ref{subsec:mixing_networks}, when introducing Momentary-$m$-Dependence and $\beta$-Mixing we condition on the observed network. Thus, in these concepts we consider the observations after making choices which are strongly dependent on the observed network.

One way of taking the network structure into account is through random distance functions on networks. A \emph{random distance function on networks} is a collection of stochastic processes $d_t^n:L_n\times L_n\to[0,\infty]$ such that for any $t\in[0,T]$ and $n\in\IN$, $d_t^n$ is almost surely a metric. For later reference we collect all the above in a single definition.
\begin{definition}
\label{defin:struc_interaction_net}
The processes $(N_{n,ij},X_{n,ij},C_{n,ij})_{(i,j)\in L_n}$ on $[0,T]$ together with the random distance function on networks $d_t^n$ is called \emph{structured interaction network process} if for all $n\in\IN$
\begin{enumerate}
\item the above mentioned measurability properties hold,
\item the network process is exchangeable, i.e., \eqref{eq:exchangeability} holds for all permutations $\sigma:V_n\to V_n$,
\item $t\mapsto d_t^n((i,j),(k,l))$ is predictable with respect to $\mathcal{F}_t^n$ for all $(i,j),(k,l)\in L_n$.
\end{enumerate}
In this case $p_n(t):=\IP(C_{n,ij}(t)=1)$ is well defined.
\end{definition}

\begin{remark}
\label{rem:disunknown}
\begin{itemize}
\item Later the interpretation of $d_t^n$ will be as follows: The distance $d_t^n((i,j),(k,l))$ reflects how strongly the pairs $(i,j)$ and $(k,l)$ are related conditionally on the observed network (short distance means strong relation, large distance means weak relation).
\item From a modelling perspective, we emphasize that the distance function $d^n_t$ does not need to be known to the researcher. It is only necessary that it exists.
\item In order to allow sparsity we explicitly allow that $p_n(t)\to0$ for $n\to\infty$.
\end{itemize}
\end{remark}

\subsection{Asymptotic Uncorrelation}
\label{subsec:asymptotic_uncorrelation}
We briefly review a stationarity type result which was similarly used for static networks in \citet{V18} and for dynamic networks in \citet{KMP19}. In this subsection we restrict to undirected networks (see also the paragraph below Corollary \ref{cor:au}). Consider square integrable random variables $(Z_{n,ij})_{(i,j)\in L_n}$ with the following property: \emph{The $Z_{n,ij}$ are identically distributed. For $(i,j),(k,l)\in L_n$ let $\kappa((i,j),(k,l)):=|\{i,j\}\cap \{k,l\}|\in\{0,1,2\}$ denote the number of common vertices of $(i,j)$ and $(k,l)$. For $(i,j),(k,l),(i',j'),(k',l')\in L_n$ the pairs $(Z_{n,ij},Z_{n,kl})$ and $(Z_{n,i'j'},Z_{n,k'l'})$ are identically distributed if $\kappa((i,j),(k,l))=\kappa((i',j'),(k',l'))$.}

We will later consider $Z_{n,ij}:=\phi(N_{n,ij},X_{n,ij},C_{n,ij})$ where $\phi$ takes real values and $\IE(Z_{n,ij}^2)<\infty$. The exchangeability assumption in Definition \ref{defin:struc_interaction_net} guarantees the above property which in turn yields the following corollary:
\begin{corollary}
\label{cor:au}
For all $n\in \IN$, let $(Z_{n,ij})_{(i,j)\in L_n}$ be as above. Recall that $r_n=\frac{n(n-1)}{2}$ is the number of undirected pairs. Then, for pairwise different vertices $v_1,v_2,v_3,v_4\in V_n$,
\begin{align*}
&\mathcal{V}_n:=\textrm{Var}\left(\frac{1}{r_n}\sum_{(i,j)\in L_n}Z_{n,ij}\right) \\
=&\frac{1}{r_n^2}\sum_{(i,j)\in L_n}\textrm{Var}(Z_{n,ij})+\frac{1}{r_n^2}\underset{\kappa((i,j),(k,l))=1}{\sum_{(i,j),(k,l)\in L_n}}\textrm{Cov}(Z_{n,ij},Z_{n,kl})+\frac{1}{r_n^2}\underset{\kappa((i,j),(k,l))=0}{\sum_{(i,j),(k,l)\in L_n}}\textrm{Cov}(Z_{n,ij},Z_{n,kl}) \\
=&r_n^{-1}\cdot \textrm{Var}(Z_{n,v_1v_2})+O\left(r_n^{-\frac{1}{2}}\right)\textrm{Cov}(Z_{n,v_1v_2},Z_{n,v_2v_3})+O\left(1\right)\textrm{Cov}(Z_{n,v_1v_2},Z_{n,v_3v_4}).
\end{align*}
\end{corollary}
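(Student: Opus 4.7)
The plan is entirely combinatorial once we expand the variance. First, write
\[
\mathcal{V}_n \;=\; \frac{1}{r_n^2}\sum_{(i,j),(k,l)\in L_n}\mathrm{Cov}(Z_{n,ij},Z_{n,kl})
\]
and partition the double sum according to whether $\kappa((i,j),(k,l))$ equals $2$, $1$, or $0$. The three cases correspond respectively to the variance terms (where the two pairs coincide, since we are working with undirected pairs), the terms in which the two pairs share exactly one vertex, and the terms in which the two pairs are vertex-disjoint. This yields the first displayed equality.

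For the second equality, the key observation is that by the hypothesis on $(Z_{n,ij})$, the covariance $\mathrm{Cov}(Z_{n,ij},Z_{n,kl})$ depends on the pair $((i,j),(k,l))$ only through $\kappa((i,j),(k,l))$. Hence in each of the three sums I can replace the summand by the single representative covariance $\mathrm{Var}(Z_{n,v_1v_2})$, $\mathrm{Cov}(Z_{n,v_1v_2},Z_{n,v_2v_3})$, or $\mathrm{Cov}(Z_{n,v_1v_2},Z_{n,v_3v_4})$ respectively, and the remaining work is to count the number of summands.

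The counting, which is the only real content, proceeds as follows. The diagonal set $\{\kappa=2\}$ has exactly $r_n$ elements, giving the factor $r_n^{-1}$. For $\kappa=1$, fixing $(i,j)$ there are $2(n-2)$ undirected pairs $(k,l)$ sharing exactly one endpoint with $\{i,j\}$, so the total count is $2r_n(n-2)=O(n^3)=O(r_n^{3/2})$; dividing by $r_n^2$ gives $O(r_n^{-1/2})$. For $\kappa=0$, fixing $(i,j)$ there are $\binom{n-2}{2}$ vertex-disjoint pairs, so the total count is $r_n\binom{n-2}{2}=O(n^4)=O(r_n^2)$; dividing by $r_n^2$ gives $O(1)$.

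I do not see any genuine obstacle: the exchangeability supplied by Definition \ref{defin:struc_interaction_net} gives exactly the invariance of covariances under $\kappa$ that is needed, and the rest is counting. The only care required is to use the undirected convention consistently when counting pairs (so that, in particular, the diagonal $\{(i,j)=(k,l)\}$ has size $r_n$ rather than $2r_n$), and to remember that $r_n=n(n-1)/2$ so that $n\asymp r_n^{1/2}$ when converting the combinatorial counts into powers of $r_n$.
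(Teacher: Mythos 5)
Your proof is correct and follows exactly the route the paper intends: expand the variance as a double sum of covariances, split by $\kappa\in\{2,1,0\}$, use exchangeability to reduce each block to a single representative covariance, and count the terms ($r_n$, $n(n-1)(n-2)=O(r_n^{3/2})$, and $O(r_n^2)$ respectively). The paper itself only sketches this as "a combinatorial exercise," and your counts fill in that exercise correctly.
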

For the proof of this corollary we just need to think about the number of terms in each sum. It is a combinatorial exercise to find that their sizes are of the order $r_n, r_n^{\frac{3}{2}}$ and $r_n^2$ respectively. In order to have $\mathcal{V}_n\to0$ we hence require that $\textrm{Cov}(Z_{n,v_1v_2},Z_{n,v_2v_3})=o\left(r_n^{\frac{1}{2}}\right)$ and $\textrm{Cov}(Z_{n,v_1v_2},Z_{n,v_3v_4})=o(1)$. We will call assumptions of this type \emph{asymptotic uncorrelation assumptions}. This result naturally extends to directed networks by splitting the sum in all possible patterns which two directed pairs can have.

\subsection{Momentarily m-Dependent Networks}
\label{subsec:m_dependence}
We introduce momentary-$m$-dependence for processes $\mathcal{N}_{n,ij}:=(N_{n,ij},X_{n,ij},C_{n,ij})$. The aim is to mathematically formulate and use the following intuition: The processes $\mathcal{N}_{n,ij}$ and $\mathcal{N}_{n,kl}$ are dependent for any fixed choice of $(i,j)$ and $(k,l)$. However, if we choose $(i,j)$ and $(k,l)$ such that they are far apart in the observed network (in terms of $d_{t_0}^n$ for some time $t_0\in[0,T]$), then for real world actors it is likely that it takes some time for the pair $(i,j)$ to receive knowledge of interactions between $(k,l)$ and to process them before reacting by casting interactions themselves. Therefore, we assume: Provided that we know the network structure at time $t_0$ and that we know the past of all processes up to time $t_0$ and provided that we know that for two pairs $(i,j),(k,l)\in L_n$ the distance $d_{t_0}^n((i,j),(k,l))$ is large, then the processes $\mathcal{N}_{n,ij}(t)_{t\in[t_0,t_0+6\Delta]}$ and $\mathcal{N}_{n,kl}(t)_{t\in[t_0,t_0+6\Delta]}$ are conditionally independent given all information up to time $t_0$ for some $\Delta>0$ (the factor six is chosen for later convenience). We illustrate this in Figure \ref{fig:info_flow}: The horizontal axis is time and the vertical axis is distance. The two lines correspond to two pairs $(i,j)$ and $(k,l)$ and the vertical distance between these two lines represents the distance between the pairs $(i,j)$ and $(k,l)$. Dots on the lines indicate events between the respective vertices. The two gray rectangles in the future (next to the line at $t_0$) stand for the information of the processes of $(i,j)$ on the interval $[t_0,t_0+6\Delta]$ and the processes of $(k,l)$ on the interval $[t_0,t_0+6\Delta]$. We suppose that these two are conditionally independent given the information up to time $t_0$. So there is no direct \emph{information flow} between these two areas. However, they are not unconditionally independent because we can infer from the gray rectangle in the future of $(k,l)$ on its past when $(i,j)$ and $(k,l)$ were possibly close, such that we can infer on the past of $(i,j)$ which is informative about its future. But if we already know the past, then additional knowledge of the future of $(k,l)$ is independent of the future of $(i,j)$.

\begin{figure}
\centering
\includegraphics[width=0.6\textwidth]{./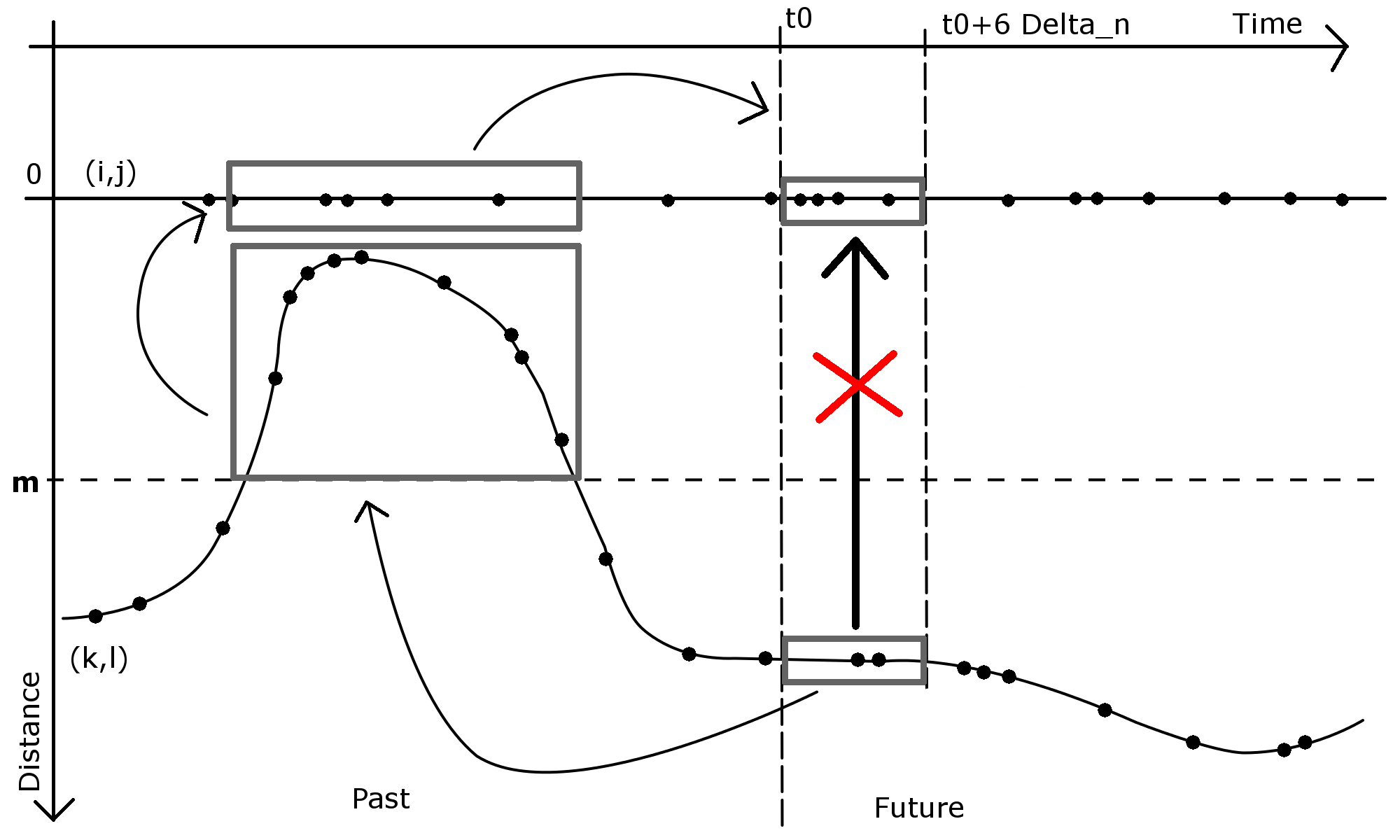}
\caption{Graphical illustration of the information flow in a momentarily-$m$-dependent network.}
\label{fig:info_flow}
\end{figure}

In mathematical terms this can be described as follows. For a set $J\subseteq L_n$ of pairs, let $d_s^n((i,j),J):=\min\{d_s^n((i,j),(k,l)):\,(k,l)\in J\}$ be the distance of $(i,j)$ to $J$ at time $s$.
\begin{definition}
\label{defin:m-dependence}
A structured interaction network process $(\mathcal{N}_{n,ij})_{(i,j)\in L_n}$ with filtration $(\mathcal{F}_t^n)_{t\in[0,T]}$ and distance $d^n_t$ is said to be \emph{momentarily-$m$-dependent} for $m\in[0,\infty)$, if
\begin{align*}
&\forall n\in\IN,\,\exists\delta_n>0,\,\forall t_0\in[0,T],\,\forall J\subseteq L_n: \textrm{Given }\mathcal{F}_{t_0}^n\\
&\quad\quad (\mathcal{N}_{n,ij}(t))_{(i,j)\in J,\,t\in[t_0,t_0+6\delta_n]}\textrm{ is cond. independent of } \\
&\quad\quad\sigma\Big(\mathcal{N}_{n,ij}(r)\cdot\Ind(d_s^n((i,j),J)\geq m):s\leq t_0,r\leq s+6\delta_n,\, (i,j)\in L_n\Big).
\end{align*}
\end{definition}

In order to work with momentary-$m$-dependent networks, we introduce two augmentations of the filtration $\mathcal{F}_t^n$. Generally, when extending filtrations, we have more predictable processes and fewer martingales. In the following definition we introduce two extensions of $\mathcal{F}_t^n$, one of which is the exact right trade-off: Certain processes become predictable with respect to the extension while certain other processes remain martingales (see Lemma \ref{lem:martingale}).

For two $\sigma$-fields $\mathcal{A}$ and $\mathcal{B}$ we denote by $\mathcal{A}\lor\mathcal{B}$ the $\sigma$-field which is generated by the union of $\mathcal{A}$ and $\mathcal{B}$.
\begin{definition}
\label{defin:leave-one-out}
Let $(\mathcal{N}_{n,ij})_{(i,j)\in L_n}$ be a structured interaction network with filtration $(\mathcal{F}_t^n)_{t\in[0,T]}$ and distance $d^n_t$. For a subset $J\subseteq L_n$ define
\begin{align*}
\mathcal{F}^{n,J,m}_t&:=\mathcal{F}^n_t\lor\sigma\Big(\mathcal{N}_{n,ij}(r)\Ind(d_s^n((i,j),J)\geq m):s\leq t, r\leq s+6\delta_n, (i,j)\in L_n\Big).
\end{align*}
We call $\mathcal{F}^{n,J,m}_t$ the \emph{long-sighted leave-$J$-out filtration}. In contrast, the \emph{short-sighted leave-$J$-out filtration} $\tilde{\mathcal{F}}^{n,J,m}_{I,t}$ for $I\subseteq J$ is defined by
\begin{align*}
\tilde{\mathcal{F}}^{n,J,m}_{I,t}:=&\sigma(X_{n,i}(\tau):\,(i,j)\in I,\tau\leq t) \\
&\lor\sigma(\mathcal{N}_{n,ij}(r)\Ind(d_s^n((i,j),J)\geq m):s\leq\max(0,t-4\delta_n), r\leq s+6\delta_n, (i,j)\in L_n).
\end{align*}
Denote further for any pair $(i,j)\in L_n$, $F_{(i,j)}(t):=\{(k,l)\in L_n:\,d_t^n((i,j),(k,l))\geq m\}$. Functions which are predictable with respect to $\tilde{\mathcal{F}}_{I,t}^{n,J,m}$ will be called of \emph{leave-$m$-out} type.
\end{definition}
It holds that $\mathcal{F}_t^{n,J,m}\supseteq\tilde{\mathcal{F}}_t^{n,J,m}$. We can now make the earlier mentioned property of the long-sighted leave-$J$-out filtration precise: The counting processes stay counting processes and in particular their martingales are still martingales. The proof of the result is a direct consequence of the definition and can be found in Appendix \ref{subsec:fproofs}.
\begin{lemma}
\label{lem:martingale}
We consider a structured momentarily-$m$-dependent interaction network. For $J\subseteq L_n$, the processes $\left(N_{n,ij}(t)\right)_{(i,j)\in J}$ form a multivariate counting process with respective intensity functions $(\lambda_{n,ij}(t))_{(i,j)\in J}$ with respect to $\mathcal{F}_t^{n,J,m}$. This means in particular that the counting process martingales $M_{n,ij}(t):=N_{n,ij}(t)-\int_0^t\lambda_{n,ij}(s)ds$ remain martingales with respect to $\mathcal{F}_t^{n,J,m}$.
\end{lemma}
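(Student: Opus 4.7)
The plan is to derive the martingale property of $M_{n,ij}$ with respect to $\mathcal{F}_t^{n,J,m}$ by reducing it to the already-known martingale property under $\mathcal{F}_t^n$, using the conditional independence hard-wired into Definition \ref{defin:m-dependence}. Since $\lambda_{n,ij}(t)=C_{n,ij}(t)\lambda(t,X_{n,ij}(t))$ is $\mathcal{F}_t^n$-predictable and $\mathcal{F}_t^n\subseteq\mathcal{F}_t^{n,J,m}$, the compensator $t\mapsto\int_0^t\lambda_{n,ij}(s)\,ds$ remains predictable with respect to the enlarged filtration; hence the uniqueness of Doob--Meyer compensators will yield the multivariate counting-process assertion once the martingale identity is established. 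The ``no common jumps'' part of being a multivariate counting process is a sample-path property inherited automatically from the original setting.

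First I would verify the martingale identity on \emph{short} intervals $[s,t]$ with $t-s\leq 6\delta_n$ and $(i,j)\in J$. Write
\[
\mathcal{F}_s^{n,J,m}=\mathcal{F}_s^n\lor G_s,\qquad G_s:=\sigma\bigl(\mathcal{N}_{n,ij}(r)\Ind(d_u^n((i,j),J)\geq m):u\leq s,\,r\leq u+6\delta_n,\,(i,j)\in L_n\bigr).
\]
The increment $M_{n,ij}(t)-M_{n,ij}(s)=N_{n,ij}(t)-N_{n,ij}(s)-\int_s^t C_{n,ij}(u)\lambda(u,X_{n,ij}(u))\,du$ is a measurable functional of $\mathcal{N}_{n,ij}$ restricted to $[s,t]\subseteq[s,s+6\delta_n]$ and therefore lies in the $\sigma$-field generated by $\bigl(\mathcal{N}_{n,ij}(r)\bigr)_{(i,j)\in J,\,r\in[s,s+6\delta_n]}$. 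Definition \ref{defin:m-dependence} applied at $t_0=s$ states that this family is conditionally independent of $G_s$ given $\mathcal{F}_s^n$, so by the standard equivalence for conditional independence,
\[
\IE\bigl[M_{n,ij}(t)-M_{n,ij}(s)\bigm|\mathcal{F}_s^{n,J,m}\bigr]=\IE\bigl[M_{n,ij}(t)-M_{n,ij}(s)\bigm|\mathcal{F}_s^n\bigr]=0,
\]
with the second equality being the original martingale property under $\mathcal{F}_t^n$.

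For a generic pair $s<t$, I would partition $[s,t]$ into finitely many pieces $s=s_0<s_1<\dots<s_N=t$ of length at most $6\delta_n$ and iterate the short-interval identity via the tower property, using that $\mathcal{F}_\cdot^{n,J,m}$ is monotone in time (immediate from its defining formula). This reduces the general claim to the short case and completes the verification that $M_{n,ij}$ is an $\mathcal{F}_t^{n,J,m}$-martingale.

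The main obstacle is the first step: one must correctly identify which $\sigma$-field the increment $M_{n,ij}(t)-M_{n,ij}(s)$ belongs to so as to invoke Definition \ref{defin:m-dependence}. Once it is clear that the relevant randomness depends on $\mathcal{N}_{n,ij}$ for $(i,j)\in J$ only on the window $[s,s+6\delta_n]$ (this is exactly why the factor six, which reappears in the definition of $G_s$, was built into Definition \ref{defin:m-dependence}), the rest is bookkeeping: monotonicity of the enlarged filtration, preservation of predictability of $\lambda_{n,ij}$, and the pathwise inheritance of the multivariate counting-process structure.
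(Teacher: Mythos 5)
Your proposal is correct and follows essentially the same route as the paper: the paper's (very terse) proof also reduces the martingale identity on windows of length at most $6\delta_n$ to the $\mathcal{F}_t^n$-martingale property via the conditional independence in Definition \ref{defin:m-dependence}, noting that predictability of $\lambda_{n,ij}$ is inherited since $\mathcal{F}_t^{n,J,m}\supseteq\mathcal{F}_t^n$. Your explicit identification of the $\sigma$-field containing the increment and the chaining over a partition of $[s,t]$ simply spell out details the paper leaves implicit.
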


\begin{remark}
Throughout we will use the notion of Stieltjes and Itô Integration interchangeably when possible. In particular, when $\phi$ is predictable, we will understand $\int_0^T\phi(t)dM_{n,ij}(t)$ as an Itô Integral and use its martingale properties (since $M_{n,ij}$ is a martingale). If $\phi$ is not predictable we can understand the same integral as Stieltjes Integral which is defined path-wise (no predictability required) but is itself no martingale (in contrast to the Itô Integral). 
\end{remark}

We can use momentary-$m$-dependence in order to extend a technique which \citet{MN07} applied to iid observations in a non-network context: Approximate non-predictable integrands by processes which are predictable with respect to a larger filtration. The proof of the following result is along the lines of \citet{MN07} and is given in Appendix \ref{subsec:fproofs}.
\begin{proposition}
\label{prop:single_non-pred}
Let $\mathcal{N}_{n,ij}$ be momentarily-$m$-dependent and let $\phi_{n,ij}:[0,T]\to\IR$ for $n\in\IN, (i,j)\in L_n$ be random functions (not necessarily predictable). Let furthermore $\tilde{\phi}_{n,ij}^J:[0,T]\to\IR$ for $(i,j)\in J\subseteq L_n$ and $|J|\leq 2$ be of leave-$m$-out type, i.e., predictable with respect to $\tilde{\mathcal{F}}_{(i,j),t}^{n,J,m}$. Then, we have ($\lambda_{n,ij}$ and $M_{n,ij}$ mean the same as in Lemma \ref{lem:martingale})
\begin{align*}
&\IE\left(\left(\sum_{(i,j)\in L_n}\int_0^T\phi_{n,ij}(t)dM_{n,ij}(t)\right)^2\right)\leq\sum_{(i,j)\in L_n}\int_0^T\IE\left(\tilde{\phi}_{n,ij}^{ij}(t)^2C_{n,ij}(t)\lambda_{n,ij}(t)\right)dt \\
&+2\sum_{(i,j),(k,l)\in L_n}\IE\left(\int_0^T\tilde{\phi}^{ij,kl}_{n,ij}(t)dM_{n,ij}(t)\int_0^T\left(\phi_{n,kl}(r)-\tilde{\phi}_{n,kl}^{ij,kl}(r)\right)dM_{n,kl}(r)\right) \\
&+\sum_{(i,j),(k,l)\in L_n}\IE\left(\int_0^T\left(\phi_{n,ij}(t)-\tilde{\phi}_{n,ij}^{ij,kl}(t)\right)dM_{n,ij}(t)\int_0^T\left(\phi_{n,kl}(r)-\tilde{\phi}_{n,kl}^{ij,kl}(r)\right)dM_{n,kl}(r)\right).
\end{align*}
\end{proposition}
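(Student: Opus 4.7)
The plan is to expand the square, substitute the decomposition $\phi_{n,ij} = \tilde\phi^{ij,kl}_{n,ij} + (\phi_{n,ij} - \tilde\phi^{ij,kl}_{n,ij})$ into each factor, and identify the single cross-term which requires nontrivial analysis. After writing
$$\IE\left(\left(\sum_{(i,j)\in L_n}\int_0^T\phi_{n,ij}dM_{n,ij}\right)^2\right)=\sum_{(i,j),(k,l)\in L_n}\IE\left(\int_0^T\phi_{n,ij}dM_{n,ij}\int_0^T\phi_{n,kl}dM_{n,kl}\right),$$
each summand splits into four pieces. The two mixed pieces $\tilde\phi\cdot(\phi-\tilde\phi)$ and $(\phi-\tilde\phi)\cdot\tilde\phi$ combine into the factor-of-two term on the right-hand side after relabelling $(i,j)\leftrightarrow(k,l)$ in one of them (note that $J=\{(i,j),(k,l)\}$ is unchanged by this swap, so $\tilde\phi^{ij,kl}=\tilde\phi^{kl,ij}$), and the $(\phi-\tilde\phi)(\phi-\tilde\phi)$ pieces give the last term on the right-hand side directly.

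The key step is thus to handle the ``clean'' $\tilde\phi\tilde\phi$ piece. For the diagonal $(i,j)=(k,l)$, take $J=\{(i,j)\}$; then $\tilde\phi^{ij}_{n,ij}$ is predictable with respect to $\tilde{\mathcal{F}}^{n,\{(i,j)\},m}_{(i,j),t}\subseteq\mathcal{F}^{n,\{(i,j)\},m}_t$ (the inclusion follows from $\max(0,t-4\delta_n)+6\delta_n\le t+6\delta_n$ in the respective definitions). By Lemma \ref{lem:martingale}, $M_{n,ij}$ is a martingale with intensity $\lambda_{n,ij}=C_{n,ij}\lambda(\cdot,X_{n,ij})$ with respect to $\mathcal{F}^{n,\{(i,j)\},m}_t$, so Doob's isometry yields exactly $\int_0^T\IE(\tilde\phi^{ij}_{n,ij}(t)^2C_{n,ij}(t)\lambda_{n,ij}(t))\,dt$, i.e.\ the first term on the right-hand side. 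For $(i,j)\neq(k,l)$, I would show that
$$\IE\left(\int_0^T\tilde\phi^{ij,kl}_{n,ij}dM_{n,ij}\int_0^T\tilde\phi^{ij,kl}_{n,kl}dM_{n,kl}\right)=0.$$
Both integrands are predictable with respect to $\mathcal{F}^{n,J,m}_t$ with $J=\{(i,j),(k,l)\}$ (same inclusion as above), and Lemma \ref{lem:martingale} simultaneously makes $M_{n,ij}$ and $M_{n,kl}$ martingales for this filtration. Since the counting processes of two distinct pairs have continuous compensators and therefore almost surely no common jumps, $[M_{n,ij},M_{n,kl}]\equiv0$ and hence $\langle M_{n,ij},M_{n,kl}\rangle\equiv0$; the two stochastic integrals are therefore orthogonal square-integrable martingales with zero expected product.

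The main obstacle is this orthogonality step, which is the whole reason for the intricate construction of the short-sighted filtration $\tilde{\mathcal{F}}^{n,J,m}_{I,t}$: the $4\delta_n$ backshift combined with the leave-$J$-out indicator is exactly what allows simultaneous predictability of $\tilde\phi^{ij,kl}_{n,ij}$ and $\tilde\phi^{ij,kl}_{n,kl}$ with respect to a single filtration under which \emph{both} counting process martingales are still martingales. One must therefore carefully verify the filtration inclusion $\tilde{\mathcal{F}}^{n,J,m}_{I,t}\subseteq\mathcal{F}^{n,J,m}_t$ in the general case $|J|=2$, and invoke the momentary-$m$-dependence together with Lemma \ref{lem:martingale} to preserve the martingale property of each $M_{n,ij}$ against the enlarged filtration that also contains the leave-$J$-out observations with a $6\delta_n$ look-ahead. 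Once this is in place, summing the diagonal Doob contribution and the vanishing off-diagonal $\tilde\phi\tilde\phi$ contribution with the two remaining cross-term sums gives exactly the bound in the statement.
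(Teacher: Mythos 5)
Your proof is correct and takes essentially the same route as the paper's own proof (which adapts the technique of \citet{MN07}): expand the square using $\phi=\tilde\phi+(\phi-\tilde\phi)$, keep the mixed and remainder blocks as they appear in the bound, and dispose of the clean $\tilde\phi\tilde\phi$ block via the It\^o isometry on the diagonal and the orthogonality of $M_{n,ij}$ and $M_{n,kl}$ as martingales with respect to the common filtration $\mathcal{F}_t^{n,\{(i,j),(k,l)\},m}$ supplied by Lemma \ref{lem:martingale}. The only nitpick is that the absence of common jumps is a consequence of the multivariate-counting-process property asserted in Lemma \ref{lem:martingale}, not of each compensator being continuous on its own; this is, however, exactly the fact the paper also invokes.
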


For our purposes we have to extend this technique even further: When studying kernel estimators we encounter integrals of the type
\begin{equation}
\label{eq:ip}
\frac{1}{r_n}\underset{(i,j)\neq (k,l)}{\sum_{(i,j),(k,l)\in L_n}}\int_0^T\int_{t-2h}^{t-}\phi_{n,ij,kl}(t,r)dM_{n,kl}(r)dM_{n,ij}(t),
\end{equation}
where $h\to0$ is a bandwidth and
$$\phi_{n,ij,kl}:=\underset{(v_1,v_2)\neq(i,j),(k,l)}{\sum_{(v_1,v_2)\in L_n}}\int_t^{r+2h}f(X_{n,v_1v_2}(\tau))d\tau$$
for some real-valued function $f$. Hence, for a given $t$ the integrand in \eqref{eq:ip} is non-predictable. However, under momentary-$m$-dependence, by removing the correct terms from the sum in the definition of $\phi_{n,ij,kl}(t,r)$, we obtain processes which are partially predictable with respect to $\tilde{\mathcal{F}}^{n,I,m}_{\{(i,j),(k,l)\},t}$:
\begin{definition}
\label{def:preliminary_partially_predictable}
Let $\phi$ be a real-valued stochastic process defined on $[0,T]^2$. $\phi$ is called \emph{partially-predictable} with respect to a filtration $\mathcal{G}_t$ if for any filtration $\mathcal{H}_t\supseteq\mathcal{G}_t$ and any process $X$ which is adapted to $\mathcal{H}_t$ the process
$$t\mapsto\int_0^{t-}\phi(t,r)dX(r)$$
is predictable with respect to $\mathcal{H}_t$. Note that $\phi(r,t)=g(r)h(t)f(r,t)$ with $g$ being adapted, $h$ being predictable (both with respect to $\mathcal{G}_t$) and $f$ deterministic has this property.
\end{definition}
Since the martingales $M_{n,ij}$ and $M_{n,kl}$ remain martingales under the correct long-sighted filtration, we can now use stochastic integral properties. For ease of notation, we use the convention $u_r:=(i_r,j_r)\in L_n$ for $r=1,...,4$ and we write sets without curly brackets, e.g. instead of $\{u_1,u_2,u_3\}\subseteq L_n$, we simply write $u_1u_2u_3\subseteq L_n$. The proof of the following result is given in Appendix \ref{sec:proofs}.

\begin{theorem}
\label{thm:easy_non-pred}
Let $(N_{n,ij},X_{n,ij},C_{n,ij})_{(i,j)\in L_n}$ be a structured interaction network with filtration $(\mathcal{F}_t^n)_{t\in[0,T]}$ and distance $d^n_t$. Let $\phi_{n,u_1u_2}:[0,T]\times[0,T]\to\IR$ for $u_1,u_2\in L_n$ be random functions (possibly not predictable with respect to $\mathcal{F}_t^n$). It holds that
\begin{equation}
\label{eq:aim}
\frac{1}{r_n}\underset{u_1\neq u_2}{\sum_{u_1,u_2\in L_n}}\int_0^T\int_{t-2\delta_n}^{t-}\phi_{n,u_1u_2}(t,r)dM_{n,u_2}(r)dM_{n,u_1}(t)\overset{\IP}{\to}0,
\end{equation}
for $n\to\infty$, if
\begin{enumerate}
\item the processes $(N_{n,ij},X_{n,ij},C_{n,ij})_{(i,j)\in L_n}$ are momentarily-$m$-dependent and
\item there exist random functions $\tilde{\phi}_{n,u_1u_2}^{I}(t,r)$ for all $u_1u_2\subseteq J\subseteq L_n$ with $|J|\leq 4$ which are partially predictable with respect to $\tilde{\mathcal{F}}_{u_1u_2,t}^{n,J,m}$, respectively, and such that (the symbol $\neg$ means negation)
\end{enumerate}
\begin{align}
&\frac{1}{r_n}\underset{u_1\neq u_2}{\sum_{u_1,u_2\in L_n}}\int_0^T\int_{t-2\delta_n}^{t-}\left(\phi_{n,u_1u_2}(t,r)-\tilde{\phi}_{n,u_1u_2}^{u_1u_2}(t,r)\right)dM_{n,u_2}(r)dM_{n,u_1}(t)=o_P(1), \label{eq:cond1} \\
&\IE\Bigg(\frac{1}{r_n^2}\underset{u_1\neq u_2,u_3\neq u_4}{\sum_{u_1,u_2,u_3,u_4\in L_n}}\int_0^T\int_{t-2\delta_n}^{t-}\left(\tilde{\phi}_{n,u_1u_2}^{u_1u_2}(t,r)-\tilde{\phi}_{n,u_1u_2}^{u_1u_2u_3u_4}\right)(t,r)dM_{n,u_2}(r)dM_{n,u_1}(t) \nonumber \\
&\quad\quad\times\int_0^T\int_{t-2\delta_n}^{t-}\left(\tilde{\phi}_{n,u_3u_4}^{u_3u_4}(t,r)-\tilde{\phi}_{n,u_3u_4}^{u_1u_2u_3u_4}(t,r)\right)dM_{n,u_4}(r)dM_{n,u_3}(t)\Bigg)=o(1), \label{eq:cond2a} \\
&\frac{2}{r_n^2}\underset{u_1\neq u_2,u_3\neq u_4}{\sum_{u_1,u_2,u_3,u_4\in L_n}}\IE\Bigg[\int_0^T\int_{t-2\delta_n}^{t-}\left(\tilde{\phi}_{n,u_1u_2}^{u_1u_2}(t,r)-\tilde{\phi}_{n,u_1u_2}^{u_1u_2u_3u_4}(t,r)\right)dM_{n,u_2}(r) \nonumber \\
&\times\int_t^{t+2\delta_n}\int_{\xi-2\delta_n}^{\xi-}\tilde{\phi}_{n,u_3u_4}^{u_1u_2u_3u_4}(\xi,\rho)dM_{n,u_4}(\rho)dM_{n,u_3}(\xi)\Ind(\neg u_3,u_4\in F_{u_1}(t-2\delta_n))dM_{n,u_1}(t)\Bigg]=o(1), \label{eq:cond3} \\
&\frac{1}{r_n^2}\underset{u_1\neq u_2}{\sum_{u_1,u_2\in L_n}}\int_0^T\int_{t-2\delta_n}^{t-}\IE\Bigg[\tilde{\phi}_{n,u_1u_2}^{u_1u_2}(t,r)^2C_{n,u_1}(t)\lambda_{n,u_1}(t)C_{n,u_2}(r)\lambda_{n,u_2}(r) \nonumber \\
&\quad\quad\times\Ind(u_2\in F_{u_1}(t-2\delta_n))\Bigg]drdt =o(1), \label{eq:cond4} \\
&\frac{1}{r_n^2}\underset{u_1\neq u_2}{\sum_{u_1,u_2\in L_n}}\underset{u_4\neq u_2}{\sum_{u_4\in L_n}}\int_0^T\IE\Bigg[\int_{t-2\delta_n}^{t-}\tilde{\phi}_{n,u_1u_2}^{u_1u_2u_4}(t,r)dM_{n,u_2}(r)\int_{t-2\delta_n}^{t-}\tilde{\phi}_{n,u_1u_4}^{u_1u_2u_4}(t,r')dM_{n,u_4}(r') \nonumber \\
&\quad\quad\quad\times C_{n,u_1}(t)\lambda_{n,u_1}(t)\Ind(\neg u_2,u_4\in F_{u_1}(t-2\delta_n))\Bigg]dt =o(1). \label{eq:cond5}
\end{align}
\end{theorem}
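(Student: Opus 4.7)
The plan is to compute the second moment $\IE(\tilde Y_n^2)$, where $\tilde Y_n$ denotes the sum in \eqref{eq:aim} with the non-predictable $\phi_{n,u_1u_2}$ replaced by its leave-$u_1u_2$-out approximation $\tilde\phi^{u_1u_2}_{n,u_1u_2}$, and then to invoke Markov's inequality. The roles of the conditions \eqref{eq:cond1}--\eqref{eq:cond5} become clear once one follows the systematic decomposition sketched below.

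First, by \eqref{eq:cond1} it suffices to prove \eqref{eq:aim} with $\tilde\phi^{u_1u_2}_{u_1u_2}$ in place of $\phi_{u_1u_2}$; call the resulting random variable $\tilde Y_n$. Expanding the square gives
$$\IE(\tilde Y_n^2)=\frac{1}{r_n^2}\underset{u_1\neq u_2,\,u_3\neq u_4}{\sum_{u_1,u_2,u_3,u_4\in L_n}}\IE\!\left[I_{u_1u_2}\cdot I_{u_3u_4}\right],$$
where $I_{uv}$ abbreviates the inner double stochastic integral appearing in \eqref{eq:aim}. For each four-tuple I then split the integrand via $\tilde\phi^{u_1u_2}=\tilde\phi^{u_1u_2u_3u_4}+(\tilde\phi^{u_1u_2}-\tilde\phi^{u_1u_2u_3u_4})$ and symmetrically on the $u_3u_4$-side, producing four cross subterms per summand.

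The \emph{remainder $\times$ remainder} piece, summed and normalised, is precisely \eqref{eq:cond2a}, hence negligible. The two symmetric \emph{main $\times$ remainder} cross terms are further decomposed by the indicator $\Ind(u_3,u_4\in F_{u_1}(t-2\delta_n))$. In the far-in-network branch, momentary-$m$-dependence together with Lemma \ref{lem:martingale} permits one to condition on $\mathcal F^{n,u_1u_2u_3u_4,m}_{t-2\delta_n}$ and to exploit that the $u_3u_4$-integral is then a zero-mean martingale increment (its integrand $\tilde\phi^{u_1u_2u_3u_4}$ is partially predictable with respect to the long-sighted filtration under which $M_{n,u_3}$ remains a martingale), so the contribution vanishes for $\xi\notin[t-2\delta_n,t+2\delta_n]$. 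The residual time-overlap $\xi\in[t,t+2\delta_n]$ (after symmetrisation in $(t,\xi)$) is exactly the expression controlled by \eqref{eq:cond3}. The \emph{main $\times$ main} contribution is treated analogously: its far-in-network branch is handled by It\^o isometry applied to four pairwise orthogonal counting-process martingales, which collapses the quadruple sum onto the subdiagonals where $\{u_1,u_2\}\cap\{u_3,u_4\}\neq\emptyset$; the two-pair and three-pair coincidences produce precisely the second-moment expressions of \eqref{eq:cond4} and \eqref{eq:cond5}, respectively.

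The main technical hurdle is the consistent filtration bookkeeping. At each step the indicator must be evaluated at a time ($t-2\delta_n$) sufficiently in the past to be measurable with respect to the correct long-sighted leave-$J$-out filtration, while simultaneously the martingale increment one wishes to annihilate must lie in a time window of length at most $6\delta_n$ ahead in order for Definition \ref{defin:m-dependence} to deliver the required conditional independence; the $2\delta_n$-offsets and the factor six in the definition are tuned precisely for this. A secondary difficulty is verifying that each remainder in the partial-predictability decomposition genuinely admits the claimed form, which follows once one unpacks the definition of the short-sighted filtration together with Definition \ref{def:preliminary_partially_predictable}.
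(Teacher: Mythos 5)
Your decomposition matches the paper's proof essentially step for step: reduction via \eqref{eq:cond1}, second-moment expansion with the four-pair approximation $\tilde\phi^{u_1u_2}=\tilde\phi^{u_1u_2u_3u_4}+(\tilde\phi^{u_1u_2}-\tilde\phi^{u_1u_2u_3u_4})$, identification of the remainder-times-remainder block with \eqref{eq:cond2a}, time-window splitting plus martingale annihilation under the long-sighted leave-$J$-out filtrations for the cross terms with residual \eqref{eq:cond3}, and orthogonality of the outer martingales collapsing the main-times-main block onto $u_1=u_3$, where the far-indicator renders the intensity factor predictable so that only the diagonal $u_2=u_4$ term (\eqref{eq:cond4}) and the near-indicator off-diagonal term (\eqref{eq:cond5}) survive. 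The only cosmetic difference is the order in which you apply the indicator split and the time-window split in the cross term; the mechanism is identical to the paper's.
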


\subsection{Mixing Networks}
\label{subsec:mixing_networks}
In this section our interest lies in proving a Bernstein type exponential inequality e.g. for the following average
$$\frac{1}{r_np_n(t)}\sum_{(i,j)\in L_n}\left(Z_{n,ij}-\IE\left(Z_{n,ij}\right)\right),$$
where we will later have $Z_{n,ij}=\phi(N_{n,ij},X_{n,ij},C_{n,ij})$ for a real-valued function $\phi$. However, the following results do not depend on this specific functional form as long as the $Z_{n,ij}$ have the exchangeability property 2 in Definition \ref{defin:struc_interaction_net}. The difficulties here are two-fold: We usually have that $Z_{n,ij}=0$ when $C_{n,ij}(t)=0$ and hence the number of terms in the sum is random and, secondly, the terms are dependent. We argued in the discussion of Figure \ref{fig:info_flow} that unconditional independence is not a good assumption. However, it is reasonable to assume that, conditionally on the network, far apart actors influence each other very weakly. We include this aspect in the model by imposing mixing assumptions with time-varying mixing coefficients. These mixing assumptions will be used in the proofs by applying the grouping technique for mixing random variables (cf. \citet{R17,D94,V97}): The idea is to group the random variables $Z_{n,ij}$ in blocks which have large distances between each other in the observed network. To this end, we define a partitioning of a network as follows (the existence of such partitions will be discussed in Section \ref{subsec:example}).

\begin{definition}
\label{def:graph_partition}
Let $\Delta>0$, $t\in[0,T]$, $\Kappa,n,m\in\IN$ and $k\in\{1,...,\Kappa\}$. We call the random sets $G^t(k,m,\Delta)\subseteq L_n$ a $\Delta$-partition of the network at time $t$ (note that we omit $n$ in the notation) if
\begin{enumerate}
\item $(k,m)\neq(k',m')\,\Rightarrow\, G^t(k,m,\Delta)\cap G^t(k',m',\Delta)=\emptyset$,
\item For $k\in\{1,...,\Kappa\}$ and $m\neq m'$: $(i,j)\in G^t(k,m,\Delta),\,(k,l)\in G^t(k,m',\Delta)\,\Rightarrow\,d_t^n((i,j),(k,l))\geq\Delta$.
\end{enumerate}
\end{definition}

Intuitively speaking, the sets $G^t(k,m,\Delta)$ form random groups where two different groups of the same type $k$ are far apart in the random network. For the following definition we use the notion of $\beta$-mixing coefficients. For any two $\sigma$-fields $\mathcal{A}$ and $\mathcal{B}$ denote the $\beta$-mixing coefficient by (cf. e.g. \citet{R17})
$$\beta(\mathcal{A},\mathcal{B}):=\sup_{C\in\mathcal{A}\otimes\mathcal{B}}\left|\IP_{\mathcal{A}\otimes\mathcal{B}}(C)-\left(\IP_{\mathcal{A}}\otimes\IP_{\mathcal{B}}\right)(C)\right|,$$
where $\IP_{\mathcal{A}\otimes\mathcal{B}}$ and $\IP_{\mathcal{A}}\otimes\IP_{\mathcal{B}}$ denote measures on $\mathcal{A}\otimes\mathcal{B}$ for which for all sets $A\times B\in\mathcal{A}\otimes\mathcal{B}$
\begin{align*}
\IP_{\mathcal{A}\otimes\mathcal{B}}(A\times B)&=\IP(A\cap B) \textrm{ and } \\
\left(\IP_{\mathcal{A}}\otimes\IP_{\mathcal{B}}\right)(A\times B)&=\IP(A)\IP(B).
\end{align*}

For two random variables $X,Y$ we denote $\beta(X,Y):=\beta(\sigma(X),\sigma(Y))$ where $\sigma(X)$ and $\sigma(Y)$ denote the $\sigma$-fields generated by $X$ and $Y$ respectively.

\begin{definition}
\label{defin:network-beta}
Let $(Z_{n,ij})_{(i,j)\in L_n}$ be a sequence of random variables, let $\Delta>0$ and let $G^t(k,m,\Delta)$ be a $\Delta$-partition of the network as in Definition \ref{def:graph_partition}. For every time point $t$ and every pair $(i,j)\in L_n$, we define
\begin{align*}
I_{n,ij}^{k,m,t}(\Delta)&:=\Ind((i,j)\in G^t(k,m,\Delta)),
\end{align*}
the indicator function which checks if $(i,j)$ belongs to the $m$-th block of type $k$ at time $t$. Group the $Z_{n,ij}$ based on the partition $G^t(k,m,\Delta)$, i.e.,
$$U_{k,m}^{n,t}(\Delta):=\sum_{(i,j)\in L_n}\left[Z_{n,ij}\cdot I_{n,ij}^{k,m,t}(\Delta)-\IE\left(Z_{n,ij}\cdot I_{n,ij}^{k,m,t}(\Delta)\right)\right].$$
Then we define the $\beta$-Mixing coefficient which depends on the graph partitioning $G^t(k,m,\Delta)$ (which we do not indicate in the notation) via:
$$\beta_t(\Delta):=\underset{k\in\{1,...,\Kappa\}}{\max_{M\in\IN}}\beta\left(\left[U_{k,m}^{n,t}(\Delta)\right]_{m\leq M-1},U_{k,M}^{n,t}(\Delta)\right).$$
\end{definition}

\begin{remark}
In most (but not all) situations we have additionally to the properties of Definition \ref{def:graph_partition} that
\begin{equation}
\label{eq:SpecificCoverCondition}
\bigcup_{k=1}^{\mathcal{K}}\bigcup_mG^t(k,m,\Delta)=E_{n,t},
\end{equation}
where $E_{n,t}$ is the random edge set of the network. In case where $Z_{n,ij}=0$ for $C_{n,ij}(t)=0$, i.e., if $Z_{n,ij}C_{n,ij}(t)=Z_{n,ij}$ all relevant pairs $(i,j)\in L_n$ are covered by the partition and it holds that
\begin{equation}
\label{eq:GeneralCoverCondition}
\sum_{(i,j)\in L_n}Z_{n,ij}=\sum_{k=1}^{\mathcal{K}}\sum_{m=1}^{\infty}\sum_{(i,j)\in L_n}Z_{n,ij}I_{n,ij}^{k,m,t}.
\end{equation}
In general, for our results to hold, we do not have to require \eqref{eq:SpecificCoverCondition}. It will be sufficient to assume that \eqref{eq:GeneralCoverCondition} holds.
\end{remark}

In applications, the random variables $Z_{n,ij}$ will depend on a time point $t_0\in[0,T]$. So it will be the case that for $t$ close to $t_0$ the $\beta$-Mixing coefficients at time $t$ will be small while they might be large for $t$ far away from $t_0$. The following result is the main result of this section (inspired by \citet{D94}). The proof is deferred to Section \ref{sec:proofs}.

\begin{lemma}
\label{lem:network_exp}
Let $(Z_{n,ij})_{(i,j)\in L_n}$ be an array of random variables which fulfils \eqref{eq:GeneralCoverCondition} for a given $t\in[0,T]$ and let $\Delta_n>0, \mathcal{K}_n\in\IN$. Suppose that for all $n\in\IN$ there exist $\Delta_n$-partitions with $\mathcal{K}_n$ block types and numbers $E_{k,m}^{n,t}, E_k^{n,t}>0$ for $k=1,...,\mathcal{K}_n$ and $m=1,...,r_n$ as well as $\sigma^2,c_1,c_2,c_3>0$ such that (cf. notation from Definition \ref{defin:network-beta})
\begin{enumerate}
\item $\forall n\in\IN,\rho\in\IN\setminus \{0,1\},k\in\{1,...,\Kappa\},m\in\{1,...,r_n\}:$
$$\IE(|U_{k,m}^{n,t}(\Delta_n)|^{\rho})\leq\frac{\rho!}{2} E_{k,m}^{n,t}\sigma^2\cdot\left(E_k^{n,t}c_1\right)^{\rho-2},$$
\item  for $|E|_{n,t}:=\sum_{k=1}^{\mathcal{K}}\sum_{m=1}^{r_n}E_{k,m}^{n,t}$, it holds for all $n\in\IN$ and all $k=1,...,\mathcal{K}_n$
\begin{align*}
\frac{1}{|E|_{n,t}}\sum_{m=1}^{r_n}E_{k,m}^{n,t}\geq c_2\textrm{ and }E_k^{n,t}\leq c_3\sqrt{\frac{|E|_{n,t}}{\log |E|_{n,t}}}.&&
\end{align*}
\end{enumerate}
Then, for any $x>0$ and all $n\in\IN$,
\begin{align}
&\IP\left(\frac{1}{|E|_{n,t}}\sum_{(i,j)\in L_n}(Z_{n,ij}-\IE(Z_{n,ij}))\geq x\cdot\sqrt{\frac{\log |E|_{n,t}}{|E|_{n,t}}}\right) \nonumber \\
\leq&\Kappa_n|E|_{n,t}^{-\frac{c_2\cdot x^2}{2(\sigma^2+c_1c_3 x)}}+\beta_{t}(\Delta_n)\cdot \Kappa_n r_n. \label{eq:state1}
\end{align}
\end{lemma}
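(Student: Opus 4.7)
The strategy is to use the $\Delta_n$-partition to split $\sum_{(i,j)\in L_n}(Z_{n,ij}-\IE(Z_{n,ij}))$ into $\Kappa_n$ weakly dependent block sums, couple each block sequence to an independent one via Berbee's lemma, and then apply a classical Bernstein inequality block-wise before taking a union bound over the block types. By the cover condition \eqref{eq:GeneralCoverCondition},
$$\sum_{(i,j)\in L_n}(Z_{n,ij}-\IE(Z_{n,ij}))=\sum_{k=1}^{\Kappa_n}\sum_{m=1}^{r_n}U_{k,m}^{n,t}(\Delta_n).$$
Set $w_k:=|E|_{n,t}^{-1}\sum_m E_{k,m}^{n,t}$. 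Condition 2 gives $w_k\geq c_2$, and by the definition $|E|_{n,t}=\sum_{k,m}E_{k,m}^{n,t}$ one has $\sum_k w_k=1$. Writing $T_n:=x\sqrt{|E|_{n,t}\log|E|_{n,t}}$, the weighted pigeonhole step yields
$$\Bigl\{\sum_{k,m}U_{k,m}^{n,t}(\Delta_n)\geq T_n\Bigr\}\subseteq\bigcup_{k=1}^{\Kappa_n}\Bigl\{\sum_m U_{k,m}^{n,t}(\Delta_n)\geq w_k T_n\Bigr\}.$$

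For each fixed $k$, I would apply Berbee's coupling lemma (see e.g.\ \citet{D94,R17}) iteratively along the sequence $(U_{k,m}^{n,t}(\Delta_n))_{m=1}^{r_n}$: because $\beta_t(\Delta_n)$ dominates the $\beta$-coefficient between $\sigma([U_{k,m}^{n,t}(\Delta_n)]_{m\leq M-1})$ and $\sigma(U_{k,M}^{n,t}(\Delta_n))$ for every $M$, one obtains, possibly after enlarging the probability space, an independent sequence $(\tilde U_{k,m}^{n,t}(\Delta_n))_{m=1}^{r_n}$ with matching one-dimensional marginals and
$$\IP\bigl(\exists m\leq r_n:\tilde U_{k,m}^{n,t}(\Delta_n)\neq U_{k,m}^{n,t}(\Delta_n)\bigr)\leq r_n\beta_t(\Delta_n).$$
On the complement of this event the partial sums coincide, so it suffices to tail-bound the independent version. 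The moment condition 1 is in classical Bernstein form with variance parameters $v_m=E_{k,m}^{n,t}\sigma^2$ and boundedness parameter $E_k^{n,t}c_1$; the textbook MGF argument therefore yields
$$\IP\Bigl(\sum_m\tilde U_{k,m}^{n,t}(\Delta_n)\geq w_k T_n\Bigr)\leq \exp\Bigl(-\frac{w_k^2 T_n^2}{2\bigl(w_k|E|_{n,t}\sigma^2+E_k^{n,t}c_1 w_k T_n\bigr)}\Bigr).$$

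Substituting $T_n=x\sqrt{|E|_{n,t}\log|E|_{n,t}}$ and $E_k^{n,t}\leq c_3\sqrt{|E|_{n,t}/\log|E|_{n,t}}$ collapses the denominator to $2w_k|E|_{n,t}(\sigma^2+c_1c_3x)$; one factor of $w_k$ cancels and the lower bound $w_k\geq c_2$ turns the resulting exponent into $-c_2x^2\log|E|_{n,t}/(2(\sigma^2+c_1c_3x))$, i.e.\ exactly $|E|_{n,t}^{-c_2 x^2/(2(\sigma^2+c_1c_3 x))}$. A final union bound over $k=1,\dots,\Kappa_n$, adding up both the Bernstein and the coupling contributions, then gives \eqref{eq:state1}. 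The only delicate ingredient is the iterative Berbee coupling: one must iterate the one-step result so that the final sequence is jointly independent while the total discrepancy probability telescopes to at most $r_n\beta_t(\Delta_n)$ per block type; everything else is a routine Bernstein computation combined with the weighted pigeonhole split.
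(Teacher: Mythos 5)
Your proposal is correct and follows essentially the same route as the paper: the paper likewise decomposes via \eqref{eq:GeneralCoverCondition} into the block sums $U_{k,m}^{n,t}$, applies the Berbee/grouping lemma to build an independent copy per block type with discrepancy probability at most $r_n\beta_t(\Delta_n)$, uses the weighted union bound with exactly your weights $\alpha_k=|E|_{n,t}^{-1}\sum_m E_{k,m}^{n,t}$, and then invokes the classical Bernstein inequality followed by the same substitution of $T_n$ and the bound on $E_k^{n,t}$. The only cosmetic difference is that the paper packages the coupling-plus-Bernstein step as a separate proposition and performs the coupling before the pigeonhole split, which changes nothing in the final bound.
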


Note that $E_{k,m}^{n,t}$ can be understood as the expected size of group $m$ of type $k$ and that $E_k^{n,t}$ can be understood as the largest expected group size of groups of type $k$. Then, $|E|_{n,t}$ can be understood as the expected number of edges in the network at time $t$, i.e., $|E|_{n,t}\approx r_np_n(t)$. Now the first part of condition 2 in Lemma \ref{lem:network_exp} translates to assuming that the expected fraction of edges contained in all groups of type $k$ is non-negligible. The second part means that the largest single group cannot be too large. The first condition, the moment condition, will be discussed in the next lemma. We will also show that it suffices to assume the existence of a suitable partition as above with high probability. To this end we introduce an indicator function $\Gamma_n^t$ which ensures that we can partition the network suitably. Conditionally on that, we can use the previous mixing results. In order to obtain an unconditional result we need to assume that $\Gamma_n^t=1$ sufficiently often. This is reflected in the unusual condition on $x$. The proof of the following result can be found in Appendix \ref{sec:proofs}. In addition we will show in the Appendix (Lemma \ref{lem:exponential_inequality}) a different result which provides an exponential inequality for (unbounded) martingales and also avoids the moment condition.

\begin{lemma}
\label{lem:sufficient_mixing}
Let $(Z_{n,ij})_{(i,j)\in L_n}$ be random variables bounded by $M>0$ and let $(C_{n,ij}(t))_{i,j\in V_n}$ be the adjacency matrix of a random, undirected network at time $t\in[0,T]$. Let furthermore $\Delta_n>0$ and $I_{n,ij}^{k,m,t}$ be the indicators of a $\Delta_n$-partition with $\mathcal{K}_n$ group types which fulfils \eqref{eq:GeneralCoverCondition} (cf. also Definition \ref{defin:network-beta}). Suppose there are numbers $E_{k,m}^{n,t}>0$, $c_3>0$ such that for $|E|_{n,t}:=\sum_{k,m}E_{k,m}^{n,t}$ and
\begin{align*}
S_k(t)&:=\max_{m=1,...,r_n}\sum_{(i,j)\in L_n}I_{n,ij}^{k,m,t}, \quad \Gamma_n^t:=\Ind\left(\forall k\in\{1,...,\mathcal{K}\}:\frac{S_k^2\cdot\log(|E|_{n,t})}{|E|_{n,t}}\leq c_3^2\right), \\
E_k^{n,t}&:=c_3\sqrt{\frac{|E|_{n,t}}{\log(|E|_{n,t})}}, \\
Y_{n,ij}&:=\left(Z_{n,ij}(t)-\IE\left(Z_{n,ij}(t)\Big|\Gamma_n^t=1\right)\right)\cdot \Gamma_n^t
\end{align*}
there are constants $c_2>0$ and $C>0$ such that $\forall k=1,...,\mathcal{K}:\frac{1}{|E|_{n,t}}\sum_{m=1}^{r_n}E_{k,m}^{n,t}\geq c_2$ and that for pairwise different vertices $i,j,k,l\in V_n$
\begin{align*}
&\frac{r_n}{E_{k,m}^{n,t}}\textrm{Var}\left(Y_{n,ij}I_{n,ij}^{k,m,t}\right)\leq CM^2, \quad\frac{r_n^{\frac{3}{2}}}{E_{k,m}^{n,t}}\textrm{Cov}\left(Y_{n,ij}I_{n,ij}^{k,m,t},Y_{n,jk}I_{n,jk}^{k,m,t}\right)\leq CM^2, \\
&\frac{r_n^2}{E_{k,m}^{n,t}}\textrm{Cov}\left(Y_{n,ij}I_{n,ij}^{k,m,t},Y_{n,kl}I_{n,kl}^{k,m,t}\right)\leq CM^2.
\end{align*}
Let $\beta_t(\Delta_n)$ denote the $\beta$-mixing coefficients with respect to $(Y_{n,ij})_{(i,j)\in L_n}$ as in Definition \ref{defin:network-beta}. Then, for $x>M\IP(\Gamma_n^t=0)r_n\left(\log(|E|_{n,t})\cdot|E|_{n,t}\right)^{-\frac{1}{2}}$
\begin{align*}
&\IP\left(\frac{1}{|E|_{n,t}}\sum_{(i,j)\in L_n}\left[Z_{n,ij}(t)-\IE(Z_{n,ij}(t))\right]\geq 3x\cdot\sqrt{\frac{\log |E|_{n,t}}{|E|_{n,t}}}\right) \\
\leq&2\mathcal{K}_n\cdot(|E|_{n,t})^{-\frac{c_2\cdot x^2}{2(3CM^2+4Mc_3x)}}+\beta_t(\Delta_n)\cdot\mathcal{K}r_n+\IP(\Gamma_n^t=0)
\end{align*}
\end{lemma}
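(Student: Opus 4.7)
The plan is to reduce the claim to Lemma~\ref{lem:network_exp} by passing to the truncated and recentred variables $Y_{n,ij}$ (which vanish on $\{\Gamma_n^t=0\}$) and to verify the moment hypothesis of that lemma via the three assumed covariance bounds together with the overlap decomposition of Corollary~\ref{cor:au}.

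First I would split $Z_{n,ij}(t)-\IE Z_{n,ij}(t)=Y_{n,ij}+R_{n,ij}$, where by construction $\IE Y_{n,ij}=0$ and $R_{n,ij}=Z_{n,ij}(1-\Gamma_n^t)+\IE(Z_{n,ij}\mid\Gamma_n^t=1)\Gamma_n^t-\IE Z_{n,ij}$. Using $|Z_{n,ij}|\le M$ one checks that on $\{\Gamma_n^t=1\}$ the remainder collapses to the deterministic quantity $\IE(Z_{n,ij}\mid\Gamma_n^t=1)-\IE Z_{n,ij}$, whose modulus is at most $2M\IP(\Gamma_n^t=0)$; consequently $\bigl|\sum_{(i,j)}R_{n,ij}\bigr|\le 2Mr_n\IP(\Gamma_n^t=0)$ on that event. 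The stated lower bound on $x$ is exactly what forces this shift below $2x|E|_{n,t}\sqrt{\log|E|_{n,t}/|E|_{n,t}}$, so the tail event is contained in $\{\Gamma_n^t=0\}\cup\bigl\{\sum_{(i,j)}Y_{n,ij}\ge x|E|_{n,t}\sqrt{\log|E|_{n,t}/|E|_{n,t}}\bigr\}$, and matters reduce to a tail bound for $\sum_{(i,j)} Y_{n,ij}$.

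To apply Lemma~\ref{lem:network_exp} I would verify its moment hypothesis for $U_{k,m}^{n,t}(\Delta_n)=\sum_{(i,j)}\bigl[Y_{n,ij}I_{n,ij}^{k,m,t}-\IE(Y_{n,ij}I_{n,ij}^{k,m,t})\bigr]$. Because $|Y_{n,ij}|\le 2M$ and $Y_{n,ij}$ vanishes on $\{\Gamma_n^t=0\}$, the definition of $\Gamma_n^t$ yields $\bigl|\sum_{(i,j)}Y_{n,ij}I_{n,ij}^{k,m,t}\bigr|\le 2MS_k\le 2ME_k^{n,t}$ on $\{\Gamma_n^t=1\}$; applying the same bound in expectation gives the almost-sure bound $|U_{k,m}^{n,t}|\le 4ME_k^{n,t}$. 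The elementary reduction $\IE|U_{k,m}^{n,t}|^\rho\le(4ME_k^{n,t})^{\rho-2}\,\textrm{Var}\bigl(\sum_{(i,j)}Y_{n,ij}I_{n,ij}^{k,m,t}\bigr)$ then shifts the task to a second-moment estimate, which I would obtain by splitting the double sum by the overlap parameter $\kappa((i,j),(k,l))\in\{2,1,0\}$ as in Corollary~\ref{cor:au}: the three assumed covariance bounds pair with the $r_n$, $O(r_n^{3/2})$, $O(r_n^2)$ term counts of the three patterns to give three contributions of size $CM^2E_{k,m}^{n,t}$, hence $\textrm{Var}\le 3CM^2E_{k,m}^{n,t}$. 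Since $\rho!/2\ge 1$ for $\rho\ge 2$, this yields condition~1 of Lemma~\ref{lem:network_exp} with $\sigma^2=3CM^2$ and $c_1=4M$, and the invocation of that lemma on $Y_{n,ij}$ reproduces exactly the exponent $c_2x^2/[2(3CM^2+4Mc_3x)]$ in the stated bound.

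The main obstacle I anticipate is the bookkeeping in the truncation step: the unusual lower bound on $x$ is precisely tailored so that the deterministic recentring error $\sum_{(i,j)}R_{n,ij}$, which is of order $Mr_n\IP(\Gamma_n^t=0)$, is absorbed into the threshold, and one must check that after splitting $\{\sum(Z-\IE Z)\ge 3x|E|_{n,t}\sqrt{\log|E|_{n,t}/|E|_{n,t}}\}$ only the three claimed summands---$\IP(\Gamma_n^t=0)$, the $\beta$-mixing term, and the exponential term---survive, together with a benign loss in the prefactor of $\mathcal{K}_n$. A secondary point to verify is that the sum identity \eqref{eq:GeneralCoverCondition} transfers from $Z$ to $Y$ in the sense needed to invoke Lemma~\ref{lem:network_exp}; once these accounting issues are settled, the almost-sure bound on $U_{k,m}^{n,t}$, the three-pattern decomposition of the variance, and the invocation of Lemma~\ref{lem:network_exp} are essentially mechanical.
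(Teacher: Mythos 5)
Your proposal is correct and follows essentially the same route as the paper: reduce to Lemma \ref{lem:network_exp} applied to $Y_{n,ij}$, use the almost-sure bound $|U_{k,m}^{n,t}|\le 4ME_k^{n,t}$ to reduce the moment condition to a variance bound, obtain $\mathrm{Var}(U_{k,m}^{n,t})\le 3CM^2E_{k,m}^{n,t}$ from the three overlap patterns, and absorb the recentring error via the lower bound on $x$. The only (cosmetic) difference is that the paper splits the tail probability into four separate terms and argues two of them are exactly zero, whereas you package the same bookkeeping as a containment of the tail event in $\{\Gamma_n^t=0\}\cup\{\sum Y_{n,ij}\ge x\,|E|_{n,t}\sqrt{\log|E|_{n,t}/|E|_{n,t}}\}$.
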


\subsection{Examples}
\label{subsec:example}
In the following we discuss the previous concepts on examples.

\subsubsection{On $\Delta$-Partitions}
\label{subsubsec:partitions}
For the exponential inequality to hold, we do not need to know the specific partition in practice: Knowledge of existence is sufficient. Nevertheless, we discuss under which circumstances a $\Delta$-partition with the properties of Lemma \ref{lem:sufficient_mixing} can be expected to exist. Let $G_n$ be a given random, dynamic network with adjacency matrix $C_n$. As a distance function we take the graph distance, i.e., $d(ij,kl)$ denotes the length of the shortest path between the pairs $(i,j)$ and $(k,l)$ if $C_{n,ij}=C_{n,kl}=1$ (e.g. $d(ij,kl)=1$ if $(i,j)$ and $(k,l)$ are adjacent, $d(ij,kl)=2$ if there is one edge between $(ij)$ and $(k,l)$ and so forth). Otherwise or if there is no path, we set $d(ij,kl)=\infty$. We begin by supposing that for a given point in time $t$ the network $G_n(t)$ is a two-dimensional grid. In that situation we consider a chess-board like partitioning $(G^t(k,m,\Delta))_{k,m}$ of the edges as illustrated in Figure \ref{fig:grid_partition} where the sides and corners of the blocks lie exactly on the vertices. For edges which lie on the sides of the blocks we take the convention that the bottom and left side belong to the respective block. Each block (square) is of side length $\Delta$ and each block is assigned one of four types. In Figure \ref{fig:grid_partition} all blocks of the same type $k\in\{1,...,4\}$ have been assigned the same number. It is clear that the distance between two points taken from two different blocks of the same type $k$ is at least $\Delta$. We assign numbers $\{1,2,3,...\}$ to all blocks of the same type such that we can speak of the $m$-th block of type $k$. Later we will choose $\Delta_n\approx a\log n$ and $E_{k,m}^{n,t}$ in Lemma \ref{lem:sufficient_mixing} will be the expected size of the $m$-th block of type $k$. Above the $\Delta$-partition is made such that all edges are contained in exactly one set $G^t(k,m,\Delta)$ and we obtain as a consequence that by definition $|E|_{n,t}$ will equal the expected number of edges. Moreover, the blocks $G^t(k,m,\Delta)$ all have identical size $2\Delta_n^2\approx2(a\log n)^2$. Thus $\frac{1}{|E|_{n,t}}\sum_{m=1}^{r_n}E_{k,m}^{n,t}=1/4$. Also $S_k(t)=2(a\log n)^2$ and hence $\Gamma_n^t=0$ for $c_3$ chosen large enough. These considerations can be directly transferred to higher dimensional grids. Hence, for networks which form a grid of any dimension, the assumption of the existence of a sequence of $\Delta_n$-partitions as required in Lemma \ref{lem:sufficient_mixing} with $\Delta_n=O(\log n)$ is proven. In consideration of this, we conclude that for a network which roughly looks like a grid, the above construction still yields a valid partition.

\begin{figure}
\center
\includegraphics[width=0.5\textwidth]{./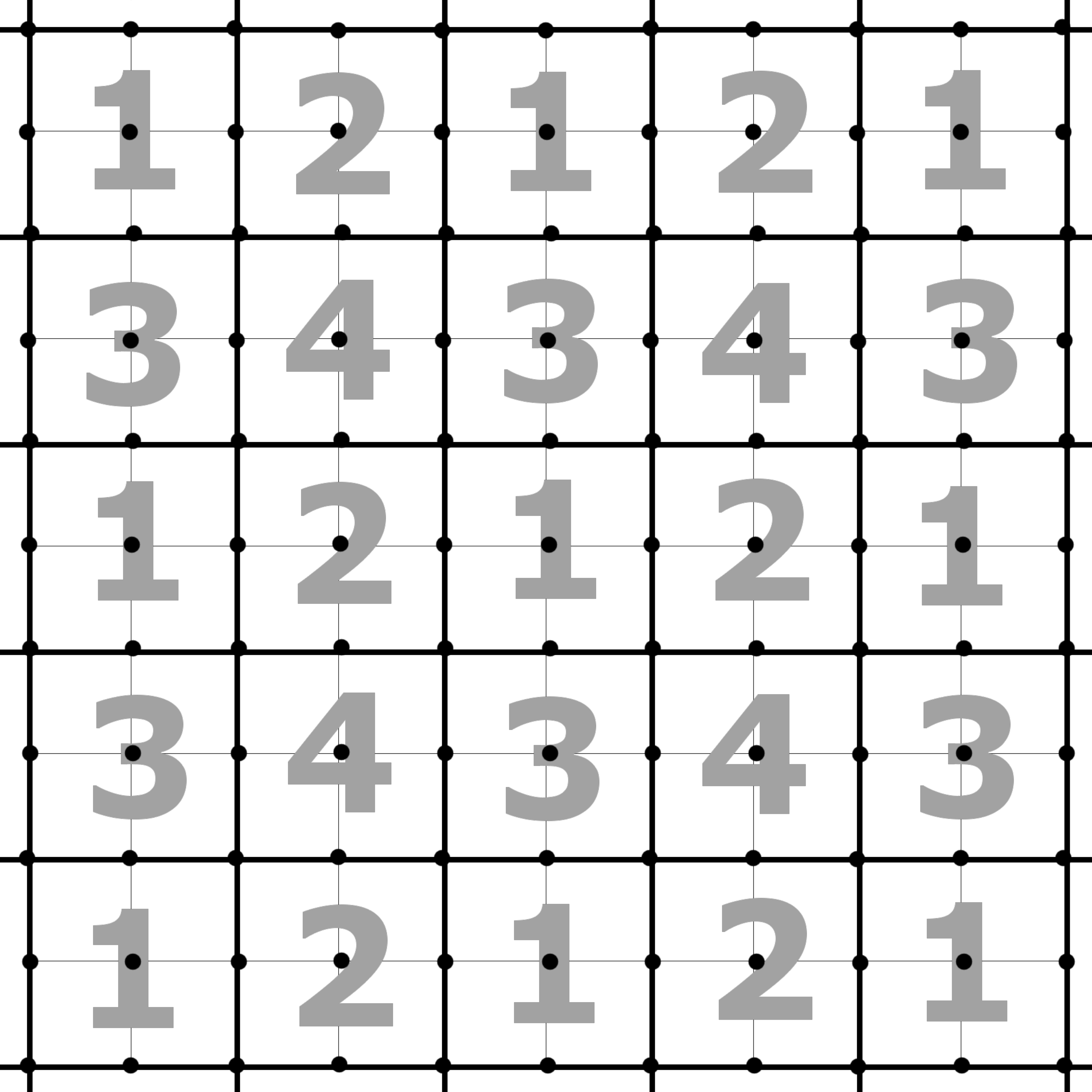}
\caption{Partition of the two-dimensional plane: Blocks of the same type have the same number $k\in\{1,...,4\}$. Here $\Delta=2$.}
\label{fig:grid_partition}
\end{figure}

In order to check the assumption for a general network, we assign to each pair of vertices random, $d$-dimensional coordinates. Then, we plot these coordinates in the $d$-dimensional plane and partition the edges by using a chess-board like partitioning as before. We suggest two example strategies for doing this.

\begin{example}
\label{exp:net_coords}
Let $e_1,,...,e_d\in L_n$ be arbitrary pairs of vertices. For any $n\in\IN, t\in[0,T]$ and $(i,j)\in L_n$ we call $(d_t^n((i,j),e_1),...,d_t^n((i,j),e_d))$ the coordinates of $(i,j)$ at time $t$. Let $G^t(k,m,\Delta)$ for $k=1,...,2^d$ and $m\in\IN$ comprise all pairs $(i,j)$ with coordinates lying in the $m$-th block of type $k$ in a chess-board like grouping (similar to Figure \ref{fig:grid_partition} in the case $d=2$).
\end{example}
Note that above we construct the partition for each time point $t$ individually. Hence, the choice of the reference pairs $e_1,...,e_d$ may depend on time as well. Moreover, the pairs may be chosen randomly since $\Delta$-partitions are allowed to be random. That we produce indeed a $\Delta$-partition in the above example is ensured by the following Lemma.
\begin{lemma}
Let $\Delta>0$ be given. The sets $G^t(k,m,\Delta)$ defined in Example \ref{exp:net_coords} form a $\Delta$-partition of the network in the sense of Definition \ref{def:graph_partition}.
\end{lemma}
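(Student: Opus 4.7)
The plan is to verify the two conditions of Definition \ref{def:graph_partition} directly from the chess-board construction, using only that $d_t^n$ is (almost surely) a metric on $L_n$. Write the coordinate map as $\Phi_t:L_n\to[0,\infty]^d$, $\Phi_t(i,j):=(d_t^n((i,j),e_1),\ldots,d_t^n((i,j),e_d))$. The chess-board partition of the coordinate space is obtained by slicing $[0,\infty)^d$ into axis-aligned hypercubes of side $\Delta$ and assigning each hypercube one of $2^d$ types according to the parity vector of its index. A pair $(i,j)$ is placed into $G^t(k,m,\Delta)$ iff $\Phi_t(i,j)$ lies in the $m$-th hypercube of type $k$. (Coordinates equal to $\infty$ can be absorbed into a separate distinguished block or ignored, since such a pair has infinite distance to any other pair anyway and thus trivially satisfies the separation condition.)

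Property 1 (pairwise disjointness across $(k,m)\neq(k',m')$) then reduces to the fact that the chess-board hypercubes partition $[0,\infty)^d$ into pairwise disjoint cells, which is a statement purely about the coordinate space.

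For Property 2, the core step is the following coordinate-based separation lemma: if two distinct hypercubes of the chess-board share the same type $k$, then in at least one coordinate direction $r^{\star}\in\{1,\ldots,d\}$ their projections are disjoint intervals whose endpoints differ by at least $\Delta$. This is a direct consequence of the parity-based colouring: two hypercubes of the same type have index vectors that differ in every coordinate where they differ by an even number, hence in any coordinate where they do differ, the index gap is at least $2$, giving a gap of at least $\Delta$ between their projections. Combining this with the triangle inequality for $d_t^n$ will finish the proof: if $(i,j)\in G^t(k,m,\Delta)$ and $(k',l')\in G^t(k,m',\Delta)$ with $m\neq m'$ and WLOG $d_t^n((i,j),e_{r^{\star}})\geq d_t^n((k',l'),e_{r^{\star}})+\Delta$, then
\[
d_t^n((i,j),(k',l'))\geq d_t^n((i,j),e_{r^{\star}})-d_t^n((k',l'),e_{r^{\star}})\geq\Delta.
\]

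I do not anticipate a real obstacle here; the only point that needs a brief comment is the handling of infinite coordinates and the mild technicality that the separation lemma for the chess-board is a purely combinatorial statement about index parities in $\mathbb{Z}^d$, not a metric statement. Once that coordinate-space lemma is stated, the triangle inequality step is one line, and the disjointness in Property 1 is immediate.
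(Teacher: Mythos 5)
Your proposal is correct and follows essentially the same route as the paper: disjointness is immediate from the coordinate-space partition, and the separation condition follows from the reverse triangle inequality $d_t^n((i,j),(k,l))\geq|d_t^n((i,j),e_r)-d_t^n((k,l),e_r)|$ combined with the fact that distinct same-type chess-board cells are separated by at least $\Delta$ in some coordinate. The only difference is presentational: you make the parity argument for the chess-board separation and the treatment of infinite coordinates explicit, whereas the paper takes these as definitional.
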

\begin{proof}
We consider the case $d=2$ (The proof for $d>2$ follows analogous arguments). $G^t(k,m,\Delta)$ and $G^t(k',m',\Delta)$ are disjoint for $(k,m)\neq (k',m')$ by construction. Let $(i,j),(k,l)\in L_n$ and denote by $(q,r):=(d_t^n((i,j),e_1),d_t^n((i,j),e_2))$ and $(q',r'):=(d_t^n((k,l),e_1),d_t^n((k,l),e_2))$ their respective coordinates. Then we obtain by the triangle inequality
\begin{align*}
q'=d_t^n(e_1,(k,l))&\leq d_t^n(e_1,(i,j))+d_t^n((i,j),(k,l))=q+d_t^n((i,j),(k,l)) \\
q=d_t^n(e_1,(i,j))&\leq d_t^n(e_1,(k,l))+d_t^n((k,l),(i,j))=q'+d_t^n((i,j),(k,l)),
\end{align*}
which yields $d_t^n((i,j),(k,l))\geq|q-q'|$. Analogously, we obtain $d_t^n((i,j),(k,l))\geq|r-r'|$. The second condition in Definition \ref{def:graph_partition} follows if we notice that by definition for $m\neq m'$, $(i,j)\in G^t(k,m,\Delta)$ and $(k,l)\in G^t(k,m',\Delta)$ implies either $|q-q'|\geq\Delta$ or $|r-r'|\geq\Delta$.
\end{proof}

Additionally to Example \ref{exp:net_coords}, we provide another method of how to equip edges with $d$-dimensional coordinates via multidimensional scaling.

\begin{example}
\label{exp:MDS}
Use Multidimensional Scaling (MDS) (cf. \citet{CC94}) to find for each $(i,j)\in L_n$ coordinates $p(i,j)$ in $\IR^d$ such that $\|p(i,j)-p(k,l)\|_2\approx d_t^n((i,j),(k,l))$, where $\|.\|_2$ denotes the Euclidean distance in $\IR^d$.
\end{example}

In general it is not possible to have equality above. So the method yields only an approximation. However, the resulting partition might still be valid for a different $\Delta$. In general we expect that for networks in which the vertices are already related to some position in $\IR^d$ (e.g. geographical positions) the assumption of the existence of such a $\Delta$-partition is not restrictive.

\subsubsection{Example: Momentary-$m$-Dependence}
\label{subsubsec:m-dep}
This section provides an example of a data generating process which is momentarily-$m$-dependent and exchangeable. Consider the following over-simplistic model for the use of on-line communication: A population $V_n:=\{1,...,n\}$ of people (e.g. employees of a company) is connected through a social network with adjacency matrix $C_n$, i.e., people $i,j\in V_n$ are in regular personal contact if $C_{n,ij}=1$. Now a new on-line communication tool is introduced. Consider a pair $(i,j)\in L_n$ with $C_{n,ij}=1$. At a given point in time $t\in[0,T]$, the pair either has started to communicate via the on-line tool ($N_{n,ij}(t)=1$) or not ($N_{n,ij}(t)=0$). We suppose that pairs $(i,j)$ with $C_{n,ij}=0$ will also not connect via the communication tool and hence $N_{n,ij}\equiv 0$ in these cases. So the processes $N_{n,ij}$ have at most one jump in the period $[0,T]$. Suppose we are interested in studying a statistic which depends on the array $\left(N_{n,ij}\right)_{n,ij}$. Clearly it would not be justifiable to assume that all $N_{n,ij}$ are independent because people who are connected will influence each other. However, assuming momentarily-$m$-dependence and exchangeability is less restrictive as we will motivate next.

In order to focus on the main ideas, we restrict to a time-constant network model. However, we can also apply dynamic network models and consider the distribution of a snapshot of the network at a given time of interest $t_0$. As a network generating process we consider a stochastic block model (cf. \citet{HLL83}) with random group assignments. That is, we suppose that every vertex $i\in V_n$ is randomly assigned to a group $g(i)\in\{1,...,\mathcal{G}\}$. While the number $\mathcal{G}\in\IN$ is fixed, the random variables $g(i)$ for $i=1,...,n$ are assumed to be independent and identically distributed. Now we suppose that the random variables $C_{n,ij}\in\{0,1\}$ are independent conditionally on all $g(i)$ and that for $i>j$ $\IP(C_{n,ij}=1|g(i),g(j))=Q(g(i),g(j))$ where $Q\in[0,1]^{\mathcal{G}\times\mathcal{G}}$ contains the connection probabilities. Set $C_{n,ij}=C_{n,ji}$ for $i>j$ and $C_{n,ii}=0$. We suppose that all these random variables are measurable with respect to $\mathcal{F}^n_0$.

The model for the processes $N_{n,ij}$ is as follows. We assume that the decision of a pair $(i,j)$ with $C_{n,ij}=1$ to use the communication tool is influenced by how many neighbouring communication connections are established in the sense that the pair is more likely to use the tool if many others use it as well. In addition, we assume that it takes some time to process information such that if a pair $(i,j)$ uses the tool at time $t_0$ pair $(j,k)$ will be influenced by it not before time $t+\delta_{ij,jk}$ (let $\delta_{ij,jk}=\infty$ if $C_{n,ij}=0$ or $C_{n,jk}=0$). We allow that some pairs process information faster than others but we do not allow chains of arbitrary fast communication, i.e., we suppose there is $\delta_{0,n}>0$ and $m_0\in\IN$ such that
$$\inf_{i_1,...,i_{m_0+1}\in V_n}\sum_{k=1}^{m_0-1}\delta_{i_ki_{k+1},i_{k+1}i_{k+2}}\geq\delta_{0,n}\textrm{ for all }n\in\IN.$$
Let $U_{n,ij}\geq 0$ denote pair $(i,j)$'s perception of the new tool. We suppose that the $U_{n,ij}$ are independent and identically distributed among all pairs. For any pair $(i,j)\in L_n$ define moreover by $L_n(i,j):=\{(k,l):k\in\{i,j\},l\in V_n\}$ the set of potential neighbours of $(i,j)$. Using these preparations, we consider the following model for the process $N_{n,ij}$ for given $\alpha_0,\theta_0>0$
\begin{equation}
\label{eq:example}
N_{n,ij}(t)=C_{n,ij}\Ind\left(U_{n,ij}+\sum_{(k,l)\in L_n(i,j)}^nC_{n,kl}\alpha_0\int_0^{t-\delta_{ij,kl}}N_{n,kl}(r)dr>\theta_0\right).
\end{equation}
For simplicity of exposition, we choose here a model without covariates and consider only the process $(N_{n,ij},C_{n,ij})_{(i,j)\in L_n}$. Since the group assignments and the initial perceptions $U_{n,ij}$ are iid, the process $(N_{n,ij},C_{n,ij})_{(i,j)\in L_n}$ fulfils the exchangeability property \eqref{eq:exchangeability}.

Let $\mathcal{F}^n_t$ denote the canonical filtration with respect to which all $N_{n,ij}$ are adapted and $C_{n,ij}$ and $g(1),...,g(n)$ are measurable with respect to $\mathcal{F}_0^n$. Definition \ref{defin:m-dependence} reads in this situation as follows
\begin{align*}
&\exists m>0,\,\forall n\in\IN,\,\exists\delta_n>0,\,\forall t_0\in[0,T],\,\forall J\subseteq L_n \textrm{Given }\mathcal{F}_{t_0}^n: \\
&\quad\quad(C_{n,ij},N_{n,ij}(t))_{(i,j)\in J,\,t\in[t_0,t_0+6\delta_n]}\textrm{ is cond. independent of } \\
&\quad\quad\sigma\Big((C_{n,ij},N_{n,ij}(r))\cdot\Ind(d((i,j),J)\geq m): r\leq t_0+6\delta_n,\, (i,j)\in L_n\Big).
\end{align*}
Note firstly that $C_{n,ij}$ is measurable with respect to $\mathcal{F}_{t_0}^n$ for all $t_0$ and thus may be treated as a constant. In order to see that the above holds for $m=m_0$ and $6\delta_n<\delta_{0,n}$ we use the following notation:  A sequence of pairs $P=(p_a)_{a=1}^M\subseteq L_n$ is called a path from $(i,j)$ to $(k,l)$ if $p_1=(i,j)$, $p_M=(k,l)$ and $p_a$ and $p_{a+1}$ share at least one vertex. For such a path we denote by $\delta(P):=\sum_{a=1}^{M-1}\delta_{p_a,p_{a+1}}$. Let $t_0\in[0,T], J\subseteq L_n$ be arbitrary and let $(k,l)\in J$. Let, moreover, $(i,j)\in L_n$ be given with $d((i,j),(k,l))\geq m_0$ and let $t\in[t_0,t_0+6\delta_n]$. By construction it is clear that $N_{n,kl}(t)$ depends only on those events of $N_{n,ij}$ which happened before time (the $\inf$ is taken over all paths from $(k,l)$ to $(i,j)$)
$$t-\inf_P \delta(P)\leq t-\delta_{0,n}\leq t_0+6\delta_n-\delta_{0,n}\leq t_0$$
since $6\delta_n\leq\delta_{0,n}$. Information about these is available in $\mathcal{F}^n_{t_0}$. Hence, the events of the processes $N_{n,ij}\Ind(d((i,j),J)\geq m_0)$ on $[t_0,t_0+6\delta_n]$ are non-influential to $N_{n,kl}$ for $(k,l)\in J$ on $[t_0,t_0+6\delta_n]$ provided that $\mathcal{F}_{t_0}^n$ is known. Therefore momentary-$m$-dependence holds.

\subsubsection{Example: Mixing}
\label{subsubsec:mixing}
In this section we will show that a simplified version of the process described in Section \ref{subsubsec:m-dep} is exchangeable and $\beta$-mixing (see also Remark \ref{rem:mge} at the end of this section). Let $G_0$ be a 2-dimensional discrete torus with a suitable number of $n$ vertices, i.e., the network has grid structure as in Figure \ref{fig:grid_partition} but the vertices on the left and on the right are identified, as well as the vertices on the bottom and the top. The random network $G_n$ is obtained by randomly assigning labels to the vertices of $G_0$. As before $C_n$ denotes the adjacency matrix of $G_n$. We consider processes $N_{n,ij}(t)=C_{n,ij}\Ind(A_{n,ij}(t)\geq \theta_0)$ where $A_{n,ij}(t)$ is a stochastic process which we specify now. Let $\phi:\{1,...,r_n\}\to L_n$ be an arbitrary enumeration of the pairs of vertices $L_n$ and let $A(t):=(A_{n,\phi(x)}(t))_{x=1,...,r_n}$. Denote by $\tilde{C}\in\{0,1\}^{r_n\times r_n}$ the random matrix with $\tilde{C}_{x,y}=1$ if and only if $C_{n,\phi(x)}=C_{n,\phi(y)}=1$ and the pairs $\phi(x)$ and $\phi(y)$ share exactly one vertex. Set $\tilde{C}_{x,x}=0$. We suppose that $A(t)$ follows the AR-model
$$A(t)=\alpha_0\tilde{C}A(t)+\epsilon,$$
where $\alpha_0<1/6$ and $\epsilon=(\epsilon_1,...,\epsilon_{r_n})^T$ is (for simplicity) a vector of independent Brownian motions scaled by $t^{-1/2}$ for $t>0$. Then, $\epsilon_x(t)\sim N(0,1)$ for all $t$ and all $x$. Since we assigned the vertex labels randomly, the processes $A_{n,ij}$ and thus $N_{n,ij}$ are exchangeable.

We prove now that the mixing coefficients at a given time $t$ decay exponentially fast. The $\Delta$-partition we consider is as follows: Fix a chess-board like partitioning as in Figure \ref{fig:grid_partition} with side-length $\Delta-1\in\IN$ on the deterministic network $G_0$. The random blocks $G^t(k,m,\Delta)$ are formed based on the edges which lie in the corresponding square in $G_0$. Fix $(k,m_1)\neq(k,m_2)$ and let for ease of notation $I_1:=G^t(k,m_1,\Delta)$ and $I_2:=G^t(k,m_2,\Delta)$. The distance $d$ is defined as before. Then, $d(ij,kl)\geq \Delta$ if $(i,j)\in I_1$ and $(k,l)\in I_2$. Denote $U_1:=\sum_{(i,j)\in L_n}\left(N_{n,ij}(t)-\IE(N_{n,ij}(t)\Ind((i,j)\in I_1)\right)$ and $U_2$ is defined analogously for $I_2$. Note that by the symmetry of the network and the choice of the $\Delta$-partition the conditional distribution of $U_2$ given $C_n$ is actually the same for all realisations of $C_n$. As a consequence $\IP(U_2\in S_2|C_n=C_0)=\IP(U_2\in S_2)$ for all adjacency matrices $C_0$ and all sets $S_2\subseteq\IR$. In consideration of this, we can find the mixing coefficient $\beta(U_1,U_2)$ as the supremum over all partitions $(S_{1,a}),(S_{2,b})$ of $\IR$ of (cf. \citet{DDLLLP07})
\begin{align}
&\frac{1}{2}\sum_{a,b}\left|\IP(U_1\in S_{1,a},U_2\in S_{2,b})-\IP(U_1\in S_{1,a})\IP(U_2\in S_{2,b})\right| \label{eq:eaim} \\
\leq&\sum_{C_0}\IP(C_n=C_0)\frac{1}{2}\sum_{a,b}\big|\IP(U_1\in S_{1,a},U_2\in S_{2,b}|C_n=C_0) \nonumber \\
&\quad-\IP(U_1\in S_{1,a}|C_n=C_0)\IP(U_2\in S_{2,b}|C_n=C_0)\big|, \nonumber
\end{align}
where $\sum_{C_0}$ is the sum over all adjacency matrices. On $C_n=C_0$, the random variables $U_1$ and $U_2$ are deterministic functions of $D_1:=(A_{n,ij})_{(i,j)\in I_1}$ and $D_2:=(A_{n,ij})_{(i,j)\in I_2}$, respectively. Thus, by Pinsker's Inequality (e.g., Lemma 2.5 in \citet{T09})
\begin{align*}
&\frac{1}{2}\sum_{a,b}\big|\IP(U_1\in S_{1,a},U_2\in S_{2,b}|C_n=C_0)-\IP(U_1\in S_{1,a}|C_n=C_0)\IP(U_2\in S_{2,b}|C_n=C_0)\big| \\
\leq& \sqrt{\frac{1}{2}KL((D_1,D_2),(\tilde{D}_1,\tilde{D}_2)|C_n=C_0)},
\end{align*}
where $KL(\cdot,\cdot|C_n=C_0)$ denotes the Kullback-Leibler divergence (conditionally on $C_n=C_0$) and $(\tilde{D_1},\tilde{D}_2)$ are independent with the same marginal distributions as $(D_1,D_2)$. It follows from the properties of the normal-distribution that an exponential bound on $\textrm{Cov}(A_x(t),A_y(t)|C_n=C_0)$ implies a similar exponential bound on the Kullback-Leibler divergence and thus on \eqref{eq:eaim}. Details are given in Appendix \ref{subsec:addexp}. We prove now an exponential bound on the covariances for $x\neq y$.

Note that all eigenvalues of $\alpha_0 \tilde{C}$ can be bounded in absolute value by $6\alpha_0<1$ (since every edge has exactly six neighbours). Hence, $A(t)=(I-\alpha_0\tilde{C})^{-1}\epsilon(t)$ and by the Neumann series representation
$$A_x(t)=\sum_{z=1}^{r_n}\delta_z(x)\epsilon_z(t)\textrm{ for } \delta_z(x)=\sum_{k=0}^{\infty}\alpha_0^k\left(\tilde{C}^k\right)_{x,z}.$$
Thus, conditionally on $C_n$, all $A_x(t)$ are normally distributed. Recall that $\left(\tilde{C}^k\right)_{z_1,z_2}$ gives the number of paths from $\phi(z_1)$ to $\phi(z_2)$ of exactly length $k$. Hence, for all pairs $\phi(z)\in L_n$ we must have $\left(\tilde{C}^k\right)_{x,z}\left(\tilde{C}^r\right)_{y,z}=0$ whenever $r+k< d(\phi(x),\phi(y))$ because otherwise there would be a path of length shorter than $d(\phi(x),\phi(y))$ which connects $\phi(x)$ and $\phi(y)$ via $\phi(z)$. Moreover, $\left(\tilde{C}^k_{z_1,z_2}\right)\leq 6^k$ for all $z_1,z_2\in\{1,...,r_n\}$. Therefore we obtain by symmetry of $\tilde{C}$ that there is a constant $c^*>0$ (which depends only on $\alpha_0$) such that
\begin{align}
\gamma_{xy}:=&\textrm{Cov}(A_x(t),A_y(t))=\sum_{z=1}^{r_n}\delta_z(x)\delta_z(y) \nonumber \\
=&\sum_{k,r=0}^{\infty}\sum_{z=1}^{r_n}\Ind(k+r\geq d(\phi(x),\phi(y))\alpha_0^{k+r}\left(\tilde{C}^k\right)_{x,z}\left(\tilde{C}^r\right)_{y,z} \nonumber \\
\leq&\sum_{k,r=0}^{\infty}\Ind(k+r\geq d(\phi(x),\phi(y))\left(6\alpha_0\right)^{k+r}\leq c^* \sqrt{6\alpha_0}^{d(\phi(x),\phi(y))}. \label{eq:expdecay}
\end{align}
\begin{remark}
\label{rem:mge}
If $\IP(B\in S_2|C_n=C_0)$ depends on $C_0$, we can write a more general version of \eqref{eq:eaim} which requires two estimates: Firstly, the distribution of the sum over a single block may not depend too strongly on the specific network. In that sense, the main task of the $\Delta$-partition is to group pairs together such that similarly behaved blocks emerge. This is possibly also the case in the example from Section \ref{subsubsec:m-dep} if the $\Delta$-partition takes the original group structure into account. Once this holds, in a second step, it suffices to bound the \emph{conditional} mixing coefficients for all fixed network realisations.
\end{remark}

\subsection{Processes Indexed by Vertices}
\label{subsec:vertex_processes}
The dependence concepts in Sections \ref{subsec:m_dependence} and \ref{subsec:mixing_networks} have been introduced for processes $Z_{n,ij}$ which are indexed by pairs $(i,j)\in L_n$. The results also transfer to processes $(\tilde{Z}_{n,i})_{i\in V_n}$ indexed by vertices. The results and definitions from Sections \ref{subsec:m_dependence} and \ref{subsec:mixing_networks} can be obtained for this case by replacing all $Z_{n,ij}$ by $\tilde{Z}_{n,i}$, all indices $(i,j)$ of pairs of vertices by vertex indices $i$ and by replacing the set $L_n$ by $V_n$. Moreover, $r_n$ has to be adopted.

\section{Application}
\label{sec:model}
We apply the previously introduced dependence concepts to find the asymptotic null-distribution of an $L^2$-type test statistic in the following situation. We consider a structured interaction network process $(N_{n,ij},X_{n,ij},C_{n,ij})_{(i,j)\in L_n}$ (cf. Definition \ref{defin:struc_interaction_net}). In the measurability assumption in Section \ref{sec:notation} we consider a Cox-type link function $\lambda$ which depends on an unknown parameter function $\theta_0:[0,T]\to\Theta\subseteq\IR^q$ (recall that $q$ is the dimension of the covariate functions $X_{n,ij}$), i.e., the intensity functions of the counting processes $N_{n,ij}$ are given by
$$\lambda_{n,ij}(t)=C_{n,ij}(t)\exp\left(\theta_0(t)^TX_{n,ij}(t)\right).$$
Examples for choices of the covariate vector $X_{n,ij}$ can be found in \citet{B08}, \citet{PW13} and \citet{KMP19}. Our interest lies in testing the hypothesis
$$\textrm{H}_0:\,\theta_0\equiv\textrm{ const.}\quad\textrm{ vs. }\quad\textrm{H}_1:\,\theta_0\textrm{ is time varying}.$$
On $\textrm{H}_0$, we denote the value of the constant parameter function also by $\theta_0$. For setting up a test statistic, we compare a non-parametric estimator of $\theta_0(t)$ with a parametric estimator which assumes that $\theta_0(t)$ is constant. As non-parametric estimator we use the local maximum likelihood estimator $\hat{\theta}_n(t_0):=\argmin{\theta\in\Theta}\,\ell_n(\theta;t_0)$ as in \citet{KMP19} where $\ell_n(\theta;t_0)$ is the localized-likelihood which is given by
\begin{equation}
\label{eq:local_likelihood}
\begin{array}{ll}
\ell_n(\theta;t_0):=&\underset{(i,j)\in L_n}{\sum}\Big(\int_0^TK_{h,t_0}(t)\theta^TX_{n,ij}(t)dN_{n,ij}(t) \\
&\quad-\int_0^TK_{h,t_0}(t)C_{n,ij}(t)\exp(\theta^TX_{n,ij}(t))dt\Big)
\end{array},
\end{equation}
where $K_{h,t_0}(t):=\frac{1}{h}K\left(\frac{t-t_0}{h}\right)$ is a kernel with kernel function $K$ and bandwidth $h>0$. Note that when removing the kernel $K_{h,t_0}$ in \eqref{eq:local_likelihood} we end up with the \emph{regular} likelihood $\ell_n(\theta)$ for the case when $\theta_0$ is a constant (cf. \citet{ABGK93}). Denote finally by $\bar{\theta}_n$ a parametric estimator for $\theta_0$ which assumes that the parameter function is constant (e.g. the maximum-likelihood estimator $\bar{\theta}_n:=\argmin{\theta\in\Theta}\,\ell_n(\theta)$). Similar as in \citet{HM93} we compare the non-parametric and parametric estimator above by means of the following test statistic
$$T_n:=\int_0^T\left\|\hat{\theta}_n(t_0)-\bar{\theta}_n\right\|^2\bar{p}_n(t_0)w(t_0)dt_0,$$
where $w$ is a non-negative weight function with $\textrm{supp}\, w\subseteq[\delta,T-\delta]$ for $\delta>0$  and $\bar{p}_n(t_0):=\int_0^TK_{h,t_0}(s)p_n(s)ds$ is the smoothed version of $p_n(t)=\IP(C_{n,ij}(t)=1)$. In contrast to \citet{HM93}, we know in advance that we test for a constant function. Therefore we can directly compare the parametric and non-parametric estimate and we do not require additional smoothing. For the statement of the following theorem define (note that under the following Assumption (A3, 1) the right hand side below does not depend on $(i,j)$)
\begin{equation}
\label{eq:def_sigma}
\Sigma(t,\theta):=-\IE\left(X_{n,ij}(t)X_{n,ij}(t)^T\exp\left(\theta^TX_{n,ij}(t)\right)\Big|C_{n,ij}(t)=1\right)
\end{equation}
with the abbreviation (on $H_0$) $\Sigma_t:=\Sigma(t,\theta_0)$. The following theorem gives the asymptotic distribution of the test statistic on the hypothesis $H_0$. The proof is given in Section \ref{subsec:proof_test} in the Appendix.
\begin{theorem}
\label{thm:test_asymptotics}
Under the Assumptions stated in the remainder of this section, on $\textrm{H}_0$
$$r_nh^{\frac{1}{2}}T_n-h^{-\frac{1}{2}}A_n\overset{d}{\to} N(0,B), \,\,n\to\infty,$$
\begin{align*}
\textrm{where }\quad A_n&:=\frac{1}{r_n}\sum_{(i,j)\in L_n}\int_0^TX_{n,ij}(s)^T\int_0^ThK_{h,t}(s)^2\Sigma_t^{-2}\frac{w(t)}{\bar{p}_n(t)}dtX_{n,ij}(s)dN_{n,ij}(s), \\
B&:=4K^{(4)}\int_0^T\textrm{trace}\left(\Sigma_t^{-2}\right)w^2(t)dt, \quad K^{(4)}:=\int_0^2\left(\int_{-1}^1K(v)K(u+v)dv\right)^2du.
\end{align*}
Note that $A_n$ can be approximated by using a plug in estimator for $\Sigma$ and $B$ can be approximated by Lemma \ref{lem_ass:var} in the Appendix.
\end{theorem}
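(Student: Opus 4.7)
Under $H_0$, the first-order condition $\nabla_\theta\ell_n(\hat\theta_n(t_0);t_0)=0$ and a stochastic Taylor expansion around the constant $\theta_0$ yield
$$\hat\theta_n(t_0)-\theta_0 \;=\; \bigl(r_n\bar p_n(t_0)\Sigma_{t_0}\bigr)^{-1}\sum_{(i,j)\in L_n}\int_0^T K_{h,t_0}(t)X_{n,ij}(t)dM_{n,ij}(t)\;+\;R_n(t_0),$$
where under $H_0$ we used $dN_{n,ij}=C_{n,ij}\exp(\theta_0^TX_{n,ij})dt+dM_{n,ij}$ so the score at the truth is a genuine martingale integral. Uniform control of $R_n(t_0)$ and of the localized Hessian on $\mathrm{supp}\,w\subseteq[\delta,T-\delta]$ will be obtained from the exponential inequality in Lemma \ref{lem:sufficient_mixing}. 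An analogous argument without kernel gives $\bar\theta_n-\theta_0=O_P(r_n^{-1/2})$, which is strictly faster than the local rate $O_P((r_nh)^{-1/2})$ of $\hat\theta_n(t_0)-\theta_0$. Expanding $\|\hat\theta_n(t_0)-\bar\theta_n\|^2=\|\hat\theta_n(t_0)-\theta_0\|^2-2(\hat\theta_n(t_0)-\theta_0)^T(\bar\theta_n-\theta_0)+\|\bar\theta_n-\theta_0\|^2$ and integrating against $\bar p_n w$ shows that only the first term survives multiplication by $r_nh^{1/2}$; the cross term vanishes because, after integration in $t_0$, the localized score averages out to a $O_P(r_n^{-1/2})$ quantity.

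\textbf{Double martingale integral expansion.} Substituting the linearization gives
$$r_nh^{1/2}T_n \;=\; \frac{h^{1/2}}{r_n}\sum_{(i,j),(k,l)\in L_n}\iint H_n\bigl((i,j),(k,l);t,s\bigr)dM_{n,ij}(t)dM_{n,kl}(s)+o_P(1),$$
with kernel
$$H_n\bigl((i,j),(k,l);t,s\bigr):=X_{n,ij}(t)^T\!\!\left(\int_0^T K_{h,t_0}(t)K_{h,t_0}(s)\frac{w(t_0)}{\bar p_n(t_0)}\Sigma_{t_0}^{-2}dt_0\right)\!\!X_{n,kl}(s).$$
I split this sum into the diagonal piece $(i,j)=(k,l)$ and the off-diagonal piece. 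On the diagonal, use $\bigl(\int f dM\bigr)^2=\int f^2dN+2\int_0^T\!\!\int_0^{t-}f(s)f(t)dM(s)dM(t)$; after exchanging the $t_0$-integration, the $dN$ part is exactly $h^{-1/2}A_n$, while the remaining within-pair Itô cross term is shown to be $o_P(1)$ by Proposition \ref{prop:single_non-pred} (its second moment is of order $h$).

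\textbf{The off-diagonal sum: the main obstacle.} What remains is to show that the off-diagonal double sum converges in distribution to $N(0,B)$. Because $H_n$ depends on $\Sigma_{t_0}$, $\bar p_n(t_0)$ and on the full $X$-processes, it is \emph{not} predictable, so I invoke Theorem \ref{thm:easy_non-pred}: construct leave-$m$-out partially-predictable surrogates $\widetilde H_n$ in the sense of Definition \ref{def:preliminary_partially_predictable}, and verify the five error conditions \eqref{eq:cond1}--\eqref{eq:cond5}. Once predictability is secured, the variance can be computed by Itô isometry together with the kernel identity
$$\int_0^T K_{h,t_0}(t)K_{h,t_0}(s)\frac{w(t_0)}{\bar p_n(t_0)}dt_0\;\sim\;\frac{w(t)}{h\,\bar p_n(t)}\int K(v)K\!\bigl(v+(t-s)/h\bigr)dv,$$
whose square, integrated in $(t,s)$ against $\bar p_n(t)\bar p_n(s)$ via the compensators, produces exactly $B=4K^{(4)}\int\mathrm{trace}(\Sigma_t^{-2})w^2(t)dt$.

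\textbf{Closing the CLT.} The hardest step is the CLT itself, since the summands are dependent across pairs and no direct martingale CLT applies. I would handle this by the $\beta$-mixing machinery: apply the $\Delta_n$-partition of Definition \ref{def:graph_partition} to group pairs into blocks whose inter-block dependence is governed by $\beta_t(\Delta_n)$ (which decays exponentially in the example of Section \ref{subsubsec:mixing}), couple each block sum to an independent copy using the $\beta$-mixing bound from Lemma \ref{lem:network_exp}, and conclude by a Lyapunov CLT on the independent surrogate. The delicate point is that the leave-$m$-out construction (for predictability) and the $\Delta_n$-blocking (for independence) must be carried out on the same indexing of pairs and with compatible scales $m,\Delta_n\to\infty$ slowly enough that $r_nh^{1/2}\cdot$(all approximation errors)$=o_P(1)$, while $h\to0$ keeps the local rate nontrivial — this simultaneous calibration in the specific Cox kernel $H_n$ is the core technical obstacle.
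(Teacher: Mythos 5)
Your route through the linearization, the elimination of $\bar\theta_n$, the diagonal/off-diagonal split of the double martingale integral, and the identification of $h^{-1/2}A_n$ with the diagonal $dN$-part coincides with the paper's proof. One small misattribution there: the within-pair Itô cross term $\int_0^T\int_0^{s-}f_{n,ij,ij}(s,t)dM_{n,ij}(t)dM_{n,ij}(s)$ is killed by a direct second-moment bound (Lemma \ref{lem:bounded_g}), not by Proposition \ref{prop:single_non-pred}; no surrogate is needed because the inner integral is already a predictable function of $s$.

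The genuine gap is in your final step. Your premise that ``no direct martingale CLT applies'' is false, and the replacement you propose does not obviously close. The kernel $f_{n,ij,kl}(s,t)$ of \eqref{eq:deff} is a product of the predictable processes $X_{n,ij}(s)$ and $X_{n,kl}(t)$ with the purely deterministic weight $\int_0^T hK_{h,t_0}(s)K_{h,t_0}(t)\Sigma_{t_0}^{-T}\Sigma_{t_0}^{-1}w(t_0)\bar p_n(t_0)^{-1}dt_0$ (note $\Sigma_{t_0}$ and $\bar p_n(t_0)$ are nonrandom), so $\tau_{n,ij,kl}(s)$ in \eqref{eq:def_tau} and hence $s\mapsto\sum_{(k,l)\neq(i,j)}\tau_{n,ij,kl}(s)$ are predictable, and the off-diagonal sum is a genuine square-integrable martingale in $T$. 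The paper therefore concludes by Rebolledo's theorem (Theorem \ref{thm:Rebolledo}), which reduces the CLT to two law-of-large-numbers statements: convergence of the predictable quadratic variation to $B$ (Lemma \ref{lem_ass:var}) and negligibility of the jump part (Lemma \ref{lem_ass:jump}). The non-predictability you attribute to $H_n$ in fact arises only one level deeper, inside the quadratic variation: expanding $\sum_{u}\tau_{n,u,u_1}(s)\tau_{n,u,u_2}(s)C_{n,u}(s)\lambda_{n,u}(s)$ and reordering the integrals produces integrands $\phi_{n,u_1u_2}(t,r)$ that involve covariate values at the future times $t+yh$, and it is there that momentary-$m$-dependence and Theorem \ref{thm:easy_non-pred} are invoked. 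Your alternative of proving the CLT itself by $\Delta_n$-blocking, coupling via Lemma \ref{lem:network_exp}, and a Lyapunov CLT faces a structural obstacle: the mixing framework of Definition \ref{defin:network-beta} is tied to a single time point $t$, with coefficients that are only small locally in time, whereas the statistic is a double stochastic integral over all of $[0,T]$. You would need uniform-in-time couplings, a Lyapunov condition for block sums of double stochastic integrals, and a bound showing the coupling error is $o_P(1)$ at the $r_nh^{1/2}$ scale; none of this is supplied, and it is not clear it can be. Recognizing the off-diagonal sum as a single martingale and applying Rebolledo is precisely what makes the problem tractable; the mixing and $m$-dependence machinery is needed only for the variance convergence, not for the distributional limit.
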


In the following we firstly state an assumption and then discuss its meaning and the intuition behind it. All assumptions are formulated on $\textrm{H}_0$, in particular, $\theta_0$ denotes the true value of the constant parameter function. We use the abbreviation
$$\lambda_{n,ij}(t,\theta):=C_{n,ij}(t)\exp\left(\theta^TX_{n,ij}(t)\right)$$
such that $\lambda_{n,ij}(t,\theta_0)$ denotes the true intensity function on $H_0$.

\textbf{(A1) Boundary Cut-Off} \\
{\em 
$w\colon[0,T]\to[0,\infty)$ is continuous, bounded and $\IT:=\textrm{supp}(w)\subseteq[\delta,T-\delta]$ for some $\delta>0$.
}

\textbf{(A2) Exhaustiveness of $\Theta$} \\
{\em There is an open and bounded set $\Theta\subseteq\IR^q$ (denote the bound by $\tau$) such that $\theta_0\in\Theta$.}

Assumption (A1) allows to ignore convergence issues of the kernel estimator at the boundary and Assumption (A2) allows us to simplify some notation. Both assumptions are not very restrictive.

\textbf{(A3) Modelling Assumptions}
\emph{\begin{enumerate}
\item \label{ass:cond_ind} The conditional distribution of $(X_{n,ij}(s),N_{n,ij}(s))$ given $C_{n,ij}(s)=1$ is independent of $n$ and $(i,j)\in L_n$.
\item \label{ass:const_estimator} For $\bar{p}_n:=\int_0^T\bar{p}_n(s)ds$ the estimator $\bar{\theta}_n$ fulfils $\left\|\bar{\theta}_n-\theta_0\right\|=O_P\left((r_n\bar{p}_n)^{-\frac{1}{2}}\right)$.
\item \label{ass:boundedness} The covariates $X_{n,ij}$ are almost surely bounded by a constant $\hat{K}$. Together with (A2) this implies that $\lambda_{n,ij}(t,\theta)$ is almost surely bounded by a constant $\Lambda$ for all $\theta\in\Theta$.
\end{enumerate}
}

Assumption (A3, \ref{ass:cond_ind}) is identical to Assumption (A1) in \citet{KMP19}. It is reflecting our intuition about the asymptotics of the network: For growing networks we assume that the number of actors to whom a fixed actor has active connections remains bounded over time. In our intuition, the distribution of the covariates and events on an active edge is therefore only influenced by this group which is not growing. In consideration of this, we regard Assumption (A3, \ref{ass:cond_ind}) not restrictive. (A3, \ref{ass:const_estimator}) holds for example for the maximum likelihood estimator as introduced in Chapter VI.1.2. in \citet{ABGK93}. However, for our theory here, it is not required that $\bar{\theta}_n$ is the maximum likelihood estimator. For (A3, \ref{ass:boundedness}) we note that examples of covariates are the number of common friends, age difference, number of interactions in the past and so on. These quantities are naturally bounded e.g. if we believe that interactions and maintaining friendships requires time. More generally, we expect the intensity functions to be bounded if the actors have to invest time in the interactions (e.g. sending a message takes some time even though the actual event of sending is instantaneous). Because in this case, at least on average, actors will not cast arbitrarily many events in a given time frame.

\textbf{(A4) Kernel and Bandwidth}
{\em
\begin{enumerate}
\item \label{ass:bw} For $p_n:=\inf_{t\in[0,T]}p_n(t)$ the bandwidth $h$ fulfils $\frac{\sqrt{r_np_n}\cdot h}{(\log r_n)^{\frac{3}{2}}}\to\infty$ and $h(\log r_n)^2\to0$.
\item \label{ass:kernel_hoelder} The kernel $K:\IR\to[0,\infty)$ is supported on $[-1,1]$ and is Hoelder continuous with exponent $\alpha_K$ and constant $H_K$, i.e., $|K(x)-K(y)|\leq H_K\cdot|x-y|^{\alpha_K}$. As a consequence it is bounded by a constant which we also denote by $K$.
\end{enumerate}
}

(A4, \ref{ass:bw}) holds for example when $h\approx (p_nr_n)^{-\frac{1}{5}}$ is the asymptotically optimal bandwidth choice in most one-dimensional regression contexts (e.g. \citet{T09}), so they are standard for this type of problem. The Hoelder continuity of the kernel in (A4, \ref{ass:kernel_hoelder}) is a mild assumption which avoids technical problems later. For most simple kernels like Epanechnikov or a triangular kernel it is true.

\textbf{(A5) Invertibility of Fisher-Information} \\
\emph{The matrix $\Sigma_t=\Sigma(t,\theta_0)$ (cf. \eqref{eq:def_sigma}) is invertible for all $t\in[0,T]$ and $t\mapsto\Sigma_t$ is continuously differentiable. Particularly, $D:=\sup_{t\in[0,T]}\left\|\partial_t\Sigma_t\right\|<\infty$ and $t\mapsto\Sigma_t$ is uniformly continuous on $[0,T]$.
}

In (A5) we assume that the Fisher Information is invertible. This is a classical assumption. The assumption that $t\mapsto\Sigma_t$ is smooth reflects our believe that the behaviour of the network is also changing smoothly over time. Note that $\Sigma_t$ is a conditional expectation conditional on $C_{n,ij}(t)=1$, i.e., changes in the network itself (appearance or disappearance of edges) do not interfere with the smoothness of $\Sigma_t$.

\textbf{(A6) Behaviour of $p_n(t)$}\\
\emph{The quotient $\frac{\max_{s\in[0,T]}p_n(s)}{\min_{s\in[0,T]}p_n(s)}$ is bounded in $n\in\IN$ and the function $p_n(t)$ is Hoelder continuous with fixed exponent $\alpha_c$ but the constant $H_{n,c}$ may vary like a power of $n$.}

In this assumption we require that $p_n(t)$ lies for a given $n$ always on the same scale. The convergence rate of the non-parametric estimator at a given point in time $t$ depends on $r_np_n(t)$. Hence, we actually assume here that the non-parametric estimator has the same rate at all points in time. Note, however, that $p_n=\min_{s\in[0,T]} p_n(s)\to0$ is still allowed.

Before we can present the assumptions on the weak dependence structure, we introduce the concept of hubs. Informally speaking, a hub is a pair $(i,j)$ which is close to many other pairs.
\begin{definition}
\label{def:hubs}
Let $m>0$, $F\in\IN$ and $[a,b]\subseteq[0,T]$. For a subset of pairs $A\subseteq L_n$ we let
$$K_{m}^A(a,b):=\sup_{(k,l)\in A}\sum_{(i,j)\in L_n}\Ind(d_a^n((i,j),(k,l))< m)\cdot\sup_{u\in [a,b]}C_{n,ij}(u)$$
be the maximal number of active edges being close to pairs in $A$. A pair $(i,j)\in L_n$ is called a \emph{hub} on $[a,b]$ if $K_m^{(i,j)}(a,b)\geq F$.

Consider a collection $[a_t,b_t]\subseteq[0,T]$ for $t\in[0,T]$. Every random variable $H_{UB}^A\in\{0,1\}$ with
$$H_{UB}^A\geq\sup_{t\in[0,T]}\Ind\left(K_m^A(a_t,b_t)\geq F\right)$$
is called \emph{hub-ability} of the set $A$. By $N_{UB}:=\sum_{(i,j)\in L_n}H_{UB}^{ij}$ we denote an upper bound on the number of possible hubs in the networks $G_{n,t}$.
\end{definition}
The definitions of $H_{UB}^A$ and $N_{UB}$ depend on the choice of $([a_t,b_t])_{t\in[0,T]}$. In order to avoid notation clutter, we do not indicate this in the notation. Note that $K_m^{L_n}(a,b)$ denotes the size of the largest hub on $[a,b]$. We think about hubs in the following way: Consider a social media setting where every edge represents the connection between two people. In the works \citet{GWH07,HRW08} it is argued that in social media most of the \emph{friendships} between users are actually inactive in the sense that they do not interchange messages. This underpins the very much believable idea that every actor has only close contact to a bounded number of people. Having close contact means in our formulas that their distance is less than $m$. That means that most people interact with not more than, say $F$ people, regardless of the size of the network. Thus, if one edge exceeds the threshold of $F$, we call it a hub. In the following assumptions (H1) and (H2) we have to balance the size and frequency of hubs. This is necessary because if there was one pair which strongly influences the entire network, inference would be impossible.

\textbf{(H1) Hub Predictability}\\
\emph{For some $m\in\IN$ and for $a_t:=t-4h, b_t:=t+2h$, the random variables $K_m^{L_n}:=\sup_{t\in[0,T]}K_m^{L_n}(a_t,b_t)$ and $H_{UB}^{ij}$  are measurable with respect to $\mathcal{F}_0^n$. As a consequence also $N_{UB}$ is measurable with respect to $\mathcal{F}_0^n$ as well as $\mathcal{C}_{n,ij}:=F+3H_{UB}^{ij}\left(K_m^{L_n}\right)^2$.}

In this assumption we require that the fact whether a pair of vertices has the potential to become a hub is determined in the beginning of the observation period. Note that this does not require that every potential hub is a hub from the beginning. A pair can be close to few others in the beginning and then become a hub later. In addition, the maximal size of the hubs over time is assumed to be determined in the beginning as well (however it might be unknown). This latter assumption might be relaxed to an exponential growth condition.

\textbf{(H2) Hub size restriction}\\
\emph{Let $m\in\IN$ be as in (H1). The frequency of hubs is restricted to the following constraints:
\begin{align}
&\sup_{n\in\IN}\sup_{(i,j),(k,l)\in L_n}\sup_{t\in[0,T]}\IE\left(K_m^{L_n}H_{UB}^{ij,kl}\Big|\sup_{r\in[t-2h,t]}C_{n,kl}(r)C_{n,ij}(t)=1\right)<\infty \label{eq:HubS1}, \\
&\sup_{n\in\IN}\sup_{(i,j),(k,l)\in L_n}\sup_{t\in[0,T]}\sup_{r:|t-r|<2h}\IE\left(\left(K_m^{L_n}\right)^4H_{UB}^{ij}H_{UB}^{kl}\Big|C_{n,ij}(t)C_{n,kl}(r)=1\right)<\infty, \label{eq:HubS7} \\
&\int_0^T\IE\left(\int_{t-2h}^{t-}K_m^{L_n}H_{UB}^{ij}d|M_{n,ij}|(r)\Big|C_{n,ij}(t)=1\right)dt=O(1). \label{eq:AD9}
\end{align}}

The following assumptions refer to the dependence types we reviewed and introduced in Sections \ref{subsec:asymptotic_uncorrelation}-\ref{subsec:mixing_networks}. For a discussion of them we refer to the respective section.

\textbf{(D1) Momentary-$m$-Dependence}\\
\emph{Let $m>0$ be as in (H1). The processes are momentarily-$m$-dependent in the sense of Definition \ref{defin:m-dependence}. Moreover, the conditions \eqref{eq:cond1}-\eqref{eq:cond5} of Theorem \ref{thm:easy_non-pred} are fulfilled for
\begin{align*}
&\tilde{\phi}^J_{n,u_1u_2}(t,r) \\
:=&\frac{1}{r_n}\int_{0}^{\frac{r-t}{h}+2} \sum_{u\in L_n\setminus\{u_1,u_2\}}X_{n,u}(t+yh)^T\tilde{f}_{n}(t+yh,t)X_{n,u_1}(t)X_{n,u_2}(r)^T\tilde{f}_{n}(t+yh,r)^T \nonumber \\
&\,\,\,\,\,\times X_{n,u}(t+yh)C_{n,u}(t+yh)\lambda_{n,u}(t+yh,\theta_0)\Ind(d_{t-4h}^n(u,J)\geq m)dy,
\end{align*}
where $u_1,u_2\in L_n$, $J\subseteq L_n$ ($d_{t}^n(u,\emptyset)=\infty$) and
$$\tilde{f}_{n}(s,t):=\int_{\frac{s-T+\delta}{h}}^{\frac{s-\delta}{h}}K(y)K\left(\frac{t-s}{h}+y\right)\Sigma^{-T}_{s-yh}\Sigma_{s-yh}^{-1}\frac{w(s-yh)}{\bar{p}_n(s-yh)}dy$$
and $\phi_{n,u_1u_2}(t,r):=\tilde{\phi}_{n,u_1u_2}^{\emptyset}(t,r)$.
}

Proving the conditions \eqref{eq:cond1}-\eqref{eq:cond5} is very tedious. Therefore, we assume here that they hold and provide in Appendix \ref{subsec:fassumpD1} a list of technical but easy to believe assumptions under which they can be proven.

\textbf{(D2) Asymptotic Uncorrelation}\\
\emph{It holds that
\begin{align}
\sup_{t\in[0,T]}\frac{1}{r_n^2}\underset{(i,j)\neq (k,l)}{\sum_{(i,j),(k,l)\in L_n}}\frac{\IP(\sup_{r\in[t-2h,t]}C_{n,kl}(r)C_{n,ij}(t)=1)}{p_n(t)^2}=&O(1), \label{CA:AU1}  \\
\sup_{t,r\in[0,T]}\frac{1}{r_n^2}\sum_{(i,j),(k,l)\in L_n}\frac{\IP(C_{n,kl}(r)C_{n,ij}(t)=1)}{p_n(t)p_n(r)}=&O(1). \label{CA:AU2}
\end{align}}

\textbf{(D3) Mixing} \\
\emph{For any $a>0$, $n\in\IN$ and $t\in[0,T]$ there is a $\Delta_n$-partition with $\Delta_n=a\log n$ and $\mathcal{K}$ many types such that for all $(i,j)\in L_n$
$$\sum_{k=1}^{\mathcal{K}}\sum_{m\in\IN}I_{n,ij}^{k,m,t}(\Delta_n)\geq\sup_{r\in[t-2h,t+h]}C_{n,ij}(r)$$
and $I_{n,ij}^{k,m,t}(\Delta_n)$ is measurable with respect to $\mathcal{F}^n_{t-h}$. Define
$$E_{k,m}^{n,t}:=\sum_{(i,j)\in L_n}\int_{-\infty}^{\infty}L_{h,t}(s)\IE\left(I_{n,ij}^{k,m,t}(\Delta_n)C_{n,ij}(s)\right)ds\textrm{ and }S_{k,t}:=\max_{m}\sum_{(i,j)\in L_n}I_{n,ij}^{k,m,t}(\Delta_n),$$
where $L$ is either the kernel $K$ or $\tilde{K}(u)=\frac{1}{2}\Ind(u\in[-2,0])$ and $L_{h,t}$ is defined analogously to $K_{h,t}$. Suppose that for $p_n^*:=\sup_{t\in[0,T]}p_n(t)$ there is $c_3>0$ such that for
$$\Gamma_n^{t}:=\Ind\left(\forall k\in\{1,...,\mathcal{K}\}:\frac{S_{k,t}^2\cdot\log r_np_n^*}{r_np_n^*}\leq c_3,\, S_{k,t}\sqrt{h}\geq 1\right)$$
it holds that $\sup_{t\in[0,T]}\IP\left(\Gamma_n^t=0\right)$ vanishes exponentially fast. Also we suppose that there is a constant $c_2>0$ such that for all $t\in[0,T]$ and either choice of $L$
\begin{equation}
\label{eq:gt}
\frac{1}{r_np_n^*}\sum_{m}E_{k,m}^{n,t}\geq c_2.
\end{equation}
Let $\beta_t$ denote the mixing coefficients as in Definition \ref{defin:network-beta} of
\begin{align*}
Y_{n,ij}:=&\left(\mathcal{C}_{n,ij},\left(N_{n,ij}(r),X_{n,ij}(r),C_{n,ij}(r)\right)_{r\in[t-2h,t+h]}\right)\sup_{r\in[t-2h,t+h]}C_{n,ij}(r).
\end{align*}
We suppose that $\beta_t(\Delta)\leq \alpha_1\exp(-\alpha_2 \Delta)$ for some $\alpha_1,\alpha_2>0$. Let
\begin{align*}
q_n^L(t):=&\int_{-\infty}^{\infty}L_{h,t}(s)\IE(I_{n,ij}^{k,m}|C_{n,ij}(s)=1)ds, \\
H_{n,ij}(s,\theta_0):=&-C_{n,ij}(s)X_{n,ij}^{(r_1)}(s)X_{n,ij}^{(r_2)}(s)\exp(\theta_0^TX_{n,ij}(s))
\end{align*}
for any choice $r_1,r_2\in\{1,...,q\}$. Consider for each $t\in[\delta,T-\delta]$ and each $\theta\in\Theta$ either
\begin{align}
Y_{n,ij}&:=-\left(H_{n,ij}(t,\theta_0)-\IE\left(H_{n,ij}(t,\theta_0)|\Gamma_n^t=1\right)\right)\Gamma_n^t\textrm{ or} \label{eq:UUM1} \\
Y_{n,ij}&:=\int_0^TK_{h,t_0}(s)\left(H_{n,ij}(s,\theta)-\IE\left(H_{n,ij}(s,\theta)|\Gamma_n^t=1\right)\right)ds\cdot \Gamma_n^t \textrm{ or} \label{eq:UUM2} \\
Y_{n,ij}&:=\int_0^TK_{h,t}(s)\left(C_{n,ij}(s)-\IE(C_{n,ij}(s)|\Gamma_n^t=1)\right)ds\cdot\Gamma_n^t \textrm{ or} \label{eq:UUM3} \\
Y_{n,ij}&:=\frac{1}{H_n}\int_{-\infty}^{\infty}\tilde{K}_{h,t}\left(\mathcal{C}_{n,ij}\lambda_{n,ij}(r)-\IE\left(\mathcal{C}_{n,ij}\lambda_{n,ij}(r)|\Gamma_n^t=1\right)\right)dr\cdot \Gamma_n^t. \label{eq:UUM4}
\end{align}
Suppose that for either choice, there is $C>0$ such that for pairwise different vertices $i,j,k,l\in V_n$, all $n\in\IN$ and all $t\in[0,T]$ it holds that (use $L=K$ in the definition of $q_n^L$ for \eqref{eq:UUM1}-\eqref{eq:UUM3} and $L=\tilde{K}$ for \eqref{eq:UUM4})
\begin{equation*}
\begin{array}{ll}
\frac{1}{p_nq^L_n(t)}\textrm{Var}\left(Y_{n,ij}I_{n,ij}^{k,m,t}\right)&\leq C, \\
\frac{\sqrt{r_n}}{p_nq^L_n(t)}\textrm{Cov}\left(Y_{n,ij}I_{n,ij}^{k,m,t},Y_{n,jk}I_{n,jk}^{k,m,t}\right)&\leq C, \\
\frac{r_n}{p_nq^L_n(t)}\textrm{Cov}\left(Y_{n,ij}I_{n,ij}^{k,m,t},Y_{n,kl}I_{n,kl}^{k,m,t}\right)&\leq C.
\end{array}
\end{equation*}
}

The assumptions have been mostly discussed in Sections \ref{subsec:asymptotic_uncorrelation}-\ref{subsec:mixing_networks}. However, we would like to comment on $\Gamma_n^t$ and the measurability in Assumption (D3). The way $\Gamma_n^t$ is used ensures that the mixing property is only required if the partitioning of the network is reasonable. However, we also assume that the probability that the partitioning is reasonable is large. The inequality in \eqref{eq:gt} means that we assume that the percentage of the edges which are on average contained in the blocks of type $k$ is never negligible, i.e., that no block type is obsolete, a plausible assumption. We also tacitly assume that the number of block types $\Kappa$ is the same for all time points and does not change with $n$. This assumption reflects the idea that the network geometry is staying the same while the network size is increasing. The measurability assumption is required because of Lemma \ref{lem:exponential_inequality} in the Appendix. In the proof of the lemma we see that measurability is essential because we have to apply martingale results. In practice this means that the $\sigma$-field $\mathcal{F}_{t-h}^n$ contains the information which at the time $t-h$ inactive pairs $(i,j)$ (i.e., $C_{n,ij}(t-h)=0$) will possibly be active in the interval $[t-h,t+h]$ (i.e., $\sup_{r\in[t-h,t+h]}C_{n,ij}(r)=1$). Since there is $\geq$ in the condition in the beginning of (D3) the information is not required exactly: It is no problem if the partitioning contains a few pairs too many. When adding this information to the filtration we assume that the intensity process remains unaffected. This is plausible because we only add information about the future connectivity (not activity) of pairs which are currently known to be inactive (so they are known to not cast events among each other currently regardless of their future behaviour).

Denote for the next assumption
\begin{align*}
A_n(t):=&\sum_{(i,j)\in L_n}\sup_{u\in[-2,2]}C_{n,ij}(t+uh).
\end{align*}
The following set of assumptions looks very clumsy and difficult to check. However, the reader is politely asked to read the following assumptions by keeping in mind that Assumptions (AD, \ref{eq:AD8},\ref{eq:AD10}) are moment conditions which merely require a polynomial growth (but do not specify the exponent). Moreover, in (AD, \ref{eq:AD10}) the integral is over an interval of length $2h$, so it is to be expected that this integral is small. \\
\textbf{(AD) Additional Dependence} \\
\emph{For any given $k_0\in\IN$ we can choose $\xi>0$ such that
\begin{align}
&\IP\left(\sup_{t\in[0,T]}\frac{A_n(t)}{r_np_n}>\xi\right)=o(n^{-k_0}). \label{eq:AD5}
\end{align}
For the next assumptions we use the notation $d|M_{n,ij}|(s):=dN_{n,ij}(s)+\lambda_{n,ij}(s)ds$. There is $\kappa>0$ such that for all $\xi>1$, $(i,j)\in L_n$ it holds that
\begin{align}
&\sup_{t\in[0,T]}\IE\left[\left(\sum_{(i,j)\in L_n}\int_{0}^T\frac{\left(F+K_m^{L_n}H_{UB}^{ij}\right)}{r_np_n}d|M_{n,ij}|(r)\right)^2\left(\frac{A_n(t)}{r_np_n}\right)^2\Bigg|\frac{A_n(t)}{r_np_n(t)}>\xi_1\right]=O\left(n^{\kappa})\right), \label{eq:AD8} \\
&\sup_{t\in[0,T]}\frac{1}{p_n(t)}\IE\left(\frac{A_n(t)}{r_np_n(t)}\int_{t-2h}^{t-}d|M_{n,ij}|(r)C_{n,ij}(t)\Bigg| \frac{A_n(t)}{r_np_n}>\xi_1\right)=O(n^{\kappa}). \label{eq:AD10}
\end{align}
Additionally suppose that
\begin{align}
&\IE\left(\left(\frac{A_n(t)}{r_np_n}\right)^3\right)=O(1). \label{eq:AD11}
\end{align}}
Assumption (AD, \ref{eq:AD5}) requires the network to concentrate around its expected size. It could be proven on the expense of other technical assumptions. In order to prove \eqref{eq:AD5} we need an exponential inequality for averages of counting processes. Such an inequality can be shown by employing $\beta$-mixing as in the proof of Lemma \ref{lem:network_exp}. However, instead of using the Bernstein inequality  (see e.g. Proposition \ref{prop:bernstein}) we need a tail bound valid for independent sums of counting processes with bounded intensity functions. For our purposes it is sufficient to use a tail bound induced by using Chebyshev's Inequality in its exponential form. The remaining assumptions (AD, \ref{eq:AD8})-(AD, \ref{eq:AD11}) are moment growth conditions. Overall they appear to be weak because we only require that they do not grow super-polynomially. The main reason why we need these assumptions is that the martingales cannot be computed under the conditional probability.

\textbf{(AC) Additional Continuity}\\
\emph{Recall the definition of $H_{n,ij}(s,\theta_0)$ in (D3). For every choice of entries $r_1,r_2\in\{1,...,q\}$ there is $k>0$ such that
\begin{align}
\underset{|t-t^*|<n^{-k}}{\sup_{t,t^*\in[0,T]}}\left|\frac{1}{r_n}\sum_{(i,j)\in L_n}\left(\frac{H_{n,ij}(t,\theta_0)}{p_n(t)}-\frac{H_{n,ij}(t^*,\theta_0)}{p_n(t^*)}\right)\right|&\overset{P}{\to}0, \label{eq:AC1} \\
\underset{|t-t^*|<n^{-k}}{\sup_{t,t^*\in[0,T]}}\left|\IE\left(\frac{H_{n,ij}(t,\theta_0)}{p_n(t)}-\frac{H_{n,ij}(t^*,\theta_0)}{p_n(t^*)}\right)\right|&\overset{P}{\to}0. \label{eq:AC2}
\end{align}}
Instead of posing specific assumptions on the covariate processes $X_{n,ij}$, we choose to state the continuity which is required in the proofs directly. Assumption (AC, \ref{eq:AC2}) could be replaced by assuming that the conditional expectation function $\IE(X_{n,ij}(t)|C_{n,ij}(t)=1)$ is uniformly continuous. For Assumption (AC, \ref{eq:AC1}) we could for example assume that the sample paths of the covariate processes are continuous and that the number of edges which change their status in a small time interval is very small.

\section{Bike Data Illustration}
\label{sec:bikes}
In this section we apply the test from Section \ref{sec:model} to bike-sharing data from Washington D.C. The data is publicly available at {\tt https://www.capitalbikeshare.com/system-data}. For this small application we use the same setting as in Section 3.2 in \citet{KMP19}. In particular we consider the $527$ bike stations as vertices. The bike stations $i$ and $j$ \emph{interact} whenever there is a bike ride from station $i$ to station $j$. In contrast to \citet{KMP19} we consider only bike rides which happened on May, 5th 2018 between 5am in the morning and 10pm in the evening. We consider a short time span because for a longer time span it would be obvious that the parameter function is not constant (e.g. on weekends and weekdays). Without additional detailed information about short term effects (e.g. street closures due to accidents or increased biking due to festivals), it is difficult to observe the true dynamic network. We therefore use a non-dynamic \emph{conservative} network as described in Section 2.1 \citet{KMP19}: We consider two bike stations $i$ and $j$ connected by an edge if they are regularly used by which we mean that there were at least ten bike rides from $i$ to $j$ in April 2018 (that is, more than two rides per week). The true (but unobserved) time varying network is supposed to contain at each time more edges than the conservative network. But the above methodology could also be applied if we had a dynamic conservative network. Note finally that this small data application serves just as an illustration and is not meant to be and in-depth analysis of bike data which would particularly include a sensitivity analysis of the threshold for the network construction.

As covariates, we choose for this application the geographical distance between the bike stations. Let $d_{i,j}$ denote the logarithm of the distance (in minutes of bike time) between bike station $i$ and $j$. Then, we consider the following covariate vector
$$X_{n,ij}=(1,\max(d_{i,j},0),\max(d_{i,j},0)^2)^T.$$

We suppose that the weak dependence concepts which we introduced in Sections \ref{subsec:asymptotic_uncorrelation}-\ref{subsec:mixing_networks} are applicable here because the bike stations have an underlying geographic structure and it is very plausible that bike connections which are geographically far away can be treated more or less independently. Therefore, we use a distance function which is related to the geographical distance (recall that we do not need the actual values of the distance function to apply the technique). To be more specific: If we observe the bike rides between two bike stations $i$ and $j$ during a short time-period $[t_0,t_1]$, we can make inference about other bike rides between other stations $k$ and $l$ in the same time period $[t_0,t_1]$ only if these stations lie geographically close to each other. If there is e.g. a sudden traffic incident which affects the bike rides between $i$ and $j$ it is likely that the bike rides between $k$ and $l$ are also affected, if they lie in the same area. However, if they lie in a different part of the city, there is no influence. Therefore, we assume that asymptotic uncorrelation and $\beta$-mixing are plausible assumptions. In order for the assumption of momentary-$m$-dependence to hold we need to assume that global events which effect the entire city, like big sport events, need to be included in the filtration as we condition on it. As a consequence special events should be included in the intensity function too. Since we restrict the data example to one day (May, 5th 2018) this is a plausible assumption too.

The bandwidth choice for the non-parametric estimator as defined above \eqref{eq:local_likelihood} is carried out in the same way as in \citet{KMP19} and for details about the procedure we refer to that paper: We compute firstly for different bandwidths $h$ a prediction of the bike rides per edge by using a locally-linear estimator with a one-sided kernel. The resulting prediction error is seen in Figure \ref{fig:bwchoice} for a discrete grid of choices of $h$ (we chose the grid for computational simplicity). It can be seen that the prediction error starts to flatten out roughly at $h=1$ and is minimal for $h=1.1$. So we take that bandwidth and transfer it to the case of a regular kernel estimator by dividing by $\rho\approx1.82$ (see \citet{KMP19} for details). Hence, the bandwidth we use is $h\approx0.604$. The non-parametric and parametric estimates in are shown in Figure \ref{fig:thetas}. In this scenario the centred and scaled test statistic yields a value of above $16$. At least asymptotically, we consider the centred and scaled test statistics to be $N(0,1)$ distributed if the underlying data generating process has indeed a constant parameter function. So from this point of view, we have provided evidence that the model with the time-varying parameter function fits the data better. When looking at Figure \ref{fig:thetas} this result is at least intuitively not surprising. If we focus on a shorter time period, e.g. 4pm to 8pm, the result is not as extreme. In Figure \ref{fig:zoomed_thetas} the corresponding estimators are shown. In this case the scaled and centred test statistic is about $-0.79$ which results in a p-value of about $0.43$ which is usually not regarded as significant.

\begin{figure}
\center
\includegraphics[width=0.5\textwidth]{./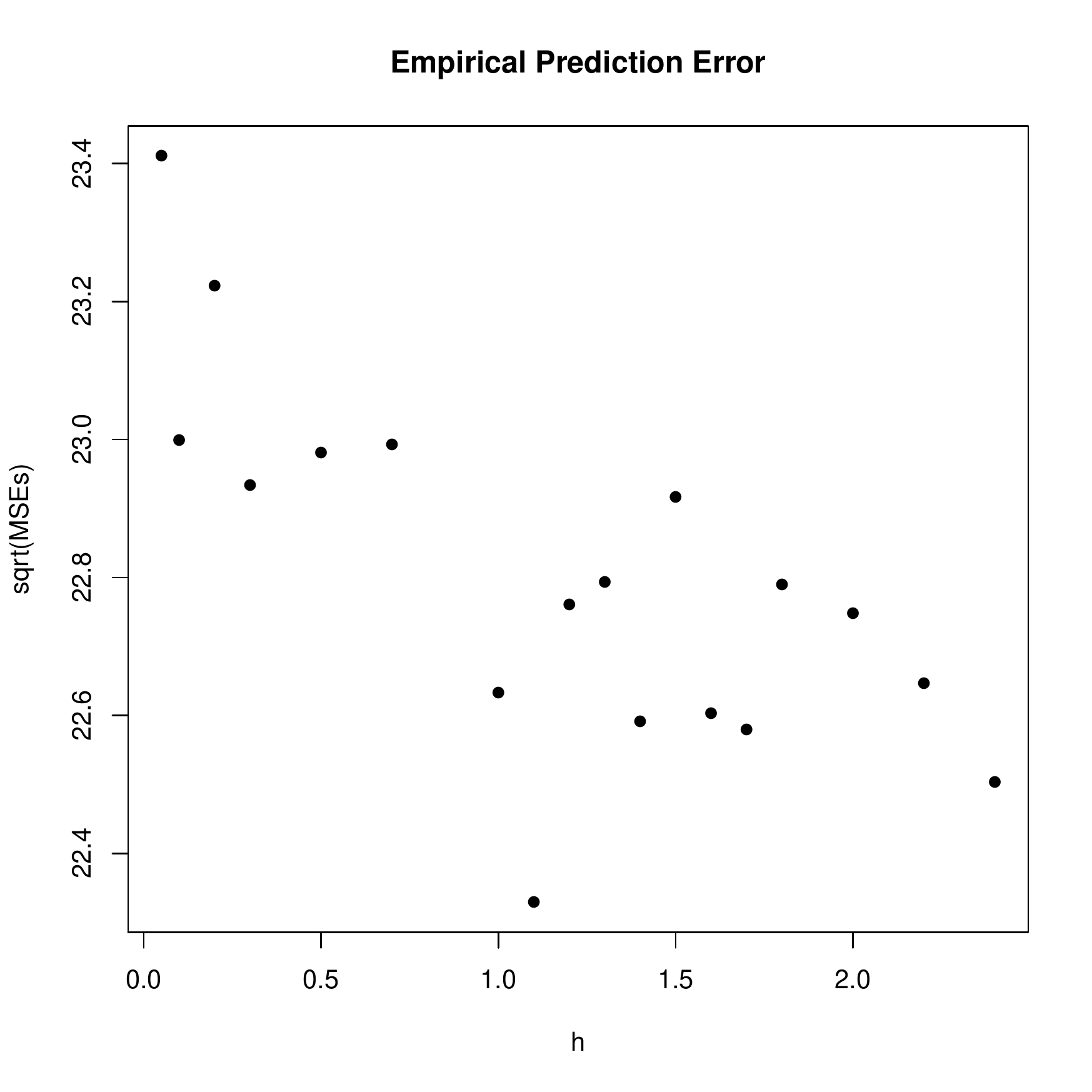}
\caption{Empirical Prediction Error for different bandwidths by using the estimate from a locally-linear estimator with a one-sided kernel.}
\label{fig:bwchoice}
\end{figure}

\begin{figure}
\begin{subfigure}{0.5\textwidth}
\includegraphics[width=\linewidth]{./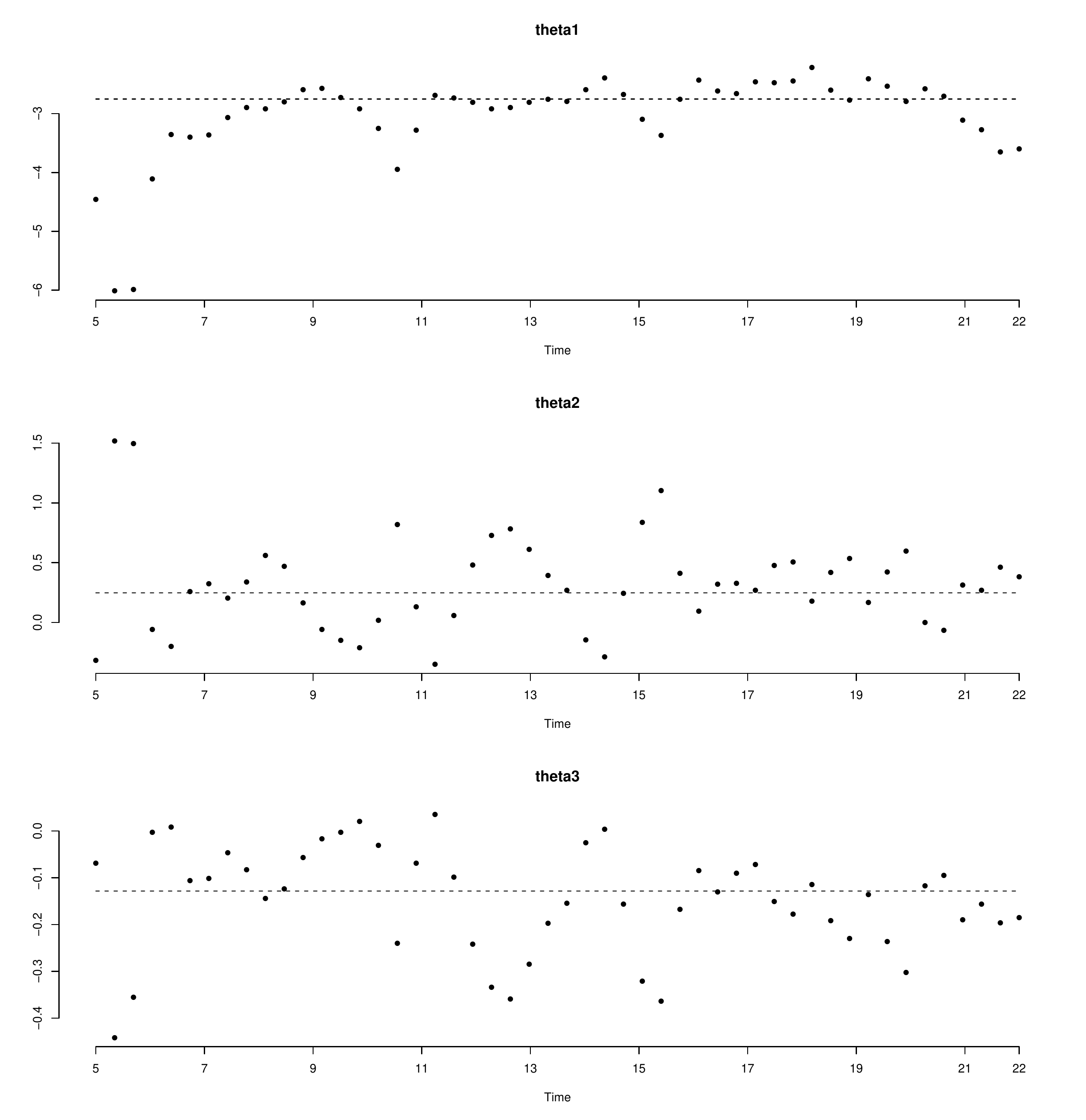}
\caption{Complete Day}
\label{fig:thetas}
\end{subfigure}%
\begin{subfigure}{0.5\textwidth}
\includegraphics[width=\linewidth]{./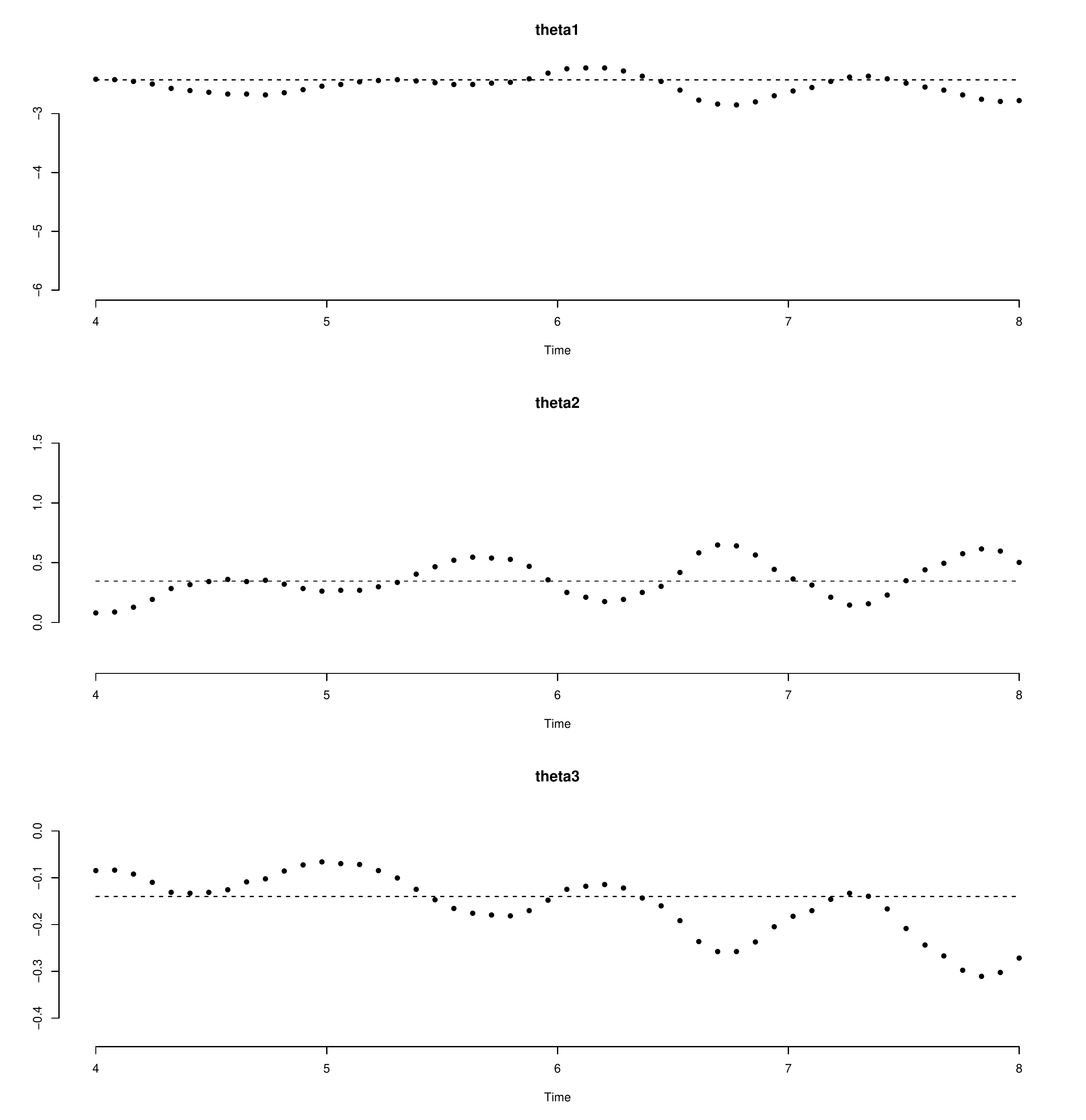}
\caption{Afternoon}
\label{fig:zoomed_thetas}
\end{subfigure}
\caption{The dots show evaluations of the non-parametric estimator for each covariate function, while the dotted lines refer to the estimator which assumes a constant parameter function.}
\end{figure}

\newpage
\section{Appendix}
\label{sec:appendix}
\subsection{Proofs of Section \ref{sec:main}}
\label{sec:proofs}
\begin{proof}[Proof of Theorem \ref{thm:easy_non-pred}]
The idea of the proof is to translate the convergence statement about $\phi_{n,u_1u_2}$ to statements about $\tilde{\phi}_{n,u_1u_2}^J$. This will be useful because the latter are partially predictable with respect to the short sighted filtration. Since we have certain processes which are martingales with respect to the short sighted filtration (cf. Lemma \ref{lem:martingale}) we can make use of martingale properties of the Itô Integral. For the first step, we see that the asymptotic behaviour of \eqref{eq:aim} is the same as the sum over the leave-$m$-out approximations, i.e.,
\begin{align}
&\eqref{eq:aim} \nonumber \\
=&\frac{1}{r_n}\underset{u_1\neq u_2}{\sum_{u_1,u_2\in L_n}}\int_0^T\int_{t-2\delta_n}^{t-}\phi_{n,u_1u_2}(t,r)-\tilde{\phi}_{n,u_1u_2}^{u_1u_2}(t,r)dM_{n,u_2}(r)dM_{n,u_1}(t) \label{eq:a1} \\
&+\frac{1}{r_n}\underset{u_1\neq u_2}{\sum_{u_1,u_2\in L_n}}\int_0^T\int_{t-2\delta_n}^{t-}\tilde{\phi}_{n,u_1u_2}^{u_1u_2}(t,r)dM_{n,u_2}(r)dM_{n,u_1}(t) \label{eq:a2}
\end{align}
and \eqref{eq:a1} converges to zero by \eqref{eq:cond1}. Hence, we only have to study \eqref{eq:a2}. $\tilde{\phi}_{n,u_1u_2}^{u_1u_2}(t,r)$ is partially-predictable with respect to the filtration $\mathcal{F}^{n,u_1u_2,m}_t$ and, by the assumption of Momentary $m$-Dependence (c.f. Definition \ref{defin:m-dependence} and Lemma \ref{lem:martingale}), $M_{n,ij}$ is a martingale with respect to $\mathcal{F}^{n,J,m}_t$ for all $J\subseteq L_n$ with $(i,j)\in J$. We will use this observation in order to prove that \eqref{eq:a2} converges to zero in probability by applying Markov's Inequality:
\begin{align}
&\IE(\eqref{eq:a2}^2) \nonumber \\
=&\frac{1}{r_n^2}\underset{u_1\neq u_2,u_3\neq u_4}{\sum_{u_1,u_2,u_3,u_4\in L_n}}\IE\Bigg[\int_0^T\int_{t-2\delta_n}^{t-}\tilde{\phi}_{n,u_1u_2}^{u_1u_2}(t,r)-\tilde{\phi}_{n,u_1u_2}^{u_1u_2u_3u_4}(t,r)dM_{n,u_2}(r)dM_{n,u_1}(t) \nonumber \\
&\quad\quad\times\int_0^T\int_{t-2\delta_n}^{t-}\tilde{\phi}_{n,u_3u_4}^{u_3u_4}(t,r)-\tilde{\phi}_{n,u_3u_4}^{u_1u_2u_3u_4}(t,r)dM_{n,u_4}(r)dM_{n,u_3}(t)\Bigg] \label{eq:Ia} \\
+&\frac{2}{r_n^2}\underset{u_1\neq u_2,u_3\neq u_4}{\sum_{u_1,u_2,u_3,u_4\in L_n}}\IE\Bigg[\int_0^T\int_{t-2\delta_n}^{t-}\tilde{\phi}_{n,u_1u_2}^{u_1u_2}(t,r)-\tilde{\phi}_{n,u_1u_2}^{u_1u_2u_3u_4}(t,r)dM_{n,u_2}(r)dM_{n,u_1}(t) \nonumber \\
&\quad\quad\quad\quad\quad\quad\times\int_0^T\int_{t-2\delta_n}^{t-}\tilde{\phi}_{n,u_3u_4}^{u_1u_2u_3u_4}(t,r)dM_{n,u_4}(r)dM_{n,u_3}(t)\Bigg] \label{eq:Ib} \\
+&\frac{1}{r_n^2}\underset{u_1\neq u_2,u_3\neq u_4}{\sum_{u_1,u_2,u_3,u_4\in L_n}}\IE\Bigg[\int_0^T\int_{t-2\delta_n}^{t-}\tilde{\phi}_{n,u_1u_2}^{u_1u_2u_3u_4}(t,r)dM_{n,u_2}(r)dM_{n,u_1}(t) \nonumber \\
&\quad\quad\quad\quad\quad\quad\times\int_0^T\int_{t-2\delta_n}^{t-}\tilde{\phi}_{n,u_3u_4}^{u_1u_2u_3u_4}(t,r)dM_{n,u_4}(r)dM_{n,u_3}(t)\Bigg] \label{eq:Ic}
\end{align}

We will treat the terms \eqref{eq:Ia}-\eqref{eq:Ic} separately. Note, that in contrast to \eqref{eq:aim}, all of the above expressions contain only the approximations with their predictability property. We will show in the following how this is useful.

\eqref{eq:Ia} converges to zero by \eqref{eq:cond2a}.

In order to see that \eqref{eq:Ib} converges to zero, we note firstly that the two stochastic integrals in \eqref{eq:Ib} (with respect to $M_{n,u_1}(t)$ and $M_{n,u_3}(t)$) are martingales with respect to the correct leave-$m$-out filtrations (namely $\mathcal{F}_t^{n,u_1,m}$ and $\mathcal{F}_t^{n,u_3,m}$, respectively). Although these two filtrations are in general not the same, we can make use of the fact that the leave-$m$-out filtrations allow future knowledge. Define furthermore for Lebesgue sets $A\subseteq \IR$
\begin{align*}
\bar{\Phi}_{n,u_1u_2}^{u_1u_2u_3u_4}(t,r)&:=\tilde{\phi}_{n,u_1u_2}^{u_1u_2}(t,r)-\tilde{\phi}_{n,u_1u_2}^{u_1u_2u_3u_4}(t,r) \\
I_{A}(u_1,u_2,u_3,u_4)&:=\int_{A\cap[0,T]}\int_{t-2\delta_n}^{t-}\bar{\Phi}_{n,u_1u_2}^{u_1u_2u_3u_4}(t,r)dM_{n,u_2}(r)dM_{n,u_1}(t), \\
J_A(u_1,u_2,u_3,u_4)&:=\int_{A\cap[0,T]}\int_{t-2\delta_n}^{t-}\tilde{\phi}_{n,u_3u_4}^{u_1u_2u_3u_4}(t,r)dM_{n,u_4}(r)dM_{n,u_3}(t).
\end{align*}
Note that $M_{n,u_3}$ and $M_{n,u_4}$ are adapted with respect to all leave-$m$-out filtrations. Since $\tilde{\phi}_{n,u_3u_4}^{u_1u_2u_3u_4}(t,r)$ is partially-predictable with respect to $\mathcal{F}_t^{n,u_1u_2u_3u_4,m}$, we get that
$$t\mapsto\int_{t-2j}^{t-}\tilde{\phi}_{n,u_3u_4}^{u_1u_2u_3u_4}(t,r)dM_{n,u_4}(r)$$
is predictable (cf. Definition \ref{def:preliminary_partially_predictable}) and as a consequence, $t\mapsto J_{[0,t)}(u_1,u_2,u_3,u_4)$ is predictable as well with respect to $\mathcal{F}_t^{n,u_1u_2u_3u_4,m}$.

With these definitions we have (if $\alpha>\beta$ we define $(\alpha,\beta]:=\emptyset$)
\begin{align}
&\eqref{eq:Ib} \nonumber \\
=&\frac{2}{r_n^2}\underset{u_1\neq u_2,u_3\neq u_4}{\sum_{u_1,u_2,u_3,u_4\in L_n}}\IE\Bigg[\int_0^T\int_{t-2\delta_n}^{t-}\bar{\Phi}_{n,u_1u_2}^{u_1u_2u_3u_4}(t,r)dM_{n,u_2}(r)\cdot J_{[t,t+2\delta_n]}(u_1,u_2,u_3,u_4)dM_{n,u_1}(t)\Bigg] \label{eq:case1} \\
&+\frac{2}{r_n^2}\underset{u_1\neq u_2,u_3\neq u_4}{\sum_{u_1,u_2,u_3,u_4\in L_n}}\IE\Bigg[\int_0^T\int_{t-2\delta_n}^{t-}\bar{\Phi}_{n,u_1u_2}^{u_1u_2u_3u_4}(t,r)dM_{n,u_2}(r)\cdot J_{[0,t)}(u_1,u_2,u_3,u_4)dM_{n,u_1}(t)\Bigg] \label{eq:case2} \\
&+\frac{2}{r_n^2}\underset{u_1\neq u_2,u_3\neq u_4}{\sum_{u_1,u_2,u_3,u_4\in L_n}}\IE\Bigg[\int_0^T\int_{t-2\delta_n}^{t-}\bar{\Phi}_{n,u_1u_2}^{u_1u_2u_3u_4}(t,r)dM_{n,u_2}(r)\cdot J_{(t+2\delta_n,T]}(u_1,u_2,u_3,u_4)dM_{n,u_1}(t)\Bigg] 
\label{eq:case3}
\end{align}
We show that this is $o(1)$ by considering the tree lines separately. Recall therefore that $F_{u_1}(t)=\{u_2\in L_n: d_t^n(u_2,u_1)\geq m\}$ is the set of pairs which are further away than $m$ from $u_1$ at time $t$.

For \eqref{eq:case1}, we prove firstly that for each $q\in[t-2\delta_n,t]$
$$J_{[t,t+2\delta_n]}(u_1,u_2,u_3,u_4)\Ind(u_3,u_4\in F_{u_1}(q))$$
is measurable with respect to $\mathcal{F}_{q}^{n,u_1,m}$. This follows from the following intermediate results:
\begin{enumerate}
\item The integrators $M_{n,u_3}$ and $M_{n,u_4}$ in $J_{[t,t+2\delta_n]}(u_1,u_2,u_3,u_4)$ are only considered up to time at most $t+2\delta_n$ and $\mathcal{F}_{q}^{n,u_1,m}$ contains information up to and including time $q+6\delta_n\geq t+4\delta_n$ for processes which are at time $q$ at least of distance $m$ to $u_1$.
\item We show that $\int_{a-2\delta_n}^{a-}\tilde{\phi}_{n,u_3u_4}^{u_1u_2u_3u_4}(a,r)dM_{n,u_2}(r)\Ind(u_3,u_4\in F_{u_1}(q))$ is measurable with respect to $\mathcal{F}_q^{n,u_1,m}$ for all $a\in[t,t+2\delta_n]$
\begin{enumerate}
\item $\tilde{\phi}_{n,u_3u_4}^{u_1u_2u_3u_4}(a,r)$ is partially-predictable with respect to $\tilde{\mathcal{F}}_{u_3u_4,a}^{n,u_1u_2u_3u_4,m}$ by assumption. In particular, it is measurable with respect to $\tilde{\mathcal{F}}_{u_3u_4,t+2\delta_n}^{n,u_1u_2u_3u_4,m}$ for all $r<a\leq t+2\delta_n$. Thus $\tilde{\phi}_{n,u_3u_4}^{u_1u_2u_3u_4}(a,r)\Ind(u_3,u_4\in F_{u_1}(q))$ requires two types of information: One on $X_{u_3}$ and $X_{u_4}$ up to time $t+2\delta_n$, and another type of information about the future (after $t+2\delta_n$) on all processes which are away from $u_1u_2u_3u_4$. Both are contained in $\mathcal{F}_q^{n,u_1,m}$ as we shall show in the following two steps.
\item The information about $X_{u_3}(\tau)$ and $X_{u_4}(\tau)$ for $\tau\leq t+2\delta_n$ is well included in $\mathcal{F}_q^{n,u_1,m}$ by the same arguments as in 1.
\item Let $s\leq t-2\delta_n$ and $r\leq s+6\delta_n$, then
$$[N_{n,ij}(r),X_{n,ij}(r),C_{n,ij}(r)]\cdot\Ind(d_s^n((i,j),\{u_1,u_2,u_3,u_4\})\geq m)$$
is measurable with respect to $\mathcal{F}_q^{n,u_1,m}$ because $s\leq t-2\delta_n\leq q$.
\end{enumerate}
\end{enumerate}
Together the above points imply that $J_{[t,t+2\delta_n]}(u_1,u_2,u_3,u_4)\Ind(u_3,u_4\in F_{u_1}(t-2\delta_n))$ is predictable with respect to $\mathcal{F}_{t}^{n,u_1,m}$. Moreover, $M_{n,u_1}$ is a martingale with respect to $\mathcal{F}_t^{n,u_1,m}$ by momentary $m$-dependence. Hence,

\begin{align}
&\eqref{eq:case1} \nonumber \\
=&\frac{2}{r_n^2}\underset{u_1\neq u_2,u_3\neq u_4}{\sum_{u_1,u_2,u_3,u_4\in L_n}}\IE\Bigg[\int_0^T\int_{t-2\delta_n}^{t-}\bar{\Phi}_{n,u_1u_2}^{u_1u_2u_3u_4}(t,r)dM_{n,u_2}(r)J_{[t,t+2\delta_n]}(u_1,u_2,u_3,u_4) \nonumber \\
&\quad\quad\times\Ind(\neg u_3,u_4\in F_{u_1}(t-2\delta_n))dM_{n,u_1}(t)\Bigg]. \nonumber
\end{align}
The last part is $o(1)$ by \eqref{eq:cond3}.

In \eqref{eq:case2}, we see that $J_{[0,t)}(u_1,u_2,u_3u_4)$ is predictable with respect to \\$\mathcal{F}_{t}^{n,u_1,m}\supseteq\mathcal{F}_t^{n,u_1u_2u_3u_4,m}$. Thus, we conclude by using that $M_{n,u_1}$ is a martingale with respect to $\mathcal{F}_{t}^{n,u_1,m}$ (with analogue arguments as in the first case): $\eqref{eq:case2}=0$.

For \eqref{eq:case3}, we note firstly that
\begin{align*}
&\int_0^T\int_{t-2\delta_n}^{t-}\bar{\Phi}_{n,u_1u_2}^{u_1u_2u_3u_4}(t,r)dM_{n,u_2}(r)\cdot J_{(t+2\delta_n,T]}(u_1,u_2,u_3,u_4)dM_{n,u_1}(t) \\
=&\int_0^T\int_{\xi-2\delta_n}^{\xi-}\tilde{\phi}_{n,u_3u_4}^{u_1u_2u_3u_4}(\xi,\rho)dM_{n,u_4}(\rho)\cdot I_{[0,\xi-2\delta_n)}(u_1,u_2,u_3,u_4)dM_{n,u_3}(\xi).
\end{align*}
Now, we can play a similar game: This time, $M_{n,u_3}$ is a martingale with respect to $\mathcal{F}_{\xi}^{n,u_3,m}$. Furthermore, $I_{[0,\xi-2\delta_n)}(u_1,u_2,u_3,u_4)$ requires knowledge of $M_{n,u_1}(\tau)$, $M_{n,u_2}(\tau)$, $X_{n,u_1}(\tau)$ and $X_{n,u_2}(\tau)$ for $\tau<\xi-2\delta_n$ which is included in $\mathcal{F}_{\xi}^{n,u_3,m}$ as well as knowledge of $[N_{n,ij}(r),X_{n,ij}(r),C_{n,ij}(r)]\cdot\Ind(d_s^n((i,j),\{u_1,u_2\})\geq m)$ for $s\leq\xi-6\delta_n$ and $r\leq s+6\delta_n$, i.e., $r\leq\xi$ which is again included in $\mathcal{F}_{\xi}^{n,u_3,m}$. Hence, $\xi\mapsto I_{[0,\xi-2\delta_n)}(u_1,u_2,u_3,u_4)$ is predictable with respect to $\mathcal{F}_{\xi}^{n,u_3,m}$. Hence, the integrand of \eqref{eq:case3} is a martingale and we obtain $\eqref{eq:case3}=0$. Thus, we have shown that $\eqref{eq:Ib}=o(1)$.

Finally, we consider \eqref{eq:Ic}. Therefore note firstly that $\tilde{\phi}_{n,u_1u_2}^{u_1u_2u_3u_4}(t,r)$ and $\tilde{\phi}_{n,u_3u_4}^{u_1u_2u_3u_4}(t,r)$ are both partially-predictable with respect to $\mathcal{F}_t^{n,u_1u_2u_3u_4,m}$. Moreover, $M_{n,u_1}$, $M_{n,u_2}$, $M_{n,u_3}$ and $M_{n,u_4}$ are all martingales with respect to $\mathcal{F}_t^{n,u_1u_2u_3u_4,m}$. Hence,
$$\int_{t-2\delta_n}^{t-}\tilde{\phi}_{n,u_1u_2}^{u_1u_2u_3u_4}(t,r)dM_{n,u_2}(r)$$
is also a predictable function in $t$ and
$$\int_0^T\int_{t-2\delta_n}^{t-}\tilde{\phi}_{n,u_1u_2}^{u_1u_2u_3u_4}(t,r)dM_{n,u_2}(r)dM_{n,u_1}(t)$$
is a martingale. The same holds when $M_{n,u_1}$ and $M_{n,u_2}$ are replaced by $M_{n,u_3}$ and $M_{n,u_4}$. Hence, for $u_1\neq u_3$
\begin{eqnarray*}
&&\IE\Bigg[\int_0^T\int_{t-2\delta_n}^{t-}\tilde{\phi}_{n,u_1u_2}^{u_1u_2u_3u_4}(t,r)dM_{n,u_2}(r)dM_{n,u_1}(t) \\
&&\quad\quad\quad\times\int_0^T\int_{t-2\delta_n}^{t-}\tilde{\phi}_{n,u_3u_4}^{u_1u_2u_3u_4}(t,r)dM_{n,u_4}(r)dM_{n,u_3}(t)\Bigg]=0.
\end{eqnarray*}
For $u_1=u_3$ we will apply firstly a martingale result to compute the covariance of the two stochastic integrals (first equality below), in the second equality below we employ a similar technique as in the computations for \eqref{eq:Ib}: Note that $C_{n,u_1}(t)\lambda_{n,u_1}(t)\Ind(u_2,u_4\in F_{u_1}(t-2\delta_n))$ is measurable with respect to $\mathcal{F}_{t-2\delta_n}^{n,u_2u_4,m}$, additionally $M_{n,u_2}$ and $M_{n,u_4}$ are martingales with respect to $\mathcal{F}_t^{n,u_2u_4,m}$. Hence,
\begin{align*}
&\IE\Bigg[\int_0^T\int_{t-2\delta_n}^{t-}\tilde{\phi}_{n,u_1u_2}^{u_1u_2u_4}(t,r)dM_{n,u_2}(r)dM_{n,u_1}(t)\int_0^T\int_{t-2\delta_n}^{t-}\tilde{\phi}_{n,u_1u_4}^{u_1u_2u_4}(t,r)dM_{n,u_4}(r)dM_{n,u_1}(t)\Bigg] \\
=&\int_0^T\IE\Bigg[\int_{t-2\delta_n}^{t-}\tilde{\phi}_{n,u_1u_2}^{u_1u_2u_4}(t,r)dM_{n,u_2}(r)\int_{t-2\delta_n}^{t-}\tilde{\phi}_{n,u_1u_4}^{u_1u_2u_4}(t,r')dM_{n,u_4}(r')C_{n,u_1}(t)\lambda_{n,u_1}(t)\Bigg]dt \\
=&\Ind(u_2=u_4)\int_0^T\int_{t-2\delta_n}^{t-}\IE\Bigg[\tilde{\phi}_{n,u_1u_2}^{u_1u_2}(t,r)^2C_{n,u_1}(t)\lambda_{n,u_1}(t)C_{n,u_2}(r)\lambda_{n,u_2}(r) \\
&\quad\quad\quad\times\Ind(u_2\in F_{u_1}(t-2\delta_n))]\Bigg]drdt \\
+&\int_0^T\IE\Bigg[\int_{t-2\delta_n}^{t-}\tilde{\phi}_{n,u_1u_2}^{u_1u_2u_4}(t,r)dM_{n,u_2}(r) \\
&\quad\quad\quad\times\int_{t-2\delta_n}^{t-}\tilde{\phi}_{n,u_1u_4}^{u_1u_2u_4}(t,r')dM_{n,u_4}(r')C_{n,u_1}(t)\lambda_{n,u_1}(t)\Ind(\neg u_2,u_4\in F_{u_1}(t-2\delta_n))\Bigg]dt 
\end{align*}
So we may rewrite
\begin{align*}
\eqref{eq:Ic}=&\frac{1}{r_n^2}\underset{u_1\neq u_2}{\sum_{u_1,u_2\in L_n}}\int_0^T\int_{t-2\delta_n}^{t-}\IE\Bigg[\tilde{\phi}_{n,u_1u_2}^{u_1u_2}(t,r)^2C_{n,u_1}(t)\lambda_{n,u_1}(t)C_{n,u_2}(r)\lambda_{n,u_2}(r) \\
&\quad\times\Ind(u_2\in F_{u_1}(t-2\delta_n))\Bigg]drdt \\
+&\frac{1}{r_n^2}\underset{u_1\neq u_2}{\sum_{u_1,u_2\in L_n}}\underset{u_4\neq u_2}{\sum_{u_4\in L_n}}\int_0^T\IE\Bigg[\int_{t-2\delta_n}^{t-}\tilde{\phi}_{n,u_1u_2}^{u_1u_2u_4}(t,r)dM_{n,u_2}(r) \\
&\quad\quad\quad\times\int_{t-2\delta_n}^{t-}\tilde{\phi}_{n,u_1u_4}^{u_1u_2u_4}(t,r')dM_{n,u_4}(r')C_{n,u_1}(t)\lambda_{n,u_1}(t)\Ind(\neg u_2,u_4\in F_{u_1}(t-2\delta_n))\Bigg]dt 
\end{align*}
By \eqref{eq:cond4} and \eqref{eq:cond5} we conclude $\eqref{eq:Ic}=o(1)$. Thus we have finally shown that $\eqref{eq:a2}\overset{\IP}{\to}0$ and hence the proof is complete.
\end{proof}

\begin{proof}[Proof of Lemma \ref{lem:network_exp}]
The proof of \eqref{eq:state1} is an immediate consequence of the following Proposition \ref{prop:network_bernstein} together with the assumptions:
\begin{eqnarray*}
&&\IP\left(\frac{1}{|E|_{n,t}}\sum_{i\in L_n}(Z_{n,i}-\IE(Z_{n,i}))\geq x\cdot\sqrt{\frac{\log|E|_{n,t}}{|E|_{n,t}}}\right) \\
&\leq&\IP\left(\sum_{i\in L_n}(Z_{n,i}-\IE(Z_{n,i}))\geq x\cdot\sqrt{\log|E|_{n,t}\cdot|E|_{n,t}}\right) \\
&\leq&\sum_{k=1}^\Kappa\exp\left(-\frac{c_2x^2\log|E|_{n,t}\cdot|E|_{n,t}}{2\left(|E|_{n,t}\sigma^2+E_k^{n,t}c_1x\cdot\sqrt{\log|E|_{n,t}\cdot|E|_{n,t}}\right)}\right)+\beta_t(\Delta_n)\cdot \Kappa r_n \\
&\leq&\Kappa\exp\left(-\frac{c_2x^2\log|E|_{n,t}}{2\left(\sigma^2+c_1c_3x\right)}\right)+\beta_t(\Delta_n)\cdot \Kappa r_n.
\end{eqnarray*}
\end{proof}

\begin{proposition}
\label{prop:network_bernstein}
Let $(Z_{n,ij})_{(i,j)\in L_n}$ be a set of random variables which fulfils \eqref{eq:GeneralCoverCondition} for a given $t\in[0,T]$. With the same notation as in Definition \ref{defin:network-beta} assume that there is a $\Delta$-partition such that for all $\rho\in\IN$ with $\rho\geq2$ and all $k\in\{1,...,\mathcal{K}\}$ and $m\in\{1,...,r_n\}$
$$\IE(|U_{k,m}^{n,t}(\Delta)|^{\rho})\leq\frac{\rho!}{2}E_{k,m}\sigma^2\cdot(E_kC)^{\rho-2},$$
for some numbers $\sigma^2, E_{k,m}, E_k$ and $C$ with $|E|_n:=\sum_{k=1}^{\Kappa}\sum_{m=1}^{r_n}E_{k,m}<+\infty$. Then,
$$\IP\left(\sum_{(i,j)\in L_n}(Z_{n,ij}-\IE(Z_{n,ij}))\geq x\right)\leq\sum_{k=1}^{\Kappa}\exp\left(-\frac{|E|_n^{-1}\sum_{m=1}^{r_n}E_{k,m}x^2}{2(|E|_n\sigma^2+E_kCx)}\right)+\beta_t(\Delta)\cdot \Kappa r_n.$$
\end{proposition}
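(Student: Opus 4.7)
The plan is to reduce the problem to an independent-block sum via Berbee-style coupling and then apply the classical Bernstein inequality block-wise.

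First, I would exploit the coverage condition \eqref{eq:GeneralCoverCondition} to write
\[
\sum_{(i,j)\in L_n}(Z_{n,ij}-\IE Z_{n,ij}) \;=\; \sum_{k=1}^{\Kappa}\sum_{m=1}^{r_n} U_{k,m}^{n,t}(\Delta),
\]
where $U_{k,m}^{n,t}(\Delta)$ is the centred block sum from Definition~\ref{defin:network-beta}. Then I would split the deviation over the $\Kappa$ block types using the weights $w_k := |E|_n^{-1}\sum_{m=1}^{r_n}E_{k,m}$, which are non-negative and sum to $1$ by definition of $|E|_n$. A union bound gives
\[
\IP\!\left(\sum_{(i,j)\in L_n}(Z_{n,ij}-\IE Z_{n,ij})\geq x\right)\;\leq\;\sum_{k=1}^{\Kappa}\IP\!\left(\sum_{m=1}^{r_n} U_{k,m}^{n,t}(\Delta)\geq x\,w_k\right),
\]
and it suffices to bound each of the $\Kappa$ inner probabilities by the corresponding exponential in the statement plus $r_n\,\beta_t(\Delta)$.

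Second, for each fixed $k$ I would apply Berbee's coupling lemma iteratively. Because $\beta_t(\Delta)$ was defined precisely as the maximum of $\beta\bigl([U_{k,m}^{n,t}(\Delta)]_{m\leq M-1},\,U_{k,M}^{n,t}(\Delta)\bigr)$, there exists (on an enlarged probability space) an independent sequence $(\tilde U_{k,m})_{m=1,\dots,r_n}$ with $\tilde U_{k,m}\sim U_{k,m}^{n,t}(\Delta)$ such that
\[
\IP\!\left(\exists m:\,\tilde U_{k,m}\neq U_{k,m}^{n,t}(\Delta)\right)\;\leq\;(r_n-1)\,\beta_t(\Delta)\;\leq\;r_n\,\beta_t(\Delta).
\]
On the coupling event, the sums coincide, so
\[
\IP\!\left(\sum_{m=1}^{r_n} U_{k,m}^{n,t}(\Delta)\geq x\,w_k\right)\;\leq\;\IP\!\left(\sum_{m=1}^{r_n} \tilde U_{k,m}\geq x\,w_k\right)+r_n\,\beta_t(\Delta).
\]

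Third, since $(\tilde U_{k,m})_m$ are independent and centred, and since by construction they inherit the moment bound $\IE|\tilde U_{k,m}|^\rho\leq\frac{\rho!}{2}E_{k,m}\sigma^2(E_kC)^{\rho-2}$, the summed bound $\sum_m\IE|\tilde U_{k,m}|^\rho\leq\frac{\rho!}{2}\bigl(\sum_m E_{k,m}\bigr)\sigma^2 (E_kC)^{\rho-2}$ puts us exactly in the setting of the classical Bernstein inequality with variance parameter $v_k=(\sum_m E_{k,m})\sigma^2$ and scale $c_k=E_kC$. This yields
\[
\IP\!\left(\sum_{m=1}^{r_n} \tilde U_{k,m}\geq x\,w_k\right)\leq\exp\!\left(-\frac{x^2 w_k^2}{2\bigl((\sum_m E_{k,m})\sigma^2+E_kC\,x\,w_k\bigr)}\right),
\]
and substituting $w_k=|E|_n^{-1}\sum_m E_{k,m}$ and dividing numerator and denominator by $\sum_m E_{k,m}/|E|_n$ collapses this to the target expression $\exp\bigl(-|E|_n^{-1}\sum_m E_{k,m}\,x^2/[2(|E|_n\sigma^2+E_kCx)]\bigr)$. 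Summing over $k$ produces the asserted bound.

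The main delicate point is the coupling: one has to check that the sequential application of Berbee's lemma is legitimate here, i.e., that the mixing coefficient in Definition~\ref{defin:network-beta} controls the block-by-block replacement, not just the one-step separation. This is exactly why the definition takes the maximum over $M$ of $\beta(\sigma(U_{k,1},\dots,U_{k,M-1}),\sigma(U_{k,M}))$: at step $M$ we couple the new block $U_{k,M}^{n,t}(\Delta)$ to something independent of the already-coupled prefix, paying $\beta_t(\Delta)$ each time, for a total of at most $r_n\,\beta_t(\Delta)$ per block type. Everything else (the weight algebra and the Bernstein application) is routine.
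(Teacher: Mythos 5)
Your proposal is correct and follows essentially the same route as the paper's proof: the coverage decomposition into block sums, iterated Berbee-type coupling (the paper's Lemma \ref{lem:grouping_lemma}) costing $\Kappa r_n\beta_t(\Delta)$ in total, a weighted union bound over block types with $\alpha_k=w_k=|E|_n^{-1}\sum_m E_{k,m}$, and a block-wise application of the classical Bernstein inequality with $\sigma_m^2=E_{k,m}\sigma^2$ and $c=E_kC$. The only cosmetic difference is that you split over block types before coupling while the paper couples first; the accounting and the final algebra are identical.
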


\begin{proof}
With the definitions as in Definition \ref{defin:network-beta} we obtain by \eqref{eq:GeneralCoverCondition} that
$$Z_{n,ij}=\sum_k\sum_mI_{n,ij}^{k,m,t}(\Delta)Z_{n,ij}\textrm{ for all } (i,j)\in L_n.$$
Hence,
\begin{align}
\sum_{(i,j)\in L_n}(Z_{n,ij}-\IE Z_{n,ij})=&\sum_{k=1}^{\mathcal{K}}\sum_{m=1}^{r_n} \sum_{(i,j)\in L_n}Z_{n,ij} I_{n,ij}^{k,m,t}(\Delta)-\IE\left(Z_{n,ij}I_{n,ij}^{k,m,t}(\Delta)\right) \nonumber \\
=&\sum_{k=1}^{\mathcal{K}}\sum_{m=1}^{r_n}U_{k,m}^{n,t}(\Delta). \label{eq:sum_splitting}
\end{align}
In order to reduce notation, we omit $(\Delta)$ when talking about $U_{k,m}^{n,t}(\Delta)$. By Lemma \ref{lem:grouping_lemma} we can construct sequences $U_{k,m}^*$ as follows: We assume that the $\sigma$-field $\mathcal{F}_t^n$ is rich enough to allow for independent extra random variables $\delta_{k,m}$ which are uniformly distributed on $[0,1]$ and which are independent amongst each other and of everything else. The construction is the same for every $k$, so we only construct the sequence $U_{1,m}^*$, all other sequences $U_{k,m}^*$ for $k\geq2$ are constructed analogously. Define $U_{1,1}^*:=U_{1,1}$. For $m\geq2$ there is by Lemma \ref{lem:grouping_lemma} a function $f_m$ such that $U_{1,m}^*:=f_m(U_{1,1},...,U_{1,m-1},\delta_{1,m},U_{1,m})$ has the same distribution as $U_{1,m}$, is independent of $U_{1,1},...,U_{1,m-1}$ and
$$\IP(U_{1,m}\neq U_{1,m}^*)=\beta\left(\left(U_{1,1},...,U_{1,m-1}\right),U_{1,m}\right)\leq\beta_t(\Delta).$$
To sum it up, we have sequences $U_{k,m}^*$ with
\begin{enumerate}
\item For any $k$ and any fixed $R\in\IN$, $\left(U_{k,m}^*\right)_{m=1,...,R}$ is a sequence of independent random variables.
\item $U_{k,m}^*$ and $U_{k,m}$ have the same distribution.
\item For all $k=1,...,\mathcal{K}$: $\IP\left(\exists m\in\{1,...,r_n\}: U_{k,m}\neq U_{k,m}^*\right)\leq r_n\cdot\beta_t(\Delta)$.
\end{enumerate}
Denote by $R_k$ the random number of blocks $U_{k,m}$ of type $k$ which exist, i.e., such that for $m>R_k$ we have $U_{k,m}=0$. So we obtain by \eqref{eq:sum_splitting} for any $x\geq0$ and any sequence $(\alpha_k)_{k=1,...,\Kappa}$ with $\sum_{k=1}^{\Kappa}\alpha_k=1$ and $\alpha_k\geq0$:
\begin{eqnarray}
&&\IP\left(\sum_{(i,j)\in L_n}Z_{n,ij}-\IE(Z_{n,ij})\geq x\right) \nonumber \\
&\leq&\IP\left(\sum_{k=1}^{\Kappa}\sum_{m=1}^{R_k}U_{k,m}^*\geq x\right)+\IP\left(\exists k\in\{1,...,\Kappa\},m\in\{1,...,r_n\}:\, U_{k,m}\neq U_{k,m}^*\right) \nonumber \\
&\leq&\sum_{k=1}^{\Kappa}\IP\left(\sum_{m=1}^{R_k}U_{k,m}^*\geq\alpha_k\cdot x\right)+\beta_t(\Delta,v,v')\cdot \Kappa r_n \label{eq:ze}
\end{eqnarray}
For every $k$ the sequence $U_{k,m}^*$ is a sequence of independent random variables. Moreover, by definition $\IE(U_{k,m})=0$. So, the assumptions of Proposition \ref{prop:bernstein} are fulfilled with $\sigma_m^2:=E_{k,m}\sigma^2$ and $c:=E_kC$. So we can estimate the first part of \eqref{eq:ze} by
\begin{align}
\IP\left(\sum_{m=1}^{R_k}U_{k,m}^*\geq\alpha_k\cdot x\right)\leq&\exp\left(-\frac{\alpha_k^2x^2}{2\left(\sum_{m=1}^{r_n}E_{k,m}\sigma^2+E_k C\cdot\alpha_kx\right)}\right). \label{eq:ze2}
\end{align}
We chose $\alpha_k=|E|_n^{-1}\sum_{m=1}^{r_n}E_{k,m}$ and obtain by combining the equalities \eqref{eq:ze} and \eqref{eq:ze2},
\begin{align*}
\IP\left(\sum_{(i,j)\in L_n}(Z_{n,ij}-\IE(Z_{n,ij}))\geq x\right)\leq&\sum_{k=1}^{\Kappa}\exp\left(-\frac{\alpha_k^2x^2}{2\left(\sum_{m=1}^{r_n}E_{k,m}\sigma^2+E_kC\alpha_kx\right)}\right)+\beta_t(\Delta)\cdot \Kappa r_n \\
\leq&\sum_{k=1}^{\Kappa}\exp\left(-\frac{|E|_n^{-1}\sum_{m=1}^{r_n}E_{k,m}x^2}{2\left(|E|_n\sigma^2+E_kCx\right)}\right)+\beta_t(\Delta)\cdot \Kappa r_n
\end{align*}
\end{proof}

\begin{proof}[Proof of Lemma \ref{lem:sufficient_mixing}]
\label{subsubsec:pol}
Define $\epsilon:=x\cdot\sqrt{\frac{\log |E|_{n,t}}{|E|_{n,t}}}$. Then,
\begin{align}
&\IP\left(\frac{1}{|E|_{n,t}}\sum_{(i,j)\in L_n}\left[Z_{n,ij}-\IE(Z_{n,ij}(t))\right]\geq3\epsilon\right) \nonumber  \\
\leq&\IP\left(\frac{1}{|E|_{n,t}}\sum_{(i,j)\in L_n}\left[Z_{n,ij}(t)-\IE\left(Z_{n,ij}(t)|\Gamma_n^t=1\right)\right]\cdot \Gamma_n^t\geq\epsilon\right) \label{eq:exp1} \\
&\quad+\IP\left(\frac{1}{|E|_{n,t}}\sum_{(i,j)\in L_n}\left|\IE\left(Z_{n,ij}(t)|\Gamma_n^t=1\right)\right|\left(\Gamma_n^t-\IE(\Gamma_n^t)\right)\geq\epsilon\right) \label{eq:exp2} \\
&\quad+ \IP\left(\frac{1}{|E|_{n,t}}\sum_{(i,j)\in L_n}\IE\left(Z_{n,ij}(t)(\Gamma_n^t-1)\right)\geq\epsilon\right) \label{eq:exp3} \\
&\quad+\IP(\Gamma_n^t=0). \label{eq:exp4}
\end{align}
Line \eqref{eq:exp4} is part of the statement, so we just leave it as it is. For line \eqref{eq:exp3} we have
$$\left|\frac{r_n}{|E|_{n,t}}\IE(Z_{n,ij}(t)(\Gamma_n^t-1))\right|\leq M\IP(\Gamma_n^t=0)\cdot\frac{r_n}{|E|_{n,t}}.$$
Thus line \eqref{eq:exp3} equals zero by choice of $x$. For line \eqref{eq:exp2} we can make a similar argument
\begin{align*}
&\frac{r_n}{|E|_{n,t}}\left|\IE\left(Z_{n,ij}|\Gamma_n^t=1\right)\right|\left(\Gamma_n^t-\IE\left(\Gamma_n^t\right)\right)\geq\epsilon \\
\Rightarrow&\frac{Mr_n}{|E|_{n,t}}\left(\Gamma_n^t-\IE\left(\Gamma_n^t\right)\right)>M\IE\left(1-\Gamma_n^t\right)\frac{r_n}{|E|_{n,t}} \\
\Leftrightarrow&\Gamma_n^t>1.
\end{align*}
The last expression is a false statement and hence the first line cannot be true. Thus, $\eqref{eq:exp2}=0$. For line \eqref{eq:exp1} we apply Lemma \ref{lem:network_exp} to $Y_{n,ij}$ which is given in the statement of Lemma \ref{lem:sufficient_mixing}. We have
$$\left|\sum_{(i,j)\in L_n}Y_{n,ij}I_{n,ij}^{k,m,t}\right|\leq 2MS_k(t)\Gamma_n^t\leq 2Mc_3\sqrt{\frac{|E|_{n,t}}{\log(|E|_{n,t})}}=2M E_k^{n,t}.$$
In order to see that the conditions of Lemma \ref{lem:network_exp} hold, let $\rho\in\IN$ and greater or equal than two. Going on, we conclude for the grouping of $Y_{n,ij}$ by using the above estimation (recall that $\IE\left(U_{k,m}^{n,t}(\Delta_n)\right)=0$)
\begin{align*}
\IE\left(\left|U_{k,m}^{n,t}\right|^{\rho}\right)&= \IE\left(\left|\sum_{(i,j)\in L_n}Y_{n,ij}I_{n,ij}^{k,m,t}(\Delta_n)-\IE\left(\sum_{(i,j)\in L_n}Y_{n,ij} I_{n,ij}^{k,m,t}(\Delta_n)\right)\right|^{\rho-2}U_{k,m}^{n,t}(\Delta_n)^2\right) \\
&\leq\left(4ME_k^{n,t}\right)^{\rho-2}\cdot\textrm{Var}\left(U_{k,m}^{n,t}(\Delta_n)\right).
\end{align*}
Moreover, by assumption
\begin{align*}
&\frac{1}{E_{k,m}^{n,t}}\textrm{Var}\left(U_{k,m}^{n,t}(\Delta_n)\right) \\
\leq&\frac{1}{E_{k,m}^{n,t}}\sum_{(i,j)\in L_n}\textrm{Var}\left(Y_{n,ij}I_{n,ij}^{k,m,t}(\Delta_n)\right) \\
&\quad+\frac{1}{E_{k,m}^{n,t}}\underset{(i,j)\neq (k,l)}{\sum_{(i,j),(k,l)\in L_n}}\textrm{Cov}\left(Y_{n,ij}I_{n,ij}^{k,m,t}(\Delta_n),Y_{n,kl}I_{n,kl}^{k,m,t}(\Delta_n)\right) \\
\leq& 3CM^2.
\end{align*}
Thus the first requirement of Lemma \ref{lem:network_exp} holds for the definitions in the statement of this Lemma and $\sigma^2=3CM^2$ and $c_1=4M$. The first part of the second condition in Lemma \ref{lem:network_exp} holds by assumption and the second part holds by definition of $E_k^{n,t}$. Thus, we may apply Lemma \ref{lem:network_exp} and obtain for \eqref{eq:exp1}
\begin{align*}
&\IP\left(\frac{1}{|E|_{n,t}}\sum_{(i,j)\in L_n}Y_{n,ij}\geq x\cdot\sqrt{\frac{\log |E|_{n,t}}{|E|_{n,t}}}\right) \\
\leq&\mathcal{K}(|E|_{n,t})^{-\frac{c_2\cdot x^2}{2(3CM^2+4Mc_3x)}}+\beta_t(\Delta_n)\cdot\mathcal{K}r_n.
\end{align*}
This yields the statement.
\end{proof}

\subsection{Proof of Theorem \ref{thm:test_asymptotics}}
\label{subsec:proof_test}
Recall that $M_{n,ij}(t):=N_{n,ij}(t)-\int_0^t\lambda_{n,ij}(s)ds$ denotes the counting process martingale and decompose the likelihood as follows:
\begin{align}
P_n(\theta,t_0)&:=\frac{1}{r_np_n(t_0)}\sum_{(i,j)\in L_n}\int_0^TK_{h,t_0}(s)C_{n,ij}(s)\left[\log\lambda_{n,ij}(\theta,s)\cdot\lambda_{n,ij}(\theta_0,s)-\lambda_{n,ij}(\theta,s)\right]ds \label{eq:def_P} \\
\frac{\ell_n(\theta,t_0)}{r_np_n(t_0)}&=\frac{1}{r_np_n(t_0)}\sum_{(i,j)\in L_n}\int_0^TK_{h,t_0}(s)\log\lambda_{n,ij}(\theta,s)dM_{n,ij}(s)+P_n(\theta,t). \label{eq:decomp_ell}
\end{align}
Define moreover for $(i,j),(k,l)\in L_n$ and $s,t\in[0,T]$ (we use the convention $\Sigma_{t}^{-T}:=\left(\Sigma_t^{-1}\right)^T$)
\begin{equation}
\label{eq:deff}
f_{n,ij,kl}(s,t):=X_{n,ij}(s)^T\int_0^ThK_{h,t_0}(s)K_{h,t_0}(t)\Sigma_{t_0}^{-T}\Sigma_{t_0}^{-1}\frac{w(t_0)}{\bar{p}_n(t_0)}dt_0X_{n,kl}(t).
\end{equation}
We will also need the following functions defined for all $(i,j), (k,l)\in L_n$
\begin{align}
H_{n,ij}(s,\theta)&:=\left[-\partial_{\theta}^2\lambda_{n,ij}(s,\theta)\right]C_{n,ij}(s), \label{eq:def_H} \\
\tilde{H}_{n,ij}(s,\theta)&:=H_{n,ij}(s,\theta)-p_n(s)\Sigma(s,\theta), \label{eq:def_Htilde} \\
\tau_{n,ij,kl}(s)&:=\int_0^{s-}f_{n,ij,kl}(s,t)dM_{n,kl}(t), \label{eq:def_tau}
\end{align}
where $\int_0^{s-}$ denotes the integral over the set $[0,s)$. Most technical difficulties are contained in the proofs of the following Lemmas \ref{lem_ass:var}-\ref{lem_ass:exp2}. Their proofs are presented in Appendix \ref{sec:lemma_proofs}.

\begin{lemma}
\label{lem_ass:var}
It holds that
\begin{equation}
\label{eq:var1}
\frac{4}{hr_n^2}\sum_{(i,j)\in L_n}\int_0^T\underset{(k,l)\neq (i,j)}{\sum_{(k,l)\in L_n}}\tau_{n,ij,kl}(s)^2C_{n,ij}(s)\lambda_{n,ij}(s,\theta_0)ds \overset{\IP}{\to}B
\end{equation}
and
\begin{equation}
\label{eq:var2}
\frac{4}{hr_n^2}\sum_{(i,j)\in L_n}\int_0^T\underset{(k_1,l_1)\neq (k_2,l_2)}{\underset{(k_1,l_1),(k_2,l_2)\neq (i,j)}{\sum_{(k_1,l_1),(k_2,l_2)\in L_n}}}\tau_{n,ij,k_1l_1}(s)\tau_{n,ij,k_2l_2}(s)C_{n,ij}(s)\lambda_{n,ij}(s,\theta_0)ds\overset{\IP}{\to}0.
\end{equation}
The definition of $B$ is given in Theorem \ref{thm:test_asymptotics}.
\end{lemma}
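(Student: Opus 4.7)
\textbf{Proof plan for Lemma \ref{lem_ass:var}.} The basic idea is a two-step (expectation plus fluctuation) argument for \eqref{eq:var1} and a martingale-orthogonality argument for \eqref{eq:var2}. I first exploit the bilinear structure of $f_{n,ij,kl}$ by writing
\[
f_{n,ij,kl}(s,t)=X_{n,ij}(s)^{T}A_n(s,t)X_{n,kl}(t),\qquad A_n(s,t):=\int_0^T hK_{h,t_0}(s)K_{h,t_0}(t)\Sigma_{t_0}^{-2}\frac{w(t_0)}{\bar p_n(t_0)}dt_0,
\]
which is deterministic, and then by defining $\mathbf v_{n,ij,kl}(s):=\int_0^{s-}A_n(s,t)X_{n,kl}(t)dM_{n,kl}(t)$ so that $\tau_{n,ij,kl}(s)=X_{n,ij}(s)^{T}\mathbf v_{n,ij,kl}(s)$. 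This isolates the dependence on $(i,j)$ and $(k,l)$ into a product, which is crucial since $A_n(s,\cdot)$ is deterministic and $X_{n,kl}$ is predictable, so $\mathbf v_{n,ij,kl}(s)$ is an ordinary It\^o integral (for fixed $s$) in $t$ against the orthogonal martingale $M_{n,kl}$.

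\textbf{Expectation of \eqref{eq:var1}.} I would compute $\mathbb{E}$ of the left hand side and show it tends to $B$. By exchangeability the summand depends only on the overlap pattern of $(i,j)$ and $(k,l)$; the $O(r_n^2)$ disjoint-pair terms dominate, and the $O(r_n^{3/2})$ one-vertex-overlap terms are negligible under (D2). For disjoint pairs I apply momentary-$m$-dependence: after conditioning on the past up to time $s-2h$ and on $X_{n,ij}(s)$, $C_{n,ij}(s)=1$, the integrand $\mathbf v_{n,ij,kl}(s)\mathbf v_{n,ij,kl}(s)^{T}$ decouples from the factor $C_{n,ij}(s)\exp(\theta_0^{T}X_{n,ij}(s))$ for the overwhelming majority of $(k,l)$ (the $O(1)$-many close pairs are handled by hub-size bounds (H2)). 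The It\^o isometry applied to $\mathbf v$ together with the defining identity $\mathbb{E}[X_{n,kl}(t)X_{n,kl}(t)^{T}e^{\theta_0^{T}X_{n,kl}(t)}\mid C_{n,kl}(t)=1]=-\Sigma_t$ gives
\[
\mathbb{E}\bigl[\mathbf v_{n,ij,kl}(s)\mathbf v_{n,ij,kl}(s)^{T}\bigr]=-\int_0^{s-}A_n(s,t)\,p_n(t)\Sigma_t\,A_n(s,t)\,dt,
\]
and the corresponding identity converts $\mathbb{E}[X_{n,ij}(s)^{T}M X_{n,ij}(s)\cdot C_{n,ij}(s)e^{\theta_0^{T}X_{n,ij}(s)}]$ into $-p_n(s)\mathrm{tr}(M\Sigma_s)$. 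Plugging in $A_n$, substituting $t=s-hu$, $t_0=s-hv$ and using (A5)(A6) continuity to replace $\Sigma_{s-hv},w(s-hv),\bar p_n(s-hv)$ by their values at $s$, the kernel integrals collapse to $\tilde K(u)=\int K(v)K(v-u)dv$, and one obtains
\[
\mathbb{E}[\eqref{eq:var1}]\longrightarrow 4\Bigl(\int_0^2\tilde K(u)^2du\Bigr)\int_0^T\mathrm{tr}(\Sigma_t^{-2})w(t)^2 dt\;=\;B.
\]

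\textbf{Fluctuations around the expectation.} To upgrade to convergence in probability I would compute the variance of \eqref{eq:var1}, expanding the square as a sum over eight pairs $(i,j),(k,l),(i',j'),(k',l')$ and bounding covariances according to overlap patterns; terms with completely disjoint index sets are controlled by the asymptotic uncorrelation estimates (D2), (CA:AU1), (CA:AU2), while terms with shared indices have count of smaller order than $r_n^4$ and are controlled by (AD)-(H2) moment bounds (the sixth-moment boundedness of $X_{n,ij}$ through (A3, \ref{ass:boundedness})), yielding variance $o(1)$.

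\textbf{Argument for \eqref{eq:var2}.} For the off-diagonal sum I would show that the expectation is $o(1)$ and the variance is also $o(1)$. For the expectation, note that $\tau_{n,ij,k_1l_1}(s)\tau_{n,ij,k_2l_2}(s)=(X_{n,ij}(s)^{T}\mathbf v_1)(X_{n,ij}(s)^{T}\mathbf v_2)$ with $(k_1l_1)\neq(k_2l_2)$ and hence $\mathbf v_1,\mathbf v_2$ are orthogonal martingale integrals (in $t$, for fixed $s$); if we could freeze $X_{n,ij}(s)$ and obtain $\mathbb{E}[\mathbf v_1\mathbf v_2^{T}\mid X_{n,ij}(s),C_{n,ij}(s)=1]=0$, the term would vanish. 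Exact orthogonality fails due to the dependence of $X_{n,ij}(s)$ on $(k_rl_r)$, but via momentary-$m$-dependence one splits the sum into pairs where $(k_1l_1),(k_2l_2)$ lie at network distance $\geq m$ from $(ij)$ (where, after conditioning, the orthogonality produces exactly zero contribution through the leave-$J$-out filtration of Lemma \ref{lem:martingale}) and a remainder whose count is $O(r_n\cdot H_{UB})$, controlled by (H1)-(H2). The variance of \eqref{eq:var2} is handled analogously to that of \eqref{eq:var1}.

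\textbf{Main obstacle.} The most delicate part is the fluctuation control in \eqref{eq:var1}: the quantity is already a quadratic form in martingale integrals and the squared version involves fourth-order martingale expressions, so one must decompose into a large number of overlap patterns and verify that each pattern is either negligible by asymptotic uncorrelation/mixing or has a small combinatorial multiplier. Maintaining the correct $1/p_n$-scalings throughout, while accommodating the fact that the outer weight $C_{n,ij}(s)\lambda_{n,ij}(s,\theta_0)$ is \emph{not} predictable in $t$, is where the momentary-$m$-dependence machinery (Theorem \ref{thm:easy_non-pred} style arguments) has to be applied carefully.
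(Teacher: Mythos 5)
Your overall plan (first moment $\to B$, second moment $\to 0$ for \eqref{eq:var1}; martingale orthogonality for \eqref{eq:var2}) is the natural one, but it is not the route the paper takes, and the two steps you flag as delicate are in fact gaps that the paper's actual proof is structured specifically to avoid. The paper never computes $\IE[\eqref{eq:var1}]$ or a variance. Instead it uses Fubini to push the outer $ds$-integral \emph{inside} the two inner stochastic integrals, rewriting both \eqref{eq:var1} and \eqref{eq:var2} as iterated stochastic integrals $\frac{8}{r_n}\sum\int_0^T\int_{t-2\delta_n}^{t-}\phi_{n,u_1u_2}(t,r)\,dM_{n,u_2}(r)\,dM_{n,u_1}(t)$ plus, in the case of \eqref{eq:var1}, a single stochastic integral and a pure drift term \eqref{eq:diag2}. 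All stochastic-integral parts are shown to vanish via Theorem \ref{thm:easy_non-pred} and Proposition \ref{prop:single_non-pred} (whose hypotheses are exactly Assumptions (D1), (H2), (AD) for the specific $\tilde\phi^I_{n,u_1u_2}$ arising from this reorganization), and only the drift term — a product of two empirical averages of $X X^T C\lambda$ — survives; it is sent to $B$ by a \emph{uniform} law of large numbers \eqref{eq:usop} proved with the mixing exponential inequality (Lemma \ref{lem:sufficient_mixing}), i.e.\ $B$ emerges as a product of limits of empirical averages, not as $\lim\IE[\eqref{eq:var1}]$.

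The first concrete gap is your decoupling step. You apply the It\^o isometry to $\mathbf v_{n,ij,kl}(s)$ and then factor $\IE[X_{n,ij}(s)^T\mathbf v\mathbf v^TX_{n,ij}(s)\,C_{n,ij}(s)e^{\theta_0^TX_{n,ij}(s)}]$ into $\mathrm{tr}(\IE[\mathbf v\mathbf v^T]\,\IE[XX^TC e^{\theta_0^TX}])$. But the outer weight is evaluated at time $s$, \emph{after} the window $[s-2h,s)$ over which $\mathbf v$ integrates, so it is neither predictable nor independent of $\mathbf v$; and momentary-$m$-dependence (Definition \ref{defin:m-dependence}) gives only \emph{conditional} independence of future increments given $\mathcal{F}_{t_0}^n$, which does not yield the unconditional factorization of this product of random variables, not even for pairs at distance $\geq m$. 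Making the decoupling rigorous is precisely what forces the paper's Fubini reordering and the long-sighted leave-$J$-out filtrations. The second gap is the variance control for \eqref{eq:var1}: the $O(r_n^4)$ disjoint quadruples carry normalization $r_n^{-4}$, so you need the covariance of $\int\tau_{n,ij,kl}^2C_{n,ij}\lambda_{n,ij}\,ds$ across disjoint index quadruples to be $o$ of the product of their typical magnitudes. Assumption (D2) only bounds covariances of the edge indicators $C_{n,ij}$, and none of (D1), (D3), (AD), (H2) supplies decorrelation for these fourth-order martingale functionals in the form you need; the assumptions are tailored to the functionals appearing in the paper's decomposition. As written, "the variance is $o(1)$" therefore cannot be concluded from the stated hypotheses without essentially re-deriving the machinery of Theorem \ref{thm:easy_non-pred}. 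Your treatment of \eqref{eq:var2} is closer in spirit to the paper's (near/far splitting with hub bounds corresponds to conditions \eqref{eq:cond3}--\eqref{eq:cond5}), but it too must pass through the iterated-integral representation to turn "conditional orthogonality of far pairs" into actual vanishing expectations.
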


\begin{lemma}
\label{lem_ass:jump}
For any $\epsilon>0$
$$\frac{2}{h^{\frac{1}{2}}r_n}\sum_{(i,j)\in L_n}\int_0^T\Ind\left(\left|\frac{2}{h^{\frac{1}{2}}r_n}\underset{(k,l)\neq (i,j)}{\sum_{(k,l)\in L_n}}\tau_{n,ij,kl}(s)\right|>\epsilon\right)\underset{(k,l)\neq (i,j)}{\sum_{(k,l)\in L_n}}\tau_{n,ij,kl}(s)dM_{n,ij}(s)\overset{\IP}{\to}0$$
\end{lemma}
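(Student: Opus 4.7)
Let $Y_{n,ij}(s):=\frac{2}{h^{1/2}r_n}\sum_{(k,l)\neq(i,j)}\tau_{n,ij,kl}(s)$, so the expression in the lemma equals $Z_n:=\sum_{(i,j)\in L_n}\int_0^T Y_{n,ij}(s)\Ind(|Y_{n,ij}(s)|>\epsilon)\,dM_{n,ij}(s)$. Writing $A(s,t):=\int_0^T hK_{h,t_0}(s)K_{h,t_0}(t)\Sigma_{t_0}^{-T}\Sigma_{t_0}^{-1}\frac{w(t_0)}{\bar{p}_n(t_0)}dt_0$, one has $\tau_{n,ij,kl}(s)=X_{n,ij}(s)^T\int_0^{s-}A(s,t)X_{n,kl}(t)\,dM_{n,kl}(t)$ with $A$ deterministic and continuous in $s$ by (A4, \ref{ass:kernel_hoelder}); hence $\tau_{n,ij,kl}(\cdot)$ is left-continuous and adapted, so both $Y_{n,ij}$ and $Y_{n,ij}\Ind(|Y_{n,ij}|>\epsilon)$ are predictable processes. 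By Markov's inequality it suffices to prove $\IE[Z_n^2]\to 0$.

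\textbf{Reduction to a fourth-moment bound.} Since the compensators $\int_0^{\cdot}\lambda_{n,ij}(s)\,ds$ are absolutely continuous, the counting processes $N_{n,ij}$ and $N_{n,kl}$ with $(i,j)\neq(k,l)$ a.s.\ have no simultaneous jumps, so $[M_{n,ij},M_{n,kl}]\equiv 0$ and $\langle M_{n,ij},M_{n,kl}\rangle\equiv 0$. Combined with the predictability of the integrands this makes every off-diagonal term in the expansion of $Z_n^2$ have zero expectation, and the It\^o isometry yields
\[
\IE[Z_n^2]=\sum_{(i,j)\in L_n}\IE\left[\int_0^T Y_{n,ij}(s)^2\,\Ind(|Y_{n,ij}(s)|>\epsilon)\,C_{n,ij}(s)\lambda_{n,ij}(s,\theta_0)\,ds\right].
\]
Bounding the indicator by $Y_{n,ij}(s)^2/\epsilon^2$ reduces the task to showing
\[
R_n:=\sum_{(i,j)\in L_n}\IE\left[\int_0^T Y_{n,ij}(s)^4\,C_{n,ij}(s)\lambda_{n,ij}(s,\theta_0)\,ds\right]=o(1).
\]

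\textbf{The fourth-moment bound (heuristic).} Each $\tau_{n,ij,kl}(s)$ has second moment of order $h/p_n$, since $f_{n,ij,kl}$ is $O(\bar{p}_n^{-1})$ on the window of width $O(h)$ where it is non-zero and $d\langle M_{n,kl}\rangle=\lambda_{n,kl}\,ds\sim p_n\,ds$. By the same partial-cancellation mechanism that drives Lemma \ref{lem_ass:var}, the sum $\sum_{(k,l)\neq(i,j)}\tau_{n,ij,kl}(s)$ is typically of order $(r_nh/p_n)^{1/2}$, so $Y_{n,ij}(s)^4$ is of order $1/(r_np_n)^2$; summing over the $r_n$ outer pairs and integrating against an intensity of size $p_n$ gives $R_n=O(1/(r_np_n))$, which vanishes by the bandwidth condition (A4, \ref{ass:bw}).

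\textbf{Main obstacle.} To turn the heuristic into a proof one expands
\[
Y_{n,ij}(s)^4=\frac{16}{h^2r_n^4}\sum_{u_1,u_2,u_3,u_4\neq(i,j)}\tau_{n,ij,u_1}(s)\tau_{n,ij,u_2}(s)\tau_{n,ij,u_3}(s)\tau_{n,ij,u_4}(s),
\]
and estimates the resulting four-fold sum case by case according to which of the pairs $u_1,\ldots,u_4$ coincide or share a vertex with one another or with $(i,j)$. When $u_1,\ldots,u_4$ are pairwise distinct, iterated It\^o isometry with respect to the four orthogonal martingales $M_{n,u_k}$ reduces the contribution to a variance-type quantity controlled by the asymptotic uncorrelation bounds \eqref{CA:AU1}--\eqref{CA:AU2}. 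When some $u_k$ coincide or share a vertex, one first approximates the inner $\tau$'s by their leave-$J$-out predictable versions via the momentary-$m$-dependence machinery of Theorem \ref{thm:easy_non-pred}, and then bounds the non-predictable remainder using the hub control of (H1)--(H2). This combinatorial bookkeeping, essentially a fourth-order analogue of the second-moment calculation already carried out in Lemma \ref{lem_ass:var}, is the main technical burden of the argument.
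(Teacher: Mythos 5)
Your reduction to $\IE[Z_n^2]$ via the It\^o isometry is fine as far as it goes, but the proof then hinges entirely on the fourth-moment bound $R_n=\sum_{(i,j)}\IE[\int_0^T Y_{n,ij}(s)^4 C_{n,ij}(s)\lambda_{n,ij}(s,\theta_0)ds]=o(1)$, and this is exactly the step you do not carry out — you label it the ``main technical burden'' and leave it as a sketch. This is a genuine gap, and not a routine one: expanding $Y_{n,ij}(s)^4$ produces a four-fold sum of products of four stochastic integrals, whereas all of the paper's dependence machinery — the momentary-$m$-dependence conditions \eqref{eq:cond1}--\eqref{eq:cond5}, the asymptotic uncorrelation bounds \eqref{CA:AU1}--\eqref{CA:AU2}, and the moment conditions in (H2) and (AD) — is calibrated to \emph{second}-order quantities (products of at most two such integrals). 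A fourth-order analogue of Lemma \ref{lem_ass:var} would require fourth-order versions of these assumptions, which the paper does not make, so your route cannot be closed with the hypotheses as stated.

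The paper avoids fourth moments altogether. It applies Lenglart's inequality (Corollary \ref{cor:Lenglart}), so it suffices to show that the predictable quadratic variation of the jump martingale tends to zero in probability. That variation is bounded by pulling the indicator out as
$$\Ind\Big(\sup_{s\in[0,T],\,u\in L_n}\big|Y_{n,u}(s)\big|>\epsilon\Big)\cdot\frac{4}{hr_n^2}\sum_{u\in L_n}\int_0^T\Big(\sum_{v\neq u}\tau_{n,uv}(s)\Big)^2\lambda_{n,u}(s)\,ds,$$
where the second factor is $O_P(1)$ by Lemma \ref{lem_ass:var}, so one only needs $\sup_{s,u}|Y_{n,u}(s)|=o_P(1)$. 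That uniform bound is obtained by taking norms inside the integrals and recognizing the resulting expression as (a norm variant of) $\sup_{t_0}\|\partial_\theta\ell_n(\theta_0,t_0)\|/(r_n\bar p_n(t_0))$ plus an $O_P(h^{1/2})$ term, both of which are controlled by the uniform exponential inequalities already established for Proposition \ref{lem:1} and by \eqref{eq:cub}. If you want to salvage your structure, you should replace the Lyapunov-type bound $\Ind(|Y|>\epsilon)\leq Y^2/\epsilon^2$ by the observation that the indicator vanishes identically on the event $\{\sup_{s,u}|Y_{n,u}(s)|\leq\epsilon\}$, whose probability tends to one — which is precisely the paper's argument.
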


\begin{lemma}
\label{lem_ass:K1}
There is a sequence $B_n$ with $B_n=O_P(1)$, such that for all $t_0\in\IT$
$$\left\|\left[\frac{1}{r_n\bar{p}_n(t_0)}\partial_{\theta}^2\ell_n(\theta_0;t_0)\right]^{-1}\right\|\leq B_n.$$
\end{lemma}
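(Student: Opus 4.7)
The plan is to show that $G_n(t_0) := \frac{1}{r_n\bar p_n(t_0)}\partial_\theta^2\ell_n(\theta_0;t_0)$ converges to $\Sigma_{t_0}$ uniformly in $t_0\in\IT$ in probability, and then to deduce uniform boundedness of the inverse from Assumption (A5) and the compactness of $\IT$ (A1). Direct differentiation of \eqref{eq:local_likelihood} gives
$\partial_\theta^2\ell_n(\theta_0;t_0) = \sum_{(i,j)\in L_n}\int_0^T K_{h,t_0}(t)\,H_{n,ij}(t,\theta_0)\,dt$
with $H_{n,ij}$ as defined in (D3), which is the natural starting point for both a bias and a stochastic analysis.

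First, I would handle the expectation. Assumption (A3, \ref{ass:cond_ind}) together with the definition of $\Sigma_t$ in \eqref{eq:def_sigma} gives $\IE[H_{n,ij}(t,\theta_0)] = p_n(t)\Sigma_t$, so that $\IE[G_n(t_0)] = \bar p_n(t_0)^{-1}\int_0^T K_{h,t_0}(t)p_n(t)\Sigma_t\,dt$. A standard kernel-bias estimate using the Lipschitz continuity of $t\mapsto\Sigma_t$ (via $D:=\sup\|\partial_t\Sigma_t\|<\infty$ in A5) and $h\to 0$ (A4, \ref{ass:bw}) then yields $\sup_{t_0\in\IT}\|\IE[G_n(t_0)]-\Sigma_{t_0}\| = O(h)\to 0$, uniformly in $t_0$.

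The main obstacle will be controlling the stochastic deviation $G_n(t_0)-\IE[G_n(t_0)]$ uniformly over $t_0\in\IT$. For each fixed $t_0$ and each entry $(r_1,r_2)\in\{1,\ldots,q\}^2$, the corresponding scalar deviation is, after pulling out the factor $1/\bar p_n(t_0)$, exactly the type of normalized sum of the $Y_{n,ij}$ appearing in \eqref{eq:UUM2} of Assumption (D3); these summands are bounded by (A3, \ref{ass:boundedness}), and the covariance and $\beta$-mixing requirements of Lemma \ref{lem:sufficient_mixing} are precisely what (D3) supplies. Applying that lemma with $|E|_{n,t_0}\asymp r_np_n(t_0)$ (comparing $p_n(t_0)$ to $\bar p_n(t_0)$ and to $p_n^*$ via (A6)) will produce a pointwise tail bound that decays faster than any polynomial in $n$ for deviations of size $\sqrt{\log(r_np_n)/(r_np_n)}$, which is $o(1)$ by (A4, \ref{ass:bw}). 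I would then place a grid of polynomial cardinality on $\IT$, apply the union bound to obtain uniform control on the grid, and use (AC, \ref{eq:AC1}) to bridge between grid points.

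Combining the two steps yields $\sup_{t_0\in\IT}\|G_n(t_0)-\Sigma_{t_0}\| = o_P(1)$. Since $t_0\mapsto\|\Sigma_{t_0}^{-1}\|$ is continuous on the compact set $\IT$ by (A5) and (A1), it is bounded by some deterministic constant $B$; a Neumann-series perturbation argument then shows that $G_n(t_0)$ is invertible throughout $\IT$ on an event of probability tending to one, with $\sup_{t_0\in\IT}\|G_n(t_0)^{-1}\|\leq 2B$ on that event. Defining $B_n := \sup_{t_0\in\IT}\|G_n(t_0)^{-1}\|$, truncated to a large constant off the good event, yields the required $B_n = O_P(1)$.
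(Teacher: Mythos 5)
Your proposal follows essentially the same route as the paper's proof: uniform convergence of $\frac{1}{r_n\bar p_n(t_0)}\partial_\theta^2\ell_n(\theta_0;t_0)$ to $\Sigma_{t_0}$ via an $O(h)$ bias estimate plus a polynomial grid, a union bound, and Lemma \ref{lem:sufficient_mixing} under (D3, \ref{eq:UUM2}), followed by the perturbation argument of Lemma \ref{lem:bounded_inverse} to get a uniformly bounded inverse on an event of probability tending to one. One small correction to the bridging step: between grid points $t_0$ and $t_0^*$ it is the kernel weight $K_{h,t_0}(s)$ and the normalization $\bar p_n(t_0)$ that vary, not the time argument of $H_{n,ij}$, so (AC, \ref{eq:AC1}) is not the relevant tool there; the paper instead uses boundedness of the covariates (A3, \ref{ass:boundedness}) together with the Hoelder continuity of $K$ (A4, \ref{ass:kernel_hoelder}) and of $\bar p_n(\cdot)^{-1}$ (Lemma \ref{lem:p_continuous}), which is what your argument actually requires.
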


\begin{lemma}
\label{lem_ass:K2}
There is a sequence $K_n$ with $K_n=O_P(1)$ such that for all $\theta_1,\theta_2$ and $t\in\IT$
$$\frac{1}{r_n\bar{p}_n(t_0)}\left\|\partial_{\theta}^2\ell_n(\theta_1;t)-\partial_{\theta}^2\ell_n(\theta_2;t)\right\|\leq K_n\cdot\|\theta_1-\theta_2\|.$$
\end{lemma}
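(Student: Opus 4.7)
The plan is to compute the second derivative explicitly, apply the mean value theorem on the exponential, and reduce the problem to controlling a kernel-weighted count of active edges via Assumption (AD, \ref{eq:AD5}).

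First, I would note that from the explicit form of $\ell_n(\theta;t_0)$ in \eqref{eq:local_likelihood},
\[
\partial_\theta^2 \ell_n(\theta;t_0) = -\sum_{(i,j)\in L_n}\int_0^T K_{h,t_0}(t)\,C_{n,ij}(t)\,X_{n,ij}(t)X_{n,ij}(t)^T\exp(\theta^T X_{n,ij}(t))\,dt,
\]
so that the difference $\partial_\theta^2\ell_n(\theta_1;t_0)-\partial_\theta^2\ell_n(\theta_2;t_0)$ equals the same sum with $\exp(\theta_1^T X_{n,ij})-\exp(\theta_2^T X_{n,ij})$ in place of $\exp(\theta^T X_{n,ij})$. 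Applying the mean value theorem to the scalar function $\theta\mapsto \exp(\theta^T X_{n,ij}(t))$ and using (A2)--(A3, \ref{ass:boundedness}) (which give $\|X_{n,ij}\|\le \hat K$ and $C_{n,ij}(t)\exp(\theta^T X_{n,ij}(t))\le \Lambda$ for every $\theta\in\Theta$) yields the pointwise Lipschitz bound
\[
C_{n,ij}(t)\,\|X_{n,ij}(t)X_{n,ij}(t)^T\|\cdot|\exp(\theta_1^T X_{n,ij}(t))-\exp(\theta_2^T X_{n,ij}(t))|\le \hat K^3\Lambda\,\|\theta_1-\theta_2\|\,C_{n,ij}(t).
\]

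In the second step I would pull this constant out of the sum and the kernel-integral to obtain
\[
\|\partial_\theta^2\ell_n(\theta_1;t_0)-\partial_\theta^2\ell_n(\theta_2;t_0)\|\le \hat K^3\Lambda\,\|\theta_1-\theta_2\|\sum_{(i,j)\in L_n}\int_0^T K_{h,t_0}(t)C_{n,ij}(t)\,dt.
\]
Since $K_{h,t_0}$ integrates to one on $[t_0-h,t_0+h]$, the inner integral is bounded by $\sup_{u\in[-1,1]}C_{n,ij}(t_0+uh)\le\sup_{u\in[-2,2]}C_{n,ij}(t_0+uh)$, so the sum is at most $A_n(t_0)$ in the notation preceding Assumption (AD).

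Third, I would handle the denominator: for $t_0\in\IT\subseteq[\delta,T-\delta]$ and $h<\delta$ (valid for large $n$ by (A4, \ref{ass:bw})), the kernel is fully supported in $[0,T]$, so $\bar p_n(t_0)\ge\inf_{s\in[0,T]} p_n(s)=p_n$. Combining with the previous step,
\[
\frac{1}{r_n\bar p_n(t_0)}\|\partial_\theta^2\ell_n(\theta_1;t_0)-\partial_\theta^2\ell_n(\theta_2;t_0)\|\le \hat K^3\Lambda\,\|\theta_1-\theta_2\|\cdot \frac{A_n(t_0)}{r_np_n}.
\]
Finally I would set $K_n:=\hat K^3\Lambda\sup_{t_0\in[0,T]}A_n(t_0)/(r_np_n)$; by Assumption (AD, \ref{eq:AD5}), this supremum has tails that decay faster than any polynomial in $n$, so in particular $K_n=O_P(1)$, which yields the required uniform Lipschitz bound.

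There is no real obstacle here; the statement is a routine Lipschitz estimate on the Cox-type Hessian. The only step that is not purely algebraic is converting the sum $\sum_{(i,j)}\int K_{h,t_0}C_{n,ij}\,dt$ into something $O_P(1)$ uniformly in $t_0$, and this is exactly what Assumption (AD, \ref{eq:AD5}) was introduced to supply.
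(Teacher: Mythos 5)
Your proof is correct, but it departs from the paper's argument in the one step that is not purely algebraic. Both proofs start identically: differentiate under the integral, apply the mean value theorem to $\theta\mapsto\exp(\theta^TX_{n,ij}(t))$ using the boundedness of the covariates and of $\Theta$, and reduce the claim to showing that $\sup_{t_0\in\IT}\frac{1}{r_n\bar{p}_n(t_0)}\sum_{(i,j)\in L_n}\int_0^TK_{h,t_0}(s)C_{n,ij}(s)ds$ is $O_P(1)$. The paper establishes this as the separate statement \eqref{eq:cub} via a chaining argument over a discrete grid in $t_0$ combined with the $\beta$-mixing exponential inequality of Lemma \ref{lem:sufficient_mixing} applied to $Z_{n,u}=\int_0^TK_{h,t_0}(s)C_{n,u}(s)ds$ under Assumption (D3, \eqref{eq:UUM3}). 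You instead discard the kernel averaging entirely, bounding $\int_0^TK_{h,t_0}(s)C_{n,ij}(s)ds$ by (a constant times) $\sup_{u\in[-2,2]}C_{n,ij}(t_0+uh)$ and $\bar{p}_n(t_0)$ from below by $p_n$, so that the whole expression is dominated by $A_n(t_0)/(r_np_n)$, whose supremum over $t$ is $O_P(1)$ directly by Assumption (AD, \eqref{eq:AD5}). Your route is shorter and avoids the mixing machinery for this lemma, at the cost of a cruder bound and of leaning on the concentration assumption (AD, \eqref{eq:AD5}) rather than deriving concentration from (D3); the paper's detour through \eqref{eq:cub} is not wasted, since that sharper uniform bound is reused later in the proof of Lemma \ref{lem_ass:jump}. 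One cosmetic remark: your Lipschitz constant should be $\hat{K}^3e^{\tau\hat{K}}$ (bounding $\exp(\theta^{*T}X_{n,ij}(t))$ at the intermediate point via $\|\theta^*\|\leq\tau$) rather than $\hat{K}^3\Lambda$, since $\Lambda$ bounds $C_{n,ij}(t)\exp(\theta^TX_{n,ij}(t))$ only for $\theta\in\Theta$ and the intermediate point need not lie in $\Theta$ if $\Theta$ is not convex; this does not affect the conclusion.
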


\begin{lemma}
\label{lem_ass:exp1}
Denote by $T_{n,k}$ for $n,k\in\IN$ the grid
\begin{equation}
\label{eq:grid1}
T_{n,k}:=\left\{\frac{j}{hn^k}:j\in\IN, \frac{j}{hn^k}\in[0,T]\right\}.
\end{equation}
Then, for any $k_0$ there is $C>0$ such that
$$\sup_{t_0\in T_{n,k_0}}\IP\left(\left\|\frac{\partial_{\theta}\ell_n(\theta_0;t_0)}{r_n\sqrt{\bar{p}_n(t_0)}}\right\|\geq C\sqrt{\frac{\log r_n}{r_nh}}\right)=o\left(h^{-1}n^{-k_0}\right).$$
\end{lemma}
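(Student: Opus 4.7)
The plan is to combine the $\Delta_n$-partition supplied by Assumption (D3) with an exponential inequality for (unbounded) martingale sums (referred to as Lemma \ref{lem:exponential_inequality} in the Appendix) so as to obtain, for each $t_0\in T_{n,k_0}$, a tail bound decaying faster than $h^{-1}n^{-k_0}$. Since all covariates and kernel weights are predictable, the score
\begin{align*}
\partial_\theta\ell_n(\theta_0;t_0)=\sum_{(i,j)\in L_n}\int_0^TK_{h,t_0}(t)X_{n,ij}(t)\bigl(dN_{n,ij}(t)-\lambda_{n,ij}(t,\theta_0)\,dt\bigr)=\sum_{(i,j)\in L_n}\int_0^TK_{h,t_0}(t)X_{n,ij}(t)\,dM_{n,ij}(t)
\end{align*}
is a sum of counting-process martingale terminal values, and it suffices (up to a factor $q$) to control each coordinate $r\in\{1,\ldots,q\}$ separately, i.e.\ to show for $S_n^{(r)}(t_0):=\sum_{(i,j)\in L_n}\int_0^TK_{h,t_0}(t)X_{n,ij}^{(r)}(t)dM_{n,ij}(t)$ that
$$\IP\Bigl(|S_n^{(r)}(t_0)|\geq C\sqrt{r_n\bar p_n(t_0)\log r_n/h}\Bigr)=o(h^{-1}n^{-k_0}),$$
uniformly in $t_0\in T_{n,k_0}$.

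The main step is to apply the $\Delta_n$-partition from (D3) with $\Delta_n=a\log n$ and the cutoff $\Gamma_n^{t_0}$ which keeps all block sizes reasonable. For each group type $k$ one defines the block martingale sums
$$U_{k,m}(t_0):=\sum_{(i,j)\in L_n}I_{n,ij}^{k,m,t_0}(\Delta_n)\int_0^TK_{h,t_0}(t)X_{n,ij}^{(r)}(t)\,dM_{n,ij}(t)\cdot\Gamma_n^{t_0}.$$
Because $I_{n,ij}^{k,m,t_0}$ is measurable with respect to $\mathcal F^n_{t-h}$ by (D3), the grouping preserves the martingale structure; the predictable quadratic variation of $U_{k,m}(t_0)$ is, by (A3,\ref{ass:boundedness}) and the diagonal, one-shared-vertex and no-shared-vertex asymptotic uncorrelation estimates from (D2), bounded by a constant multiple of $E_{k,m}^{n,t_0}/h$. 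Coupling the blocks of fixed type $k$ to independent copies as in Lemma \ref{lem:grouping_lemma} costs at most $\Kappa r_n\beta_{t_0}(\Delta_n)\leq\alpha_1\Kappa r_n n^{-a\alpha_2}$ by the mixing decay in (D3), after which Lemma \ref{lem:exponential_inequality} applies to the resulting independent martingale sum.

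Assembling these pieces yields, for each $t_0\in T_{n,k_0}$, a tail bound of the form
$$\IP\bigl(|S_n^{(r)}(t_0)|\geq C\sqrt{r_n p_n^*\log r_n/h}\bigr)\leq 2\Kappa\,(r_n p_n^*)^{-\gamma C^2}+\alpha_1\Kappa r_n n^{-a\alpha_2}+\IP(\Gamma_n^{t_0}=0)$$
for some $\gamma>0$ depending only on the constants in (D2)--(D3). Since $\IP(\Gamma_n^{t_0}=0)$ is already exponentially small uniformly in $t_0$ by (D3), and since $r_n p_n^*\to\infty$ by (A4,\ref{ass:bw}) and (A6), one first chooses $a$ large (so that $a\alpha_2>k_0+2$) and then $C$ large (so that $\gamma C^2$ dominates $k_0+\log h^{-1}/\log n$), making every summand $o(h^{-1}n^{-k_0})$ uniformly in $t_0$.

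The main obstacle is the careful verification that the block martingales $U_{k,m}(t_0)$ satisfy the predictable-quadratic-variation hypothesis of Lemma \ref{lem:exponential_inequality}; this is precisely where the three orders of covariance decay in (D2) are used to control the variance contributions of pairs sharing two, one, or no vertices, respectively, after which boundedness of $X_{n,ij}$ and $\lambda_{n,ij}$ from (A3,\ref{ass:boundedness}) converts these into the required $E_{k,m}^{n,t_0}$-scaled bounds. Uniformity over the grid $T_{n,k_0}$ is not a separate issue: the right-hand side above depends on $t_0$ only through $p_n(t_0)$ and $\Gamma_n^{t_0}$, both handled uniformly by (A6) and (D3), so the same constant $C$ works for every $t_0\in T_{n,k_0}$.
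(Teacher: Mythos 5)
Your overall route is the same as the paper's: write $\partial_\theta\ell_n(\theta_0;t_0)$ as $\sum_{(i,j)}\int_0^TK_{h,t_0}(t)X_{n,ij}(t)\,dM_{n,ij}(t)$ (the drift part vanishes because $\partial_\theta P_n(\theta_0,t_0)=0$), reduce to coordinates, and feed the result into the martingale exponential inequality of Lemma \ref{lem:exponential_inequality} together with the $\Delta_n$-partition, mixing decay and $\Gamma_n^{t_0}$ from (D3). The paper in fact applies Lemma \ref{lem:exponential_inequality} as a black box in exactly this way, after only a rescaling from $\sqrt{\bar p_n(t_0)}$ to $\bar p_n(t_0)$, so the blocking, coupling and Bernstein steps you sketch are already packaged inside that lemma; re-deriving them is not needed.

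There is, however, a genuine gap in the one step you flag as "the main obstacle". You claim that the hypothesis of Lemma \ref{lem:exponential_inequality} to be verified is a bound on the \emph{predictable quadratic variation} of the block sums $U_{k,m}(t_0)$, and that this follows from the three covariance-decay conditions in (D2). Neither half is right. First, the Bernstein-type bound underlying Lemma \ref{lem:network_exp} (Proposition \ref{prop:bernstein}) requires control of \emph{all} moments, $\IE(|U_{k,m}|^{\rho})\leq\frac{\rho!}{2}E_{k,m}^{n,t_0}\sigma^2(E_k^{n,t_0}c_1)^{\rho-2}$ for every $\rho\geq2$; for block sums of unbounded martingale terminal values a second-moment bound alone does not yield exponential tails, and this is precisely why the paper proves the full moment hierarchy inside Lemma \ref{lem:exponential_inequality} by an It\^o-formula recursion and induction on $\rho$, using only the boundedness of $X_{n,ij}$ and $\lambda_{n,ij}$ from (A3) and the block-size cutoff $S_k\leq c_3\sqrt{r_n\bar p_n(t_0)/\log r_n\bar p_n(t_0)}$ enforced by $\Gamma_n^{t_0}$. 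Second, (D2) plays no role here: it bounds joint activation probabilities of edges, and within a single block the relevant cross terms of the martingale integrals vanish anyway because distinct counting-process martingales are orthogonal with respect to the common filtration. So either you invoke Lemma \ref{lem:exponential_inequality} directly (in which case the proof collapses to the paper's short argument: union bound over the $O(hn^{k_0})$ grid points, then choose $C$ large so the polynomial term beats $h^{-1}n^{-k_0}$, with the $\beta$- and $\Gamma$-terms handled by (D3)), or you must supply the It\^o-formula moment induction yourself; the (D2)-based variance argument as written does not close the proof.
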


\begin{lemma}
\label{lem_ass:exp2}
Define for $k,n\in\IN$ the grid
\begin{equation}
\label{eq:grid2}
T_{n,k}:=\left\{\left(\frac{j}{hn^{k}},\frac{j_1}{n^{k}},...,\frac{j_p}{n^{k}}\right)\in \IT\times\Theta:\,j,j_1,...,j_p\in\IZ\right\}.
\end{equation}
Then, for any $k_0\in\IN$, there is $C>0$ such that
\begin{eqnarray*}
&&\IP\left(\sup_{(t_0,\theta)\in T_{n,k_0}}\left|\frac{1}{r_n\bar{p}_n(t_0)}\sum_{(i,j)\in L_n}\int_0^TK_{h,t_0}\left(s\right)\tilde{H}_{n,ij}(s,\theta)ds\right|>C\cdot\sqrt{\frac{\log r_np_n}{r_np_n\cdot h}}\right)\to0.
\end{eqnarray*}
\end{lemma}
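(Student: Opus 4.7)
My plan is to apply the exponential inequality of Lemma \ref{lem:sufficient_mixing} pointwise at each $(t_0,\theta) \in T_{n,k_0}$ and then take a union bound, exploiting that $|T_{n,k_0}|$ grows only polynomially in $n$. To this end define $Z_{n,ij}(t_0,\theta) := \int_0^T K_{h,t_0}(s) H_{n,ij}(s,\theta)\,ds$. By (A2) and (A3, \ref{ass:boundedness}), the integrand is uniformly bounded when $C_{n,ij}(s)=1$, so $|Z_{n,ij}(t_0,\theta)| \le M$ for a deterministic $M$, and $Z_{n,ij}(t_0,\theta)=0$ unless $\sup_{s\in[t_0-h,t_0+h]} C_{n,ij}(s)=1$. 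Furthermore $\IE Z_{n,ij}(t_0,\theta) = \int K_{h,t_0}(s)\,p_n(s)\,\Sigma(s,\theta)\,ds$, so the quantity inside the supremum equals $\tfrac{1}{r_n\bar p_n(t_0)}\sum_{(i,j)}\bigl(Z_{n,ij}(t_0,\theta)-\IE Z_{n,ij}(t_0,\theta)\bigr)$, a centred sum of bounded random variables.

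For fixed $(t_0,\theta)$ I would invoke Lemma \ref{lem:sufficient_mixing} with the $\Delta_n$-partition supplied by (D3) (taking $\Delta_n=a\log n$ with $a$ large enough). The coverage inequality in (D3) implies \eqref{eq:GeneralCoverCondition} for $Z_{n,ij}(t_0,\theta)$, and the variance/covariance bounds demanded by the lemma are precisely those (D3) asserts for the choice \eqref{eq:UUM2}, after identifying $|E|_{n,t_0}$ with a constant multiple of $r_n\bar p_n(t_0)$ under (A6). The mixing coefficient $\beta_{t_0}(\Delta_n)$ decays like $\exp(-a\alpha_2 \log n)$, hence faster than any negative polynomial, and $\IP(\Gamma_n^{t_0}=0)$ vanishes exponentially by assumption.

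The key subtlety is a rate mismatch: the lemma produces a threshold $3x\sqrt{\log|E|_{n,t_0}/|E|_{n,t_0}}\asymp 3x\sqrt{\log(r_np_n)/(r_np_n)}$, whereas the statement demands the larger threshold $C\sqrt{\log(r_np_n)/(r_np_n h)}$. I therefore choose $x = C/(3\sqrt{h})$, which forces $x\to\infty$ and degrades the Bernstein exponent $c_2 x^2/(2(3CM^2+4Mc_3 x))$ from quadratic to essentially linear in $x$, yielding at each grid point a pointwise probability bound of order
\[
2\mathcal{K}(r_np_n)^{-c'C/\sqrt h} + \beta_{t_0}(\Delta_n)\,\mathcal{K} r_n + \IP(\Gamma_n^{t_0}=0),
\]
for an absolute constant $c'>0$.

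Finally I union-bound over $T_{n,k_0}$, whose cardinality is $O(h\,n^{(q+1)k_0})$, i.e.\ polynomial in $n$. The main obstacle is verifying that the pointwise bound $(r_np_n)^{-c'C/\sqrt h}$ still dominates this polynomial after the union bound; this follows from (A4, \ref{ass:bw}), since $\sqrt{r_np_n}\,h\gg(\log r_n)^{3/2}$ and $h\log^2 r_n\to 0$ jointly yield $r_np_n\gg (\log r_n)^3/h^2$, whence $\log(r_np_n)/\sqrt h$ exceeds any fixed multiple of $\log n$ for $n$ large. Choosing $C>0$ fixed but arbitrary, the bounded-$\beta$ and bounded-$\IP(\Gamma_n^{t_0}=0)$ contributions likewise vanish after multiplication by $|T_{n,k_0}|$, so the total supremum probability tends to zero, proving the claim.
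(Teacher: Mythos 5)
Your proposal is correct and follows essentially the same route as the paper: a union bound over the polynomially sized grid $T_{n,k_0}$, centering via $\tilde{H}_{n,ij}(s,\theta)=H_{n,ij}(s,\theta)-\IE(H_{n,ij}(s,\theta))$, and a pointwise application of Lemma \ref{lem:sufficient_mixing} with the partition and covariance conditions from (D3, \eqref{eq:UUM2}), exactly as the paper does by referring back to the discussion around \eqref{eq:smcs2}. You are in fact more explicit than the paper about the rate bookkeeping (the choice $x\asymp C/\sqrt{h}$ and why the resulting exponent beats $\#T_{n,k_0}$ under (A4, \ref{ass:bw})); the only details left implicit are the reduction to scalar entries of the matrix $\tilde{H}_{n,ij}$ and applying the one-sided bound to $\pm Z_{n,ij}$, both of which the paper also treats as routine.
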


The above lemmas hold under the assumptions in Theorem \ref{thm:test_asymptotics}. Therefore, we can use all their statements in the following. We begin the proof of Theorem \ref{thm:test_asymptotics} by showing the following small lemmas.
\begin{lemma}
\label{lem:p_continuous}
Suppose (A4, \ref{ass:kernel_hoelder}) holds and that $p_n>0$. Let $\alpha_p:=\alpha_K$. Then it holds for any $t_0,t_1\in[h,T-h]$ and all $n\in\IN$ that
$$\left|\frac{1}{\bar{p}_n(t_0)}-\frac{1}{\bar{p}_n(t_1)}\right|\leq H_{n,p}\cdot|t_0-t_1|^{\alpha_p},\textrm{ where }H_{n,p}:=\frac{4H_K\max_{s\in[0,t]}p_n(s)}{h^{\alpha_K}p_n^2}$$
Suppose that, in addition, (A6) holds. Then,
$$\int_{-2}^2\frac{\bar{p}_n(t)}{\bar{p}_n(t-hv)}dv\textrm{ and }\frac{p_n(t)}{\bar{p}_n(t)}$$
are uniformly bounded in $n$ and $t$.
\end{lemma}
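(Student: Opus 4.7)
The plan is to bound numerator and denominator of the telescoping identity
\[
\frac{1}{\bar{p}_n(t_0)} - \frac{1}{\bar{p}_n(t_1)} = \frac{\bar{p}_n(t_1) - \bar{p}_n(t_0)}{\bar{p}_n(t_0)\,\bar{p}_n(t_1)}
\]
separately. For the denominator, since $t_0, t_1 \in [h, T-h]$ both kernels lie fully inside $[0,T]$, so $\bar{p}_n(t_i)$ is a convex combination of values of $p_n$; hence $\bar{p}_n(t_i) \geq p_n$ and the product is at least $p_n^2$, which accounts for the $p_n^2$ in $H_{n,p}$. (This implicitly uses $\int K = 1$, standard for a smoothing kernel; otherwise one carries an extra factor $\int K$ throughout.) For the numerator I would expand
\[
\bar{p}_n(t_1) - \bar{p}_n(t_0) = \int_0^T \tfrac{1}{h}\Bigl[K\bigl(\tfrac{s-t_1}{h}\bigr) - K\bigl(\tfrac{s-t_0}{h}\bigr)\Bigr]\,p_n(s)\,ds,
\]
bound the integrand pointwise by $(H_K/h^{1+\alpha_K})\,|t_0-t_1|^{\alpha_K}\,\max_s p_n(s)$ via (A4, \ref{ass:kernel_hoelder}), and note that the integrand vanishes outside a set of Lebesgue measure at most $2h + |t_0-t_1|$. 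When $|t_0-t_1|\leq 2h$ the measure factor is $\leq 4h$, the $1/h$ cancels, and the resulting estimate is exactly $H_{n,p}\,|t_0-t_1|^{\alpha_p}$.

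For the complementary regime $|t_0-t_1|>2h$ the Hoelder bound has become wasteful, so I would replace it by the crude estimate $|\bar{p}_n(t_1) - \bar{p}_n(t_0)| \leq 2\max_s p_n(s)$ (each kernel average is trivially sandwiched by $\max_s p_n(s)$) and compare with $H_{n,p}\,|t_0-t_1|^{\alpha_p} \geq (4 H_K\cdot 2^{\alpha_K}\max_s p_n(s))/p_n^2$. The remaining numerical inequality $4 H_K \cdot 2^{\alpha_K} \geq 2$ comes for free: any Hoelder constant of a non-negative kernel with unit mass on $[-1,1]$ must satisfy $H_K \geq 2^{-1-\alpha_K}$ (the peak of $K$ is at least $1/2$ while $K(\pm 1)=0$ by continuity, so Hoelder applied between the peak and the boundary gives the bound). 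Thus the single constant $H_{n,p}$ works in both regimes.

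The second statement follows almost immediately from the same sandwich $p_n \leq \bar{p}_n(t) \leq \max_s p_n(s)$, now applied for any $t \in [h, T-h]$. It gives
\[
\frac{p_n(t)}{\bar{p}_n(t)} \leq \frac{\max_s p_n(s)}{p_n} \qquad \text{and} \qquad \frac{\bar{p}_n(t)}{\bar{p}_n(t-hv)} \leq \frac{\max_s p_n(s)}{p_n},
\]
both uniformly bounded in $n$ and $t$ by (A6). For the second ratio one needs $t-hv \in [h, T-h]$ for $v \in [-2, 2]$; this holds for all relevant $t$ (e.g.\ $t\in\IT \subseteq [\delta, T-\delta]$) once $h$ is small enough that $3h \leq \delta$, which is eventually true under (A1) and (A4, \ref{ass:bw}). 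Integrating over $v\in[-2,2]$ adds only a harmless factor of $4$.

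The main conceptual hurdle is purely bookkeeping: one has to notice that the Hoelder estimate on $K$ is tight only when $|t_0-t_1| \lesssim h$, and that the generous factor $h^{-\alpha_K}$ built into $H_{n,p}$ gives more than enough slack to absorb the far-regime by a trivial estimate. Once the regime split is made, no analytic ingredient beyond pointwise Hoelder control of $K$ and the uniform comparability of $\max_s p_n(s)$ and $p_n$ from (A6) is required.
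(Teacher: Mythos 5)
Your argument is correct, and for the first claim it is in essence the paper's own one-line computation (telescoping, $\bar{p}_n(t_i)\geq p_n$ from the kernel having unit mass inside $[0,T]$, pointwise Hoelder control of the kernel difference, and the small support of the integrand). The one place you diverge is the case split at $|t_0-t_1|=2h$: it is unnecessary, because the union of the two kernel supports $[t_0-h,t_0+h]\cup[t_1-h,t_1+h]$ has Lebesgue measure at most $4h$ in \emph{both} regimes (when the intervals are disjoint it is exactly $4h$), so the near-regime computation $\frac{1}{hp_n^2}\cdot\max_s p_n(s)\cdot H_K h^{-\alpha_K}|t_0-t_1|^{\alpha_K}\cdot 4h = H_{n,p}|t_0-t_1|^{\alpha_p}$ already covers all $t_0,t_1$; this is exactly what the paper does. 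Your far-regime detour via $H_K\geq 2^{-1-\alpha_K}$ is valid (it does use $\int K=1$ and continuity at $\pm1$, both available), but it buys nothing. For the second claim you take a genuinely different and arguably cleaner route: the direct sandwich $p_n\leq\bar{p}_n(t)\leq\max_s p_n(s)$ plus (A6), whereas the paper derives $\frac{\bar{p}_n(t)}{\bar{p}_n(t-hv)}\leq 2^{\alpha_p}\bar{p}_n(t)h^{\alpha_p}H_{n,p}+1$ from the first claim and then invokes (A6) to bound $\bar{p}_n(t)h^{\alpha_p}H_{n,p}$. Both versions need the same implicit domain restriction ($t-hv\in[h,T-h]$, i.e.\ $t$ bounded away from the endpoints and $h$ small), which you at least make explicit; your sandwich avoids the dependence on the Hoelder constant $H_{n,p}$ altogether.
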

\begin{proof}
The proof is just a direct calculation: Note that $\bar{p}_n(t)\geq p_n$ for $t\in[h,T-h]$ and hence
\begin{align*}
\left|\frac{1}{\bar{p}_n(t_0)}-\frac{1}{\bar{p}_n(t_1)}\right|&\leq\frac{1}{p_n^2}\left|\bar{p}_n(t_1)-\bar{p}_n(t_0)\right| \leq\frac{1}{hp_n^2}\int_0^Tp_n(s)\left|K\left(\frac{t_0-s}{h}\right)-K\left(\frac{t_1-s}{h}\right)\right|ds \\
&\leq\frac{4H_K\max_{s\in[0,t]}p_n(s)}{h^{\alpha_K}p_n^2}|t_0-t_1|^{\alpha_K}.
\end{align*}
The second statement is now a direct consequence by noting that for $v\in[-2,2]$
$$\frac{\bar{p}_n(t)}{\bar{p}_n(t-hv)}\leq2^{\alpha_p}\bar{p}_n(t)h^{\alpha_p}H_{n,p}+1.$$
The right hand side is uniformly bounded under (A6). The boundedness of $\frac{p_n(t)}{\bar{p}_n(t)}$ is also a direct consequence of (A6).
\end{proof}

\begin{lemma}
\label{lem:bounded_inverse}
Suppose Assumption (A5) holds. There exist $M,\rho\in(0,\infty)$ such that for all $t$ and all matrices $X\in\IR^{p\times p}$ with $\|\Sigma(t,\theta_0)-X\|<\rho$ it holds that $X$ is invertible and $\|X^{-1}\|<M$.
\end{lemma}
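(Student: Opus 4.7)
The statement is a uniform perturbation result: every matrix within a fixed distance $\rho$ of the curve $\{\Sigma_t : t \in [0,T]\}$ is invertible with a uniform bound on the inverse. My plan is to combine compactness of $[0,T]$ with a standard Neumann-series perturbation argument.

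First I would extract the uniform bound $M_0 := \sup_{t \in [0,T]} \|\Sigma_t^{-1}\| < \infty$. Assumption (A5) gives that $\Sigma_t$ is invertible for each $t \in [0,T]$ and that $t \mapsto \Sigma_t$ is continuously differentiable, hence continuous. Matrix inversion is continuous on the open set of invertible matrices, so $t \mapsto \Sigma_t^{-1}$ is continuous on the compact interval $[0,T]$, and therefore $t \mapsto \|\Sigma_t^{-1}\|$ attains a finite maximum $M_0$.

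Next I would choose $\rho := 1/(2M_0)$ and $M := 2M_0$, and verify the conclusion by a Neumann-series argument. For any $t \in [0,T]$ and any $X$ with $\|\Sigma_t - X\| < \rho$, write
\begin{equation*}
X \;=\; \Sigma_t\bigl(I - \Sigma_t^{-1}(\Sigma_t - X)\bigr),
\end{equation*}
and note that $\|\Sigma_t^{-1}(\Sigma_t - X)\| \le M_0 \rho = 1/2 < 1$. Thus $I - \Sigma_t^{-1}(\Sigma_t - X)$ is invertible with inverse bounded by $(1 - M_0\rho)^{-1} = 2$, and consequently $X$ is invertible with
\begin{equation*}
\|X^{-1}\| \;\le\; \bigl\|\bigl(I - \Sigma_t^{-1}(\Sigma_t - X)\bigr)^{-1}\bigr\| \cdot \|\Sigma_t^{-1}\| \;\le\; \frac{M_0}{1 - M_0\rho} \;=\; 2M_0 \;=\; M.
\end{equation*}

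There is no real obstacle here; the only subtlety is ensuring that the bound $M_0$ is genuinely uniform in $t$, which is where the compactness of $[0,T]$ and the continuity granted by (A5) are used. The rest is the textbook perturbation bound for invertibility.
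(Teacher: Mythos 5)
Your proof is correct, and it takes a cleaner route than the paper's for the invertibility part. The paper splits the claim in two: it first proves the existence of some $\rho$ guaranteeing invertibility by a compactness-and-contradiction argument (extracting a convergent subsequence $t_{n_k}\to t_0$ and using that the set of singular matrices, being $\det^{-1}(\{0\})$, is closed), and only afterwards shrinks $\rho$ so that $\rho\cdot\sup_t\|\Sigma_t^{-1}\|\leq\tfrac{1}{2}$ and derives the bound $\|X^{-1}\|\leq 2\sup_t\|\Sigma_t^{-1}\|$ from the identity $X^{-1}=X^{-1}(\Sigma_t-X)\Sigma_t^{-1}+\Sigma_t^{-1}$. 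You instead factor $X=\Sigma_t\bigl(I-\Sigma_t^{-1}(\Sigma_t-X)\bigr)$ and invoke the Neumann series, which delivers invertibility and the norm bound in a single step with explicit constants $\rho=1/(2M_0)$, $M=2M_0$; the shared ingredient in both arguments is the uniform bound $M_0=\sup_{t}\|\Sigma_t^{-1}\|<\infty$, obtained from (A5) and compactness of $[0,T]$ exactly as you do. Your version is constructive where the paper's first step is not, and avoids the detour through the topology of the singular set; the paper's contradiction argument has the mild advantage of separating the qualitative claim (invertibility persists under small perturbations) from the quantitative one, but buys nothing here. One cosmetic remark: the statement asks for a strict inequality $\|X^{-1}\|<M$; since $\|\Sigma_t-X\|<\rho$ is strict, your chain of estimates is in fact strict as well, so no adjustment is needed.
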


\begin{proof}
We begin by showing that
\begin{equation}
\label{eq:rho_existence}
\exists\rho>0\,\forall t\in[0,T]\forall X\in\IR^{p\times p}, \|X-\Sigma(t,\theta_0)\|<\rho:\quad X\textrm{ is invertible}.
\end{equation}
Define $\rho_n:=\frac{1}{n}$ and suppose the statement was wrong. Then, we find for all $n\in\IN$ numbers $t_n\in[0,T]$ and matrices $X_n\in\IR^{p\times p}$ such that $\|X_n-\Sigma(t_n,\theta_0)\|<\rho_n$ but $X_n$ is not invertible. Since $(t_n)_{n\in\IN}\subseteq[0,T]$ and $[0,T]$ is compact, there is a subsequence $(t_{n_k})_{k\in\IN}$ such that $t_{n_k}\to t_0\in[0,T]$ for $k\to\infty$. By continuity of $\Sigma(t,\theta_0)$ in $t_0$ we conclude that
$$\|X_{n_k}-\Sigma(t_0,\theta_0)\|\leq\|X_{n_k}-\Sigma(t_{n_k},\theta_0)\|+\|\Sigma(t_{n_k},\theta_0)-\Sigma(t_0,\theta_0)\|\to0$$
and hence $X_{n_k}\to\Sigma(t_0,\theta_0)$ for $k\to\infty$. Note finally that the space of non-invertible matrices is given by $\textrm{det}^{-1}(\{0\})$. Since $\textrm{det}:\IR^{p\times p}\to\IR$ is continuous, the set of non-invertible matrices is closed. By construction $X_{n_k}$ is non-invertible and hence $\Sigma(t_0,\theta_0)$ is non-invertible, too. This is a clear contradiction to (A5).

In order to find $M>0$ choose $\rho$ in \eqref{eq:rho_existence} such that $\rho\cdot\sup_{t\in[0,T]}\|\Sigma(t,\theta_0)^{-1}\|\leq\frac{1}{2}$. This is possible because inverting a matrix is a continuous operation and by continuity of $t\mapsto\Sigma(t,\theta_0)$ as in (A5). Let now $t$ and $X$ be as in \eqref{eq:rho_existence}. By using the fact that the spectral-norm of a matrix is sub-multiplicative, we find
$$\|X^{-1}\|\leq\|X^{-1}\|\cdot\|X-\Sigma(t,\theta_0)\|\cdot\|\Sigma(t,\theta_0)^{-1}\|+\|\Sigma(t,\theta_0)^{-1}\|\leq\frac{1}{2}\|X^{-1}\|+\|\Sigma(t,\theta_0)^{-1}\|.$$
Hence, $\|X^{-1}\|\leq2\sup_{t\in[0,T]}\|\Sigma(t,\theta_0)^{-1}\|=:M<\infty$.
\end{proof}

\begin{lemma}
\label{lem:diff_integral}
Under (A3, \ref{ass:boundedness}), the functions $\ell_n(\theta,t)$ and $P_n(\theta,t)$ are twice differentiable with respect to $\theta$ and the derivatives can be computed by interchanging integral and differential.
\end{lemma}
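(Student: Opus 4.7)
This is a routine application of Leibniz's rule for differentiation under the integral sign; the plan is to check its hypotheses term by term. Both $\ell_n(\theta,t_0)$ and $P_n(\theta,t_0)$ are finite sums of integrals in which the dependence on $\theta$ is either linear (so that the first-order interchange is trivial and the second $\theta$-derivative vanishes) or of the form $\exp(\theta^T X_{n,ij}(t))$. For the latter, the pointwise first and second $\theta$-derivatives differ from the integrand only by an extra factor of $X_{n,ij}(t)$ or $X_{n,ij}(t)X_{n,ij}(t)^T$.

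Under (A2) and (A3, \ref{ass:boundedness}), $\Theta$ is bounded by $\tau$ and $\|X_{n,ij}(t)\|\leq\hat{K}$ almost surely, hence $\exp(\theta^T X_{n,ij}(t))\leq\exp(\tau\hat{K})$ uniformly for $\theta$ ranging over $\Theta$. Together with the boundedness of $K_{h,t_0}$ and $C_{n,ij}$, the first two $\theta$-derivatives of each integrand are therefore dominated almost surely by a constant multiple of $K_{h,t_0}(t)$, which is Lebesgue-integrable on $[0,T]$ and (using that $\lambda_{n,ij}$ is bounded, so that $N_{n,ij}(T)<\infty$ almost surely) also integrable against $dN_{n,ij}$. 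Dominated convergence then justifies the exchange of $\partial_\theta$ with both the Lebesgue and Stieltjes integrals, and a second pass of the same argument yields the result for $\partial_\theta^2$.

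For $P_n(\theta,t_0)$ the only mildly delicate point is that $\log\lambda_{n,ij}(\theta,s)$ is ill-defined on $\{C_{n,ij}(s)=0\}$; however, on this set the outer factor $C_{n,ij}(s)\lambda_{n,ij}(\theta_0,s)$ forces the product $C_{n,ij}(s)\log\lambda_{n,ij}(\theta,s)\lambda_{n,ij}(\theta_0,s)$ to vanish (with the standard convention $0\cdot\log 0=0$), and on $\{C_{n,ij}(s)=1\}$ the integrand simplifies to $K_{h,t_0}(s)C_{n,ij}(s)\big[\theta^T X_{n,ij}(s)\exp(\theta_0^T X_{n,ij}(s))-\exp(\theta^T X_{n,ij}(s))\big]$, which is smooth in $\theta$ and to which the same domination argument applies. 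I do not expect any real technical obstacle here: the whole statement reduces to verifying uniform domination, which is immediate from the boundedness assumptions already in force.
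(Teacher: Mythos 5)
Your proposal is correct and follows essentially the same route as the paper's proof: split the (martingale/counting-process) integrals into a sum plus a Lebesgue integral, note the integrands are smooth in $\theta$, and use the boundedness of $\Theta$ and of the covariates to dominate the first two $\theta$-derivatives by a $\theta$-independent integrable function so that differentiation and integration may be interchanged. Your treatment is in fact somewhat more explicit than the paper's (which only sketches the domination argument and does not comment on the $\log\lambda_{n,ij}$ convention on $\{C_{n,ij}(s)=0\}$), but no new idea is involved.
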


\begin{proof}
The integral with respect to $M_{n,ij}$ can be split in an integral with respect to $N_{n,ij}$ (which is a sum) and a regular Lebesgue integral. Therefore, the stochastic integration is not inducing additional difficulties and we can apply standard theory for Lebesgue integration. The integrands are clearly differentiable with respect to $\theta$. Boundedness of the covariates guarantees that the derivatives can be bounded by an integrable function (which does not depend on $\theta$). Then the integral and derivative may be interchanged.
\end{proof}

\begin{lemma}
\label{lem:Fubini}
Under Assumptions (A4, \ref{ass:kernel_hoelder}), (A5) and (A3, \ref{ass:boundedness}) and $p_n>0$ we have that for any $(i,j),(k,l)\in L_n$ and any $r\in\{1,...,p\}$ the order of integration in the following integrals can be interchanged
\begin{align*}
&\int_{[0,T]^3}K_{h,t'}\left(s\right)K_{h,t'}\left(t\right)\frac{\partial_{\theta}\lambda_{n,ij}(s,\theta_0)^T}{\lambda_{n,ij}(s,\theta_0)}\Sigma_{t'}^{-T}\Sigma_{t'}^{-1}\frac{\partial_{\theta}\lambda_{n,kl}(t,\theta_0)}{\lambda_{n,kl}(t,\theta_0)}\frac{w(t')}{\bar{p}_n(t')}dt'dM_{n,ij}(s)dM_{n,kl}(t), \\
&\int_{[0,T]^2}K_{h,t_0}\left(\Sigma_{t_0}^{-1}\right)_{r\cdot}\partial_{\theta}\log\lambda_{n,ij}(\theta_0,t)w(t_0)dM_{n,ij}(t)dt_0.
\end{align*}
\end{lemma}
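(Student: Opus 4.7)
The plan is to reduce both statements to a routine Fubini--Tonelli application on a compact product domain equipped with a finite product measure. First I would decompose $dM_{n,ij}(s)=dN_{n,ij}(s)-\lambda_{n,ij}(s,\theta_0)\,ds$; under (A3, \ref{ass:boundedness}) the intensity is bounded by $\Lambda$, so the total variation satisfies $\int_0^T d|M_{n,ij}|(s)\le N_{n,ij}(T)+\Lambda T$, which is almost surely finite because a counting process with bounded intensity admits only finitely many jumps on $[0,T]$. Consequently each multiple integral in the statement is a Stieltjes integral against a finite product measure on a compact rectangle $[0,T]^k$, and the desired exchange of order will follow once the integrand is shown to be absolutely integrable with respect to that product measure.

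The second step is a factor-by-factor pointwise bound on the integrand. For the first integral, $|K_{h,\cdot}(\cdot)|\le K/h$ by (A4, \ref{ass:kernel_hoelder}); the ratio $\partial_\theta\lambda_{n,ij}(s,\theta_0)/\lambda_{n,ij}(s,\theta_0)$ collapses to $X_{n,ij}(s)$ on $\{C_{n,ij}(s)=1\}$, which is bounded by $\hat{K}$ under (A3, \ref{ass:boundedness}), while on $\{C_{n,ij}(s)=0\}$ both $dN_{n,ij}$ and the compensator $\lambda_{n,ij}(s,\theta_0)\,ds$ vanish so that the ratio may be set to zero by convention; $\|\Sigma_{t'}^{-1}\|$ is uniformly bounded on $[0,T]$ via Lemma \ref{lem:bounded_inverse} together with (A5); and $w(t')/\bar{p}_n(t')$ is bounded because $w$ is supported away from $\{0,T\}$, so that $\bar{p}_n(t')\ge p_n>0$ there for $n$ large. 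The second integral is handled identically, using the analogous bounds on the entries of $\Sigma_{t_0}^{-1}$, on $K_{h,t_0}$, on $w(t_0)$, and on $\partial_\theta\log\lambda_{n,ij}(\theta_0,t)=X_{n,ij}(t)$. Multiplying the resulting pointwise bound by the total masses $(N_{n,ij}(T)+\Lambda T)(N_{n,kl}(T)+\Lambda T)\cdot T$, respectively $(N_{n,ij}(T)+\Lambda T)\cdot T$, yields a finite upper bound on $\int|\text{integrand}|\,d(\text{product measure})$, and Fubini--Tonelli permits arbitrary reordering.

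The main, rather mild, obstacle is ensuring that the factor $1/\bar{p}_n(t')$ does not blow up on the effective domain of integration. Once $t'$ is restricted to the support of $w$, which lies in $[\delta,T-\delta]$, the kernel average $\bar{p}_n(t')=\int_0^T K_{h,t'}(s)p_n(s)\,ds$ inherits the lower bound $p_n>0$ for $n$ large enough (so that $h\le\delta$), after which the factor is uniformly bounded. Beyond this point the argument is a direct pairing of each factor with its uniform bound, combined with the classical theorem, and no further analytical work is required.
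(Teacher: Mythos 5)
Your proposal is correct and follows essentially the same route as the paper: split $dM_{n,ij}$ into the counting-process part (a finite sum, since the bounded intensity gives a.s.\ finitely many jumps) and the Lebesgue compensator part, then verify absolute integrability of the integrand via the uniform bounds on the kernel, the covariates, $\|\Sigma_{t'}^{-1}\|$ (Lemma \ref{lem:bounded_inverse}) and $w(t')/\bar{p}_n(t')$ on $\textrm{supp}(w)$, and invoke Fubini. Your explicit handling of the $0/0$ convention on $\{C_{n,ij}(s)=0\}$ is a detail the paper leaves implicit, but the argument is the same.
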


\begin{proof}
Note that similar to the proof of Lemma \ref{lem:diff_integral}, the integrals with respect to the martingales $M_{n,ij}$ can be split into two integrals. The integral with respect to the counting process is a sum and hence it is clear that the order of integration can be interchanged. For the other (Lebesgue) integrals we can apply Fubini's Theorem: We show that the iterated integrals exist even after taking the norm within the integral. For both iterated integrals we may apply Lemma \ref{lem:bounded_inverse} in order to remove $\Sigma$ from the consideration. Then, the innermost integral is in both cases an integral over the kernel, the weight function $w$ and in case of the first iterated integral of $\bar{p}_n(t)$. All these functions are bounded (cf Assumptions (A1), (A4, \ref{ass:kernel_hoelder}) and Lemma \ref{lem:p_continuous}) and hence the innermost integral can just be bounded by a constant. The outer integrals are now integrals over $\left\|\partial_{\theta}\lambda_{n,i}(t,\theta_0)\right\|$ or $\left\|\partial_{\theta}\lambda_{n,i}(t,\theta_0)\cdot\lambda_{n,i}(t,\theta_0)\right\|$, both of which are integrable by Assumption (A3, \ref{ass:boundedness}).
\end{proof}

\begin{lemma}
\label{lem:sigma_diff}
Suppose that (A2) and (A3, \ref{ass:boundedness}) hold true. Then, there is $\gamma_{\Sigma}:[0,T]\to(0,\infty)$ such that $\left\|\Sigma(s,\theta_1)-\Sigma(s,\theta_2)\right\|\leq\gamma_{\Sigma}(s)\|\theta_1-\theta_2\|$ for all $s\in[0,T]$ and all $\theta_1,\theta_2\in\Theta$, i.e., $\theta\mapsto\Sigma(t,\theta)$ is Lipschitz continuous in $\theta$ for every fixed $t$. Additionally,
$$\sup_{t\in[0,T]}\int_0^TK_{h,t}\left(s\right)\frac{p_n(s)}{\bar{p}_n(t)}\gamma_{\Sigma}(s)ds=O_P(1).$$
\end{lemma}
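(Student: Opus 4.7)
The plan is to establish the Lipschitz bound pointwise in $t$ from boundedness of the covariates and of $\Theta$, and then to observe that the integral in the second claim collapses by the very definition of $\bar{p}_n(t)$.

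First I would attack the Lipschitz property. Fix $s\in[0,T]$, an arbitrary pair $(i,j)\in L_n$ and $\theta_1,\theta_2\in\Theta$. For a fixed realization of $X:=X_{n,ij}(s)$, the map $\theta\mapsto\exp(\theta^T X)$ is smooth with gradient $X\exp(\theta^T X)$. By (A2) we have $\|\theta\|\leq\tau$ for all $\theta\in\Theta$ and by (A3, \ref{ass:boundedness}) we have $\|X\|\leq\hat K$ almost surely, so on the line segment between $\theta_1$ and $\theta_2$ the gradient is bounded in norm by $\hat K e^{\tau\hat K}$. The mean value theorem then gives
\[
|\exp(\theta_1^T X)-\exp(\theta_2^T X)|\leq \hat K e^{\tau\hat K}\|\theta_1-\theta_2\|.
\]
Multiplying by $X X^T$ (whose operator norm is at most $\hat K^2$) and taking conditional expectation given $C_{n,ij}(s)=1$, one obtains
\[
\|\Sigma(s,\theta_1)-\Sigma(s,\theta_2)\|\leq \hat K^3 e^{\tau\hat K}\|\theta_1-\theta_2\|,
\]
since by (A3, \ref{ass:cond_ind}) the conditional distribution does not depend on $(i,j)$ or $n$. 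Hence we may take $\gamma_\Sigma(s)\equiv C_\Sigma:=\hat K^3 e^{\tau\hat K}$, a finite deterministic constant.

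With $\gamma_\Sigma$ constant, the second claim becomes trivial: by definition $\bar{p}_n(t)=\int_0^T K_{h,t}(s)\,p_n(s)\,ds$, so
\[
\int_0^T K_{h,t}(s)\frac{p_n(s)}{\bar{p}_n(t)}\gamma_\Sigma(s)\,ds=\frac{C_\Sigma}{\bar{p}_n(t)}\int_0^T K_{h,t}(s)p_n(s)\,ds=C_\Sigma
\]
uniformly in $t\in[0,T]$ and $n\in\IN$, which is clearly $O_P(1)$.

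There is no substantive obstacle here; the only point needing care is the interchange of taking a supremum over the line segment (to apply the mean value bound) with taking the conditional expectation, which is standard because the mean value bound on the integrand is a deterministic number depending only on $\hat K$ and $\tau$. If one wished to state $\gamma_\Sigma$ as a genuinely $s$-dependent function (to leave room for relaxing (A3, \ref{ass:boundedness}) to, say, a uniformly bounded $s$-dependent moment of $\|X_{n,ij}(s)\|^3 e^{\tau\|X_{n,ij}(s)\|}$ conditional on $C_{n,ij}(s)=1$), the same computation goes through with $\gamma_\Sigma(s)$ equal to that conditional moment, and the second display then uses boundedness of $p_n(s)/\bar p_n(t)$ for $s$ in the effective kernel support (Lemma \ref{lem:p_continuous}) to conclude.
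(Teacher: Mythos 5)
Your proof is correct and is exactly the argument the paper intends: the paper's own proof is a one-line remark that the claim is immediate from the boundedness of the covariates (A3, 3) and of $\Theta$ (A2), and you have simply filled in the mean-value-theorem computation yielding the constant $\gamma_\Sigma\equiv\hat K^3e^{\tau\hat K}$ and the observation that the integral collapses to that constant by the definition of $\bar{p}_n(t)$.
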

\begin{proof}
The proof is immediate since the covariates are bounded by Assumption (A3, \ref{ass:boundedness}) and the parameter space $\Theta$ is bounded by Assumption (A2).
\end{proof}

\begin{lemma}
\label{lem:bounded_g}
Suppose that (A1), (A3, \ref{ass:boundedness}), (A4), (A5) and (A6) hold. For $g_{n,ij}(s):=h^{-\frac{1}{2}}\int_{[0,s)}f_{n,ij,ij}(s,t)dM_{n,ij}(t)$, we have for $n\to\infty$
$$\frac{1}{r_n}\IE\left(\int_0^Tg_{n,ij}(s)^2C_{n,ij}(s)\lambda_{n,ij}(s,\theta_0)ds\right)\to0.$$
\end{lemma}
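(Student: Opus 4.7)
The plan is to factor out the ``outer'' dependence on $s$ and then invoke the Itô isometry in the inner martingale integral for fixed $s$. Since $\Sigma_{t_0}$ and $\bar p_n(t_0)$ are deterministic, so is
\[
A_n(s,t) := \int_0^T h K_{h,t_0}(s) K_{h,t_0}(t)\,\Sigma_{t_0}^{-T}\Sigma_{t_0}^{-1}\,\frac{w(t_0)}{\bar p_n(t_0)}\,dt_0,
\]
and $g_{n,ij}(s) = h^{-1/2} X_{n,ij}(s)^{T} R_n(s)$ with $R_n(s) := \int_{[0,s)} A_n(s,t) X_{n,ij}(t)\,dM_{n,ij}(t)$. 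By Cauchy--Schwarz together with (A3, \ref{ass:boundedness}), the target integrand is dominated by $h^{-1}\hat K^{2}\Lambda\,\|R_n(s)\|^{2}$, so it suffices to control $\sup_{s\in[0,T]}\IE\|R_n(s)\|^{2}$.

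First I would bound the deterministic matrix $\|A_n(s,t)\|$. Lemma \ref{lem:bounded_inverse} gives a uniform bound on $\|\Sigma_{t_0}^{-T}\Sigma_{t_0}^{-1}\|$, $w$ is bounded by (A1), and $1/\bar p_n(t_0)\leq c/p_n$ on $\operatorname{supp} w$ by combining (A6) with Lemma \ref{lem:p_continuous}. The substitution $u=(t_0-s)/h$ reveals that $\int_0^T h K_{h,t_0}(s) K_{h,t_0}(t)\,dt_0$ equals a bounded convolution $(K\ast K)((t-s)/h)$ that vanishes for $|s-t|>2h$. Hence
\[
\|A_n(s,t)\| \;\leq\; \frac{C}{p_n}\,\Ind(|s-t|\leq 2h).
\]

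For each fixed $s$, the integrand $t\mapsto A_n(s,t) X_{n,ij}(t)$ is predictable in $t$ (the product of a deterministic function and the predictable process $X_{n,ij}$), so applying the Itô isometry component-wise to $M_{n,ij}$, together with $\IE\lambda_{n,ij}(t,\theta_0)\leq Cp_n(t)$ from (A3, \ref{ass:boundedness}), yields
\[
\IE\|R_n(s)\|^{2} \;\leq\; \frac{C}{p_n^{2}} \int_{\{|t-s|\leq 2h\}} p_n(t)\,dt \;\leq\; \frac{C'\,h}{p_n},
\]
where the last step uses (A6) to replace $p_n(t)$ by a multiple of $p_n$. Plugging back and using Fubini for nonnegative integrands, the whole expression is bounded by $(\hat K^{2}\Lambda T)/(r_n h)\cdot \sup_{s}\IE\|R_n(s)\|^{2} = O(1/(r_n p_n))$, which tends to zero because (A4, \ref{ass:bw}) forces $r_n p_n h^{2}\to\infty$ and $h\to 0$, hence $r_n p_n\to\infty$.

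The main obstacle is the sharp bound on $\|A_n(s,t)\|$: one must simultaneously see that the double kernel integrated over $t_0$ stays of order one (it does not pick up an extra $1/h$) on the band $|s-t|\leq 2h$ and that the factor $w(t_0)/\bar p_n(t_0)$ contributes exactly one factor $1/p_n$. Once this is in place, the rest is a routine Itô isometry plus a kernel-support argument; the $h$ coming from the integration region cancels the $h^{-1}$ in $g_{n,ij}^{2}$, leaving a residual $1/p_n$ that is absorbed by the $1/r_n$ prefactor.
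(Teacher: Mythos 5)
Your proposal is correct and follows essentially the same route as the paper: bound the deterministic kernel part of $f_{n,ij,ij}(s,t)$ by $C p_n^{-1}\Ind(|s-t|\leq 2h)$ using Lemma \ref{lem:bounded_inverse}, (A1), (A3, \ref{ass:boundedness}) and the kernel support, apply the It\^o isometry for fixed $s$, and let the width-$2h$ band cancel the $h^{-1}$ from $g_{n,ij}^2$, leaving $O\left((r_np_n)^{-1}\right)\to0$ by (A4, \ref{ass:bw}) and (A6). The only (cosmetic) difference is that you peel off the covariates via Cauchy--Schwarz before the isometry, whereas the paper absorbs them directly into the pointwise bound on $f_{n,ij,ij}$.
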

\begin{proof}
We use the bounds from (A1), (A3, \ref{ass:boundedness}) and Lemma \ref{lem:bounded_inverse} as well as the kernel properties (A4, \ref{ass:kernel_hoelder}) to obtain
$$|f_{n,ij,ij}(s,t)|\leq\frac{K\hat{K}^2\|w\|_{\infty}}{p_n}\Ind(|s-t|\leq 2h).$$
Using this estimate we can bound (denote $C:=K\hat{K}^2\|w\|_{\infty}$)
\begin{align*}
&\IE\left(\int_0^Tg_{n,ij}^2C_{n,ij}(s)\lambda_{n,ij}(s,\theta_0)ds\right)\leq\frac{\Lambda}{h}\int_0^T\IE\left(\left(\int_{[0,s)}f_{n,ij,ij}(s,t)dM_{n,ij}(t)\right)^2\right)ds \\
\leq&\frac{C^2\Lambda}{hp_n^2}\int_0^T\int_0^s\Ind(|s-t|\leq 2h)p_n(t)dtds\leq\frac{2C^2T\Lambda}{hp_n}\frac{\max_{s\in[0,T]}p_n(s)}{p_n}.
\end{align*}
The statement follows now by using the properties of $p_n(t)$ in (A6) and the bandwidth $h$ in (A4, \ref{ass:bw}).
\end{proof}

\begin{lemma}
Suppose that (H2) holds. Then, we have for
\begin{align}
&\sup_{n\in\IN}\sup_{(i,j)\in L_n}\sup_{t\in[0,T]}\frac{1}{r_np_n}\IE\left(\left(K_m^{L_n}\right)^4H_{UB}^{ij}\Big|C_{n,ij}(t)=1\right)<\infty \label{eq:HubS5}
\end{align}
\end{lemma}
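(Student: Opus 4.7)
The plan is to upgrade \eqref{eq:HubS7} (a bound on a conditional expectation under \emph{double} conditioning $C_{n,ij}(t)C_{n,kl}(r)=1$) to the \emph{single} conditioning $C_{n,ij}(t)=1$ at the cost of summing over $(k,l)$ and picking up the factor $r_n p_n$. The key combinatorial observation is that $(i,j)$ is itself one of the pairs in $L_n$, so trivially
\[
H_{UB}^{ij} C_{n,ij}(t) \;=\; H_{UB}^{ij} H_{UB}^{ij} C_{n,ij}(t) C_{n,ij}(t) \;\leq\; \sum_{(k,l)\in L_n} H_{UB}^{ij}H_{UB}^{kl}\, C_{n,ij}(t)C_{n,kl}(t),
\]
since the right-hand side has a term (namely $(k,l)=(i,j)$) that equals the left-hand side. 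Multiplying by $(K_m^{L_n})^4$ and taking expectation yields
\[
\IE\!\left((K_m^{L_n})^4 H_{UB}^{ij} C_{n,ij}(t)\right) \;\leq\; \sum_{(k,l)\in L_n}\IE\!\left((K_m^{L_n})^4 H_{UB}^{ij}H_{UB}^{kl}\,C_{n,ij}(t)C_{n,kl}(t)\right).
\]

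Next I would apply \eqref{eq:HubS7} to each summand with the choice $r=t$ (the range $|t-r|<2h$ is trivially satisfied), which gives a constant $C>0$, independent of $n$, $t$, $(i,j)$, and $(k,l)$, such that each summand is bounded by $C\cdot\IP(C_{n,ij}(t)C_{n,kl}(t)=1)$. Since $C_{n,kl}(t)\leq \sup_{r\in[t-2h,t]}C_{n,kl}(r)$, this in turn is bounded by $C\cdot\IP(\sup_{r\in[t-2h,t]}C_{n,kl}(r)\,C_{n,ij}(t)=1)$.

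Now I invoke Assumption (D2), specifically \eqref{CA:AU1}. By the exchangeability of Definition \ref{defin:struc_interaction_net}, the inner sum $\sum_{(k,l)\neq(i,j)}\IP(\sup_{r}C_{n,kl}(r)C_{n,ij}(t)=1)$ does not depend on the choice of $(i,j)$, so \eqref{CA:AU1} reads $\sup_t\frac{1}{r_n p_n(t)^2}\sum_{(k,l)\neq(i,j)}\IP(\sup_r C_{n,kl}(r)C_{n,ij}(t)=1)=O(1)$. Including the $(k,l)=(i,j)$ term (which contributes $p_n(t)$, negligible compared to $r_n p_n(t)^2$ for large $n$), the full sum is $O(r_n p_n(t)^2)$. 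Combining the steps,
\[
\IE\!\left((K_m^{L_n})^4 H_{UB}^{ij}C_{n,ij}(t)\right) \;=\; O\!\left(r_n\, p_n(t)^2\right),
\]
uniformly in $n, (i,j), t$. Dividing by $p_n(t)=\IP(C_{n,ij}(t)=1)$ converts the joint expectation into the conditional expectation, yielding $\IE((K_m^{L_n})^4 H_{UB}^{ij}\mid C_{n,ij}(t)=1)=O(r_n p_n(t))$. Finally, Assumption (A6) (boundedness of $\max_s p_n(s)/\min_s p_n(s)$) gives $p_n(t)\leq c\, p_n$ for some constant $c$ independent of $n$ and $t$, so dividing by $r_n p_n$ produces a uniform bound as required.

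The only non-routine step is the pointwise majorization using that $(i,j)$ appears as a term in the sum over $(k,l)$; once this is in place the rest consists of invoking \eqref{eq:HubS7}, \eqref{CA:AU1}, and (A6) in sequence. No further technical obstacle is anticipated.
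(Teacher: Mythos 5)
Your argument is arithmetically sound, but it is a long detour around what is a one-line specialization, and along the way it quietly imports hypotheses that the lemma does not grant you. The paper's proof is simply: take $(k,l)=(i,j)$ and $r=t$ in \eqref{eq:HubS7}. Since $H_{UB}^{ij}$ and $C_{n,ij}(t)$ are $\{0,1\}$-valued, $H_{UB}^{ij}H_{UB}^{ij}=H_{UB}^{ij}$ and the conditioning event $C_{n,ij}(t)C_{n,ij}(t)=1$ is exactly $C_{n,ij}(t)=1$, so \eqref{eq:HubS7} already asserts $\sup_{n,(i,j),t}\IE\bigl((K_m^{L_n})^4H_{UB}^{ij}\mid C_{n,ij}(t)=1\bigr)<\infty$ \emph{without} the prefactor; the factor $(r_np_n)^{-1}$ in \eqref{eq:HubS5} only shrinks the quantity further (given that $r_np_n$ is bounded below, which both your route and the paper's need and which holds throughout via (A4)).

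By contrast, your majorization by the full sum over $(k,l)$ forces you to control $\sum_{(k,l)}\IP(C_{n,ij}(t)C_{n,kl}(t)=1)$, and for that you invoke (D2, \eqref{CA:AU1}) and (A6). The lemma's hypothesis is only that (H2) holds, so strictly speaking you have proved a weaker statement (one with extra standing assumptions); moreover you obtain only the bound $O(r_np_n)$ on the conditional expectation where the direct specialization gives $O(1)$. In the context of Theorem \ref{thm:test_asymptotics} all of these assumptions are in force simultaneously, so your argument would still serve the paper's purposes, but you should notice that the diagonal term $(k,l)=(i,j)$ you are keeping in the sum is already, by itself, the entire content of the lemma.
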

\begin{proof}
Follows by applying (H2, \ref{eq:HubS7}).
\end{proof}

We continue with three more involved propositions. It is through these propositions how dependence structures enter the proof of Theorem \ref{thm:test_asymptotics}.
\begin{proposition}
\label{lem:1}
Under the same assumptions as in Theorem \ref{thm:test_asymptotics} we have
$$\sup_{t_0\in\IT}\left\|\frac{1}{r_n\sqrt{\bar{p}_n(t_0)}}\partial_{\theta}\ell_n(\theta_0,t_0)\right\|=O_P\left(\sqrt{\frac{\log r_n}{r_nh}}\right)$$
\end{proposition}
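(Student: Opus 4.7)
The starting observation is that, because $\int_0^T K_{h,t_0}(t) X_{n,ij}(t)\,dN_{n,ij}(t) - \int_0^T K_{h,t_0}(t) X_{n,ij}(t)\lambda_{n,ij}(t,\theta_0)\,dt = \int_0^T K_{h,t_0}(t) X_{n,ij}(t)\,dM_{n,ij}(t)$, we can rewrite the score as a sum of counting-process stochastic integrals:
$$
\frac{\partial_\theta \ell_n(\theta_0,t_0)}{r_n\sqrt{\bar p_n(t_0)}} = \frac{1}{r_n\sqrt{\bar p_n(t_0)}} \sum_{(i,j)\in L_n} \int_0^T K_{h,t_0}(t) X_{n,ij}(t)\,dM_{n,ij}(t).
$$
The plan is the classical discretize-and-control-the-oscillation strategy. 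I would first fix a large integer $k_0$ (to be chosen later) and work on the grid $T_{n,k_0}$ defined in Lemma~\ref{lem_ass:exp1}. A union bound combined with Lemma~\ref{lem_ass:exp1} gives
$$
\IP\!\left(\sup_{t_0\in T_{n,k_0}} \left\|\frac{\partial_\theta \ell_n(\theta_0,t_0)}{r_n\sqrt{\bar p_n(t_0)}}\right\| \geq C\sqrt{\tfrac{\log r_n}{r_n h}}\right) \leq |T_{n,k_0}|\cdot o(h^{-1} n^{-k_0}),
$$
and since $|T_{n,k_0}|=O(h n^{k_0})$, the right-hand side is $o(1)$. Hence the supremum over the grid is $O_P(\sqrt{\log r_n/(r_n h)})$.

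The remaining task is to upgrade the grid-supremum bound to a supremum over all of $\IT$. For adjacent $t_0, t_0^*$ with $|t_0-t_0^*|\leq 1/(hn^{k_0})$, I would split
$$
\frac{\partial_\theta\ell_n(\theta_0,t_0)}{r_n\sqrt{\bar p_n(t_0)}} - \frac{\partial_\theta\ell_n(\theta_0,t_0^*)}{r_n\sqrt{\bar p_n(t_0^*)}} = \underbrace{\frac{\partial_\theta\ell_n(\theta_0,t_0)-\partial_\theta\ell_n(\theta_0,t_0^*)}{r_n\sqrt{\bar p_n(t_0)}}}_{\text{(I)}} + \underbrace{\partial_\theta\ell_n(\theta_0,t_0^*)\left(\tfrac{1}{r_n\sqrt{\bar p_n(t_0)}}-\tfrac{1}{r_n\sqrt{\bar p_n(t_0^*)}}\right)}_{\text{(II)}}.
$$
For (I), Hölder continuity of $K$ (Assumption (A4,\ref{ass:kernel_hoelder})) gives $|K_{h,t_0}(t)-K_{h,t_0^*}(t)|\leq H_K h^{-1-\alpha_K}|t_0-t_0^*|^{\alpha_K}$, so using the boundedness of the covariates (A3,\ref{ass:boundedness}) and the crude bound $d|M_{n,ij}|(s) = dN_{n,ij}(s)+\lambda_{n,ij}(s)ds$ with (A6),
$$
\|\text{(I)}\|\leq \frac{\hat K H_K}{\sqrt{p_n}\,h^{1+\alpha_K}}|t_0-t_0^*|^{\alpha_K}\cdot\frac{1}{r_n}\sum_{(i,j)\in L_n}\int_0^T d|M_{n,ij}|(s),
$$
and the latter empirical average is $O_P(1)$ by exchangeability (it has mean $\IE(N_{n,12}(T)+\int_0^T\lambda_{n,12}(s)ds)=O(1)$). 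Term (II) is handled analogously using the Hölder bound $|\bar p_n(t_0)^{-1}-\bar p_n(t_0^*)^{-1}|\leq H_{n,p}|t_0-t_0^*|^{\alpha_K}$ from Lemma~\ref{lem:p_continuous} and a crude polynomial-in-$n$ bound on $\|\partial_\theta\ell_n(\theta_0,t_0^*)\|$. On the grid width $|t_0-t_0^*|\leq (hn^{k_0})^{-1}$, both (I) and (II) are of order $n^{-\alpha_K k_0}$ times a polynomial in $n$; choosing $k_0$ large enough makes this oscillation $o_P(\sqrt{\log r_n/(r_n h)})$.

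The two steps together yield the stated rate. The main obstacle is the tension in choosing $k_0$: it must be simultaneously large enough that the Hölder-based oscillation control between grid points beats the target rate $\sqrt{\log r_n/(r_n h)}$ (this is driven by $\alpha_K$ and by the polynomial-in-$n$ crude bound one uses on the raw score and on $H_{n,p}$), while the union bound survives because Lemma~\ref{lem_ass:exp1} kills any polynomial grid size $|T_{n,k_0}|=O(h n^{k_0})$. Since the probability bound in Lemma~\ref{lem_ass:exp1} is $o(h^{-1}n^{-k_0})$ \emph{for every} $k_0$ and the oscillation bound only requires a fixed $k_0$ with $\alpha_K k_0$ exceeding the polynomial growth exponents, this balance can always be struck, and no further ingredients beyond the assumptions of Theorem~\ref{thm:test_asymptotics} are needed.
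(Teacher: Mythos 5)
Your proposal is correct and follows essentially the same route as the paper: the same martingale representation of the score (using that the compensator part of the gradient vanishes at $\theta_0$), the same discretization over the grid $T_{n,k_0}$ combined with a union bound and Lemma~\ref{lem_ass:exp1}, and the same H\"older-continuity argument to control the oscillation between grid points by choosing $k_0$ large enough to beat the polynomial-in-$n$ constants. The only immaterial difference is in handling the counting-process part of the oscillation: the paper passes from $dN_{n,ij}$ to the compensator via Lenglart's inequality, whereas you bound $\frac{1}{r_n}\sum_{(i,j)}\int_0^T d|M_{n,ij}|(s)$ directly by Markov's inequality using $\IE(N_{n,ij}(T))\leq\Lambda T$; both yield the required $O_P(1)$.
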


\begin{proof}
We note firstly that existence of the derivative of $\ell_n$ is ensured by Lemma \ref{lem:diff_integral} and we can compute the derivative by taking the derivative under the integral sign. Let $\delta_n:=\sqrt{\frac{\log r_n}{r_nh}}$ and recall the definition of the grid $T_{n,k}$ in \eqref{eq:grid1}. Denote by $\pi_{n,k}:[0,T]\to T_{n,k}$ the corresponding projection on $T_{n,k}$. Then $\#T_{n,k}\leq (T+1)\cdot hn^k$ and $|t-\pi_{n,k}(t)|\leq\frac{1}{hn^k}$. Using this projection we can estimate for $C>0$
\begin{align}
&\IP\left(\left\|\sup_{t_0\in\IT}\frac{1}{r_n\sqrt{\bar{p}_n(t_0)}}\partial_{\theta}\ell_n(\theta_0,t_0)\right\|\geq C\delta_n\right) \nonumber \\
\leq&\IP\Bigg(\sup_{t_0\in\IT}\left\|\frac{\partial_{\theta}\ell_n(\theta_0,t_0)}{r_n\sqrt{\bar{p}_n(t_0)}}-\frac{\partial_{\theta}\ell_n(\theta_0,\pi_{n,k}(t_0))}{r_n\sqrt{\bar{p}_n(\pi_{n,k}(t_0))}}\right\|+\sup_{t_0\in\IT}\left\|\frac{\partial_{\theta}\ell_n(\theta_0,\pi_{n,k}(t_0))}{r_n\sqrt{\bar{p}_n(\pi_{n,k}(t_0))}}\right\|\geq C\delta_n\Bigg) \nonumber \\
\leq&\IP\left(\underset{|s_0-t_0|\leq hn^{-k}}{\sup_{t_0,s_0\in\IT}}\left\|\frac{\partial_{\theta}\ell_n(\theta_0,t_0)}{r_n\sqrt{\bar{p}_n(t_0)}}-\frac{\partial_{\theta}\ell_n(\theta_0,s_0)}{r_n\sqrt{\bar{p}_n(s_0)}}\right\|\geq \frac{C}{2}\delta_n\right) \label{eq:P1} \\
&\quad+\IP\left(\sup_{t_0\in T_{n,k}}\left\|\frac{\partial_{\theta}\ell_n(\theta_0,t_0)}{r_n\sqrt{\bar{p}_n(t_0)}}\right\|\geq \frac{C}{2}\delta_n\right). \label{eq:P2}
\end{align}
We have to prove that both \eqref{eq:P1} and \eqref{eq:P2} converge to zero. We start with \eqref{eq:P1}. Denote therefore $g_{n,ij}(t,t_0)=hK_{h,t_0}\left(t\right)\partial_{\theta}\log\lambda_{n,i}(\theta_0,t)$, then
\begin{equation}
\label{eq:deriv_decomp}
\partial_{\theta}\ell_n(\theta_0,t_0)=\sum_{(i,j)\in L_n}\int_0^Th^{-1}g_{n,ij}(t,t_0)dM_{n,ij}(t)
\end{equation}
because $P_n'(\theta_0,t_0)=0$. Then we get
\begin{align}
&\eqref{eq:P1} \nonumber \\
\leq&\IP\left(\underset{|s_0-t_0|\leq hn^{-k}}{\sup_{t_0,s_0\in\IT}}\frac{1}{r_nh}\sum_{(i,j)\in L_n}\left\|\int_0^T\frac{g_{n,ij}(t,t_0)}{\sqrt{\bar{p}_n(t_0)}}dM_{n,ij}(t)-\int_0^T\frac{g_{n,ij}(t,s_0)}{\sqrt{\bar{p}_n(s_0)}}dM_{n,ij}(t)\right\|\geq\frac{C}{2}\delta_n\right) \nonumber \\
\leq&\IP\left(\frac{1}{r_nh}\sum_{(i,j)\in L_n}\int_0^T\underset{|s_0-t_0|\leq hn^{-k}}{\sup_{t_0,s_0\in\IT}}\left\|\frac{g_{n,ij}(t,t_0)}{\sqrt{\bar{p}_n(t_0)}}-\frac{g_{n,ij}(t,s_0)}{\sqrt{\bar{p}_n(s_0)}}\right\|dN_{n,ij}(t)\geq\frac{C}{4}\delta_n\right) \label{eq:part1} \\
+&\IP\left(\frac{1}{r_nh}\sum_{(i,j)\in L_n}\int_0^T\underset{|s_0-t_0|\leq hn^{-k}}{\sup_{t_0,s_0\in\IT}}\left\|\frac{g_{n,ij}(t,t_0)}{\sqrt{\bar{p}_n(t_0)}}-\frac{g_{n,ij}(t,s_0)}{\sqrt{\bar{p}_n(s_0)}}\right\|C_{n,ij}(t)\lambda_{n,ij}(\theta_0,t)dt\geq\frac{C}{4}\delta_n\right). \label{eq:part2_test}
\end{align}
For \eqref{eq:part1} we apply Lenglart's inequality (cf. Lemma \ref{lem:Lenglart} in the Appendix) to obtain for any choice of $c^*>0$
\begin{align*}
&\IP\left(\frac{1}{r_nh}\sum_{(i,j)\in L_n}\int_0^T\underset{|s_0-t_0|\leq hn^{-k}}{\sup_{t_0,s_0\in\IT}}\left\|\frac{g_{n,ij}(t,t_0)}{\sqrt{\bar{p}_n(t_0)}}-\frac{g_{n,ij}(t,s_0)}{\sqrt{\bar{p}_n(s_0)}}\right\|dN_{n,ij}(t)\geq\frac{C}{4}\delta_n\right) \\
\leq&\frac{c^*}{C}+\IP\left(\frac{1}{r_nh}\sum_{(i,j)\in L_n}\int_0^T\underset{|s_0-t_0|\leq hn^{-k}}{\sup_{t_0,s_0\in\IT}}\left\|\frac{g_{n,ij}(t,t_0)}{\sqrt{\bar{p}_n(t_0)}}-\frac{g_{n,ij}(t,s_0)}{\sqrt{\bar{p}_n(s_0)}}\right\|C_{n,ij}(t)\lambda_{n,ij}(\theta_0,t)dt\geq\frac{c^*}{4}\delta_n\right).
\end{align*}
If we restrict to $c^*<C$ we obtain furthermore
\begin{align}
&\eqref{eq:part1}+\eqref{eq:part2_test} \nonumber \\
\leq&\frac{c^*}{C}+2\IP\left(\frac{1}{r_nh}\sum_{(i,j)\in L_n}\int_0^T\underset{|s_0-t_0|\leq hn^{-k}}{\sup_{t_0,s_0\in\IT}}\left\|\frac{g_{n,ij}(t,t_0)}{\sqrt{\bar{p}_n(t_0)}}-\frac{g_{n,ij}(t,s_0)}{\sqrt{\bar{p}_n(s_0)}}\right\|C_{n,ij}(t)\lambda_{n,ij}(\theta_0,t)dt\geq\frac{c^*}{4}\delta_n\right). \label{eq:1415}
\end{align}
Since for any $x,y\geq0$, $|\sqrt{x}-\sqrt{y}|\leq\sqrt{|x-y|}$, Lemma \ref{lem:p_continuous} implies that $\sqrt{\frac{1}{\bar{p}_n(t_0)}}$ is Hoelder continuous with exponent $\frac{\alpha_p}{2}$ and constant $\sqrt{H_{n,p}}$. Moreover, we have $\sup_{t_0\in\mathbb{T}}\frac{1}{\sqrt{\bar{p}_n(t_0)}}\leq\frac{1}{\sqrt{p_n}}$ and Hoelder continuity of the kernel K by Assumption (A4, \ref{ass:kernel_hoelder}) (we denote the bound on the kernel also by $K$). Combining all these, we obtain for $|t_0-s_0|\leq hn^{-k}$
\begin{align*}
&\left\|\frac{g_{n,ij}(t,t_0)}{\sqrt{\bar{p}_n(t_0)}}-\frac{g_{n,ij}(t,s_0)}{\sqrt{\bar{p}_n(s_0)}}\right\| \\
=&\left\|\partial_{\theta}\log\lambda_{n,ij}(\theta_0,t)\right\|\cdot h\left|\bar{p}_n(t_0)^{-\frac{1}{2}}K_{h,t_0}\left(t\right)-\bar{p}_n(s_0)^{-\frac{1}{2}}K_{h,s_0}\left(t\right)\right| \\
\leq&\left\|\partial_{\theta}\log\lambda_{n,ij}(\theta_0,t)\right\|\cdot h\left[\bar{p}_n(t_0)^{-\frac{1}{2}}\left|K_{h,t_0}\left(t\right)-K_{h,s_0}\left(t\right)\right|+K_{h,s_0}\left(t\right)\left|\bar{p}_n(t_0)^{-\frac{1}{2}}-\bar{p}_n(s_0)^{-\frac{1}{2}}\right|\right] \\
\leq& \|\partial_{\theta}\log\lambda_{n,ij}(\theta_0,t)\|\left[p_n^{-\frac{1}{2}}\cdot H_K n^{-k\alpha_K}+K\cdot\sqrt{H_{n,p}}n^{-k\cdot\frac{\alpha_p}{2}}h^{\frac{\alpha_p}{2}}\right] \\
\leq&\|\partial_{\theta}\log\lambda_{n,ij}(\theta_0,t)\|\cdot p_n^{-\frac{1}{2}}\left[H_K+K\sqrt{H_{n,p}p_nh^{\alpha_p}}\right]n^{-k\cdot\alpha_p/2},
\end{align*}
since $\alpha_p=\alpha_K$ by Lemma \ref{lem:p_continuous}. So we get
\begin{align*}
&\frac{1}{r_nh}\sum_{(i,j)\in L_n}\int_0^T\underset{|s_0-t_0|\leq hn^{-k}}{\sup_{t_0,s_0\in\IT}}\left\|\frac{g_{n,ij}(t,t_0)}{\sqrt{\bar{p}_n(t_0)}}-\frac{g_{n,ij}(t,s_0)}{\sqrt{\bar{p}_n(s_0)}}\right\|C_{n,ij}(t)\lambda_{n,ij}(\theta_0,t)dt \\
\leq&\frac{1}{r_nh}\sum_{(i,j)\in L_n}\int_0^T\|\partial_{\theta}\log\lambda_{n,ij}(\theta_0,t)\|C_{n,ij}(t)\lambda_{n,ij}(\theta_0,t)dt \\
&\quad\quad\quad\times p_n^{-\frac{1}{2}}\left[H_K+K\sqrt{H_{n,p}p_nh^{\alpha_p}}\right]n^{-k\cdot\alpha_p/2} \\
&=O_P\left(h^{-1}p_n^{-1/2}n^{-k\alpha_p/2}\right).
\end{align*}
Since by definition of $H_{n,p}$ in Lemma \ref{lem:p_continuous}, we have $H_{n,p}p_nh^{\alpha_p}=O(1)$ and the covariates are bounded by (A3, \ref{ass:boundedness}). Hence, we get that \eqref{eq:1415} is small, because we can choose $k=k_0$ such that for large enough $c^*$ the probability is small for all $n\in\IN$ and then we can choose $C$ large enough such that the whole expression is small. Then, also \eqref{eq:P1} is small, for this good choice $k=k_0$ which we keep fixed from now on.

Let us now turn to \eqref{eq:P2}. Here we take the supremum over a finite set and so we can estimate by applying union bound and Lemma \ref{lem_ass:exp1} for $C>0$ large enough
\begin{align*}
&\eqref{eq:P2} \\
\leq&\# T_{n,k_0}\,\cdot\,\sup_{t_0\in T_{n,k_0}}\IP\left(\left\|\frac{\partial_{\theta}\ell_n(\theta_0,t_0)}{r_n\sqrt{\bar{p}_n(t_0)}}\right\|\geq\frac{C}{2}\delta_n\right)\to 0.
\end{align*}
\end{proof}

Having established Proposition \ref{lem:1}, we can quickly show the following result.

\begin{lemma}
\label{lem:aux1}
Under the same assumptions as in Theorem \ref{thm:test_asymptotics} we have
$$\sup_{t_0\in\IT}\left\|\theta_0-\hat{\theta}_n(t_0)\right\|=O_p\left(\sqrt{\frac{\log r_n}{r_np_nh}}\right).$$
\end{lemma}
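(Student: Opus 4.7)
The plan is to combine Proposition \ref{lem:1}, which gives the uniform rate of the score, with Lemmas \ref{lem_ass:K1} and \ref{lem_ass:K2}, which control the inverse and Lipschitz modulus of the normalized Hessian, through a standard M-estimator Taylor-expansion argument around $\theta_0$.

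First, Lemma \ref{lem:diff_integral} ensures that $\ell_n(\cdot\,;t_0)$ is smooth, so the first-order condition $\partial_\theta\ell_n(\hat\theta_n(t_0);t_0)=0$ holds at the interior critical point. Expanding around $\theta_0$ gives
\[
0=\partial_\theta\ell_n(\theta_0;t_0)+\partial_\theta^2\ell_n(\tilde\theta_n(t_0);t_0)\bigl(\hat\theta_n(t_0)-\theta_0\bigr)
\]
for some $\tilde\theta_n(t_0)$ on the segment between $\theta_0$ and $\hat\theta_n(t_0)$. Writing $H_n(\theta;t_0):=\partial_\theta^2\ell_n(\theta;t_0)/(r_n\bar p_n(t_0))$ and rearranging,
\[
\hat\theta_n(t_0)-\theta_0=-H_n(\tilde\theta_n(t_0);t_0)^{-1}\cdot\frac{\partial_\theta\ell_n(\theta_0;t_0)}{r_n\bar p_n(t_0)}.
\]

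Second, I need uniform invertibility and a bounded inverse for $H_n(\tilde\theta_n(t_0);t_0)$. Lemma \ref{lem_ass:K1} yields $\|H_n(\theta_0;t_0)^{-1}\|\leq B_n$ uniformly in $t_0\in\IT$ with $B_n=O_P(1)$, while Lemma \ref{lem_ass:K2} gives $\|H_n(\tilde\theta_n(t_0);t_0)-H_n(\theta_0;t_0)\|\leq K_n\|\hat\theta_n(t_0)-\theta_0\|$ with $K_n=O_P(1)$. A Neumann-series argument then shows that on the event $\{K_nB_n\|\hat\theta_n(t_0)-\theta_0\|\leq 1/2\}$ one has $\|H_n(\tilde\theta_n(t_0);t_0)^{-1}\|\leq 2B_n$, uniformly in $t_0$. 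To validate the precondition, I first need the preliminary uniform consistency $\sup_{t_0\in\IT}\|\hat\theta_n(t_0)-\theta_0\|=o_P(1)$: since $\ell_n(\cdot\,;t_0)$ is concave under the Cox link, the same Taylor expansion combined with the uniform score bound from Proposition \ref{lem:1}, which is $O_P(\delta_n\sqrt{\bar p_n(t_0)})=o_P(1)$ for $\delta_n=\sqrt{\log r_n/(r_nh)}$, and with the uniform Hessian bound from Lemma \ref{lem_ass:K1}, forces $\hat\theta_n(t_0)$ into any prescribed neighbourhood of $\theta_0$ with probability tending to one, uniformly in $t_0$.

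Once uniform consistency is in hand, combining the above with Assumption (A6) (which guarantees $\bar p_n(t_0)\asymp p_n$ uniformly in $t_0$) yields
\[
\sup_{t_0\in\IT}\|\hat\theta_n(t_0)-\theta_0\|\leq 2B_n\cdot\sup_{t_0\in\IT}\frac{\|\partial_\theta\ell_n(\theta_0;t_0)\|}{r_n\sqrt{\bar p_n(t_0)}}\cdot\sup_{t_0\in\IT}\frac{1}{\sqrt{\bar p_n(t_0)}}=O_P\!\left(\sqrt{\frac{\log r_n}{r_np_nh}}\right).
\]
The main obstacle is the uniform consistency step: closing the circular dependency between the Neumann precondition $K_nB_n\|\hat\theta_n(t_0)-\theta_0\|\leq 1/2$ and the rate bound itself requires concavity of $\ell_n$ together with the sharp uniform score rate from Proposition \ref{lem:1}, which in turn rests on the exponential concentration machinery developed earlier via the mixing and momentary-$m$-dependence results.
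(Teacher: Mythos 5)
Your proposal is correct and reaches the stated rate from exactly the same three inputs the paper uses (Proposition \ref{lem:1} for the uniform score, Lemmas \ref{lem_ass:K1} and \ref{lem_ass:K2} for the inverse and Lipschitz modulus of the normalized Hessian), but it assembles them differently. The paper applies the Newton--Kantorovich theorem (Theorem \ref{thm:kantorovich}) with $B=B_n$, $K=K_n$ and Newton step $\eta_n=O_P(\sqrt{\log r_n/(r_np_nh)})$: since $B_nK_n\eta_n\leq 1/2$ holds with probability tending to one, Kantorovich directly yields a zero of the score within $2\eta_n$ of $\theta_0$, uniformly in $t_0$, with no preliminary consistency step at all. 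Your route instead Taylor-expands the first-order condition at $\hat\theta_n(t_0)$, which forces you to control the Hessian at an intermediate point and hence to break the circularity via a separate concavity-based uniform consistency argument; this is the obstacle you correctly identify, and your resolution (concavity of the Cox partial log-likelihood plus the $o_P(1)$ score and the curvature lower bound implied by Lemma \ref{lem_ass:K1}, with Lemma \ref{lem_ass:K2} controlling the Hessian at the intermediate point) is sound, though note that for the vector-valued gradient the mean-value expansion must be taken row-wise, as the paper does elsewhere with $\ell_n^*(t_0)$. What each approach buys: Kantorovich is shorter and sidesteps consistency entirely, but it only produces \emph{some} root of the score near $\theta_0$, and the paper is slightly cavalier in identifying that root with the defined estimator; your concavity argument pins down the global optimizer directly, so the identification is automatic, at the price of the extra localization step. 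Your final rate computation, using $\bar p_n(t_0)\geq p_n$ on $\IT$ to convert the $\sqrt{\bar p_n(t_0)}$-normalized score bound into the stated rate, matches the paper's.
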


\begin{proof}
By Lemmas \ref{lem_ass:K1} and \ref{lem_ass:K2} we have that for any choice of $t_0\in\IT$
\begin{align*}
\left\|\left[\frac{1}{r_n\bar{p}_n(t_0)}\partial_{\theta}^2\ell_n(\theta_0,t_0)\right]^{-1}\right\|\leq& B_n \\
\frac{1}{r_n\bar{p}_n(t_0)}\left\|\partial_{\theta}^2\ell_n(\theta_1,t_0)-\partial_{\theta}^2\ell_n(\theta_2,t_0)\right\|\leq&K_n\cdot\|\theta_1-\theta_2\|,
\end{align*}
where $B_n, K_n=O_P(1)$. Thus, we find by Proposition \ref{lem:1} that
$$\eta_n:=\sup_{t_0\in\IT}\left\|\left[\frac{\partial_{\theta}^2\ell_n(\theta_0,t_0)}{r_n\bar{p}_n(t_0)}\right]^{-1}\frac{\partial_{\theta}\ell_n(\theta_0,t_0)}{r_n\bar{p}_n(t_0)}\right\|\leq\frac{B_n}{\sqrt{p_n}}\sup_{t_0\in\IT}\left\|\frac{\partial_{\theta}\ell_n(\theta_0,t_0)}{r_n\sqrt{p_n(t_0)}}\right\|=O_P\left(\sqrt{\frac{\log r_n}{p_nr_nh}}\right).$$
Hence, we can apply Kantorovich's Theorem (cf. Theorem \ref{thm:kantorovich}) for all $t_0\in\IT$ with the same choice of $B_n, K_n$ and $\eta_n$ as above. Thus, there is $\hat{\theta}_n(t_0)$ such that for all $t_0$
$$\left\|\theta_0-\hat{\theta}_n(t_0)\right\|\leq2\eta_n=O_P\left(\sqrt{\frac{\log r_n}{r_np_nh}}\right).$$
\end{proof}

\begin{corollary}
\label{cor:theta_interior}
The probability of the event \emph{for all $t_0\in\IT$ it holds that $\hat{\theta}_n(t_0)\in\Theta$} converges to one.
\end{corollary}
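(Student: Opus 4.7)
The plan is to combine the uniform rate of convergence from Lemma \ref{lem:aux1} with the fact that $\theta_0$ is an interior point of the parameter space $\Theta$, as guaranteed by Assumption (A2). Since $\Theta$ is open and contains $\theta_0$, there exists $\epsilon > 0$ such that the open ball $B_{\epsilon}(\theta_0) := \{\theta \in \IR^q : \|\theta - \theta_0\| < \epsilon\}$ is entirely contained in $\Theta$.

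First, I would observe that by Assumption (A4, \ref{ass:bw}) we have $\frac{\sqrt{r_n p_n}\, h}{(\log r_n)^{3/2}} \to \infty$, which in particular implies that $\frac{r_n p_n h}{\log r_n} \to \infty$, and hence that the rate $\sqrt{\frac{\log r_n}{r_n p_n h}}$ in Lemma \ref{lem:aux1} tends to zero. Therefore
$$\sup_{t_0 \in \IT} \|\theta_0 - \hat{\theta}_n(t_0)\| \overset{\IP}{\to} 0,$$
so for any fixed $\epsilon > 0$,
$$\IP\left(\sup_{t_0 \in \IT} \|\theta_0 - \hat{\theta}_n(t_0)\| < \epsilon\right) \to 1$$
as $n \to \infty$.

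Finally, on the event inside this last probability, every $\hat{\theta}_n(t_0)$ lies in $B_{\epsilon}(\theta_0) \subseteq \Theta$, so the event in question is contained in the event that $\hat{\theta}_n(t_0) \in \Theta$ for every $t_0 \in \IT$, which therefore also has probability tending to one. There is no real obstacle here; the corollary is essentially a direct bookkeeping consequence of the uniform consistency already proved in Lemma \ref{lem:aux1} together with the openness of $\Theta$ in (A2).
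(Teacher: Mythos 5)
Your argument is correct and is exactly the paper's proof, just spelled out in more detail: the paper likewise combines the openness of $\Theta$ with $\theta_0\in\Theta$ from (A2) and the uniform consistency of Lemma \ref{lem:aux1}. Nothing further is needed.
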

\begin{proof}
By Assumption (A2) it holds $\theta_0\in\Theta$ and hence by Lemma \ref{lem:aux1} all estimates $\hat{\theta}_n(t_0)$ lie also in $\Theta$.
\end{proof}

\begin{proposition}
\label{lem:2}
Under the same assumptions as in Theorem \ref{thm:test_asymptotics} for any choice of \\$\theta_1^*(t_0),...,\theta^*_p(t_0)\in[\theta_0,\hat{\theta}(t_0)]$ (where for $a,b\in\IR^p$ we denote by $[a,b]$ the connecting line between $a$ and $b$), define the matrix
$$\ell_n^*(t_0):=\begin{pmatrix}
\partial_{\theta}^2\ell_{n,1\cdot}(\theta^*_1(t_0),t_0) \\
\vdots \\
\partial_{\theta}^2\ell_{n,q\cdot}(\theta^*_q(t_0),t_0)
\end{pmatrix},$$
where $\partial_{\theta}^2\ell_{n,r\cdot}$ denotes for $r\in\{1,...,p\}$ the $r$-th row of the second derivative of $\ell_n$ with respect to $\theta$. The matrix $\ell_n^*(t_0)$ concentrates around $\Sigma(\theta_0,t_0)$ (cf. \eqref{eq:def_sigma}), i.e.,
$$\sup_{t_0\in\IT}\left\|\frac{1}{r_n\bar{p}_n(t_0)}\ell_n^*(t_0)-\Sigma(t_0,\theta_0)\right\|=O_P\left(\sqrt{\frac{\log r_n}{r_np_n\cdot h}}+h\right).$$
Furthermore, $\ell_n^*(t_0)$ is invertible and
$$\sup_{t_0\in\IT}\left\|\left[\frac{1}{r_n\bar{p}_n(t_0)}\ell_n^*(t_0)\right]^{-1}-\Sigma(t_0,\theta_0)^{-1}\right\|=O_P\left(\sqrt{\frac{\log r_n}{r_np_n\cdot h}}+h\right).$$
\end{proposition}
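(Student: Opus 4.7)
The plan is to decompose $\frac{1}{r_n\bar p_n(t_0)}\ell_n^*(t_0)-\Sigma(t_0,\theta_0)$ row by row into three pieces and control each uniformly in $t_0\in\IT$, then transfer the rate to the inverse via Lemma \ref{lem:bounded_inverse}. Using $\partial_\theta^2\ell_n(\theta,t_0)=\sum_{(i,j)}\int K_{h,t_0}(s)H_{n,ij}(s,\theta)\,ds$ from \eqref{eq:def_H} together with $\IE H_{n,ij}(s,\theta)=p_n(s)\Sigma(s,\theta)$, for each row $r\in\{1,\ldots,q\}$ I would write
\begin{align*}
\frac{\ell_{n,r\cdot}^*(t_0)}{r_n\bar p_n(t_0)}-\Sigma_{r\cdot}(t_0,\theta_0)
&=\underbrace{\tfrac{1}{r_n\bar p_n(t_0)}\sum_{(i,j)}\int K_{h,t_0}(s)\tilde H_{n,ij,r\cdot}(s,\theta_0)\,ds}_{(\mathrm I)} \\
&\ +\underbrace{\tfrac{1}{r_n\bar p_n(t_0)}\sum_{(i,j)}\int K_{h,t_0}(s)\bigl[H_{n,ij,r\cdot}(s,\theta_r^*(t_0))-H_{n,ij,r\cdot}(s,\theta_0)\bigr]ds}_{(\mathrm{II})} \\
&\ +\underbrace{\tfrac{1}{\bar p_n(t_0)}\int K_{h,t_0}(s)p_n(s)\bigl[\Sigma_{r\cdot}(s,\theta_0)-\Sigma_{r\cdot}(t_0,\theta_0)\bigr]ds}_{(\mathrm{III})}.
\end{align*}

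Term $(\mathrm{III})$ is a classical kernel bias: by (A5) we have $\|\Sigma(s,\theta_0)-\Sigma(t_0,\theta_0)\|\le D|s-t_0|\le Dh$ on $\mathrm{supp}\,K_{h,t_0}$, and since $\int K_{h,t_0}(s)p_n(s)\,ds=\bar p_n(t_0)$ this gives $\sup_{t_0\in\IT}\|(\mathrm{III})\|=O(h)$. For $(\mathrm{II})$, bounded covariates (A3, \ref{ass:boundedness}) together with the bounded parameter space (A2) yield a deterministic constant $L$ with $\|H_{n,ij}(s,\theta)-H_{n,ij}(s,\theta')\|\le L C_{n,ij}(s)\|\theta-\theta'\|$; combined with $\|\theta_r^*(t_0)-\theta_0\|\le\|\hat\theta_n(t_0)-\theta_0\|=O_P(\sqrt{\log r_n/(r_np_nh)})$ from Lemma \ref{lem:aux1} and with $\frac{1}{r_n\bar p_n(t_0)}\sum\int K_{h,t_0}(s)C_{n,ij}(s)\,ds=O_P(1)$ uniformly (by a grid-plus-continuity application of Lemma \ref{lem:sufficient_mixing} using (D3) with the choice \eqref{eq:UUM3}), this gives $\sup_{t_0}\|(\mathrm{II})\|=O_P(\sqrt{\log r_n/(r_np_nh)})$.

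Term $(\mathrm I)$ is the core. Lemma \ref{lem_ass:exp2} already delivers the right rate on the finite grid $T_{n,k_0}$ from \eqref{eq:grid2}. To upgrade to a sup over $\IT\times\Theta$ I would mimic the discretisation strategy of the proof of Proposition \ref{lem:1}: the oscillation of the integrand between two neighbouring grid points is controlled deterministically via Hoelder continuity of $K$ in (A4, \ref{ass:kernel_hoelder}), of $1/\bar p_n$ by Lemma \ref{lem:p_continuous}, Lipschitz continuity of $\theta\mapsto H_{n,ij}(s,\theta)$ (as above), and the continuity-in-$t_0$ assumption (AC, \ref{eq:AC1})--(AC, \ref{eq:AC2}); by choosing $k_0$ large enough these moduli become smaller than $\delta_n:=\sqrt{\log r_n/(r_np_nh)}$, after which a union bound over $|T_{n,k_0}|=O((hn^{k_0})^{q+1})$ combined with Lemma \ref{lem_ass:exp2} gives $\sup_{t_0\in\IT}\|(\mathrm I)\|=O_P(\delta_n)$.

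Adding (I)--(III) proves the first claim. For the second, $\delta_n+h\to 0$ by (A4, \ref{ass:bw}), so with probability tending to one $\frac{1}{r_n\bar p_n(t_0)}\ell_n^*(t_0)$ lies \emph{simultaneously} for all $t_0\in\IT$ inside the $\rho$-ball around $\Sigma(t_0,\theta_0)$ furnished by Lemma \ref{lem:bounded_inverse}; on that event each $\frac{1}{r_n\bar p_n(t_0)}\ell_n^*(t_0)$ is invertible with operator-norm of the inverse at most $M$, and the identity $A^{-1}-B^{-1}=A^{-1}(B-A)B^{-1}$ transfers the established rate, amplified only by the bounded factor $M\sup_{t_0}\|\Sigma(t_0,\theta_0)^{-1}\|$, to the inverses. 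The principal obstacle is the grid-to-sup step in $(\mathrm I)$: all continuity ingredients are present in the assumptions, but the bookkeeping must simultaneously absorb the Hoelder moduli in the $t_0$- and $\theta$-directions into $\delta_n$ while passing to the union bound.
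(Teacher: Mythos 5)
Your proof is correct and follows essentially the same route as the paper's: a three-way split into a centred stochastic term (handled by Lemma \ref{lem_ass:exp2} on the grid \eqref{eq:grid2} plus a Hoelder-continuity chaining step), an intermediate-value term of order $O_P(\sqrt{\log r_n/(r_np_nh)})$ via Lemma \ref{lem:aux1}, and an $O(h)$ kernel bias from (A5), followed by the resolvent identity and Lemma \ref{lem:bounded_inverse} for the inverse. The only (immaterial) difference is that the paper absorbs the randomness of $\theta^*_r(t_0)$ by taking $\sup_{\theta\in\Theta}$ inside the centred term \eqref{eq:L1} and then invoking the Lipschitz continuity of $\Sigma(s,\cdot)$ (Lemma \ref{lem:sigma_diff}) for the residual \eqref{eq:L2}, whereas you centre at $\theta_0$ and push the discrepancy into a Lipschitz bound on $H_{n,ij}(s,\cdot)$ paired with the uniform edge-count bound \eqref{eq:cub}; both ingredients are established in the paper, so either bookkeeping works.
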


\begin{proof}
We begin by rewriting $\ell_n^*$ in terms of the second derivatives
\begin{equation}
\label{eq:intvalsigma}
\left\|\frac{1}{r_n\bar{p}_n(t_0)}\ell_n^*(t_0)-\Sigma(t_0)\right\|^2\leq\sum_{r=1}^p\left\|\frac{1}{r_n\bar{p}_n(t_0)}\partial_{\theta}^2\ell_n(\theta^*_r(t_0),t_0)-\Sigma(t_0)\right\|^2.
\end{equation}
Since $p$ doesn't vary in $n$, it is enough to consider each term in the sum on the right hand side above separately. In order to reduce notation, we do not indicate which intermediate value $\theta^*_r(t_0)$ we consider and write simply $\theta^*(t_0)$ instead. Recall the definitions of $H_{n,ij}$, $\tilde{H}_{n,ij}$ and $\Sigma(s,\theta)$ in \eqref{eq:def_H}, \eqref{eq:def_Htilde} and \eqref{eq:def_sigma}, respectively. It holds that $\Sigma(s,\theta)=\IE(H_{n,ij}|C_{n,ij}(s)=1)$. Now, we can separate the problem as follows: Recall the abbreviation $\Sigma_t:=\Sigma(t,\theta_0)$
\begin{align}
&\left\|\frac{1}{r_n\bar{p}_n(t_0)}\partial_{\theta}^2\ell_n(\theta^*(t_0),t_0)-\Sigma_{t_0}\right\| \nonumber \\
\leq&\left\|\frac{1}{r_n\bar{p}_n(t_0)}\sum_{(i,j)\in L_n}\int_0^TK_{h,t_0}\left(s\right)\left[H_{n,ij}(s,\theta^*(t_0))-\Sigma(s,\theta^*(t_0))p_n(s)\right]ds\right\| \nonumber \\
&+\left\|\frac{1}{r_n}\sum_{(i,j)\in L_n}\int_0^TK_{h,t_0}\left(s\right)\left[\Sigma(s,\theta^*(t_0))\frac{p_n(s)}{\bar{p}_n(t_0)}-\Sigma_{t_0}\right]ds\right\| \nonumber \\
\leq&\sup_{\theta\in\Theta}\left\|\frac{1}{r_n\bar{p}_n(t_0)}\sum_{(i,j)\in L_n}\int_0^TK_{h,t_0}\left(s\right)\left[H_{n,ij}(s,\theta)-\Sigma(s,\theta)p_n(s))\right]ds\right\| \label{eq:L1} \\
&\quad+\left\|\int_0^TK_{h,t_0}\left(s\right)\left(\Sigma(s,\theta^*(t_0))-\Sigma_s\right)\frac{p_n(s)}{\bar{p}_n(t_0)}ds\right\| \label{eq:L2} \\
&\quad+\left\|\int_0^TK_{h,t_0}\left(s\right)\left(\Sigma_s-\Sigma_{t_0}\right)\frac{p_n(s)}{\bar{p}_n(t_0)}ds\right\| \label{eq:L3} \\
&\quad+\left\|\int_0^TK_{h,t_0}\left(s\right)\Sigma_{t_0}\left(\frac{p_n(s)}{\bar{p}_n(t_0)}-1\right)ds\right\|. \label{eq:L4}
\end{align}
We note firstly that $\eqref{eq:L4}=0$ by definition of $\bar{p}_n(t_0)$. Moreover, after taking the $\sup$ over all $t_0$, the convergence rate of line \eqref{eq:L2} equals $O_P\left(\sqrt{\frac{\log r_n}{r_np_nh}}\right)$, because of the Lipschitz continuity of $\Sigma$ in Lemma \ref{lem:sigma_diff} and Lemma \ref{lem:aux1} (recall that $\theta^*(t_0)$ is an intermediate value between $\hat{\theta}_n(t_0)$ and $\theta_0$ in Taylor's Formula). The expression in \eqref{eq:L3} can be handled by Assumption (A5) which states boundedness of $\partial_t\Sigma_t$ together with a Taylor expansion in the time parameter:
\begin{align*}
\sup_{t_0\in[0,T]}\left\|\int_0^TK_{h,t_0}\left(s\right)\left(\Sigma_s-\Sigma_{t_0}\right)\frac{p_n(s)}{\bar{p}_n(t_0)}ds\right\|\leq&\sup_{t_0\in[0,T]}\int_0^TK_{h,t_0}\left(s\right)\sup_{t\in[0,T]}\|\partial_t\Sigma_t\|\cdot|s-t_0|\frac{p_n(s)}{\bar{p}_n(t_0)}ds \\
\leq&h\cdot D,
\end{align*}
where we used in the last step that the kernel is supported on $[-1,1]$ and hence $|s-t_0|\leq h$. So \eqref{eq:L3} is of order $h$.

To deal with the first expression, line \eqref{eq:L1}, we let $\delta_n:=\sqrt{\frac{\log r_np_n}{r_np_n\cdot h}}$ and $C>0$ and denote by $T_{n,k_0}$ the discrete grid covering $\IT\times\Theta$ as defined in \eqref{eq:grid2}. We apply the same splitting technique as in \eqref{eq:P1} and \eqref{eq:P2} and obtain
\begin{align}
&\IP\left(\underset{\theta\in\Theta}{\sup_{t_0\in\IT}}\left\|\frac{1}{r_n\bar{p}_n(t_0)}\sum_{(i,j)\in L_n}\int_0^TK_{h,t_0}\left(s\right)\tilde{H}_{n,ij}(s,\theta)ds\right\|>C\delta_n\right) \nonumber \\
\leq&\IP\left(\underset{\|\theta_1-\theta_2\|\leq n^{-{k_0}}}{\underset{|t_0-t_0'|\leq hn^{-{k_0}},}{\sup_{t_0,t_0'\in\IT,\theta\in\Theta}}}\left\|\frac{1}{r_n\bar{p}_n(t_0)}\sum_{(i,j)\in L_n}\int_0^TK_{h,t_0}\left(s\right)\tilde{H}_{n,ij}(s,\theta_1)ds\right.\right. \nonumber \\
&\quad\quad\quad\quad\quad\left.\left.-\frac{1}{r_n\bar{p}_n(t_0')}\sum_{(i,j)\in L_n}\int_0^TK_{h,t_0'}\left(s\right)\tilde{H}_{n,ij}(s,\theta_2)ds\right\|>\frac{C}{2}\delta_n\right) \label{eq:L11} \\
&+\IP\left({\sup_{(t_0,\theta)\in T_{n,{k_0}}}}\left|\frac{1}{r_n\bar{p}_n(t_0)}\sum_{(i,j)\in L_n}\int_0^TK_{h,t_0}\left(s\right)\tilde{H}_{n,ij}(s,\theta)ds\right|>\frac{C}{2}\delta_n\right). \label{eq:L12}
\end{align}
In order to show that \eqref{eq:L11} converges to zero, we note that for $|t_0-t_0'|\leq hn^{-{k_0}},|\theta_1-\theta_2|\leq n^{-{k_0}}$. Note that by Lemma \ref{lem:sigma_diff} and Assumption (A3, \ref{ass:boundedness}) $\tilde{H}_{n,ij}(s,\theta)$ is Hoelder continuous with exponent $\alpha_H$ and random, time dependent constant $\gamma_{n,ij}(s)$ which is uniformly bounded. Thus, we get by Hoelder continuity of the kernel (Assumption (A4, \ref{ass:kernel_hoelder})) and of $\bar{p}_n(t_0)^{-1}$ (Lemma \ref{lem:p_continuous})
\begin{align*}
&\frac{1}{r_n}\sum_{(i,j)\in L_n}\int_0^T\left\|\frac{1}{\bar{p}_n(t_0)}\cdot K_{h,t_0}\left(s\right)\tilde{H}_{n,ij}(s,\theta_1)-\frac{1}{\bar{p}_n(t_0')}\cdot K_{h,t_0'}\left(s\right)\tilde{H}_{n,ij}(s,\theta_2)\right\|ds \\
\leq&\frac{1}{r_n}\sum_{(i,j)\in L_n}\int_0^T\frac{1}{\bar{p}_n(t_0)}\cdot\frac{1}{h}\left|K\left(\frac{s-t_0}{h}\right)-K\left(\frac{s-t_0'}{h}\right)\right|\cdot\|\tilde{H}_{n,ij}(s,\theta_1)\| \\
&\quad\quad+\left|\frac{1}{\bar{p}_n(t_0)}-\frac{1}{\bar{p}_n(t_0')}\right|\cdot K_{h,t_0'}\left(s\right)\cdot\|\tilde{H}_{n,ij}(s,\theta_1)\| \\
&\quad\quad+\frac{1}{\bar{p}_n(t_0')}\cdot K_{h,t_0'}\left(s\right)\left\|\tilde{H}_{n,ij}(s,\theta_1)-\tilde{H}_{n,ij}(s,\theta_2)\right\|ds \\
\leq&\frac{H_K}{p_n}\cdot\frac{1}{h}n^{-{k_0}\alpha_K}\frac{1}{r_n}\sum_{(i,j)\in L_n}\int_0^T\|\tilde{H}_{n,ij}(s,\theta_1)\|ds \\
&\quad\quad+H_{n,p}h^{\alpha_p}n^{-{k_0}\alpha_p}\cdot\frac{1}{r_n}\sum_{(i,j)\in L_n}\int_0^TK_{h,t_0'}\left(s\right)\|\tilde{H}_{n,ij}(s,\theta_1)\|ds \\
&\quad\quad+\frac{1}{p_n}\cdot\frac{1}{r_n}\sum_{(i,j)\in L_n}\int_0^TK_{h,t_0'}\left(s\right)\gamma_{n,ij}(s)n^{-{k_0}\alpha_H}ds,
\end{align*}
which converges to zero when $k_0$ is chosen large enough. \eqref{eq:L12} converges to zero by Statement \ref{lem_ass:exp2}. Thus we have shown the first part of the proposition. To prove that inversion preserves the rate, we denote $X_n(t_0):=\frac{1}{r_n\bar{p}_n(t_0)}\ell_n^*(t_0)$. Since we have just shown above that $X_n(t_0)$ converges in probability to $\Sigma_{t_0}$ we conclude by Lemma \ref{lem:bounded_inverse} that firstly $X_n(t_0)$ is with probability converging to one invertible and $\|X_n(t_0)^{-1}\|\leq M$. Thus, on this event,
$$\left\|X_n(t_0)^{-1}-\Sigma(t_0,\theta_0)^{-1}\right\|=\left\|X_n(t_0)^{-1}\left(\Sigma(t_0,\theta_0)-X_n(t_0)\right)\Sigma(t_0,\theta_0)^{-1}\right\|\leq M^2\left|\Sigma(t_0,\theta_0)-X_n(t_0)\right|$$
which concludes the proof of the proposition.
\end{proof}

\begin{proposition}
\label{prop:L1conv}
Under Assumption (A3, \ref{ass:const_estimator})
$$\sqrt{r_nh}\int_0^T\left(\hat{\theta}_n(t_0)-\theta_0\right)\frac{\bar{p}_n(t_0)}{\sqrt{\bar{p}_n}}w(t_0)dt_0=o_P(1).$$
\end{proposition}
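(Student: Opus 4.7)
The plan is to start from the mean-value identity
\[
0=\partial_\theta\ell_n(\hat\theta_n(t_0);t_0)=\partial_\theta\ell_n(\theta_0;t_0)+\ell_n^*(t_0)\bigl(\hat\theta_n(t_0)-\theta_0\bigr),
\]
valid on the event (of probability tending to one by Corollary~\ref{cor:theta_interior}) that $\hat\theta_n(t_0)\in\Theta$ for every $t_0\in\IT$, with $\ell_n^*(t_0)$ the row-wise intermediate matrix from Proposition~\ref{lem:2}. That proposition yields $[\ell_n^*(t_0)]^{-1}=(r_n\bar p_n(t_0))^{-1}\bigl[\Sigma_{t_0}^{-1}+R_n(t_0)\bigr]$ with $\rho_n:=\sup_{t_0\in\IT}\|R_n(t_0)\|=O_P\bigl(\sqrt{\log r_n/(r_np_nh)}+h\bigr)$. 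After the cancellation of $\bar p_n(t_0)$ against the weight $\bar p_n(t_0)/\sqrt{\bar p_n}$, the target reduces to
\[
-\sqrt{\tfrac{h}{r_n\bar p_n}}\int_0^T\bigl[\Sigma_{t_0}^{-1}+R_n(t_0)\bigr]\,\partial_\theta\ell_n(\theta_0;t_0)\,w(t_0)\,dt_0,
\]
which I will split into a leading piece (the $\Sigma_{t_0}^{-1}$-part) and a remainder (the $R_n(t_0)$-part).

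For the leading piece I substitute $\partial_\theta\ell_n(\theta_0;t_0)=\sum_{(i,j)\in L_n}\int_0^TK_{h,t_0}(t)X_{n,ij}(t)\,dM_{n,ij}(t)$ (the drift $\partial_\theta P_n(\theta_0,t_0)$ vanishes by inspection of~\eqref{eq:def_P}) and swap the order of the $t_0$- and $t$-integrals via Lemma~\ref{lem:Fubini}, obtaining
\[
-\sqrt{\tfrac{h}{r_n\bar p_n}}\sum_{(i,j)\in L_n}\int_0^T\psi_n(t)X_{n,ij}(t)\,dM_{n,ij}(t),\qquad \psi_n(t):=\int_0^T\Sigma_{t_0}^{-1}K_{h,t_0}(t)w(t_0)\,dt_0,
\]
with $\psi_n$ deterministic and uniformly bounded by $\|w\|_\infty\sup_t\|\Sigma_t^{-1}\|$ (Lemma~\ref{lem:bounded_inverse}). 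Because $\psi_n(t)X_{n,ij}(t)$ is predictable with respect to $\mathcal F_t^n$ and the martingales $M_{n,ij}$ are pairwise orthogonal (the multivariate counting process of Lemma~\ref{lem:martingale} has a.s.\ no simultaneous jumps), the It\^o isometry collapses the nominally $r_n^2$ cross-terms in the second moment to give
\[
\IE\Big\|\sum_{(i,j)\in L_n}\int_0^T\psi_n(t)X_{n,ij}(t)\,dM_{n,ij}(t)\Big\|^2\le C\sum_{(i,j)\in L_n}\int_0^T\IE[C_{n,ij}(t)\lambda_{n,ij}(t,\theta_0)]\,dt\le C'r_n\bar p_n
\]
by (A3,~\ref{ass:boundedness}) and (A6), and Markov's inequality then bounds the leading piece by $O_P(\sqrt h)=o_P(1)$.

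For the remainder, Proposition~\ref{lem:1} combined with (A6) gives $\sup_{t_0\in\IT}\|\partial_\theta\ell_n(\theta_0;t_0)\|=O_P\bigl(\sqrt{r_n\bar p_n\log r_n/h}\bigr)$, so the remainder is $O_P\bigl(\rho_n\sqrt{\log r_n}\bigr)$ after using $\bar p_n^*/\bar p_n=O(1)$ from (A6). One checks from (A4,~\ref{ass:bw}) that $r_np_nh\gg(\log r_n)^2$ (combine $r_np_nh^2\gg(\log r_n)^3$ with $1/h\gg(\log r_n)^2$) and that $h\sqrt{\log r_n}\to 0$, hence $\rho_n\sqrt{\log r_n}=o(1)$ and the remainder is $o_P(1)$. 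Combining both pieces finishes the proof. The main obstacle will be the uniform-in-$t_0$ control needed to pull the inverse outside the integral and to bound the remainder; this is already packaged by Propositions~\ref{lem:1} and~\ref{lem:2}, so the argument reduces to the standard Cox-type It\^o isometry computation above, whose crucial non-trivial ingredient is the orthogonality of the counting-process martingales that collapses the $r_n^2$ cross-terms down to order $r_n\bar p_n$.
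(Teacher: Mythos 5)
Your proof is correct. The starting point (the mean-value identity \eqref{eq:entwicklung} and the split of $[\ell_n^*(t_0)]^{-1}$ into $\Sigma_{t_0}^{-1}$ plus an error controlled by Proposition~\ref{lem:2}) and the treatment of the leading $\Sigma_{t_0}^{-1}$-piece (Fubini via Lemma~\ref{lem:Fubini}, orthogonality of the counting-process martingales, It\^o isometry, second moment of order $h$) are exactly what the paper does for its term \eqref{eq:f2}. Where you diverge is the remainder: you bound it by $\sup_{t_0\in\IT}\|R_n(t_0)\|\cdot\sup_{t_0\in\IT}\|\partial_\theta\ell_n(\theta_0;t_0)\|$ using the uniform exponential-inequality bound of Proposition~\ref{lem:1}, which costs you an extra $\sqrt{\log r_n}$ and forces the bandwidth check $r_np_nh\gg(\log r_n)^2$ (which does follow from (A4,~\ref{ass:bw}) as you verify). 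The paper instead applies Cauchy--Schwarz in $t_0$ and bounds $\IE\int_0^T\frac{h}{r_n\bar p_n}\|\partial_\theta\ell_n(\theta_0;t_0)\|^2w(t_0)\,dt_0$ by a constant via the same It\^o isometry, so its remainder is $o_P(1)\cdot O_P(1)$ with no logarithmic loss and without invoking the heavy uniform machinery of Proposition~\ref{lem:1}. Both routes close under the stated assumptions; the paper's is marginally sharper and lighter on prerequisites, yours reuses a bound that is needed elsewhere anyway. No gap.
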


\begin{proof}
To begin with, we use the Taylor expansion from equation \eqref{eq:entwicklung} which is shown there without reference to this Proposition. By using also the Cauchy-Schwarz Inequality we get for every entry $r\in\{1,...,p\}$
\begin{align}
&\left|\sqrt{r_nh}\int_0^T\left(\hat{\theta}_n^{(r)}(t_0)-\theta_0^{(r)}\right)\frac{\bar{p}_n(t_0)}{\sqrt{\bar{p}_n}}w(t_0)dt_0\right| \nonumber \\
\leq&\left|\int_0^T\left(\Sigma^{-1}_{t_0}-\left[\frac{1}{r_n\bar{p}_n(t_0)}\ell_n^*(t_0)\right]^{-1}\right)_{r\cdot}\cdot\sqrt{\frac{h}{r_n\bar{p}_n}}\partial_{\theta}\ell_n(\theta_0,t_0)w(t_0)dt_0\right| \nonumber \\
&\quad+\left|\int_0^T\left(\Sigma^{-1}_{t_0}\right)_{r\cdot}\cdot\sqrt{\frac{h}{r_n\bar{p}_n}}\partial_{\theta}\ell_n(\theta_0,t_0)w(t_0)dt_0\right| \nonumber \\
\leq&\left(\int_0^T\left\|\left(\Sigma^{-1}_{t_0}-\left[\frac{1}{r_n\bar{p}_n(t_0)}\ell^*_n(t_0)\right]^{-1}\right)_{r\cdot}\right\|^2w(t_0)dt_0\cdot\int_0^T\frac{h}{r_n\bar{p}_n}\left\|\partial_{\theta}\ell_n(\theta_0,t_0)\right\|^2w(t_0)dt_0\right)^{\frac{1}{2}} \label{eq:f1}\\
&\quad+\left|\int_0^T\left(\Sigma^{-1}_{t_0}\right)_{r\cdot}\cdot\sqrt{\frac{h}{r_n\bar{p}_n}}\partial_{\theta}\ell_n(\theta_0,t_0)w(t_0)dt_0\right| \label{eq:f2}.
\end{align}
We show now that \eqref{eq:f1} and \eqref{eq:f2} are both $o_P(1)$. We begin with \eqref{eq:f1}. Let $\epsilon,\eta>0$ be arbitrary, then for any $C>0$
\begin{align}
&\IP\left(\eqref{eq:f1}>\epsilon\right) \nonumber \\
\leq&\IP\left(\int_0^T\frac{h}{r_n\bar{p}_n}\left\|\partial_{\theta}\ell_n(\theta_0,t_0)\right\|^2w(t_0)dt_0>\frac{\epsilon^2}{C^2}\right) \nonumber \\
&\quad+\IP\left(\int_0^T\left\|\left(\Sigma^{-1}_{t_0}-\left[\frac{1}{r_n\bar{p}_n(t_0)}\ell^*_n(t_0)\right]^{-1}\right)_{r\cdot}\right\|^2w(t_0)dt_0>C^2\right). \label{eq:gehtgleichweiter}
\end{align}

In order to deal with the first part, we use Markov's Inequality. The resulting expectation is written in terms of \eqref{eq:deriv_decomp} and can be bounded by using the fact that the counting process martingales are uncorrelated and that everything is identically distributed. More precisely, we obtain for $h<\delta/2$ (cf. Assumption (A1))
\begin{align*}
&\IE\left(\int_0^T\frac{h}{r_n\bar{p}_n}\left\|\partial_{\theta}\ell_n(\theta_0,t_0)\right\|^2w(t_0)dt_0\right) \\
=&\frac{h}{r_n\bar{p}_n}\sum_{i\in E_n}\int_0^T\int_0^TK_{h,t_0}\left(t\right)^2\IE\left(\left\|\partial_{\theta}\log\lambda_{n,ij}(\theta_0,t)\right\|^2C_{n,ij}(t)\lambda_{n,ij}(\theta_0,t)\right)dtw(t_0)dt_0 \\
\leq&\int_0^T\int_0^ThK_{h,t_0}\left(t\right)^2w(t_0)dt_0\IE\left(\left\|\partial_{\theta}\log\lambda_{n,ij}(\theta_0,t)\right\|^2\lambda_{n,ij}(\theta_0,t)\Big|C_{n,ij}(t)=1\right)\frac{p_n(t)}{\bar{p}_n}dt \\
\leq&\|w\|_{\infty}K\int_{\delta/2}^{T-\delta/2}\frac{p_n(t)}{\bar{p}_n}dt\cdot\sup_{t\in[0,T]}\IE\left(\left\|\partial_{\theta}\log\lambda_{n,ij}(\theta_0,t)\right\|^2\lambda_{n,ij}(\theta_0,t)|C_{n,ij}(t)=1\right)=:C^*,
\end{align*}
where $K$ is the bound on the kernel from (A4, \ref{ass:kernel_hoelder}), $\|w\|_{\infty}<\infty$ by (A1), the supremum is finite by Assumption (A3, \ref{ass:boundedness}) and $\int_{\delta/2}^{T-\delta/2}\frac{p_n(t)}{\bar{p}_n}dt\leq1$ by definition. By Proposition \ref{lem:2} and the assumptions on $h$ in (A4, \ref{ass:bw}) we find that for all $C>0$ and thus in particular $C=\frac{\epsilon\sqrt{\eta}}{\sqrt{2C^*}}$ it holds for $n$ large enough that
$$\IP\left(\int_0^T\left\|\left(\Sigma^{-1}_{t_0}-\left[\frac{1}{r_n\bar{p}_n(t_0)}\ell^*_n(t_0)\right]^{-1}\right)_{r\cdot}\right\|^2w(t_0)dt_0>C^2\right)\leq\frac{\eta}{2}.$$
Now, by using all previous considerations we may estimate by using \eqref{eq:gehtgleichweiter} for $n$ large enough by
$$\IP(\eqref{eq:f1}>\epsilon)\leq\frac{C^2}{\epsilon^2}\IE\left(\int_0^T\frac{h}{r_n\bar{p}_n}\left\|\partial_{\theta}\ell_n(\theta_0,t_0)\right\|^2w(t_0)dt_0\right)+\frac{\eta}{2}\leq\eta.$$
Since $\epsilon,\eta>0$ were chosen arbitrarily, we have shown that $\eqref{eq:f1}=o_P(1)$.

We continue with \eqref{eq:f2}. This is easier to handle because $\Sigma^{-1}_{t_0}$ is deterministic and thus in particular predictable. It is therefore not necessary to separate first and second derivative as we did in \eqref{eq:f1}. Let $\epsilon>0$ be arbitrary, then we find by applying Lemma \ref{lem:Fubini} again for $h<\delta/2$
\begin{align*}
&\IP\left(\eqref{eq:f2}>\epsilon\right) \\
\leq&\frac{h}{\epsilon^2r_n\bar{p}_n}\IE\left(\left(\sum_{(i,j)\in L_n}\int_0^T\int_0^T\left(\Sigma^{-1}_{t_0}\right)_{r\cdot}K_{h,t_0}\left(t\right)w(t_0)dt_0\partial_{\theta}\log\lambda_{n,ij}(\theta_0,t)dM_{n,ij}(t)\right)^2\right) \\
\leq&\frac{h}{\epsilon^2r_n\bar{p}_n}\sum_{(i,j)\in L_n}\int_0^T\IE\left(\left\|\int_0^TK_{h,t_0}\left(t\right)\left(\Sigma^{-1}_{t_0}\right)_{r\cdot}w(t_0)dt_0\right\|^2\left\|\partial_{\theta}\log\lambda_{n,ij}(\theta_0,t)\right\|^2C_{n,ij}(t)\lambda_{n,ij}(\theta_0,t)\right)dt \\
\leq&\frac{h\|w\|_{\infty}^2}{\epsilon^2\bar{p}_n}\int_{\delta/2}^{T-\delta/2}p_n(t)dt\cdot\sup_{t\in[0,T]}\IE\left(\left\|\partial_{\theta}\log\lambda_{n,ij}(\theta_0,t)\right\|^2\lambda_{n,ij}(\theta_0,t)\Big|C_{n,ij}(t)=1\right)\cdot\sup_{t\in[0,T]}\left\|\left(\Sigma_{t}^{-1}\right)_{r\cdot}\right\|.
\end{align*}
Since $h\to0$, this converges to zero by Assumption (A3, \ref{ass:boundedness}), Lemma \ref{lem:bounded_inverse} and using once again that $\int_{\delta/2}^{T-\delta/2}\frac{p_n(t)}{\bar{p}_n}dt\leq1$. Thus, also $\eqref{eq:f2}=o_P(1)$.
\end{proof}

\begin{proof}[Proof of Theorem \ref{thm:test_asymptotics}]
We note firstly that we may replace the estimator $\bar{\theta}_n$ in the test statistic with $\theta_0$ because for $T_{0,n}:=\int_0^T\left\|\hat{\theta}_n(t_0)-\theta_0\right\|^2\bar{p}_n(t_0)w(t_0)dt_0$ it holds that
\begin{align*}
r_nh^{\frac{1}{2}}T_n=&r_nh^{\frac{1}{2}}T_{0,n}+r_nh^{\frac{1}{2}}\int_0^T\left\|\bar{\theta}_n-\theta_0\right\|^2\bar{p}_n(t_0)w(t_0)dt_0 \\
&+2r_nh^{\frac{1}{2}}\int_0^T\left(\hat{\theta}_n(t_0)-\theta_0\right)^T\left(\bar{\theta}_n-\theta_0\right)\bar{p}_n(t_0)w(t_0)dt_0.
\end{align*}
By the Assumptions (A1), (A3, \ref{ass:const_estimator}), Proposition \ref{prop:L1conv} and $h\to0$ the last two terms may be asymptotically neglected. Hence, the limiting distribution of $r_nh^{\frac{1}{2}}T_n$ can be found by studying $r_nh^{\frac{1}{2}}T_{0,n}$.

By Corollary \ref{cor:theta_interior}, $\hat{\theta}(t_0)\in\Theta$ with high probability. Since $\ell_n$ is differentiable by Lemma \ref{lem:diff_integral}, we thus have $\partial_{\theta}\ell_n(\hat{\theta}(t_0),t_0)=0$ on this event. As we are concerned with convergence in the distribution, we may restrict to this event. By a Taylor expansion there are $\theta^*_r(t_0)$ which lie on the connecting line between $\hat{\theta}(t_0)$ and $\theta_0$ such that
\begin{equation}
\label{eq:standard_taylor}
0=\partial_{\theta}\ell_{n,r}(\hat{\theta},t_0)=\partial_{\theta}\ell_{n,r}(\theta_0(t_0),t_0)+\partial_{\theta}^2\ell_{n,r\cdot}(\theta^*_r(t_0),t_0)(\hat{\theta}(t_0)-\theta_0),
\end{equation}
where $\partial_{\theta}\ell_{n,r}$ is the $r$-th component of the gradient (with respect to $\theta$) of $\ell_n$ and $\partial_{\theta}^2\ell_{n,r\cdot}$ denotes the $r$-th row of the Hessian Matrix of $\ell_n$ with respect to $\theta$. Define
$$\ell_n^*(t_0):=\begin{pmatrix}
\partial_{\theta}^2\ell_{n,1\cdot}(\theta^*_1(t_0),t_0) \\
\vdots \\
\partial_{\theta}^2\ell_{n,q\cdot}(\theta^*_q(t_0),t_0)
\end{pmatrix}.$$
By Proposition \ref{lem:2} and (A4, \ref{ass:bw}) we have that $\ell_n^*(t_0)$ is uniformly close to $\Sigma_{t_0}$. Hence, by Lemma \ref{lem:bounded_inverse} we find that with probability tending to one $\ell^*_n(t_0)$ is invertible for all $t_0\in[0,T]$. Thus, \eqref{eq:standard_taylor} is equivalent to
\begin{equation}
\hat{\theta}_n(t_0)-\theta_0=-\left[\ell_n^*(t_0)\right]^{-1}\,\cdot\,\partial_{\theta}\ell_n(\theta_0,t_0). \label{eq:entwicklung}
\end{equation}
Using this expansion and by applying Propositions \ref{lem:1} and \ref{lem:2}, we obtain (use also the properties of $w$ in (A1))
\begin{align*}
T_{0,n}=&\int_{\delta}^{T-\delta}\left\|\left[\frac{1}{r_n\bar{p}_n(t_0)}\ell_n^*(t_0)\right]^{-1}\,\cdot\,\frac{1}{r_n\bar{p}_n(t_0)}\partial_{\theta}\ell_n(\theta_0,t_0)\right\|^2w(t_0)\bar{p}_n(t_0)dt_0 \\
=&\int_{\delta}^{T-\delta}\left\|\Sigma_{t_0}^{-1}\frac{\partial_{\theta}\ell_n(\theta_0,t_0)}{r_n\bar{p}_n(t_0)}+\left(\left[\frac{\ell_n^*(t_0)}{r_n\bar{p}_n(t_0)}\right]^{-1}-\Sigma_{t_0}^{-1}\right)\frac{\partial_{\theta}\ell_n(\theta_0,t_0)}{r_n\bar{p}_n(t_0)}\right\|^2w(t_0)\bar{p}_n(t_0)dt_0 \\
=&\int_{\delta}^{T-\delta}\left\|\Sigma_{t_0}^{-1}\frac{\partial_{\theta}\ell_n(\theta_0,t_0)}{r_n\bar{p}_n(t_0)}\right\|^2w(t_0)\bar{p}_n(t_0)dt_0 \\
&\,\,\,+O_P\left(\frac{\log r_n}{r_nh}\left(\sqrt{\frac{\log r_n}{r_np_nh}}+h+\left(\sqrt{\frac{\log r_n}{r_np_nh}}+h\right)^2\right)\right) \\
=&\int_{\delta}^{T-\delta}\left\|\Sigma_{t_0}^{-1}\frac{\partial_{\theta_0}\ell_n'(\theta_0,t_0)}{r_n\bar{p}_n(t_0)}\right\|^2w(t_0)\bar{p}_n(t_0)dt_0+O_P\left(\frac{\log r_n}{r_nh}\left(\sqrt{\frac{\log r_n}{r_np_nh}}+h\right)\right).
\end{align*}
Hence,
\begin{align*}
r_nh^{\frac{1}{2}}T_{0,n}=&r_nh^{\frac{1}{2}}\int_{\delta}^{T-\delta}\left\|\Sigma_{t_0}^{-1}\frac{\partial_{\theta}\ell_n(\theta_0,t_0)}{r_n\bar{p}_n(t_0)}\right\|^2w(t_0)\bar{p}_n(t_0)dt_0+O_P\left(\frac{(\log r_n)^{\frac{3}{2}}}{\sqrt{r_np_n}\cdot h}+\left(h(\log r_n)^2\right)^{\frac{1}{2}}\right),
\end{align*}
where the $O_p$ part is $o_P(1)$ by Assumption (A4, \ref{ass:bw}) on the bandwidth. Thus, for the asymptotic considerations, we have to investigate only the first part. By noting that $\log x\cdot y-x\leq \log y\cdot y-y$ for all $x,y>0$ we see that $\theta\mapsto\log\lambda_{n,ij}(\theta,s)\cdot\lambda_{n,ij}(\theta_0,s)-\lambda_{n,ij}(\theta,s)$ is maximal for $\theta=\theta_0$ and this holds for all $(i,j)\in L_n$ and and all $t\in[0,T]$. Hence, $\theta\mapsto P_n(\theta,t_0)$ defined in \eqref{eq:def_P} has a local maximum at $\theta_0$. By differentiability of $P_n$ (cf. Lemma \ref{lem:diff_integral}) we conclude that $\partial_{\theta}P_n(\theta_0,t_0)=0$ for all $t_0\in[0,T]$. Using this together with the decomposition of the likelihood in \eqref{eq:decomp_ell} we obtain (the order of integration may be interchanged by Lemma \ref{lem:Fubini})
\begin{align}
&r_nh^{\frac{1}{2}}\int_{\delta}^{T-\delta}\left\|\Sigma^{-1}_{t_0}\frac{\partial_{\theta}\ell_n(\theta_0,t_0)}{r_n\bar{p}_n(t_0)}\right\|^2w(t_0)\bar{p}_n(t_0)dt_0 \nonumber \\
=&r_nh^{\frac{1}{2}}\int_{\delta}^{T-\delta}\left\|\frac{\Sigma^{-1}_{t_0}}{r_n\bar{p}_n(t_0)}\sum_{(i,j)\in L_n}\int_0^TK_{h,t_0}\left(t\right)\frac{\partial_{\theta}\lambda_{n,ij}(\theta_0,t)}{\lambda_{n,ij}(\theta_0,t)}dM_{n,ij}(t)\right\|^2w(t_0)\bar{p}_n(t_0)dt_0 \nonumber \\
=&\frac{1}{h^{\frac{1}{2}}r_n}\sum_{(i,j),(k,l)\in L_n}\int_{[0,T]^3}hK_{h,t_0}\left(s\right)K_{h,t_0}\left(t\right)X_{n,ij}(s)^T\Sigma_{t_0}^{-T}\Sigma_{t_0}^{-1}X_{n,kl}(t)\frac{w(t_0)}{\bar{p}_n(t_0)}dt_0dM_{n,ij}(s)dM_{n,kl}(t) \nonumber \\
=&\frac{1}{h^{\frac{1}{2}}r_n}\sum_{(i,j),(k,l)\in L_n}\int_0^T\int_0^Tf_{n,ij,kl}(s,t)dM_{n,ij}(s)dM_{n,kl}(t), \label{eq:a}
\end{align}
where $f_{n,ij,kl}$ was defined in \eqref{eq:deff}. Note that $f_{n,ij,kl}(s,t)=f_{n,kl,ij}(t,s)$. Then (in the second line we reorder integration by Lemma \ref{lem:Fubini}, and the third equality is not term-wise the same but for the whole sum),
\begin{align}
\eqref{eq:a}=&\frac{1}{h^{\frac{1}{2}}r_n}\sum_{(i,j),(k,l)\in L_n}\int_0^T\int_0^Tf_{n,ij,kl}(s,t)\left(\Ind_{t<s}+\Ind_{t>s}+\Ind_{t=s}\right)dM_{n,ij}(s)dM_{n,kl}(t) \nonumber \\
=&\frac{1}{h^{\frac{1}{2}}r_n}\sum_{(i,j),(k,l)\in L_n}\Bigg[\int_0^T\int_0^{s-}f_{n,ij,kl}(s,t)dM_{n,kl}(t)dM_{n,ij}(s)+\int_0^T\int_0^{t-}f_{n,ij,kl}(s,t)dM_{n,ij}(s)dM_{n,kl}(t) \nonumber \\
&\,\,\,\,\,\quad\quad\quad+\int_0^T\int_{\{s\}}f_{n,ij,kl}(s,t)dM_{n,kl}(t)dM_{n,ij}(s)\Bigg] \nonumber \\
=&\frac{1}{h^{\frac{1}{2}}r_n}\sum_{(i,j)\in L_n}\left[2\int_0^T\int_0^{s-}f_{n,ij,ij}(s,t)dM_{n,ij}(t)dM_{n,ij}(s)+\int_0^T\int_{\{s\}}f_{n,ij,ij}(s,t)dM_{n,ij}(t)dM_{n,ij}(s)\right] \label{eq:line1} \\
+&\frac{1}{h^{\frac{1}{2}}r_n}\underset{(i,j)\neq (k,l)}{\sum_{(i,j),(k,l)\in L_n}}\Bigg[2\int_0^T\int_0^{s-}f_{n,ij,kl}(s,t)dM_{n,kl}(t)dM_{n,ij}(s)+\int_0^T\int_{\{s\}}f_{n,ij,kl}(s,t)dM_{n,kl}(t)dM_{n,ij}(s)\Bigg]. \label{eq:line2}
\end{align}
We will consider lines \eqref{eq:line1} and \eqref{eq:line2} separately. We start with line \eqref{eq:line1} and in there, we start with the second integral: Note that the martingales $M_{n,ij}$ have jumps of height exactly one at those positions where the counting processes $N_{n,ij}$ jump (this is because we assume a continuous integrated intensity process). Hence we have
\begin{equation}
\label{eq:opi}
\int_{\{s\}}f_{n,ij,ij}(s,t)dM_{n,ij}(t)=\Ind_{\Delta N_{n,ij}(s)=1}f_{n,ij,ij}(s,s),
\end{equation}
and furthermore
\begin{align*}
\int_0^T\int_{\{s\}}f_{n,ij,ij}(s,t)dM_{n,ij}(t)dM_{n,ij}(s)=\int_0^T\Ind_{\Delta N_{n,ij}(s)=1}f_{n,ij,ij}(s,s)dM_{n,ij}(s)=\int_0^Tf_{n,ij,ij}(s,s)dN_{n,ij}(s).
\end{align*}
Using the above equality, we obtain
$$\eqref{eq:line1}=\frac{1}{h^{\frac{1}{2}}r_n}\sum_{(i,j)\in L_n}\left[2\int_0^T\int_0^{s-}f_{n,ij,ij}(s,t)dM_{n,ij}(t)dM_{n,ij}(s)+\int_0^Tf_{n,ij,ij}(s,s)dN_{n,ij}(s)\right].$$
The first sum is a sum of uncorrelated martingales and so it will converge to zero in probability by an application of Markov's inequality: Denote by $g_{n,ij}(s)$ a sequence of identically distributed, predictable functions, then in general $\IE\left(\int_0^Tg_{n,ij}(s)dM_{n,ij}(s)\right)=0$ and for $(i,j),(k,l)\in L_n$
\begin{align*}
&\IE\left(\int_0^Tg_{n,ij}(s)dM_{n,ij}(s)\cdot\int_0^Tg_{n,kl}(s)dM_{n,kl}(s)\right)=0, \textrm{ for }(i,j)\neq (k,l), \\
&\IE\left(\left(\int_0^Tg_{n,ij}(s)dM_{n,ij}(s)\right)^2\right)=\int_0^T\IE\left(g_{n,ij}(s)^2C_{n,ij}(s)\lambda_{n,ij}(s,\theta_0)\right)ds.
\end{align*}
So we get for any $\epsilon>0$
\begin{align*}
&\IP\left(\left|\frac{1}{r_n}\sum_{(i,j)\in L_n}\int_0^Tg_{n,ij}(s)dM_{n,ij}(s)\right|>\epsilon\right)\leq\frac{1}{\epsilon^2}\frac{1}{r_n}\IE\left(\int_0^Tg_{n,ij}(s)^2C_{n,ij}(s)\lambda_{n,ij}(\theta_0,s)ds\right).
\end{align*}
When letting $g_{n,ij}(s)=h^{-\frac{1}{2}}\int_0^{s-}f_{n,ij,ij}(s,t)dM_{n,ij}(t)$, we have by Lemma \ref{lem:bounded_g} that the above converges to zero. Moreover, by definition
$$h^{-\frac{1}{2}}A_n=\frac{1}{h^{\frac{1}{2}}r_n}\sum_{(i,j)\in L_n}\int_0^Tf_{n,ij,ij}(s,s)dN_{n,ij}(s).$$
Combining these considerations yields $\eqref{eq:line1}=o_p(1)+h^{-\frac{1}{2}}A_n$.

Next we consider \eqref{eq:line2}. Firstly, we note, using an analogue of \eqref{eq:opi}, that the second integral in \eqref{eq:line2} equals zero because the two martingales $M_{n,ij}$ and $M_{n,kl}$ never jump simultaneously because $(i,j)\neq (k,l)$. To investigate the first integral we simplify notation by using the predictable functions $\tau_{n,ij,kl}$ defined in \eqref{eq:def_tau}. Then
$$\eqref{eq:line2}=\frac{1}{h^{\frac{1}{2}}r_n}\sum_{(i,j)\in L_n}\int_0^T2\left(\underset{(k,l)\neq (i,j)}{\sum_{(k,l)\in L_n}}\tau_{n,ij,kl}(s)\right)dM_{n,ij}(s)$$
is a martingale in $T$. We intent to show convergence to a normal distribution by using Rebolledo's martingale central limit theorem (Theorem \ref{thm:Rebolledo}). To this end, we need to prove the convergence of the variation towards a deterministic quantity and that the jump parts of the process converge to zero. We start with the quadratic variation (note that $M_{n,ij}$ and $M_{n,kl}$ are uncorrelated whenever $(i,j)\neq (k,l)$):
\begin{align*}
&\left<\frac{1}{h^{\frac{1}{2}}r_n}\sum_{(i,j)\in L_n}\int_0^T2\left(\underset{j\neq i}{\sum_{(k,l)\in L_n}}\tau_{n,ij,kl}(s)\right)dM_{n,ij}(s)\right> \\
=&\frac{4}{hr_n^2}\sum_{(i,j)\in L_n}\int_0^T\left(\underset{(k,l)\neq (i,j)}{\sum_{(k,l)\in L_n}}\tau_{n,ij,kl}(s)\right)^2C_{n,ij}(s)\lambda_{n,ij}(\theta_0,s)ds \\
=&\frac{4}{hr_n^2}\sum_{(i,j)\in L_n}\int_0^T\underset{(k_1,l_1),(k_2,l_2)\neq (i,j)}{\sum_{(k_1,l_1),(k_2,l_2)\in L_n}}\tau_{n,ij,k_1l_1}(s)\tau_{n,ij,k_2l_2}(s)C_{n,ij}(s)\lambda_{n,ij}(\theta_0,s)ds \\
=&\frac{4}{hr_n^2}\sum_{(i,j)\in L_n}\int_0^T\underset{(k,l)\neq (i,j)}{\sum_{(k,l)\in L_n}}\tau_{n,ij,kl}(s)^2C_{n,ij}(s)\lambda_{n,ij}(\theta_0,s)ds \\
&\,\,\,+\frac{4}{hr_n^2}\sum_{(i,j)\in L_n}\int_0^T\underset{(k_1,l_1)\neq (k_2,l_2)}{\underset{(k_1,l_1),(k_2,l_2)\neq (i,j)}{\sum_{(k_1,l_1),(k_2,l_2)\in L_n}}}\tau_{n,ij,k_1l_1}(s)\tau_{n,ij,k_2l_2}(s)C_{n,ij}(s)\lambda_{n,ij}(\theta_0,s)ds \\
\overset{\IP}{\to}&B,
\end{align*}
by Lemma \ref{lem_ass:var}. Now the jump process (the process which contains all jumps of size greater than or equal to $\epsilon>0$) is given by (note that no two martingales jump at the same time)
$$\frac{2}{h^{\frac{1}{2}}r_n}\sum_{(i,j)\in L_n}\int_0^T\Ind\left(\left|\frac{2}{h^{\frac{1}{2}}r_n}\underset{(k,l)\neq (i,j)}{\sum_{(k,l)\in L_n}}\tau_{n,ij,kl}(s)\right|>\epsilon\right)\underset{(k,l)\neq (i,j)}{\sum_{(k,l)\in L_n}}\tau_{n,ij,kl}(s)dM_{n,ij}(s),$$
which converges to zero by Lemma \ref{lem_ass:jump}. Hence, by Rebolledo's martingale central limit theorem (see Theorem \ref{thm:Rebolledo} in the Appendix)
$$\eqref{eq:line2}\overset{d}{\to}N(0,B)$$
and the statement of the theorem is shown.
\end{proof}

\subsection{Proof of Lemmas \ref{lem_ass:var}-\ref{lem_ass:exp2}}
\label{sec:lemma_proofs}

\begin{proof}[Proof of Lemma \ref{lem_ass:var}]
Recall that for $u,v\in L_n$
$$f_{n,u,v}(s,t)=X_{n,u}(s)^T\int_{\delta}^{T-\delta}hK_{h,t_0}(s)K_{h,t_0}(t)\Sigma_{t_0}^{-T}\Sigma_{t_0}^{-1}\frac{w(t_0)}{\bar{p}_n(t_0)}dt_0X_{n,v}(t).$$
By substituting $y=\frac{s-t_0}{h}$ we obtain
\begin{align*}
&f_{n,u,v}(s,t) \\
=&X_{n,u}(s)^T\int_{(s-T)/h}^{s/h}K(y)K\left(\frac{t-s}{h}+y\right)\Sigma^{-T}_{s-yh}\Sigma_{s-yh}^{-1}\frac{w(s-yh)}{\bar{p}_n(s-yh)}dyX_{n,v}(t).
\end{align*}
For ease of notation we denote
\begin{equation}
\label{eq:ftilde}
\tilde{f}_{n}(s,t):=\int_{\frac{s-T+\delta}{h}}^{\frac{s-\delta}{h}}K(y)K\left(\frac{t-s}{h}+y\right)\Sigma^{-T}_{s-yh}\Sigma_{s-yh}^{-1}\frac{w(s-yh)}{\bar{p}_n(s-yh)}dy.
\end{equation}
Note firstly that $\tilde{f}_n$ is not random and secondly that for $t<s-2h$ the above expression equals zero because we assume that the kernel is supported on $[-1,1]$ (cf. Assumption (A4, \ref{ass:kernel_hoelder})). So we obtain for $\tau_{n,u,v}$:
\begin{equation}
\label{eq:taurep}
\tau_{n,u,v}(s)=X_{n,u}(s)^T\int_{s-2h}^{s-}\tilde{f}_{n}(s,t)X_{n,v}(t)dM_{n,v}(t).
\end{equation}
The integral in the above display is over a vector-valued integrand. Such integrals are always understood element-wise. We begin with proving \eqref{eq:var2}. By using the representation of $\tau_{n,u,v}$ in \eqref{eq:taurep} we obtain
\begin{align*}
&\frac{4}{hr_n^2}\sum_{u\in L_n}\underset{u_1\neq u_2}{\underset{u_1,u_2\neq u}{\sum_{u_1,u_2\in L_n}}}\int_0^T\tau_{n,u,u_1}(s)\tau_{n,u,u_2}(s)C_{n,u}(s)\lambda_{n,u}(s)ds \\
=&\frac{4}{hr_n^2}\sum_{u\in L_n}\underset{u_1\neq u_2}{\underset{u_1,u_2\neq u}{\sum_{u_1,u_2\in L_n}}}\int_0^TX_{n,u}(s)^T\int_{s-2h}^{s-}\tilde{f}_{n}(s,t)X_{n,u_1}(t)dM_{n,u_1}(t) \\
&\,\,\,\times\left(\int_{s-2h}^{s-}\tilde{f}_{n}(s,t)X_{n,u_2}(t)dM_{n,u_2}(t)\right)^TX_{n,u}(s)C_{n,u}(s)\lambda_{n,u}(s,\theta_0)ds.
\end{align*}
In order to study the behaviour of these integrals, we write the product of the two integrals as a sum. The equation from before continues (we can interchange the order of integration because the integrand after the second equality is non-negative and the martingales can be split into the counting process integral and a regular Lebesgue integral):
\begin{align*}
=&\frac{4}{hr_n^2}\sum_{u\in L_n}\underset{u_1\neq u_2}{\underset{u_1,u_2\neq u}{\sum_{u_1,u_2\in L_n}}}\int_0^TX_{n,u}(s)^T\int_{s-2h}^{s-}\tilde{f}_{n}(s,t)X_{n,u_1}(t)dM_{n,u_1}(t) \\
&\,\,\,\times\int_{s-2h}^{s-}X_{n,u_2}^T(t)\tilde{f}_{n}(s,t)^T(t)dM_{n,u_2}(t)X_{n,u}(s)C_{n,u}(s)\lambda_{n,u}(s,\theta_0)ds \\
=&\frac{4}{hr_n^2}\sum_{u\in L_n}\underset{u_1\neq u_2}{\underset{u_1,u_2\neq u}{\sum_{u_1,u_2\in L_n}}}\int_0^T\int_0^T\int_0^T\Ind_{t<s}\Ind_{t\geq s-2h}\Ind_{r<s}\Ind_{r\geq s-2h}X_{n,u}(s)^T\tilde{f}_{n}(s,t)X_{n,u_1}(t) \\
&\,\,\,\times X_{n,u_2}(r)^T\tilde{f}_{n}(s,r)^TX_{n,u}(s)C_{n,u}(s)\lambda_{n,u}(s,\theta_0)dsdM_{n,u_2}(r)dM_{n,u_1}(t) \\
=&\frac{4}{hr_n^2}\sum_{u\in L_n}\underset{u_1\neq u_2}{\underset{u_1,u_2\neq u}{\sum_{u_1,u_2\in L_n}}}\int_0^T\int_0^T\int_{\max(t,r)}^{\min(t,r)+2h}X_{n,u}(s)^T\tilde{f}_{n}(s,t)X_{n,u_1}(t) \\
&\,\,\,\times X_{n,u_2}(r)^T\tilde{f}_{n}(s,r)^TX_{n,u}(s)C_{n,u}(s)\lambda_{n,u}(s,\theta_0)dsdM_{n,u_2}(r)dM_{n,u_1}(t) \\
=&\frac{4}{hr_n^2}\sum_{u\in L_n}\underset{u_1\neq u_2}{\underset{u_1,u_2\neq u}{\sum_{u_1,u_2\in L_n}}}\int_0^T\int_0^T(\Ind_{r<t}+\Ind_{r>t})\int_{\max(t,r)}^{\min(t,r)+2h}X_{n,u}(s)^T\tilde{f}_{n}(s,t)X_{n,u_1}(t) \\
&\,\,\,\times X_{n,u_2}(r)^T\tilde{f}_{n}(s,r)^TX_{n,u}(s)C_{n,u}(s)\lambda_{n,u}(s,\theta_0)dsdM_{n,u_2}(r)dM_{n,u_1}(t) \\
\end{align*}
Note that we do not need the indicator $\Ind_{t=r}$ because $u_1\neq u_2$ and hence the martingales $M_{n,u_1}$ and $M_{n,u_2}$ will not jump simultaneously almost surely. We continue (for the second equality interchange the roles of $u_1$ and $u_2$ as well as the roles of $t$ and $r$)
\begin{align}
=&\frac{4}{hr_n^2}\sum_{u\in L_n}\underset{u_1\neq u_2}{\underset{u_1,u_2\neq u}{\sum_{u_1,u_2\in L_n}}}\int_0^T\int_0^{t-}\int_{\max(t,r)}^{\min(t,r)+2h}X_{n,u}(s)^T\tilde{f}_{n}(s,t)X_{n,u_1}(t) \nonumber \\
&\,\,\,\times X_{n,u_2}(r)^T\tilde{f}_{n}(s,r)^TX_{n,u}(s)C_{n,u}(s)\lambda_{n,u}(s,\theta_0)dsdM_{n,u_2}(r)dM_{n,u_1}(t) \nonumber \\
+&\frac{4}{hr_n^2}\sum_{u\in L_n}\underset{u_1\neq u_2}{\underset{u_1,u_2\neq u}{\sum_{u_1,u_2\in L_n}}}\int_0^T\int_0^{r-}\int_{\max(t,r)}^{\min(t,r)+2h}X_{n,u}(s)^T\tilde{f}_{n}(s,t)X_{n,u_1}(t) \nonumber \\
&\,\,\,\times X_{n,u_2}(r)^T\tilde{f}_{n}(s,r)^TX_{n,u}(s)C_{n,u}(s)\lambda_{n,u}(s,\theta_0)dsdM_{n,u_1}(t)dM_{n,u_2}(r) \nonumber \\
=&\frac{8}{hr_n^2}\sum_{u\in L_n}\underset{u_1\neq u_2}{\underset{u_1,u_2\neq u}{\sum_{u_1,u_2\in L_n}}}\int_0^T\int_{t-2h}^{t-}\int_{t}^{r+2h}X_{n,u}(s)^T\tilde{f}_{n}(s,t)X_{n,u_1}(t) \nonumber \\
&\,\,\,\times X_{n,u_2}(r)^T\tilde{f}_{n}(s,r)^TX_{n,u}(s)C_{n,u}(s)\lambda_{n,u}(s,\theta_0)dsdM_{n,u_2}(r)dM_{n,u_1}(t) \nonumber \\
=&\frac{8}{r_n}\underset{u_1\neq u_2}{\sum_{u_1,u_2\in L_n}}\int_0^T\int_{t-2h}^{t-}\int_{0}^{\frac{r-t}{h}+2}\frac{1}{r_n}\underset{u\neq u_1,u_2}{\sum_{u\in L_n}}X_{n,u}(t+yh)^T\tilde{f}_{n}(t+yh,t)X_{n,u_1}(t)X_{n,u_2}(r)^T \nonumber \\
&\,\,\,\times \tilde{f}_{n}(t+yh,r)^TX_{n,u}(t+yh)C_{n,u}(t+yh)\lambda_{n,u}(t+yh,\theta_0)dydM_{n,u_2}(r)dM_{n,u_1}(t) \nonumber \\
=& \frac{8}{r_n}\underset{u_1\neq u_2}{\sum_{u_1,u_2\in L_n}}\int_0^T\int_{t-2h}^{t-}\phi_{n,u_1u_2}(t,r)dM_{n,u_2}(r)dM_{n,u_1}(t), \label{eq:p1}
\end{align}
here we have introduced the notation $\phi_{n,u_1u_2}(t,r):=\tilde{\phi}_{n,u_1u_2}^{\emptyset}(t,r)$, where for any set of pairs $I\subseteq L_n$ ($d_{t}^n(u,\emptyset)=\infty$)
\begin{align}
&\tilde{\phi}^I_{n,u_1u_2}(t,r) \nonumber \\
:=&\frac{1}{r_n}\int_{0}^{\frac{r-t}{h}+2} \sum_{u\neq u_1,u_2}X_{n,u}(t+yh)^T\tilde{f}_{n}(t+yh,t)X_{n,u_1}(t)X_{n,u_2}(r)^T\tilde{f}_{n}(t+yh,r)^T \nonumber \\
&\,\,\,\,\,\times X_{n,u}(t+yh)C_{n,u}(t+yh)\lambda_{n,u}(t+yh,\theta_0)\Ind(d_{t-4h}^n(u,I)\geq m)dy \label{eq:phi_tilde}
\end{align}
The functions $\tilde{\phi}^I_{n,u_1u_2}(t,r)$ are partially predictable with respect to $\tilde{\mathcal{F}}_{u_1u_2,t}^{n,I,m}$ for $I\supseteq\{u_1,u_2\}$ because the integrand above has the product structure as mentioned in Definition \ref{def:preliminary_partially_predictable} and summation and integration is preserving this property as it is a measurability property. In order to prove that \eqref{eq:p1} converges to zero we apply Theorem \ref{thm:easy_non-pred}. To this end we have to prove that \eqref{eq:cond1}-\eqref{eq:cond5} with $\delta_n=h$ hold. This is either true by Assumption (D1) or Lemma \ref{lem:pred_cond_hold}. Thus, we may apply Theorem \ref{thm:easy_non-pred} and thus we can conclude that \eqref{eq:var2} holds.

To prove \eqref{eq:var1} in Lemma \ref{lem_ass:var} we will apply very similar techniques as before. In fact, we can use almost exactly the same steps with $u_2=u_1$ we have taken in order to arrive at \eqref{eq:p1} with one exception: At some point we said that we can ignore the indicator function $\Ind_{t=r}$ because $u_1\neq u_2$, this is not true now and we need to take care of this. We obtain
\begin{align}
&\frac{4}{hr_n^2}\sum_{u_1\in L_n}\int_0^T\underset{u_2\neq u_1}{\sum_{u_2\in L_n}}\tau_{n,u_1u_2}(s)^2C_{n,u_1}(s)\lambda_{n,u_1}(s)ds \nonumber \\
=&\frac{8}{r_n}\sum_{u\in L_n}\int_0^T\int_{t-2h}^{t-}\phi_{n,u}(t,r)dM_{n,u}(r)dM_{n,u}(t) \label{eq:offdiag} \\
&+\frac{4}{r_n}\sum_{u\in L_n}\int_0^T\phi_{n,u}(t)\cdot\Delta M_{n,u}(t)dM_{n,u}(t), \label{eq:diag}
\end{align}
where we used the abbreviations $\phi_{n,u}(r,t):=\phi_{n,uu}(r,t)$ and $\phi_{n,u}(t):=\phi_{n,u}(t,t)$. We prove that \eqref{eq:offdiag} converges to zero in probability by applying similar techniques as before. We start by approximating $\phi_{n,u}$ by its measurable approximation $\tilde{\phi}_{n,u}^u$:
\begin{align}
&\eqref{eq:offdiag} \nonumber \\
=&\frac{8}{r_n}\sum_{u\in L_n}\int_0^T\int_{t-2h}^{t-}\phi_{n,u}(t,r)-\tilde{\phi}_{n,u}^u(t,r)dM_{n,u}(r)dM_{n,u}(t) \label{eq:offdiag1} \\
&+\frac{8}{r_n}\sum_{u\in L_n}\int_0^T\int_{t-2h}^{t-}\tilde{\phi}_{n,u}^u(t,r)dM_{n,u}(r)dM_{n,u}(t). \label{eq:offdiag2}
\end{align}
We now use again the approximation \eqref{eq:h2} and obtain by using martingale properties (recall that $K_m^{L_n}$ is measurable by Assumption (H1)) and Markov's Inequality for any $\epsilon>0$
\begin{align*}
&\IP(|\eqref{eq:offdiag1}|>\epsilon) \\
\leq&\frac{1}{\epsilon}\IE\left(\frac{8}{r_n}\sum_{u\in L_n}\int_0^T\int_{t-2h}^{t-}\frac{2C^*}{r_np_n(t)^2}\left(F+K_m^{L_n}H_{UB}^u\right)d|M_{n,u}|(r)d|M_{n,u}|(t)\right) \\
\leq&\frac{16C^*\Lambda}{\epsilon r_np_n}\int_0^T\IE\left(\int_{t-2h}^{t-}\left(F+K_m^{L_n}H_{UB}^u\right)d|M_{n,u}|(r)\Big|C_{n,u}(t)=1\right)dt \\
\to&0
\end{align*}
by Assumption (H2, \ref{eq:AD9}).

For \eqref{eq:offdiag2}, we recall that $\tilde{\phi}_{n,u}^u$ is partially predictable with respect to $\mathcal{F}_t^{n,u,m}$ (cf. remark after \eqref{eq:phi_tilde}) and thus we may apply Lemma \ref{lem:martingale}. Together with \eqref{eq:phi_bound} we get
\begin{align*}
&\IP(|\eqref{eq:offdiag2}|>\epsilon) \\
\leq&\frac{16\Lambda}{\epsilon r_n}\sum_{u\in L_n}\int_0^T\IE\left(\int_{t-2h}^{t-}|\phi_{n,u}^u(t,r)|d|M_{n,u}|(r)C_{n,u}(t)\right)dt \\
\leq&\frac{32C^*\Lambda}{\epsilon r_n^2}\sum_{u\in L_n}\int_0^T\IE\left(\frac{1}{p_n^2}\int_{t-2h}^{t-}d|M_{n,u}|(r)\cdot A_n(t)\cdot C_{n,u}(t)\right)dt \\
\leq&\frac{32C^*\Lambda}{\epsilon}\int_0^T\IE\left(\frac{A_n(t)}{r_np_n)}\cdot\int_{t-2h}^{t-}d|M_{n,u}|(r)\Bigg| C_{n,u}(t)=1\right)dt \cdot\frac{\sup_{t\in[0,T]}p_n(t)}{p_n}.
\end{align*}
This converges to zero by conditioning on $A_n(t)/r_np_n>\alpha$ and Assumptions (AD, \ref{eq:AD5}, \ref{eq:AD10}).

We study now the convergence behaviour of \eqref{eq:diag}. Note firstly that
\begin{align}
\eqref{eq:diag}=&\frac{4}{r_n}\sum_{u\in L_n}\int_0^T\phi_{n,u}(t)dM_{n,u}(t) \label{eq:diag1} \\
&+\frac{4}{r_n}\sum_{u\in L_n}\int_0^T\phi_{n,u}(t)C_{n,u}(t)\lambda_{n,u}(t)dt \label{eq:diag2}
\end{align}
The first part, \eqref{eq:diag1}, converges to zero by an application of Proposition \ref{prop:single_non-pred}. We get by said Proposition with $\tilde{\phi}_{n,u}^I(t):=\tilde{\phi}_{n,u}^I(t,t)$
\begin{align}
&\IE\left(\eqref{eq:diag1}^2\right) \nonumber \\
=&\frac{16\Lambda}{r_n^2}\sum_{u_1\in L_n}\int_0^T\IE\left(\tilde{\phi}_{n,u_1}^{u_1}(t)^2C_{n,u_1}(t)\right)dt \label{eq:diag11} \\
&+\frac{32}{r_n^2}\sum_{u_1,u_2\in L_n}\IE\left(\int_0^T\tilde{\phi}_{n,u_1}^{u_1u_2}(t)dM_{n,u_1}(t)\cdot\int_0^T\left(\phi_{n,u_2}(t)-\tilde{\phi}_{n,u_2}^{u_1u_2}(t)\right)dM_{n,u_2}(t)\right) \label{eq:diag12} \\
&+\frac{16}{r_n^2}\sum_{u_1,u_2\in L_n}\IE\left(\int_0^T\left(\phi_{n,u_1}(t)-\tilde{\phi}_{n,u_1}^{u_1u_2}(t)\right)dM_{n,u_1}(t)\cdot\int_0^T\left(\phi_{n,u_2}(t)-\tilde{\phi}_{n,u_2}^{u_1u_2}(t)\right)dM_{n,u_2}(t)\right). \label{eq:diag13}
\end{align}
We apply estimates \eqref{eq:h2} and \eqref{eq:phi_bound} to show that the three lines above converge to zero. We have by exchangeability of the network
\begin{align*}
\eqref{eq:diag11}\leq&16\Lambda\int_0^T\frac{p_n(t)}{r_np_n^2}\IE\left(\left(\frac{2C^*A_n(t)}{r_np_n}\right)^2\Bigg|C_{n,u}(t)=1\right)dt \\
\leq&64(C^*)^2\Lambda\int_0^T\frac{p_n(t)}{r_np_n^2}\IE\left(\left(\frac{A_n(t)}{r_np_n}\right)^3\right)dt.
\end{align*}
Convergence to zero of the above expression is implied by the fact that the third moment of $A_n(t)$ exist by Assumption (AD, \ref{eq:AD11}). We continue with \eqref{eq:diag12} and \eqref{eq:diag13} to get
\begin{align*}
&\eqref{eq:diag12} \\
\leq&\frac{32}{r_n^2}\sum_{u_1,u_2\in L_n}\IE\left(\int_0^T\frac{2C^*A_n(t)}{r_np_n^2}d|M_{n,u_1}|(t)\cdot \int_0^T\frac{4C^*\left(F+K_m^{L_n}H_{UB}^{u_1u_2}\right)}{r_np_n(t)^2}d|M_{n,u_2}|(t)\right) \\
\leq&\frac{256(C^*)^2}{r_n^2p_n^2}\IE\left(\left(\sum_{u_1\in L_n}\int_0^T\left(F+K_m^{L_n}H_{UB}^{u_1}\right)d|M_{n,u_1}|(t)\right)^2\sup_{t\in[0,T]}\left(\frac{A_n(t)}{r_np_n}\right)^2\right)
\end{align*}
which converges to zero by conditioning and Assumptions (H2, \ref{eq:HubS7}) and (AD, \ref{eq:AD5}, \ref{eq:AD8}). Moreover,
\begin{align*}
\eqref{eq:diag13}\leq&\frac{32}{r_n^2}\sum_{u_1,u_2\in L_n}\IE\Bigg(\int_0^T\frac{4C^*}{r_np_n(t)^2}\left(F+K_m^{L_n}H_{UB}^{u_1u_2}\right)d|M_{n,u_1}|(t) \\
&\quad\quad\quad\times \int_0^T\frac{4C^*}{r_np_n(t)^2}\left(F+K_m^{L_n}H_{UB}^{u_1u_2}\right)d|M_{n,u_2}|(t)\Bigg) \\
\leq&\frac{512(C^*)^2}{r_n^2p_n^2}\IE\left(\left(\sum_{u\in L_n}\int_0^T\frac{\mathcal{C}_{n,u}}{r_np_n(t)}d|M_{n,u}|(t)\right)^2\right).
\end{align*}
It can be shown that the above converges to zero by using martingale properties and the Assumptions (H2, \ref{eq:HubS7}), \ref{eq:HubS5} and (D2, \ref{CA:AU2}). And we conclude that \eqref{eq:diag1} converges to zero.

So we have left to prove convergence of \eqref{eq:diag2} which we do as follows: Denote by superscripts entries of vectors or matrices, i.e., $X^{r_1}_{n,u}(t)^2$ refers to the square of the $r_1$-th entry of $X_{n,u}(t)$ and $\tilde{f}^{r_1,r_2}_n(t+\xi h,t)$ refers to the entry in row $r_1$ and column $r_2$ of the matrix $\tilde{f}_n(t+\xi h,t)$. Then
\begin{align}
&\eqref{eq:diag2} \\
=&\frac{4}{r_n}\sum_{u_1\in L_n}\int_0^T\phi_{n,u_1}(t,t)C_{n,u_1}(t)\lambda_{n,u_1}(t)dt \nonumber \\
=&4\underset{r'_1,r'_2=1}{\sum_{r_1,r_2=1}^q}\int_0^T\int_0^2\frac{1}{r_n}\sum_{u_2\in L_n}\Delta_{n,u_2}^{r_1r_1',r_1r_2}(t+\xi h,t,t+\xi h)\cdot\frac{1}{r_n}\sum_{u_1\in L_n}\Delta_{n,u_1}^{r_2r_2',r_1'r_2'}(t,t,t+\xi h)d\xi dt \nonumber \\
&-\frac{4}{r_n}\sum_{r_1,r_2,r'_1,r'_2=1}^q\int_0^T\int_0^2\frac{1}{r_n}\sum_{u_2\in L_n}\Delta_{n,u_2}^{r_1r_1',r_1r_2}(t+\xi h,t,t+\xi h)\Delta_{n,u_2}^{r_2r_2',r_1'r_2'}(t,t,t+\xi h)d\xi dt, \label{eq:idontcare}
\end{align}
where for all $a,b,c,d\in\{1,...,q\}$
\begin{align*}
\Delta_{n,u}^{ab,cd}(\tau,t,s)&:=X_{n,u}^{a}(\tau)X_{n,u}^{b}(\tau)\tilde{f}^{c,d}_n(s,t)C_{n,u}(\tau)\lambda_{n,u}(\tau) \\
\tilde{\Delta}_{n,u}^{ab,cd}(\tau,t,s)&:=\Delta_{n,u}^{ab,cd}(\tau,t,s)-\IE\left(\Delta_{n,u}^{ab,cd}(\tau,t,s)\right) \\
&=\tilde{f}_n^{c,d}(s,t)\Big[X_{n,u}^a(\tau)X_{n,u}^b(\tau)C_{n,u}(\tau)\lambda_{n,u}(\tau) \\
&\quad\quad\quad\quad-\IE\left(X_{n,u}^a(\tau)X_{n,u}^b(\tau)C_{n,u}(\tau)\lambda_{n,u}(\tau)\right)\Big].
\end{align*}
We keep this in mind and prove now for all $r_1,r_1',r_2,r_2'\in\{1,...,q\}$
\begin{align}
&\sup_{t\in[0,T],\sigma,\tau\in[0,2]}\Bigg|\frac{1}{r_n}\sum_{u\in L_n}\tilde{\Delta}_{n,i}^{r_1r_1',r_1r_2}(t+\tau h,t,t+\sigma h)\Bigg|=o_P(1) \label{eq:usop}
\end{align}
via exponential inequality techniques. Since the argument is the same for all indices, we omit $r_1,r_1',r_2,r_2'$ in the notation. Let therefore $\IT_n$ denote a grid of $[0,T]\times[0,2]^2$ with mesh $H_{n,p}^{-\frac{1}{\alpha_p}}n^{-k_X}$ (where $k_X$ is chosen later) and let $(t^*,\sigma^*,\tau^*)$ be the projection of $(t,\sigma,\tau)\in[0,T]\times[0,2]^2$ onto $\IT_n$, i.e., $\|(t,\sigma,\tau)-(t^*,\sigma^*,\tau^*)\|\leq H_{n,p}^{-\frac{1}{\alpha_p}}n^{-k_X}$. We obtain
\begin{align}
&\sup_{t\in[0,T],\sigma ,\tau \in[0,2]}\Bigg|\frac{1}{r_n}\sum_{u\in L_n}\tilde{\Delta}_{n,u}(t+\tau h,t,t+\sigma h)\Bigg| \nonumber \\
\leq&\sup_{t\in[0,T],\sigma ,\tau \in[0,2]}\Bigg|\frac{1}{r_n}\sum_{u\in L_n}\left(\tilde{\Delta}_{n,u}(t+\tau h,t,t+\sigma h)-\tilde{\Delta}_{n,u}(t^*+\tau^*h,t^*,t^*+\sigma^*h)\right)\Bigg| \label{eq:last1} \\
&+\sup_{t\in[0,T],\sigma ,\tau \in[0,2]}\Bigg|\frac{1}{r_n}\sum_{u\in L_n}\tilde{\Delta}_{n,u}(t^*+\tau^*h,t^*,t^*+\sigma^*h)\Bigg|. \label{eq:last2}
\end{align}
For \eqref{eq:last1} we define $\xi_{n,u}(t):=X_{n,u}(t)X_{n,u}(t)C_{n,u}(t)\lambda_{n,u}(t)$. Then, we find
\begin{align*}
&\left|\frac{1}{r_n}\sum_{u\in L_n}\left(\tilde{\Delta}_{n,u}(t+\tau h,t,t+\sigma h)-\tilde{\Delta}_{n,u}(t^*+\tau^*h,t^*,t^*+\sigma^*h)\right)\right| \\
\leq&\left|p_n(t+\tau h)\tilde{f}_n(t+\sigma h,t)-p_n(t^*+\tau^*h)\tilde{f}_n(t^*+\sigma^*h,t^*)\right| \\
&\quad\quad\quad\times\left|\frac{1}{r_n}\sum_{u\in L_n}\left(\frac{\xi_{n,u}(t+\tau h)}{p_n(t+\tau h)}-\IE\left(\frac{\xi_{n,u}(t+\tau h)}{p_n(t+\tau h)}\right)\right)\right| \\
&+p_n(t^*+\tau^*h)\tilde{f}_n(t^*+\sigma^*h,t^*)\left|\frac{1}{r_n}\sum_{u\in L_n}\left(\frac{\xi_{n,u}(t+\tau h)}{p_n(t+\tau h)}-\frac{\xi_{n,u}(t^*+\tau^*h)}{p_n(t^*+\tau^*h)}\right)\right| \\
&+p_n(t^*+\tau^*h)\tilde{f}_n(t^*+\sigma^*h,t^*)\left|\IE\left(\frac{\xi_{n,u}(t+\tau h)}{p_n(t+\tau h)}-\frac{\xi_{n,u}(t^*+\tau^*h)}{p_n(t^*+\tau^*h)}\right)\right|.
\end{align*}
By the Assumptions (A1), (A5), (A6) and Lemma \ref{lem:p_continuous} we have that the functions $\Sigma_t$, $w$, $p_n$ and $\bar{p}_n$ are all continuous on the compact interval $[0,T]$. Therefore $p_n$ and $\tilde{f}_n$ are uniformly continuous on $[0,T]$. Hence, we can choose $k_X$ large enough such that the first term converges to zero (recall also that the covariates are bounded by Assumption (A3, \ref{ass:boundedness}). The second and third term converge to zero by Assumptions (AC, \ref{eq:AC1}, \ref{eq:AC2}), respectively, after possibly increasing $k_X$ further. Keep this choice of $k_X$ fixed for the remainder of the proof.

Lastly we need to discuss \eqref{eq:last2}. To this end, we apply a standard union bound technique together with Lemma \ref{lem:sufficient_mixing}. We can estimate when noting the sup in \eqref{eq:last2} is actually only taken over $\IT_n$ that for every $\epsilon>0$ by \eqref{eq:ftildebound}
\begin{align*}
&\IP\left(\eqref{eq:last2}>\epsilon\right) \\
=&\IP\Bigg(\sup_{(t,\sigma ,\tau)\in\IT_n}\left|\tilde{f}_n(t+\sigma h,t)\right|\cdot\frac{1}{r_n}\Bigg|\sum_{u\in L_n}\Big[\xi_{n,u}(t+\tau h)-\IE\left(\xi_{n,u}(t+\tau h)\right)\Big]\Bigg|>\epsilon\Bigg) \\
\leq&\IP\Bigg(\sup_{(t,\sigma ,\tau)\in\IT_n}\frac{1}{r_np_n(t)}\Bigg|\sum_{u\in L_n}\Big[\xi_{n,u}(t+\tau h)-\IE\left(\xi_{n,u}(t+\tau h)\right)\Big]\Bigg|>\frac{\epsilon}{c}\Bigg) \\
\leq&\#\IT_n\cdot\sup_{(t,\sigma ,\tau)\in\IT_n}\IP\left(\frac{1}{r_np_n(t)}\sum_{u\in L_n}\Big[\xi_{n,u}(t+\tau h)-\IE\left(\xi_{n,u}(t+\tau h)\right)\Big]>\frac{\epsilon}{c}\right) \\
&+\#\IT_n\cdot\sup_{(t,\sigma ,\tau)\in\IT_n}\IP\left(\frac{1}{r_np_n(t)}\sum_{u\in L_n}\Big[\xi_{n,u}(t+\tau h)-\IE\left(\xi_{n,u}(t+\tau h)\right)\Big]<-\frac{\epsilon}{c}\right).
\end{align*}
The two lines above work completely analogously and hence, we continue only with the first line. The proof of the second line is then identical, we just have to replace $\xi_{n,u}$ by $-\xi_{n,u}$. We will also replace now $\frac{\epsilon}{C}$ by $\epsilon$ for notational convenience, i.e., we show that
\begin{align}
&\#\IT_n\cdot\sup_{(t,\sigma,\tau )\in\IT_n}\IP\left(\frac{1}{r_np_n(t)}\sum_{u\in L_n}\Big[\xi_{n,u}(t+\tau h)-\IE\left(\xi_{n,u}(t+\tau h)\right)\Big]>\epsilon\right)\to0. \label{eq:last21}
\end{align}
To this end, we apply Lemma \ref{lem:sufficient_mixing} to the array of random variables
$$Z_{n,u}=\xi_{n,u}(t)$$
which is bounded by $M:=\hat{K}^2\Lambda$. Consider the sequence of $\Delta_n$-partitions as in Assumption (D3). Since $\xi_{n,u}(t)=\xi_{n,u}(t)C_{n,u}(t)$, we have that $(Z_{n,u})_{u\in L_n}$ fulfils \eqref{eq:GeneralCoverCondition} with respect to this $\Delta_n$-partitioning. The further requirements of Lemma \ref{lem:sufficient_mixing} on the partitioning were required in (D3). The asymptotic uncorrelation condition of Lemma \ref{lem:sufficient_mixing} holds by Assumption (D3, \ref{eq:UUM1}). Note that we show in the proof of Lemma \ref{lem:exponential_inequality} that $|E|_{n,t}=r_n\bar{p}_n(t)$. Thus, we may apply Lemma \ref{lem:sufficient_mixing} with $M=\hat{K}^2\Lambda$. We use that $\frac{p_n}{p_n^*}>\frac{1}{c_0}$ for some $c_0>0$ by Assumption (A6) and that $x\mapsto \frac{x}{\log x}$ is monotonically increasing to obtain
\begin{align*}
&\eqref{eq:last21} \\
\leq&\#\IT_n\left(2\mathcal{K}_n(r_np_n)^{-\frac{c_2\frac{\epsilon^2}{c_0^2}\sqrt{\frac{r_np_n}{\log r_np_n}}}{2\left(3CM^2+4Mc_3\frac{\epsilon}{c_3}\right)}}+\beta_t(\Delta_n)\cdot\mathcal{K}_nr_n+\IP(\Gamma_n^t=0)\right).
\end{align*}
The exponent of $r_np_n$ can be chosen arbitrarily small. By Assumption (D3) $\IP(\Gamma_n^t=0)\to0$ converges to zero exponentially fast and we can choose $\Delta_n$ such that $\beta_t(\Delta_n)$ vanishes as fast as we want. Thus, also the product with $\#\IT_n$ converges to zero. This was the last piece for establishing \eqref{eq:usop}.

We can now continue to compute \eqref{eq:diag2} or equivalently \eqref{eq:idontcare}. Note that by \eqref{eq:ftildebound} it holds uniformly over all indices that
\begin{align*}
\frac{1}{r_n}\Delta^{r_2r_2',r_1r_2'}_{n,i}\leq\frac{1}{r_np_n}C\hat{K}^2\Lambda\to0
\end{align*}
as $n\to\infty$. Thus, the second line in \eqref{eq:idontcare} converges to zero. The limit of the first line of \eqref{eq:idontcare} is by using \eqref{eq:usop} the same as
\begin{align*}
&4\sum_{r_1,r_2,r_1',r_2'=1}^q\int_0^T\int_0^2\IE\left(\Delta_{n,j}^{r_1r_1',r_1r_2}(t+uh,t,t+uh)\right)\cdot\IE\left(\Delta_{n,j}^{r_2r_2',r_1'r_2'}(t,t,t+uh)\right)dudt \\
\to&4\sum_{r_1,r_2,r_1',r_2'=1}^qK^{(4)}\int_0^T\Sigma_t^{r_1,r_1'}\Sigma_t^{r_2,r_2'}[\Sigma^{-2}]^{r_1,r_2}(t)[\Sigma^{-T}\Sigma^{-1}]^{r'_1,r'_2}(t)w(t)^2dt \\
=&4K^{(4)}\int_0^T\textrm{trace}\left(\Sigma_t^{-2}\right)w^2(t)dt,
\end{align*}
where we used continuity of $\Sigma_t$ from Assumption (A5) and where
\begin{align*}
K^{(4)}&:=\int_0^2\left(\int_{-1}^1K(v)K(u+v)dv\right)^2du.
\end{align*}
Thus we have proven \eqref{eq:var1} and the proof of Lemma \ref{lem_ass:var} is complete.
\end{proof}

\begin{lemma}
\label{lem:help2}
Suppose that (A1), (A3, \ref{ass:boundedness}), (A4, \ref{ass:kernel_hoelder}) and (A6) hold. Then, there is a constant $C^*>0$ such that for all $I,J\subseteq L_n$ and all $r,t\in[\delta,T-\delta]$ with $r\in[t-2h,t]$ it holds for $\tilde{\phi}$ defined in \eqref{eq:phi_tilde} that
\begin{equation}
\sup_{u_1,u_2}\left|\tilde{\phi}_{n,u_1u_2}^{I}(t,r)-\tilde{\phi}_{n,u_1u_2}^J(t,r)\right|\leq\frac{2C^*|I\Delta J|}{r_np_n(t)^2}\left(F+K_m^{L_n}H_{UB}^{I\Delta J}\right), \label{eq:h2}
\end{equation}
where $I\Delta J:=(I\setminus J) \cup (I\setminus J)$ denotes the symmetric difference of $I$ and $J$ and $K_m^{L_n}$ and $H_{UB}^{I\Delta J}$ are to be understood with respect to $U_t:=[t-4h,t+2h]$.
\end{lemma}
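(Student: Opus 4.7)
The plan is to exploit the fact that the integrands defining $\tilde{\phi}_{n,u_1u_2}^I(t,r)$ and $\tilde{\phi}_{n,u_1u_2}^J(t,r)$ are identical except for the indicator $\Ind(d_{t-4h}^n(u,\cdot)\geq m)$ under the inner sum. Subtracting gives
\begin{align*}
\tilde{\phi}_{n,u_1u_2}^I(t,r) &- \tilde{\phi}_{n,u_1u_2}^J(t,r) \\
&= \frac{1}{r_n}\int_0^{(r-t)/h+2}\sum_{u\neq u_1,u_2} G_u(y)\bigl[\Ind(d_{t-4h}^n(u,I)\geq m)-\Ind(d_{t-4h}^n(u,J)\geq m)\bigr]\,dy,
\end{align*}
where $G_u(y)$ collects the remaining factors $X_{n,u}, X_{n,u_1}, X_{n,u_2}, \tilde{f}_n(\cdot,\cdot), C_{n,u}, \lambda_{n,u}$. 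The proof then reduces to (i) counting the $u$'s for which the indicator difference is nonzero and (ii) bounding $G_u(y)$ pointwise.

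For (i), I will argue: if $d_{t-4h}^n(u,I)\geq m$ but $d_{t-4h}^n(u,J)<m$, the closest pair in $J$ to $u$ cannot lie in $I$ (else the inequality for $I$ would fail) and hence belongs to $J\setminus I\subseteq I\Delta J$; the reverse case is symmetric. So every contributing $u$ lies within distance $m$ of some element of $I\Delta J$ at time $t-4h$. Since $G_u(y)$ vanishes unless $C_{n,u}(t+yh)=1$ and $t+yh\in[t,t+2h]\subseteq[a_t,b_t]$ for $a_t=t-4h$, $b_t=t+2h$, a union bound over $I\Delta J$ together with the definition of $K_m^{(k,l)}(a_t,b_t)$ bounds the count by $\sum_{(k,l)\in I\Delta J}K_m^{(k,l)}(a_t,b_t)$. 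If $H_{UB}^{I\Delta J}=0$ each summand is strictly below $F$, while if $H_{UB}^{I\Delta J}=1$ each summand is at most $K_m^{L_n}(a_t,b_t)\leq K_m^{L_n}$. Thus in either case the count is bounded by $|I\Delta J|\bigl(F+K_m^{L_n}H_{UB}^{I\Delta J}\bigr)$.

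For (ii), Assumption (A3,\ref{ass:boundedness}) gives $|X_{n,\cdot}|\leq\hat{K}$ and $\lambda_{n,u}\leq\Lambda$, and Lemma \ref{lem:bounded_inverse} gives $\|\Sigma_s^{-1}\|$ uniformly bounded. Combining these with the bounded, compactly supported kernel from (A4,\ref{ass:kernel_hoelder}), the bounded weight $w$ from (A1), and the comparability $\bar{p}_n(\cdot)\asymp p_n(t)$ on the relevant window coming from (A6) and Lemma \ref{lem:p_continuous}, I obtain $\|\tilde{f}_n(t+yh,\tau)\|\leq c/p_n(t)$ uniformly for $y\in[0,2]$ and $\tau\in[t-2h,t]$. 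Consequently $|G_u(y)|\leq \hat{K}^4\Lambda c^2/p_n(t)^2$ uniformly, and integrating over the $y$-interval of length at most $2$, dividing by $r_n$, and multiplying by the count from (i), yields the stated inequality with $C^*:=\hat{K}^4\Lambda c^2$.

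The main obstacle is not conceptual but careful bookkeeping: I must verify that the relevant evaluation points stay inside $[\delta/2,T-\delta/2]$ so that Lemmas \ref{lem:bounded_inverse} and \ref{lem:p_continuous} apply with uniform constants, and check that the emerging $C^*$ is genuinely independent of $u_1,u_2,t,r,I,J$. No probabilistic ideas are required beyond the combinatorial symmetric-difference argument in step (i).
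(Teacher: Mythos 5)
Your proposal is correct and follows essentially the same route as the paper's proof: isolate the indicator difference, bound it by $\sum_{k\in I\Delta J}\Ind(d_{t-4h}^n(u,k)< m)$, split according to whether the pairs in $I\Delta J$ are hubs to get the $F+K_m^{L_n}H_{UB}^{I\Delta J}$ count, and bound all remaining factors via $\|\tilde f_n(t+yh,\cdot)\|\leq c/p_n(t)$ together with the boundedness of the covariates and intensities. The only cosmetic difference is that you phrase the combinatorial step via the "closest pair" argument rather than the paper's direct inequality between indicators, which is equivalent.
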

\begin{proof}
By Lemma \ref{lem:bounded_inverse}, Assumptions (A1), (A4, \eqref{ass:kernel_hoelder}) and (A6) we find a constant $c>0$ such that for all $r\in[t-2h,t]$ and $t\in[\delta,T-\delta]$ (note that then $\frac{r-t}{h}\leq 0$)
\begin{align}
\|\tilde{f}_n(t+yh,r)\|\leq&M^2\|w\|_{\infty}\int_{-\infty}^{\infty}K(x)K\left(\frac{r-t}{h}+x-y\right)\frac{1}{\bar{p}_n(t+h(y-x))}dx \nonumber \\
\leq&\frac{c}{p_n(t)}. \label{eq:ftildebound}
\end{align}
By the assumption of bounded covariates (A3, \ref{ass:boundedness}), we get for any index sets $I,J\subseteq L_n$ and $r\in[t-2h,t]$ (i.e. $\frac{r-t}{h}\in[-2,0]$), that
\begin{align}
&\sup_{u_1,u_2\in L_n}\left|\tilde{\phi}_{n,u_1u_2}^{I}(t,r)-\tilde{\phi}_{n,u_1u_2}^J(t,r)\right| \nonumber \\
\leq&\frac{C^*}{r_np_n(t)^2}\int_0^{\frac{r-t}{h}+2}\sum_{u\in L_n}C_{n,u}(t+yh)\left|\Ind(d_{t-4h}^n(u,I)\geq m)-\Ind(d_{t-4h}^n(u,J)\geq m)\right|dy \nonumber \\
\leq&\frac{C^*}{r_np_n(t)^2}\int_0^{\frac{r-t}{h}+2}\sum_{u\in L_n}C_{n,u}(t+yh)\sum_{k\in I\Delta J}\Ind(d_{t-4h}^n(u,k)\leq m)dy \nonumber \\
\leq&\frac{2C^*}{r_np_n(t)^2}\sum_{k\in I\Delta J}\Bigg(\sum_{u\in L_n}\sup_{y\in [0,2]}C_{n,u}(t+yh)\Ind(d_{t-4h}^n(u,k)\leq m)\Ind(K_m^k\leq F) \nonumber \\
&+\quad\sum_{u\in L_n}\sup_{y\in[0,2]}C_{n,u}(t+yh)\Ind(d_{t-4h}^n(u,k)\leq m)\Ind(K_m^k>F)\Bigg) \nonumber \\
\leq&\frac{2C^*|I\Delta J|}{r_np_n(t)^2}\left(F+K_m^{L_n}H_{UB}^{I\Delta J}\right) \nonumber
\end{align}
\end{proof}

\begin{proof}[Proof of Lemma \ref{lem_ass:jump}]
Let $\epsilon>0$ be arbitrary. We have to show that a martingale evaluated at a certain time point $T$ converges to zero in probability. By Lenglart's Inequality as in Corollary \ref{cor:Lenglart} it is sufficient to prove that the quadratic variation converges to zero in probability. Simply taking the $\sup$ yields for the quadratic variation
\begin{align*}
&\frac{4}{hr_n^2}\sum_{u\in L_n}\int_0^T\Ind\left(\left\|\frac{2}{h^{\frac{1}{2}}r_n}\sum_{v\neq u}\tau_{n,uv}(s)\right\|>\epsilon\right)\left(\sum_{v\neq u}\tau_{n,uv}(s)\right)^2\lambda_{n,u}(s)ds \\
\leq&\Ind\left(\sup_{s\in[0,T], u\in L_n}\left\|\frac{2}{h^{\frac{1}{2}}r_n}\sum_{v\neq u}\tau_{n,uv}(s)\right\|>\epsilon\right)\frac{4}{hr_n^2}\sum_{u\in L_n}\int_0^T\left(\sum_{v\neq u}\tau_{n,uv}(s)\right)^2\lambda_{n,u}(s)ds.
\end{align*}
Lemma \ref{lem_ass:var} is stating that the second part is converging and hence it is sufficient to prove that the indicator function is converging to zero in probability which is equivalent of proving uniform convergence in probability (uniform in $u$ and $s$) of
$$\frac{1}{h^{\frac{1}{2}}r_n}\sum_{v\neq u}\tau_{n,uv}(s)$$
to zero. We are going to employ Lemma \ref{lem:exponential_inequality}. To this end note firstly that $\tau_{n,uv}(s)$ has the following structure
$$\tau_{n,uv}(s)=X_{n,u}(s)^T\int_0^{s-}\tilde{f}_n(s,t)X_{n,v}(t)dM_{n,v}(t),$$
where $\tilde{f}_n(s,t)$ is defined in \eqref{eq:ftilde} and can be written as
$$\tilde{f}_n(s,t)=\int_{\delta}^{T-\delta}hK_{h,t_0}(s)K_{h,t_0}(t)\Sigma_{t_0}^{-T}\Sigma_{t_0}^{-1}\frac{w(t_0)}{\bar{p}_n(t_0)}.$$
We can simplify the expression by interchanging the integrals, taking the norm inside and using the boundedness properties from Assumption (A3, \ref{ass:boundedness}) and Lemma \ref{lem:bounded_inverse}:
\begin{align}
&\sup_{s\in[0,T], u\in L_n}\frac{1}{h^{\frac{1}{2}}r_n}\left\|\sum_{v\neq u}\tau_{n,uv}(s)\right\| \nonumber \\
\leq&\sup_{s\in[0,T], u\in L_n}h^{\frac{1}{2}}\int_{\delta}^{T-\delta}\|X_{n,u}(s)\|K_{h,t_0}(s)\left\|\Sigma_{t_0}^{-T}\Sigma_{t_0}^{-1}\right\|w(t_0) \nonumber \\
&\quad\quad\times\frac{1}{r_n\bar{p}_n(t_0)}\sum_{v\neq u}\int_0^{s-}K_{h,t_0}(t)\|X_{n,v}(t)\|d|M_{n,v}|(t)dt_0. \nonumber \\
\leq&h^{\frac{1}{2}}\cdot\sup_{s\in[0,T]}\int_{\delta}^{T-\delta}K_{h,t_0}(s)\frac{\hat{K}M^2\|w\|_{\infty}}{r_n\bar{p}_n(t_0)}\sum_{v\in L_n}\int_0^TK_{h,t_0}(t)\|X_{n,v}(t)\|d|M_{n,v}|(t)dt_0 \nonumber \\
\leq&h^{\frac{1}{2}}\cdot\sup_{t_0\in\IT}\frac{\hat{K}M^2\|w\|_{\infty}T}{r_n\bar{p}_n(t_0)}\sum_{v\in L_n}\int_0^TK_{h,t_0}(t)\|X_{n,v}(t)\|dM_{n,v}(t) \label{eq:lem212ready} \\
&+h^{\frac{1}{2}}\cdot\sup_{t_0\in\IT}\frac{2\hat{K}^2M^2\Lambda\|w\|_{\infty}T}{r_n\bar{p}_n(t_0)}\sum_{v\in L_n}\int_0^TK_{h,t_0}(t)C_{n,v}(t)dt. \label{eq:cubready}
\end{align}
Now, $\eqref{eq:lem212ready}=o_P(1)$ because the expression in \eqref{eq:lem212ready} is the same as in $\ell_n'(\theta_0,t_0)$ but with $X_{n,v}(t)$ replaced by $\|X_{n,v}(t)\|$. Moreover, all mixing properties valid for $X_{n,v}(t)$ hold for $\|X_{n,v}(t)\|$ as well and, of course, $\|X_{n,v}(t)\|$ is also bounded. Thus, we may repeat the proof of Proposition \ref{lem:1} and all subsidiary results (which proofs do not require this Lemma) word by word and \eqref{eq:lem212ready} converges to zero in probability. We also have that \eqref{eq:cubready} is $O_P\left(h^{\frac{1}{2}}\right)=o_P(1)$ by the later proven equation \eqref{eq:cub}. Hence, we have shown that
$$\sup_{s\in[0,T], u\in L_n}\frac{1}{h^{\frac{1}{2}}r_n}\left\|\sum_{v\neq u}\tau_{n,uv}(s)\right\|=o_P(1)$$
and this finalizes the proof of the Lemma.
\end{proof}

\begin{proof}[Proof of Lemma \ref{lem_ass:K1}]
Consider the following event
$$A_n:=\left\{\forall t_0\in[\delta,T-\delta]:\,\left\|\frac{1}{r_n\bar{p}_n(t_0)}\ell_n''(\theta_0,t_0)-\Sigma(t_0,\theta_0)\right\|\leq\rho\right\},$$
where $\rho$ is the same as in Lemma \ref{lem:bounded_inverse} and suppose for the moment that $\IP(A_n)\to1$. On $A_n$, we find
\begin{align*}
&\left\|\left[\frac{1}{r_n\bar{p}_n(t_0)}\ell_n''(\theta_0,t_0)\right]^{-1}-\Sigma(t_0,\theta_0)^{-1}\right\| \\
\leq&\left\|\left[\frac{1}{r_n\bar{p}_n(t_0)}\ell_n''(\theta_0,t_0)\right]^{-1}\right\|\cdot \left\|\Sigma(t_0,\theta_0)-\frac{1}{r_n\bar{p}_n(t_0)}\ell_n''(\theta_0,t_0)\right\|\cdot\left\|\Sigma(t_0,\theta_0)^{-1}\right\| \\
\leq&M^2\rho
\end{align*}
by Lemma \ref{lem:bounded_inverse}. Hence, we conclude
$$\sup_{t_0\in[\delta,T-\delta]}\left\|\left[\frac{1}{r_n\bar{p}_n(t_0)}\ell_n''(\theta_0,t_0)\right]^{-1}\right\|=O_P(1)$$
as required. In order to prove $\IP(A_n)\to1$, we have to prove the uniform convergence of $\frac{1}{r_n\bar{p}_n(t_0)}\ell_n''(\theta_0,t_0)$ to $\Sigma(\theta_0,t_0)$. Denote therefore by $T_n$ a grid of $\IT:=[\delta,T-\delta]$ with mesh $n^{-k}$ for some $k$ and let for $t_0\in\IT$ be $t_0^*$ be the projection of $t_0$ on $T_n$, i.e., we have $|t_0-t_0^*|\leq n^{-k}$. Then we obtain
\begin{align}
&\sup_{t_0\in\IT}\left\|\frac{1}{r_n\bar{p}_n(t_0)}\ell_n''(\theta_0,t_0)-\Sigma(\theta_0,t_0)\right\| \nonumber \\
\leq&\sup_{t_0\in\IT}\left\|\frac{1}{r_n\bar{p}_n(t_0)}\ell_n''(\theta_0,t_0)-\frac{1}{r_n\bar{p}_n(t_0^*)}\ell_n''(\theta_0,t_0^*)\right\|+\sup_{t_0\in\IT}\left\|\Sigma(\theta_0,t_0)-\Sigma(\theta_0,t_0^*)\right\| \label{eq:L29cont} \\
&\quad+\sup_{t_0\in\IT}\left\|\frac{1}{r_n\bar{p}_n(t_0^*)}\ell_n''(\theta_0,t_0^*)-\Sigma(\theta_0,t_0^*)\right\|. \label{eq:L29exp}
\end{align}
The second $\sup$ in \eqref{eq:L29cont} is converging to zero for $k$ chosen large enough because $\sup_{t_0\in\IT}|t_0-t_0^*|\to0$ as $n\to\infty$ and by uniform continuity of $t\mapsto\Sigma(\theta_0,t)$ (cf. Assumption (A5)). To prove that the first part of \eqref{eq:L29cont} is $o_P(1)$, we note that by the boundedness Assumption (A3, \ref{ass:boundedness})
\begin{align*}
&\left\|\frac{1}{r_n\bar{p}_n(t_0)}\ell_n''(\theta_0,t_0)-\frac{1}{r_n\bar{p}_n(t_0^*)}\ell_n''(\theta_0,t_0^*)\right\| \\
\leq&\hat{K}^2e^{\tau\cdot\hat{K}}\int_0^T\left|\frac{1}{h\cdot \bar{p}_n(t_0)}K\left(\frac{s-t_0}{h}\right)-\frac{1}{h\cdot \bar{p}_n(t_0^*)}K\left(\frac{s-t_0^*}{h}\right)\right|ds.
\end{align*}
Since $K$ and $\bar{p}_n$ are Hoelder continuous by Assumption (A4, \ref{ass:kernel_hoelder}) and Lemma \ref{lem:p_continuous}, respectively, the above converges to zero as $n\to\infty$ after possibly increasing $k$ further. For this choice of $k$, which we keep fixed from now on, $\eqref{eq:L29cont}=o_P(1)$. So finally, we have to prove that \eqref{eq:L29exp} is also $o_P(1)$. To this end, we firstly note that the $\sup$ is actually only taken over $T_n$ because we only consider $t_0^*$. So we apply a standard union bound technique to get the $\sup$ out of the probability and we include $\Gamma_n^{t_0}$: Let $x>0$ and recall the Definition of $H_{n,u}(s,\theta)=-C_{n,u}(s)X_{n,u}(s)X_{n,u}(s)^Te^{\theta^TX_{n,u}(s)}$ from the proof of Proposition \ref{lem:2}. Then,
\begin{align}
&\IP\left(\sup_{t_0\in T_n}\left\|\frac{1}{r_n\bar{p}_n(t_0)}\ell_n''(\theta_0,t_0)-\Sigma(\theta_0,t_0)\right\|>x\right) \nonumber \\
\leq&\# T_n\cdot\sup_{t_0\in\IT}\IP\left(\left\|\frac{1}{r_n\bar{p}_n(t_0)}\ell_n''(\theta_0,t_0)-\Sigma(\theta_0,t_0)\right\|>x\right) \nonumber \\
\leq&\# T_n\cdot\sup_{t_0\in\IT}\IP\left(\left\|\frac{1}{r_n\bar{p}_n(t_0)}\sum_{u\in L_n}\int_0^TK_{h,t_0}(s)\left(H_{n,u}(s,\theta_0)-\IE\left(H_{n,i}(s,\theta_0)\right)\right)ds\right\|>\frac{x}{2}\right) \label{eq:smcs2} \\
&+\# T_n\cdot\sup_{t_0\in\IT}\IP\left(\left\|\int_0^TK_{h,t_0}(s)\Sigma_s\frac{p_n(s)}{\bar{p}_n(t_0)}ds-\Sigma_{t_0}\right\|\geq\frac{x}{2}\right). \label{eq:smcs4}
\end{align}
The probability in line \eqref{eq:smcs4} equals zero for $n$ large enough because for $t_0\in[\delta,T-\delta]$ we have by the definition of $\bar{p}_n(s)$
\begin{align*}
&\int_0^TK_{h,t_0}(s)\Sigma_s\frac{p_n(s)}{\bar{p}_n(t_0)}ds-\Sigma_{t_0} \\
=&\int_0^TK_{h,t_0}(s)\left(\Sigma_s-\Sigma_{t_0}\right)\frac{p_n(s)}{\bar{p}_n(t_0)}ds=\eqref{eq:L3}=o(1).
\end{align*}
Finally, line \eqref{eq:smcs2} will be treated by applying Lemma \ref{lem:sufficient_mixing}. Note therefore firstly that we may work element-wise because we can estimate the norm from above by the 1-norm and consider each term separately (note that the dimension of the covariates is not increasing). Thus, we may pretend for the following that $H_{n,u}(s,\theta_0)$ is a number rather than a matrix. Moreover, we can repeat the following proof word by word for $-H_{n,u}(s,\theta_0)$ and thus we may consider $H_{n,u}(s,\theta_0)$ instead of $|H_{n,u}(s,\theta)|$. We apply Lemma \ref{lem:sufficient_mixing} to
$$Z_{n,u}:=\int_0^TK_{h,t_0}(s)H_{n,u}(s,\theta_0)ds.$$
$Z_{n,u}$ is bounded by $M:=K\hat{K}^2\Lambda$ by (A3, \ref{ass:boundedness}). The assumptions on the $\Delta_n$ partitions and asymptotic uncorrelation are fulfilled by Assumption (D3, \ref{eq:UUM2}) with $C=O(1)$. Then, the upper bound provided by Lemma \ref{lem:sufficient_mixing} converges faster to zero than any power of $n$ because of the properties of the bandwidth $h$ in (A4, \ref{ass:bw}) and the properties of $\Gamma_n^{t_0}$ and the $\beta$-mixing coefficients from Assumption (D3). This proves that \eqref{eq:smcs2} converges to zero for any choice of $x>0$ and thus $\eqref{eq:L29exp}=o_P(1)$ and the proof of the Lemma is complete.
\end{proof}

\begin{proof}[Proof of Lemma \ref{lem_ass:K2}]
We note firstly that by Assumption (A3, \ref{ass:boundedness}) for all $t_0\in\IT$ and all $\theta_1,\theta_2\in\Theta$ we can estimate by a Taylor approximation
\begin{align}
&\left\|X_{n,u}(s)X_{n,u}(s)^TC_{n,u}(s)\left(e^{\theta_1^TX_{n,u}(s)}-e^{\theta_2^TX_{n,u}(s)}\right)\right\| \nonumber \\
\leq&\hat{K}^3e^{\tau\hat{K}}\|\theta_1-\theta_2\|. \label{eq:transfer}
\end{align}
Hence, we obtain for all $t_0\in\IT$ and all $\theta_1,\theta_2\in\Theta$
\begin{align*}
&\frac{1}{r_n\bar{p}_n(t_0)}\left\|\ell_n''(\theta_1,t_0)-\ell_n''(\theta_2,t_0)\right\| \\
\leq&\frac{1}{r_n\bar{p}_n(t_0)}\sum_{u\in L_n}\int_0^TK_{h,t_0}(s)C_{n,u}(s)\left\|X_{n,u}(s)X_{n,u}(s)^T\right\|\cdot\left|e^{\theta_1^TX_{n,u}(s)}-e^{\theta_2^TX_{n,u}(s)}\right|ds \\
\leq&\frac{1}{r_n\bar{p}_n(t_0)}\sum_{u\in L_n}\int_0^TK_{h,t_0}(s)C_{n,u}(s)ds\cdot\hat{K}^3e^{\tau \hat{K}}\cdot\|\theta_1-\theta_2\|.
\end{align*}
Consequently, we can choose $K_n:=\sup_{t_0\in\IT}\hat{K}^3e^{\tau \hat{K}}\cdot\frac{1}{r_n\bar{p}_n(t_0)}\sum_{u\in L_n}\int_0^TK_{h,t_0}(s)C_{n,u}(s)ds$ which is $O_P(1)$ if
\begin{equation}
\label{eq:cub}
\sup_{t_0\in\IT}\frac{1}{r_n\bar{p}_n(t_0)}\sum_{u\in L_n}\int_0^TK_{h,t_0}(s)C_{n,u}(s)ds=O_P(1).
\end{equation}
So let us prove this. Denote therefore by $T_n$ a grid with mesh $n^{-k}$ (where $k$ is chosen later) which covers $\IT$. For a given time $t_0\in\IT$ we denote by $t_0^*\in T_n$ the closest element of $T_n$ to $t_0$, i.e., $|t_0-t_0^*|\leq n^{-k}$. Now we split the $\sup$ over an uncountable set as usual in a $\sup$ over close elements and a $\sup$ over a finite set:
\begin{align}
&\sup_{t_0\in\IT}\frac{1}{r_n\bar{p}_n(t_0)}\sum_{u\in L_n}\int_0^T\frac{1}{h}K\left(\frac{s-t_0}{h}\right)C_{n,u}(s)ds \nonumber \\
\leq&\sup_{t_0\in\IT}\int_0^T\frac{1}{r_n}\left[\frac{1}{\bar{p}_n(t_0)}K_{h,t_0}(s)-\frac{1}{\bar{p}_n(t_0^*)}K_{h,t_0^*}(s)\right]\sum_{u\in L_n}C_{n,u}(s)ds \label{eq:s1} \\
&\quad+\sup_{t_0\in\IT}\int_0^T\frac{1}{r_n\bar{p}_n(t_0^*)}K_{h,t_0^*}(s)\sum_{u\in L_n}C_{n,u}(s)ds \label{eq:s3}
\end{align}
We can apply the simple bound $\sum_{u\in L_n}C_{n,u}(s)\leq r_n$ and use Hoelder continuity of $K$ and $\bar{p}_n(t)^{-1}$, cf. Assumption (A4, \ref{ass:kernel_hoelder}) and Lemma \ref{lem:p_continuous} respectively, to see that \eqref{eq:s1} converges to zero in probability. Next, we show that $\eqref{eq:s3}=O_P(1)$ which concludes the proof of the Lemma. We begin, as usual, by taking the $\sup$ out of the probability (and recall that $\bar{p}_n(t_0)=\int_0^TK_{h,t_0}(s)p_n(s)ds$):
\begin{align}
&\IP(\eqref{eq:s3}>x) \nonumber \\
\leq&\#T_n\cdot\sup_{t_0\in T_n}\IP\Bigg(\frac{1}{r_n\bar{p}_n(t_0)}\sum_{u\in L_n}\int_0^TK_{h,t_0}(s)\left(C_{n,u}(s)-p_n(s)\right)ds>x-1\Bigg) \label{eq:s31}
\end{align}
We will apply Lemma \ref{lem:sufficient_mixing} to
$$Z_{n,u}=\int_0^TK_{h,t_0}(s)C_{n,u}(s)ds.$$
As before we use the $\Delta_n$-partitions as provided in Assumption (D3). Then, all requirements of Lemma \ref{lem:sufficient_mixing} on the partitions are fulfilled and $|E|_{n,t_0}=r_n\bar{p}_n(t_0)$. Moreover $Z_{n,u}$ is bounded by $M:=1$ and the asymptotic uncorrelation holds also by Assumption (D3, \ref{eq:UUM3}). As a consequence we can apply Lemma \ref{lem:sufficient_mixing} to obtain an upper bound on \eqref{eq:s31}. Taking into account the assumptions on $\Gamma_n^t$ and the $\beta$ mixing coefficients in Assumption (D3), the upper bound on \eqref{eq:s31} provided by Lemma \ref{lem:sufficient_mixing} converges faster to zero than any power of $n$ for $x>1$. Therefore $\eqref{eq:s31}\to0$ as $n\to\infty$. We conclude that $\eqref{eq:s3}=O_P(1)$.
\end{proof}

\begin{proof}[Proof of Lemma \ref{lem_ass:exp1}]
By employing Lemma \ref{lem:exponential_inequality} the proof of this result is fairly straight forward. Let $c^{**}$ be the constant such that $\|y\|\leq c^{**}\cdot\|y\|_1$ for all $y\in\IR^q$ where $\|.\|$ and $\|.\|_1$ denote the Euclidean- and the 1-Norm, respectively. We have
\begin{align}
&\IP\left(\left\|\frac{\ell_n'(\theta_0,t_0)}{r_n\sqrt{\bar{p}_n(t_0)}}\right\|\geq C\cdot\sqrt{\frac{\log r_n}{r_nh}}\right) \nonumber \\
\leq&\IP\left(\left\|\frac{\ell_n'(\theta_0,t_0)}{r_n\bar{p}_n(t_0)}\right\|\geq\frac{C}{qc^{**}\sqrt{h}}\cdot qc^{**}\sqrt{\frac{\log r_n\bar{p}_n(t_0)}{r_n\bar{p}_n(t_0)}}\right). \label{eq:lem41form}
\end{align}
Since
$$\ell_n'(\theta_0,t_0)=\sum_{u\in L_n}\int_0^TK_{h,t_0}(t)X_{n,u}(t)dM_{n,u}(t),$$
we can directly apply Lemma \ref{lem:exponential_inequality} and obtain
\begin{align*}
&\eqref{eq:lem41form} \\
\leq&2q\Bigg(\Kappa\exp\left(-\frac{c_2\frac{C^2}{q^2(c^{**})^2}}{2K\hat{K}A\left(\Lambda A+\sqrt{\frac{\Lambda}{2}}c_3\cdot\frac{C}{qc^{**}}\right)}\cdot\log np_n(t_0-h)\right) \\
&\quad\quad+\beta_{t_0}(\Delta_n)\cdot\Kappa r_n+\IP(\Gamma_n^{t_0}=0)\Bigg).
\end{align*}
We see that for a $C>0$ chosen sufficiently large the first term decreases faster as $hn^{k_0}$. Moreover, by Assumption (D3), $\beta(\Delta_n)$ decreases fast enough too. Finally, $\IP(\Gamma_n^{t_0}=0)$ decreases fast enough as well by the same assumption.
\end{proof}

\begin{proof}[Proof of Lemma \ref{lem_ass:exp2}]
Let $\delta_n:=\sqrt{\frac{\log r_np_n}{r_np_n\cdot h}}$. We begin with the standard union bound argument:
\begin{align}
&\IP\Bigg(\sup_{(t_0,\theta)\in T_{n,k_0}}\Bigg|\frac{1}{r_n\bar{p}_n(t_0)}\sum_{u\in L_n}\int_0^TK_{h,t_0}(s)\tilde{H}_{n,u}(s,\theta)ds\Bigg|>C\delta_n\Bigg) \nonumber \\
\leq&\#T_{n,k_0}\cdot\sup_{(t_0,\theta)\in T_{n,k_0}}\IP\Bigg(\Bigg|\frac{1}{r_n\bar{p}_n(t_0)}\sum_{u\in L_n}\int_0^TK_{h,t_0}(s)\tilde{H}_{n,u}(s,\theta)ds\Bigg|>C\delta_n\Bigg) \nonumber \\
=&\#T_{n,k_0}\cdot\sup_{(t_0,\theta)\in T_{n,k_0}}\IP\Bigg(\Bigg|\sum_{u\in L_n}\int_0^T\frac{K_{h,t_0}(s)}{r_n\bar{p}_n(t_0)}\left(H_{n,u}(s,\theta)-\IE(H_{n,u}(\theta,s))\right)ds\Bigg|>C\delta_n\Bigg) \nonumber
\end{align}
The above probability can be shown by Lemma \ref{lem:sufficient_mixing} to decrease faster than any power when $C>0$ is chosen large enough. As a matter of fact we argued already that Lemma \ref{lem:sufficient_mixing} can be applied in this situation when we discussed \eqref{eq:smcs2}. We just have to replace $\theta_0$ by an arbitrary $\theta\in\Theta$. But this does not change the argument and all necessary assumptions were made in Assumption (D3).
\end{proof}

\subsection{Primitive Assumptions for (D1)}
\label{subsec:fassumpD1}
In this section we provide additional assumptions under which the conditions \eqref{eq:cond1}-\eqref{eq:cond5} of Theorem \ref{thm:easy_non-pred} can be proven. These additional assumptions are in the spirit of the assumptions (H2) and (AD). Therefore, we present them here as extensions of them.

\textbf{(H2) Hub size restriction} \\
\emph{There is a deterministic sequence $H_n$ such that $H_n\geq K_m^{L_n}$ with
\begin{equation}
\sup_{n\in \IN}H_n^4\frac{\log r_np_n}{r_np_n}<+\infty. \label{eq:HubS6}
\end{equation}}

The extension of (H2) requires a deterministic bound to the maximal hub-size. This seems restrictive but for large networks it is believable that even highly connected actors are only connected to a fraction of the whole network. Keep also in mind, that connected here really means active influence. So if we were to allow that there is one single pair who influences the entire network, then statistical inference is (at least intuitively) impossible.

Denote
\begin{align*}
\Omega_n(t):=&\frac{1}{r_np_n(t)}\sum_{(i,j)\in L_n}\left(\int_{t-2h}^{t-}dN_{n,ij}(s)+\int_{t-2h}^{t-}\lambda_{n,ij}(s)ds\right)\mathcal{C}_{n,ij}, \\
\omega_n(t):=&\frac{1}{r_np_n(t)}\sum_{(i,j)\in L_n}C_{n,ij}(s)\mathcal{C}_{n,ij}.
\end{align*}

\textbf{(AD) Additional Dependence} \\
\emph{Let $k\in\IN$ be arbitrary and consider the following choices for the pair $(\epsilon_n,c_n)$
\begin{align*}
\epsilon_n=n^{-k}, \quad\quad&c_n=\sqrt{h}H_n^{-1} \textrm{ and}\\
\epsilon_n=2h+n^{-k}, \quad\quad&c_n=p_n\sqrt{h}n^{k\cdot\alpha_c}H_n^{-1}H_{n,c}^{-1}
\end{align*}
where $\alpha_c$ and $H_{n,c}$ are the Hoelder exponent and constant of $p_n(t)$, respectively. For any given $k_0\in\IN$ we can choose $k\in\IN$such that for both choices above it holds that
\begin{equation}
\IP\left(\underset{|t-s|<\epsilon_n}{\sup_{t,s\in[0,T]}}\frac{1}{r_np_n}\sum_{(i,j)\in L_n}N_{n,ij}(t)-N_{n,ij}(s)>c_n\right)=o\left(n^{-k_0}\right). \label{eq:AD1}
\end{equation}
Moreover, there is $\kappa>0$ such that for all $\xi_1,\xi_2>1$, $(i,j)\in L_n$ it holds that
\begin{align}
&\sup_{t\in[0,T]}\IE\left(\mathcal{C}_{n,ij}^2\Omega_n(t)^2C_{n,ij}(t)\Big|\Omega_n(t)>\sqrt{h}\xi_1\right)=O(n^{\kappa}), \label{eq:AD3} \\
&\sup_{t\in[0,T]}\IE\Bigg(\int_0^t\frac{\mathcal{C}_{n,ij}\omega_{n}(t)\Omega_n(t)\Omega_n(s)}{p_n(s)}d|M_{n,ij}|(s)\Bigg|\Omega_n(t)>\sqrt{h}\xi_1\textrm{ or }\omega_n(t)>\xi_2\Bigg)=O(n^{\kappa}), \label{eq:AD2} \\
&\sup_{s\in[0,T]}\IE\left(\int_0^s\frac{\mathcal{C}_{n,ij}}{p_n(t)}\Omega_n(t)d|M_{n,ij}|(t)\Big|\sup_{t\in[0,T]}\Omega_n(t)>\xi_1\right)=O(n^{\kappa}), \label{eq:AD2a} \\
&\IE\Bigg(\underset{t\in[0,T]}{\underset{(i,j)\neq (k,l)}{\sup_{(i,j),(k,l)\in L_n}}}\left(\frac{A_n(t)}{r_np_n}\int_t^{t+2h}\int_{\xi-2h}^{\xi-}\frac{A_n(\xi)}{r_np_n}d|M_{n,kl}|(\rho)d|M_{n,ij}|(\xi)\right)^2 \nonumber \\
&\quad\quad\quad\quad\Bigg|\sup_{t\in[0,T]}\frac{A_n(t)}{r_np_n}>\xi_1\Bigg)=O\left(n^{\kappa}\right), \label{eq:AD6} \\
&\sup_{t\in[0,T]}\frac{1}{r_n^2}\underset{(i,j)\neq (k,l)}{\sum_{(i,j),(k,l)\in L_n}}\int_0^T\int_{t-2h}^t\IE\left[\left(\frac{A_n(t)}{r_np_n}\right)^2\cdot\frac{C_{n,ij}(t)C_{n,kl}(r)}{p_n^2}\Bigg|\frac{A_n(t)}{r_np_n}>\xi_1\right]drdt=O(n^{\kappa}), \label{eq:AD7} \\
&\IE\left(\sup_{t\in[0,T]}\underset{(i,j)\neq (k,l)}{\sup_{(i,j),(k,l)\in L_n}}\left(\int_t^{t+2h}\int_{s-2h}^{s-}d|M_{n,kl}|(\rho)d|M_{n,ij}|(s)\right)^2\right)=o(1). \label{eq:AD4}
\end{align}}

Assumption (AD, \ref{eq:AD1}) is essentially stating that not too many $N_{n,ij}$ jump at the same time.  Note that (AD, \ref{eq:AD1}) could be proven (similar as for (AD, \ref{eq:AD5})) by using other technical assumptions. In order to prove these assumptions we need an exponential inequality for averages of counting processes. Such an inequality can be shown by employing $\beta$-mixing as in the proof of Lemma \ref{lem:network_exp}. However, instead of using the Bernstein inequality  (see e.g. Proposition \ref{prop:bernstein}) we need a tail bound valid for independent sums of counting processes with bounded intensity functions. For our purposes it is sufficient to use a tail bound induced by using Chebyshev's Inequality in its exponential form. For the remaining assumptions we emphasize that they are requiring a polynomial growth of certain moments. Since the power of the polynomial can be chosen arbitrarily, we regard these assumptions as easy to believe.

\begin{lemma}
\label{lem:pred_cond_hold}
Suppose that the the assumptions (H2) and (AD) hold in their extended form as above. Then, the conditions \eqref{eq:cond1}-\eqref{eq:cond5} of Theorem \ref{thm:easy_non-pred} hold for $\tilde{\phi}_{n,u_1u_2}^I$ as in \eqref{eq:phi_tilde} with $\delta_n=h$.
\end{lemma}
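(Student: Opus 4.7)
The plan is to verify \eqref{eq:cond1}--\eqref{eq:cond5} one at a time, in each case combining three ingredients: (i) Lemma \ref{lem:help2}, which reduces any difference $\tilde{\phi}^I_{n,u_1u_2}-\tilde{\phi}^J_{n,u_1u_2}$ to a bound of the form $\frac{2C^*|I\Delta J|}{r_np_n(t)^2}(F+K_m^{L_n}H_{UB}^{I\Delta J})$, (ii) the uniform bound $|\tilde{f}_n|\leq c/p_n$ from \eqref{eq:ftildebound}, which yields the ``raw'' bound $|\tilde{\phi}^{\emptyset}_{n,u_1u_2}(t,r)|\leq C^*A_n(t+\cdot h)/(r_np_n(t)^2)$, and (iii) the moment growth and concentration hypotheses in the extended (H2) and (AD). Throughout I would split martingale integrals into $dN$- and $\lambda\,ds$-parts via $d|M|$, and pass through Markov's inequality.

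For \eqref{eq:cond1}, $|I\Delta J|\leq 2$ with $I=\emptyset$ and $J=\{u_1,u_2\}$, so Lemma \ref{lem:help2} gives $|\phi_{n,u_1u_2}-\tilde{\phi}^{u_1u_2}_{n,u_1u_2}|\leq \frac{4C^*}{r_np_n(t)^2}(F+K_m^{L_n}H_{UB}^{u_1u_2})$. Markov's inequality, followed by exchangeability to reduce two free sums to a single expectation conditional on $C_{n,u_1}(t)C_{n,u_2}(r)=1$, produces an upper bound controlled by \eqref{eq:AD9} together with (H2, \eqref{eq:HubS1}) and the asymptotic uncorrelation assumption (D2). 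For \eqref{eq:cond4} the estimate is even more direct: $|\tilde{\phi}^{u_1u_2}_{n,u_1u_2}(t,r)|^2\leq (C^*A_n/(r_np_n^2))^2$ by \eqref{eq:ftildebound}, and after conditioning on $C_{n,u_1}(t)C_{n,u_2}(r)=1$ and applying the splitting $\{A_n/(r_np_n)\leq\xi_1\}$ versus its complement, one controls the tail via \eqref{eq:AD5} and the complementary moment bound \eqref{eq:AD7}.

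The remaining three conditions concern cross-terms involving four pairs. For \eqref{eq:cond2a}, $|I\Delta J|\leq 2$ again, this time with $I=\{u_1,u_2\}$ and $J=\{u_1,u_2,u_3,u_4\}$, so Lemma \ref{lem:help2} gives a $(F+K_m^{L_n}H_{UB}^{u_3u_4})$ factor per index pair. Expanding the product of double stochastic integrals and writing $|dM|\leq d|M|$, I would bound the whole expression by an iterated integral of the form studied in \eqref{eq:AD6} combined with \eqref{eq:HubS7}; after splitting on $\{\sup_t A_n(t)/(r_np_n)\leq\xi_1\}$, the tail is handled by \eqref{eq:AD5} and the body by the polynomial-growth estimate. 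For \eqref{eq:cond5}, a completely analogous scheme works: the symmetric difference is still of size $\leq 2$, and the extra indicator $\Ind(\neg u_2,u_4\in F_{u_1}(t-2h))$ forces at least one of $u_2,u_4$ to be within distance $m$ of $u_1$, which converts a sum over three free pairs into essentially a two-pair sum and activates the hub moments \eqref{eq:HubS1}, \eqref{eq:HubS7}.

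The main obstacle is \eqref{eq:cond3}. Here the inner factor $\int_{t}^{t+2h}\int_{\xi-2h}^{\xi-}\tilde{\phi}^{u_1u_2u_3u_4}_{n,u_3u_4}(\xi,\rho)dM_{n,u_4}(\rho)dM_{n,u_3}(\xi)$ lives on the \emph{future} of $t$ and is coupled to the ``past'' term $\int_{t-2h}^{t-}(\tilde{\phi}^{u_1u_2}-\tilde{\phi}^{u_1u_2u_3u_4})dM_{n,u_2}$ only through the indicator $\Ind(\neg u_3,u_4\in F_{u_1}(t-2h))$. The strategy is: use Lemma \ref{lem:help2} on the ``past'' factor to introduce $(F+K_m^{L_n}H_{UB}^{u_3u_4})/(r_np_n^2)$, pass to $d|M|$, and bound the future double integral uniformly by $(C^*A_n/(r_np_n^2))\int_t^{t+2h}\int_{\xi-2h}^{\xi-}d|M_{n,u_4}|d|M_{n,u_3}|$. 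The NOT indicator together with (H2, \eqref{eq:HubS1}) then turns the triple sum over $u_2,u_3,u_4$ into an effective double sum controlled by hub size; the tail on $\{A_n/(r_np_n)>\xi_1\}$ is absorbed by \eqref{eq:AD5}, and the body is controlled jointly by \eqref{eq:AD6} and \eqref{eq:AD4}. The delicate point is keeping the indicator inside the conditional expectations so that the resulting moments really are those already assumed to be polynomially bounded in (AD); this bookkeeping, rather than any genuinely new estimate, is what makes \eqref{eq:cond3} the hardest of the five.
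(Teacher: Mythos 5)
Your skeleton is the right one — the two pointwise bounds (Lemma \ref{lem:help2} and the raw bound \eqref{eq:phi_bound} coming from \eqref{eq:ftildebound}), passage to $d|M_{n,u}|$, Markov's inequality, and conditioning on tail events for $A_n(t)/(r_np_n)$ are exactly the paper's tools, and your treatment of \eqref{eq:cond4} and \eqref{eq:cond5} is essentially correct. But there are two genuine gaps. First, for \eqref{eq:cond1} and \eqref{eq:cond2a} you never invoke the machinery that the \emph{extended} assumptions exist to feed. After Lemma \ref{lem:help2} you are left with a quadruple sum of products of dependent double integrals of $d|M|$ weighted by the random quantities $K_m^{L_n}H_{UB}^{u_1u_2}$; you cannot factorize this through exchangeability, and the assumptions you cite do not match its structure: \eqref{eq:AD9} and \eqref{eq:HubS1} are single-pair conditional statements, and \eqref{eq:AD6} concerns the \emph{future} window $[t,t+2h]$ appearing in \eqref{eq:cond3}, not the two past windows $[t-2h,t)$ of \eqref{eq:cond2a}. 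The paper instead bounds the quadruple sum by the square of a double sum, expands that square with It\^{o}'s formula for $|M_{n,u}|$, and controls the resulting terms via the processes $\Omega_n(t)$ and $\omega_n(t)$, Lemma \ref{lem:OmegasGood}, and the conditional moment bounds \eqref{eq:AD3}, \eqref{eq:AD2}, \eqref{eq:AD2a} together with \eqref{eq:HubS5} and \eqref{eq:HubS6}; \eqref{eq:cond1} is then obtained as a by-product because its bound is dominated by the square root of the same quantity. None of this appears in your proposal, and without it the convergence of \eqref{eq:cond1} and \eqref{eq:cond2a} is not established.

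Second, for \eqref{eq:cond3} you correctly identify the past/future coupling as the obstacle but do not resolve it; ``keeping the indicator inside the conditional expectations'' is not an argument. The paper's resolution is concrete: sum the indicator $\Ind(\neg u_3,u_4\in F_{u_1}(t-2h))$ over $u_3,u_4$ to get the combinatorial bound $3A_n(t)K_m^{u_1}(t)$, pull the future double integral out as a supremum over pairs and over $t$, and then apply the Cauchy--Schwarz inequality to separate the expectation into (a) the second moment of the past double sum carrying an extra hub factor $K_m^{u_1}\leq F+K_m^{L_n}H_{UB}^{u_1}$, which reduces to the \eqref{eq:cond2a} bound, and (b) the second moment of the supremum of the future factor, which is exactly what \eqref{eq:AD4} and \eqref{eq:AD6} (after splitting on $\sup_tA_n(t)/(r_np_n)>\alpha$ via \eqref{eq:AD5}) are formulated to control. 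You should add this Cauchy--Schwarz step explicitly, and also note that \eqref{eq:AD8} — which you never cite — is the assumption that closes \eqref{eq:cond5}.
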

\begin{proof}
We begin with \eqref{eq:cond1}. Note that by Assumption (H1), $H_{UB}^{u}$ is measurable with respect to $\mathcal{F}_0^n$ for all $u\in L_n$ and that by (A2, \ref{ass:boundedness}) $\lambda_{n,u}(t)$ is bounded by $C_{n,u}(t)\Lambda$. Denote $d|M_{n,u}|(t)=dN_{n,u}(t)+\lambda_{n,u}(t)dt$. We get by applying the estimate \eqref{eq:h2} from Lemma \ref{lem:help2} for any $\epsilon>0$ and any $F>0$

\begin{align}
&\IP\left(\left|\frac{1}{r_n}\underset{u_1\neq u_2}{\sum_{u_1,u_2\in L_n}}\int_0^T\int_{t-2h}^{t-}\phi_{n,u_1u_2}(t,r)-\tilde{\phi}_{n,u_1u_2}^{u_1u_2}(t,r)dM_{n,u_2}(r)dM_{n,u_1}(t)\right|>\epsilon\right) \nonumber \\
\leq&\IP\left(\underset{u_1\neq u_2}{\sum_{u_1,u_2\in L_n}}\int_0^T\int_{t-2h}^{t-}\frac{4C^*}{r_n^2p_n(t)^2}\left(F+K_m^{L_n}H_{UB}^{u_1u_2}\right)d|M_{n,u_2}|(r)d|M_{n,u_1}|(t)>\epsilon\right). \label{eq:byProduct}
\end{align}
We keep this in mind and make a similar estimation for \eqref{eq:cond2a}: We use \eqref{eq:h2} in Lemma \ref{lem:help2} in order to obtain (the step from \eqref{eq:b0} to \eqref{eq:b1} is unmotivated at this point, but will be useful later)
\begin{align}
&\Bigg|\frac{1}{r_n^2}\underset{u_1\neq u_1', u_2\neq u_2'}{\sum_{u_1,u_2,u_1'u_2'\in L_n}}\int_0^T\int_{t-2h}^{t-}\tilde{\phi}_{n,u_1u_2}^{u_1u_2}-\tilde{\phi}_{n,u_1u_2}^{u_1u_2u_1'u_2'}dM_{n,u_2}(r)dM_{n,u_1}(t) \nonumber \\
&\quad\quad\quad\times\int_0^T\int_{t-2h}^{t-}\tilde{\phi}_{n,u'_1u'_2}^{u'_1u'_2}(t,r)-\tilde{\phi}_{n,u'_1u'_2}^{u_1u_2u'_2u'_2}(t,r)dM_{n,u'_2}(r)dM_{n,u_1'}(t)\Bigg| \nonumber \\
\leq&\frac{1}{r_n^2}\underset{u_1\neq u_1', u_2\neq u_2'}{\sum_{u_1,u_2,u_1'u_2'\in L_n}}\int_0^T\frac{4C^*}{p_n(t)^2}\left(F+K_m^{L_n}H_{UB}^{u_1'u_2'}\right)\int_{t-2h}^{t-}d|M_{n,u_2}|(r)d|M_{n,u_1}|(t) \nonumber \\
&\quad\quad\quad\times\frac{1}{r_n^2}\int_0^T\frac{4C^*}{p_n(t)^2}\left(F+K_m^{L_n}H_{UB}^{u_1u_2}\right)\int_{t-2h}^{t-}d|M_{n,u_2'}|(r)d|M_{n,u_1'}|(t) \nonumber \\
\leq&\left(\frac{1}{r_n^2}\sum_{u_1,u_2\in L_n}\int_0^T\frac{4C^*}{p_n(t)^2}\int_{t-2h}^{t-}\left(F+H_{UB}^{u_1u_2}K_m^{L_n}\right)d|M_{n,u_2}|(r)d|M_{n,u_1}|(t)\right)^2 \label{eq:b0} \\
\leq&\left(\frac{1}{r_n^2}\sum_{u_1,u_2\in L_n}\int_0^T\frac{4C^*}{p_n(t)^2}\int_{t-2h}^{t-}\left(F+H_{UB}^{u_1}K_m^{L_n}\right)\left(F+H_{UB}^{u_1u_2}K_m^{L_n}\right)d|M_{n,u_2}|(r)d|M_{n,u_1}|(t)\right)^2. \label{eq:b1}
\end{align}
A simple application of Markov's Inequality is hence showing that $\IE(\eqref{eq:b1})\to0$ implies that $\eqref{eq:byProduct}\to0$. To show the former we make the following definitions. Denote $\mathcal{C}_{n,u_1}:=F+3H_{UB}^{u_1}\left(K_m^{L_n}\right)^2$. Using that $H_{UB}^{u_1u_2}\leq H_{UB}^{u_1}+H_{UB}^{u_2}$, we can calculate that
$$\left(F+H_{UB}^{u_1}K_m^{L_n}\right)\left(F+H_{UB}^{u_1u_2}K_m^{L_n}\right)\leq \mathcal{C}_{n,u_1}\mathcal{C}_{n,u_2}.$$
Define $\Omega_n(t):=\frac{1}{r_np_n(t)}\sum_{u_2\in L_n}\int_{t-2h}^{t-}d|M_{n,u_2}|(r)\mathcal{C}_{n,u_2}$ and continue with the estimation:
\begin{align*}
\eqref{eq:b1}\leq&\left(\frac{1}{r_n}\sum_{u_1\in L_n}\int_0^T\frac{4C^*}{p_n(t)}\left(\frac{1}{r_np_n(t)}\sum_{u_2\in L_n}\int_{t-2h}^{t-}d|M_{n,u_2}|(r)\mathcal{C}_{n,u_2}\right)d|M_{n,u_1}|(t)\mathcal{C}_{n,u_1}\right)^2 \\
=&\left(\sum_{u_1\in L_n}\int_0^T\frac{4C^*}{r_np_n(t)}\Omega_n(t)d|M_{n,u_1}|(t)\mathcal{C}_{n,u_1}\right)^2.
\end{align*}
Define $\omega_n(s):=1/(r_np_n(s))\sum_{v\in L_n}\mathcal{C}_{n,v}C_{n,v}(s)$. By using Itô's Lemma (cf. Theorem \ref{thm:Ito}) for the semi-martingale $|M_{n,u}|(t):=N_{n,u}(t)+\int_0^t\lambda_{n,u}(s)ds$ in the first step we obtain for fixed constants $\xi_1,\xi_2>0$
\begin{align}
&\IE(\eqref{eq:b1}) \nonumber \\
\leq&4\sum_{u,v\in L_n}\int_0^T\IE\left(\int_0^s\frac{16(C^*)^2\mathcal{C}_{n,u}\mathcal{C}_{n,v}\Omega_n(t)\Omega_n(s)}{r_n^2p_n(t)p_n(s)}d|M_{n,u}|(t)\lambda_{n,v}(s)\right)ds \nonumber \\
&\quad+\sum_{u\in L_n}\int_0^T\IE\left(\frac{16(C^*)^2\mathcal{C}_{n,u}^2\Omega_n(s)^2}{r_n^2p_n(s)^2}\lambda_{n,u}(s)\right)ds \nonumber \\
\leq&64(C^*)^2\Lambda\int_0^T\IE\left(\int_0^s\frac{\mathcal{C}_{n,u}}{p_n(t)}\omega_{n}(s)\Omega_n(t)\Omega_n(s)d|M_{n,u}|(t)\right)ds \nonumber \\
&\quad+\frac{16(C^*)^2\Lambda}{r_np_n}\int_0^T\IE\left(\mathcal{C}_{n,u}^2\Omega_n(s)^2\Big| C_{n,u}(s)=1\right)ds \nonumber \\
\leq&64(C^*)^2\Lambda\xi_1\xi_2\sqrt{h}\int_0^T\IE\left(\int_0^s\frac{\mathcal{C}_{n,u}}{p_n(t)}\Omega_n(t)d|M_{n,u}|(t)\right)ds \label{b11} \\
&+64(C^*)^2\Lambda\int_0^T\IE\left(\int_0^s\frac{\mathcal{C}_{n,u}}{p_n(t)}\omega_n(s)\Omega_n(t)\Omega_n(s)d|M_{n,u}|(t)\Bigg|\Omega_n(s)>\sqrt{h}\xi_1\textrm{ or }\omega_n(s)>\xi_2\right) \nonumber \\
&\quad\quad\quad\times\left(\IP\left(\Omega_n(s)>\sqrt{h}\xi_1\right)+\IP\left(\omega_n(s)>\xi_2\right)\right)ds \label{b12} \\
&+\frac{16(C^*)^2\Lambda h\xi_1^2}{r_np_n}\int_0^T\IE\left(\mathcal{C}_{n,u}^2\Big| C_{n,u}(s)=1\right)ds \label{b13} \\
&+\frac{16(C^*)^2\Lambda}{r_np_n^2}\int_0^T\IE\left(\mathcal{C}_{n,u}^2\Omega_n(s)^2C_{n,u}(s)\Big|\Omega_n(s)>\sqrt{h}\xi_1\right)\IP\left(\Omega_n(s)>\sqrt{h}\xi_1\right)ds. \label{b14}
\end{align}
Line \eqref{b13} converges to zero by \eqref{eq:HubS5} and $h\to0$. Line \eqref{b14} converges to zero by Lemma \ref{lem:OmegasGood} and Assumption (AD, \ref{eq:AD3}). For line \eqref{b12} we use Lemma \ref{lem:OmegasGood} together with Assumption (AD, \ref{eq:AD2}) to show that it converges to zero for the correct choices of $\xi_1$ and $\xi_2$. For line \eqref{b11} we note that for $\xi>0$
\begin{align*}
&\IE\left(\int_0^s\frac{\mathcal{C}_{n,u}}{p_n(t)}\Omega_n(t)d|M_{n,u}|(t)\right) \\
\leq&\xi\IE\left(\int_0^s\frac{\mathcal{C}_{n,u}}{p_n(t)}d|M_{n,u}|(t)\right) \\
&\quad+\IE\left(\int_0^s\frac{\mathcal{C}_{n,u}}{p_n(t)}\Omega_n(t)d|M_{n,u}|(t)\Big|\sup_{t\in[0,T]}\Omega_n(t)>\xi\right)\IP\left(\sup_{t\in[0,T]}\Omega_n(t)>\xi\right)
\end{align*}
is bounded by Assumptions (H2, \ref{eq:HubS7}), (AD, \ref{eq:AD2a}) and Lemma \ref{lem:OmegasGood}. Thus we have shown that \eqref{eq:cond1} and \eqref{eq:cond2a} hold.

For showing condition \eqref{eq:cond3} we define the random number of active edges as
$$A_n(t):=\sum_{u\in L_n}\sup_{r\in[-2,2]}C_{n,u}(t+rh)$$
and use this to estimate for all $u_1,u_2\in L_n$, all $I\subseteq L_n$ and all $r,t\in[0,T]$
\begin{equation}
\label{eq:phi_bound}
\left|\tilde{\phi}_{n,u_1u_2}^{I}(t,r)\right|\leq\frac{2C^*}{r_np_n^2}A_n(t).
\end{equation}
In addition to the above estimate, we also observe that
\begin{align*}
&\sum_{u_1',u_2'\in L_n}\sup_{\rho\in[t-2h,t+2h]}C_{n,u_2'}(\rho)\sup_{\xi\in[t,t+2h]}C_{n,u_1'}(\xi)\Ind(\neg u_1',u_2'\in F_{u_1}(t-2\delta_n)) \\
=&\sum_{u_1'\in L_n}\sup_{\xi\in[t,t+2h]}C_{n,u_1'}(\xi)\Ind(u_1'\in F_{u_1}(t-2\delta_n))\times\sum_{u_2'\in L_n}\sup_{\rho\in[t-2h,t+2h]}C_{n,u_2'}(\rho)\Ind(u_2'\notin F_{u_1}(t-2\delta_n)) \\
&+\sum_{u_1'\in L_n}\sup_{\xi\in[t,t+2h]}C_{n,u_1'}(\xi)\Ind(u_1'\notin F_{u_1}(t-2\delta_n))\times\sum_{u_2'\in L_n}\sup_{\rho\in[t-2h,t+2h]}C_{n,u_2'}(\rho)\Ind(u_2'\in F_{u_1}(t-2\delta_n)) \\
&+\sum_{u_1'\in L_n}\sup_{\xi\in[t,t+2h]}C_{n,u_1'}(\xi)\Ind(u_1'\notin F_{u_1}(t-2\delta_n))\times\sum_{u_2'\in L_n}\sup_{\rho\in[t-2h,t+2h]}C_{n,u_2'}(\rho)\Ind(u_2'\notin F_{u_1}(t-2\delta_n)) \\
\leq&3A_n(t)K_m^{u_1}(t).
\end{align*}
Using these two estimates together with the estimate in \eqref{eq:h2}, we obtain
\begin{align}
&|\eqref{eq:cond3}| \nonumber \\
\leq&\frac{2}{r_n^2}\underset{u_1\neq u_2,u_1'\neq u_2'}{\sum_{u_1,u_2,u_1',u_2'\in L_n}}\IE\Bigg(\int_0^T\int_{t-2h}^t\frac{4C^*}{r_np_n(t)^2}\left(F+K_m^{L_n}H_{UB}^{u_1u_2}\right)d|M_{n,u_2}|(r) \nonumber \\
&\times\int_t^{t+2h}\int_{\xi-2h}^{\xi-}\frac{4C^*}{r_np_n^2}A_n(\xi)d|M_{n,u_2'}|(\rho)d|M_{n,u_1'}|(\xi)\Ind(\neg u_1',u_2'\in F_{u_1}(t-2\delta_n))d|M_{n,u_1}|(t)\Bigg) \nonumber \\
\leq&2\IE\Bigg(\frac{1}{r_n^2}\underset{u_1\neq u_2}{\sum_{u_1,u_2\in L_n}}\int_0^T\int_{t-2h}^t\frac{4C^*}{p_n(t)^2}\left(F+K_m^{L_n}H_{UB}^{u_1u_2}\right)d|M_{n,u_2}|(r) \nonumber \\
&\times\underset{u'_1\neq u'_2}{\sum_{u'_1,u'_2\in L_n}}\int_t^{t+2h}\int_{\xi-2h}^{\xi-}\frac{4C^*}{r_n^2p_n^2}A_n(\xi)d|M_{n,u_2'}|(\rho)d|M_{n,u_1'}|(\xi)\Ind(\neg u_1',u_2'\in F_{u_1}(t-2\delta_n))d|M_{n,u_1}|(t)\Bigg) \nonumber \\
\leq&2\IE\Bigg(\frac{1}{r_n^2}\underset{u_1\neq u_2}{\sum_{u_1,u_2\in L_n}}\int_0^T\int_{t-2h}^t\frac{4C^*}{p_n(t)^2}\left(F+K_m^{L_n}H_{UB}^{u_1u_2}\right)d|M_{n,u_2}|(r)d|M_{n,u_1}| \nonumber \\
&\times\underset{u'_1\neq u'_2}{\sup_{u'_1,u'_2\in L_n}}\sup_{t\in[0,T]}\int_t^{t+2h}\int_{\xi-2h}^{\xi-}\frac{4C^*}{r_n^2p_n^2}A_n(\xi)d|M_{n,u_2'}|(\rho)d|M_{n,u_1'}|(\xi)3A_n(t)K_m^{u_1}\Bigg) \nonumber \\
\leq&2\sqrt{\IE\left(\left(\frac{1}{r_n^2}\underset{u_1\neq u_2}{\sum_{u_1,u_2\in L_n}}\int_0^T\int_{t-2h}^t\frac{4C^*}{p_n(t)^2}K_m^{u_1}\left(F+K_m^{L_n}H_{UB}^{u_1u_2}\right)d|M_{n,u_2}|(r)d|M_{n,u_1}|\right)^2\right)} \nonumber \\
&\times\sqrt{\IE\left(\underset{t\in[0,T]}{\underset{u'_1\neq u'_2}{\sup_{u'_1,u'_2\in L_n}}}\left(\int_t^{t+2h}\int_{\xi-2h}^{\xi-}\frac{4C^*}{r_n^2p_n^2}A_n(\xi)d|M_{n,u_2'}|(\rho)d|M_{n,u_1'}|(\xi)3A_n(t)\right)^2\right)} \nonumber
\end{align}
Note that $K_m^{u_1}\leq F+K_m^{L_n}H_{UB}^{u_1}$. Hence, we just saw in the proof of \eqref{eq:cond2a} (cf. \eqref{eq:b1}) that the first part above converges to zero. For the second part we employ similar techniques, i.e., we condition on the event that $A_n(t)/r_np_n$ is bounded. So for any $\alpha>0$ the second part above (without the square root) can be bounded by
\begin{align*}
&\IE\left(\underset{t\in[0,T]}{\underset{u'_1\neq u'_2}{\sup_{u'_1,u'_2\in L_n}}}\left(12C^*\alpha^2\int_t^{t+2h}\int_{\xi-2h}^{\xi-}d|M_{n,u_2'}|(\rho)d|M_{n,u_1'}|(\xi)\right)^2\right) \\
+&\IE\Bigg(\underset{t\in[0,T]}{\underset{u'_1\neq u'_2}{\sup_{u'_1,u'_2\in L_n}}}\left(12C^*\frac{A_n(t)}{r_np_n}\int_t^{t+2h}\int_{\xi-2h}^{\xi-}\frac{A_n(\xi)}{r_np_n}d|M_{n,u_2'}|(\rho)d|M_{n,u_1'}|(\xi)\right)^2 \\
&\quad\quad\quad\quad\Bigg|\sup_{t\in[0,T]}\frac{A_n(t)}{r_np_n}>\alpha\Bigg)\IP\left(\sup_{t\in[0,T]}\frac{A_n(t)}{r_np_n}>\alpha\right).
\end{align*}
By the Assumptions (AD, \ref{eq:AD5}, \ref{eq:AD4}, \ref{eq:AD6}) the expression above converges to zero and we have shown \eqref{eq:cond3}.

The indicator function in \eqref{eq:cond4} is not significantly shortening the sum and hence we just ignore it. Moreover, we use the bound from \eqref{eq:phi_bound} to obtain for any $\alpha>0$
\begin{align*}
\eqref{eq:cond4}\leq&\frac{\Lambda^2}{r_n^2}\underset{u_1\neq u_2}{\sum_{u_1,u_2\in L_n}}\int_0^T\int_{t-2h}^t\IE\left[\left(\frac{2C^*A_n(t)}{r_np_n}\right)^2\cdot\frac{C_{n,u_1}(t)C_{n,u_2}(r)}{p_n^2}\right]drdt \\
\leq&\frac{4\Lambda^2(C^*)^2\alpha^2}{r_n^2}\underset{u_1\neq u_2}{\sum_{u_1,u_2\in L_n}}\int_0^T\int_{t-2h}^t\frac{\IP(C_{n,u_1}(t)C_{n,u_2}(r)=1)}{p_n^2}drdt \\
&+\frac{\Lambda^2}{r_n^2}\underset{u_1\neq u_2}{\sum_{u_1,u_2\in L_n}}\int_0^T\int_{t-2h}^t\sup_{t\in[0,T]}\IE\left[\left(\frac{2C^*A_n(t)}{r_np_n}\right)^2\cdot\frac{C_{n,u_1}(t)C_{n,u_2}(r)}{p_n^2}\Bigg|\frac{A_n(t)}{r_np_n}>\alpha\right]drdt \\
&\quad\quad\times\sup_{t\in[0,T]}\IP\left(\frac{A_n(t)}{r_np_n}>\alpha\right).
\end{align*}
The expression above converges to zero by Assumptions (D2, \ref{CA:AU1}), (AD, \ref{eq:AD5}, \ref{eq:AD7}). Hence, we have also that \eqref{eq:cond4} holds.

For \eqref{eq:cond5} we finally use that for every fixed choice of $u_2,u_2'\in L_n$ we get
\begin{align*}
\sum_{u_1\in L_n}C_{n,u_1}(t)\Ind(\neg u_2,u_2'\in F_{u_1}(t-2h))\leq& K_m^{u_2}(t+2h)+K_m^{u_2'}(t+2h) \\
\leq&\left(F+K_m^{L_n}H_{UB}^{u_2}\right)\left(F+K_m^{L_n}H_{UB}^{u_2'}\right)
\end{align*}
Thus, we obtain together with \eqref{eq:phi_bound}
\begin{align*}
|\eqref{eq:cond5}|\leq&\frac{\Lambda}{r_n^2}\underset{u_2\neq u_2'}{\sum_{u_2,u_2'\in L_n}}\int_0^T\IE\Bigg[\int_{t-2h}^{t-}\frac{2C^*}{r_np_n^2}A_n(t)d|M_{n,u_2}|(r) \\
&\times\int_{t-2h}^{t-}\frac{2C^*}{r_np_n^2}A_n(t)d|M_{n,u_2'}|(r')\sum_{u_1\in L_n}C_{n,u_1}(t)\Ind(\neg u_2,u_2'\in F_{u_1}(t-2h)\Bigg]dt \\
\leq&\Lambda\int_0^T\IE\left[\left(\sum_{u\in L_n}\int_{t-2h}^{t-}\frac{\left(F+K_m^{L_n}H_{UB}^{u}\right)}{r_np_n}d|M_{n,u}|(r)\right)^2\left(\frac{2C^*A_n(t)}{r_np_n}\right)^2\right]dt.
\end{align*}
The above converges to zero by conditioning on $A_n(t)/r_np_n>\alpha$ and the Assumptions (H2, \ref{eq:HubS7}) and (AD, \ref{eq:AD5}, \ref{eq:AD8}). Hence, we have shown that \eqref{eq:cond5} holds as well. 
\end{proof}

\subsection{Further Proofs}
\label{subsec:fproofs}
\begin{proof}[Proof of Lemma \ref{lem:martingale}]
Note that $\mathcal{F}_t^{n,J,m}\supseteq\mathcal{F}_t^n$. Hence, $(N_{n,ij}(t))_{(i,j)\in J}$ is adapted with respect to $\mathcal{F}_t^{n,J,m}$ and $(\lambda_{n,ij}(t))_{(i,j)\in J}$ is predictable with respect to $\mathcal{F}_t^{n,J,m}$. So $N_{n,ij}(t)$ is a counting process for all $(i,j)\in J$. In order to check that $\lambda_{n,ij}(t)$ is the intensity function, we need to check the martingale property: Let $t>0$ and $t'\in[t,t+6\delta_n]$, then by definition and assumption (recall $M_{n,ij}(t)=N_{n,ij}(t)-\int_0^t\lambda_{n,ij}(s)ds$)
$$\IE(M_{n,ij}(t')|\mathcal{F}_t^{n,J,m})=\IE(M_{n,ij}(t')|\mathcal{F}_t^n)=M_{n,ij}(t).$$
\end{proof}

\begin{proof}[Proof of Proposition \ref{prop:single_non-pred}]
The proof is almost exactly along the lines of \citet{MN07} but it is not identical and we give it here for completeness. We see at first that
\begin{align}
&\IE\left(\left(\sum_{(i,j)\in L_n}\int_0^T\phi_{n,ij}(t)dM_{n,ij}(t)\right)^2\right) \nonumber \\
=&\sum_{(i,j),(k,l)\in L_n}\IE\left(\int_0^T\int_0^T\tilde{\phi}_{n,ij}^{ij,kl}(t)\tilde{\phi}_{n,kl}^{ij,kl}(r)dM_{n,ij}(t)dM_{n,kl}(r)\right) \label{eq:MN1} \\
&+2\sum_{(i,j),(k,l)\in L_n}\IE\left(\int_0^T\int_0^T\tilde{\phi}^{ij,kl}_{n,ij}(t)\left(\phi_{n,kl}(r)-\tilde{\phi}_{n,kl}^{ij,kl}(r)\right)dM_{n,ij}(t)dM_{n,kl}(r)\right) \nonumber \\
&+\sum_{(i,j),(k,l)\in L_n}\IE\left(\int_0^T\int_0^T\left(\phi_{n,ij}(t)-\tilde{\phi}_{n,ij}^{ij,kl}(t)\right)\cdot\left(\phi_{n,kl}(r)-\tilde{\phi}_{n,kl}^{ij,kl}(r)\right)dM_{n,ij}(t)dM_{n,kl}(r)\right). \nonumber
\end{align}
We use now that $\tilde{\phi}_{n,ij}^{ij,kl}$ and $\tilde{\phi}_{n,kl}^{ij,kl}$ are both predictable with respect to $\mathcal{F}_t^{n,\{(i,j),(k,l)\},m}$ and that $M_{n,ij}$ and $M_{n,kl}$ are uncorrelated martingales with respect to the same filtration (cf. Lemma \ref{lem:martingale}). Hence, we obtain
$$\eqref{eq:MN1}=\sum_{(i,j)\in L_n}\int_0^T\IE\left(\tilde{\phi}_{n,ij}^{ij}(t)^2C_{n,ij}(t)\lambda_{n,ij}(\theta_0,t)\right)dt$$
and the statement follows.
\end{proof}

The following result provides an exponential inequality for martingales. Note that the result is different from \citet{vdg95} because the asymptotics in the motivation are different and in the following result we can avoid appearance of the higher order variation process in the probability.
\begin{lemma}
\label{lem:exponential_inequality}
Suppose that (A3, \ref{ass:boundedness}) and (A4, \ref{ass:kernel_hoelder}) hold. Recall the following definitions from (A3) and (A4): $\Lambda$ is the bound on the intensity function, $K$ the bound on the kernel and $\hat{K}$ the bound on the covariates. Let $A>0$ be so large such that
\begin{equation}
\label{eq:chioceA}
A\geq \max\left\{\sqrt{\hat{K}},\hat{K},\frac{1}{K},\sqrt{2^{\frac{3}{2}}\sqrt{\Lambda}\frac{\hat{K}}{K}}\right\},\,\frac{1}{A}\cdot\exp\left(\frac{\sqrt{2}}{A\sqrt{\Lambda}}\right)\leq1.
\end{equation}
Suppose we have a $\Delta_n$-partition measurable with respect to $\mathcal{F}_{t_0-h}^n$ (we write $I_{n,u}^{k,m}:=I_{n,u}^{k,k,t_0-h}$ for ease of notation)
$$\sum_k\sum_mI_{n,u}^{k,m}=\sup_{r\in[t_0-h,t_0+h]}C_{n,u}(r)$$
for some $t_0\in[0,T]$ and all $u\in L_n$. Define furthermore for arbitrary $c_3>0$ and  $u\in L_n$,
\begin{align*}
E_{k,m}^{n,t_0}:=&r_n\int_0^TK_{h,t_0}\left(s\right)\IE\left[I_{n,u}^{k,m}C_{n,u}(s)\right]ds,&E_k^{n,t_0}:=\sqrt{\frac{r_n\bar{p}_n(t_0)}{\log r_n\bar{p}_n(t_0)}}\cdot c_3, \\
S_{k,m}:=&\sum_{u\in L_n}\int_0^TK_{h,t_0}\left(s\right)\IE\left(I_{n,u}^{k,m}C_{n,u}(s)\Big|\mathcal{F}_{t_0-h}^n\right)ds,&S_k:=\max_{m=1,...,r_n}\sum_{u\in L_n}I_{n,u}^{k,m}, \\
\Gamma_n^{t_0}:=&\Ind\left(\forall k:\frac{S_k^2\cdot\log r_n\bar{p}_n(t_0)}{r_n\bar{p}_n(t_0)}\leq c_3^2, S_k\sqrt{h}\geq 1\right), \\
\sigma^2:=&\frac{1}{h}\Lambda A^2K\hat{K},\quad c_1:=\sqrt{\frac{\Lambda}{2h}}K\hat{K}A.
\end{align*}
Assume that there is $c_2>0$ such that for all $k\in\{1,...,\Kappa\}$
$$\frac{1}{r_n\bar{p}_n(t_0)}\sum_{m=1}^{r_n}E_{k,m}^{n,t_0}\geq c_2.$$

Then it holds that
\begin{align*}
&\IP\left(\frac{1}{r_n\bar{p}_n(t_0)}\left\|\sum_{u\in L_n}\int_0^TK_{h,t_0}(s)X_{n,u}(s)dM_{n,u}(s)\right\|\geq xqc^{**}\sqrt{\frac{\log r_n\bar{p}_n(t_0)}{r_n\bar{p}_n(t_0)}}\right) \\
\leq& 2q\Kappa\left[r_n\bar{p}_n(t_0)\right]^{-\frac{c_2x^2}{2(\sigma^2+c_1c_3x)}}+2q\beta_{t_0}(\Delta_n)\cdot \Kappa r_n+2q\IP(\Gamma_n^{t_0}=0),
\end{align*}
where $q$ is the dimension of the covariate and $c^{**}$ is the constant for which $\|y\|\leq c^{**}\|y\|_1$ for all $y\in\IR^q$ and $\|.\|$ and $\|.\|_1$ are the Euclidean and $1$-Norm respectively. The process $X_{n,u}$ can be replaced by any other predictable process which is bounded by $\hat{K}$.
\end{lemma}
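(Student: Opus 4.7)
The approach is to reduce the vector-valued inequality to a scalar one-sided tail bound for each coordinate and sign, and then apply the block-based Bernstein inequality of Proposition \ref{prop:network_bernstein} to the partitioned martingale sums. Using $\|y\|\leq c^{**}\|y\|_1$ together with a union bound over the $q$ coordinates and both signs (which accounts for the factor $2q$ in the final estimate), it suffices to bound, for each fixed coordinate $r$, the probability that $\frac{1}{r_n\bar{p}_n(t_0)}\sum_{u\in L_n} Y_u \geq x\sqrt{\log(r_n\bar{p}_n(t_0))/(r_n\bar{p}_n(t_0))}$, where $Y_u := \int_0^T K_{h,t_0}(s) X_{n,u}^{(r)}(s)\,dM_{n,u}(s)$ is a centered stochastic integral. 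I will first intersect with $\{\Gamma_n^{t_0}=1\}$; its complement contributes the $2q\,\IP(\Gamma_n^{t_0}=0)$ error term directly.

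On $\{\Gamma_n^{t_0}=1\}$, the given $\Delta_n$-partition is $\mathcal{F}_{t_0-h}^n$-measurable, has $S_k\leq c_3\sqrt{r_n\bar{p}_n(t_0)/\log(r_n\bar{p}_n(t_0))}$, and covers every pair that is active somewhere in $[t_0-h,t_0+h]$. Because $K_{h,t_0}$ is supported in $[t_0-h,t_0+h]$ and the indicators $I_{n,u}^{k,m}$ are $\mathcal{F}_{t_0-h}^n$-measurable, the grouped sum $U_{k,m}:=\sum_u I_{n,u}^{k,m}Y_u$ is itself a stochastic integral against a counting-process martingale, predictable in the augmented filtration. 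Furthermore the normalisation reads $|E|_{n,t_0}=r_n\bar{p}_n(t_0)$, since the assumed identity $\sum_{k,m}I_{n,u}^{k,m}=\sup_{s\in[t_0-h,t_0+h]}C_{n,u}(s)$ agrees with $C_{n,u}(s)$ on $\mathrm{supp}\,K_{h,t_0}$ and hence $\sum_{k,m}E_{k,m}^{n,t_0}=r_n\int K_{h,t_0}(s)p_n(s)ds$.

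The main technical obstacle is verifying the Bernstein moment condition of Proposition \ref{prop:network_bernstein}: $\IE|U_{k,m}|^\rho\leq\frac{\rho!}{2}E_{k,m}^{n,t_0}\sigma^2(E_k^{n,t_0}c_1)^{\rho-2}$ for $\rho\geq 2$. The two deterministic inputs are the jump bound $|\Delta U_{k,m}|\leq K\hat{K}/h$ (two distinct counting processes almost surely never jump simultaneously, and $|X|\leq\hat{K}$, $|K_{h,t_0}|\leq K/h$) and the predictable variation bound $\langle U_{k,m}\rangle_T\leq(K\hat{K}^2\Lambda/h)\sum_u I_{n,u}^{k,m}\int K_{h,t_0}(s)C_{n,u}(s)ds$, whose expectation equals $(K\hat{K}^2\Lambda/h)\,E_{k,m}^{n,t_0}$ by exchangeability. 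Substituting these into the exponential-martingale (Bennett-type) bound for purely-discontinuous martingales and Taylor-expanding the moment generating function is where the delicate work lies: the choices in \eqref{eq:chioceA} are calibrated precisely so that (i) $A^2\geq\hat{K}$ makes $\sigma^2=\Lambda A^2 K\hat{K}/h$ absorb the variance scale, (ii) $A^2\geq 2^{3/2}\sqrt{\Lambda}\hat{K}/K$ combined with $S_k\sqrt h\geq 1$ on $\Gamma_n^{t_0}=1$ ensures $K\hat{K}/h\leq E_k^{n,t_0}c_1$ so that the deterministic jump bound fits under $(E_k^{n,t_0}c_1)^{\rho-2}$, and (iii) $A^{-1}\exp(\sqrt 2/(A\sqrt\Lambda))\leq 1$ absorbs the factorial expansion of $e^{\theta\phi}-1-\theta\phi$ into the $\rho!/2$ shape.

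Once the moment bound is in hand, Proposition \ref{prop:network_bernstein} applied to $(U_{k,m})$ with $|E|_{n,t_0}=r_n\bar{p}_n(t_0)$ and the standing hypothesis $\frac{1}{r_n\bar{p}_n(t_0)}\sum_m E_{k,m}^{n,t_0}\geq c_2$ yields a one-sided scalar tail bound of the form $\Kappa [r_n\bar{p}_n(t_0)]^{-c_2 x^2/(2(\sigma^2+c_1c_3x))}+\Kappa r_n\beta_{t_0}(\Delta_n)$. Multiplying by $2q$ from the coordinate/sign reduction and adding $2q\,\IP(\Gamma_n^{t_0}=0)$ from the conditioning step then gives the stated inequality, with the remark about general bounded predictable integrands following because the argument only used the bound $\hat K$ on $|X_{n,u}|$ and its predictability.
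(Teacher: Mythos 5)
Your overall architecture matches the paper's: reduce to scalar coordinates and signs via $c^{**}$ and a union bound (the factor $2q$), split off $\{\Gamma_n^{t_0}=0\}$, observe $|E|_{n,t_0}=r_n\bar p_n(t_0)$, and feed the grouped martingale integrals $U_{k,m}=\sum_u I_{n,u}^{k,m}\int_0^T K_{h,t_0}(s)X_{n,u}(s)\,dM_{n,u}(s)$ into the block Bernstein inequality (the paper routes this through Lemma \ref{lem:network_exp}, which wraps Proposition \ref{prop:network_bernstein} together with the $\beta$-mixing grouping). The gap sits in the only genuinely hard step: the verification of the moment condition $\IE|U_{k,m}|^{\rho}\leq\frac{\rho!}{2}E_{k,m}^{n,t_0}\sigma^2(E_k^{n,t_0}c_1)^{\rho-2}$, which you reduce to ``jump bound plus predictable variation bound, then a Bennett-type mgf expansion.''

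This does not go through as stated, for two reasons. First, the predictable variation $\langle U_{k,m}\rangle_T\leq\frac{K\hat K^2\Lambda}{h}\sum_u I_{n,u}^{k,m}\int_0^TK_{h,t_0}(s)C_{n,u}(s)\,ds$ is random (it depends on the network after $t_0-h$), so it cannot simply be substituted into an exponential-martingale bound; knowing only its expectation, as you propose (``by exchangeability''), is not enough, because for $\rho>2$ the variation must enter the required bound through two different scales. Second, and this is the crux, the factor $(E_k^{n,t_0}c_1)^{\rho-2}$ is not produced by the jump height $K_h=K\hat K/h$ alone: it carries the block size $S_k=\max_m\sum_u I_{n,u}^{k,m}$. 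The paper's induction \eqref{eq:rec_cond} yields, conditionally on $\mathcal F_{t_0-h}^n$, a bound of the form $\frac{\rho!}{2}S_{k,m}\Lambda K_hA^{\rho}\bigl(S_k\sqrt{h\Lambda/2}\,K_h\bigr)^{\rho-2}\Gamma_n^{t_0}$, and the two random quantities are then treated differently: $S_{k,m}$ (an $\mathcal F_{t_0-h}^n$-conditional expectation) appears \emph{linearly} and is integrated out to give $E_{k,m}^{n,t_0}$, whereas $S_k$ appears to the power $\rho-2$ and must be bounded \emph{almost surely} by $E_k^{n,t_0}$, which is exactly what the event $\Gamma_n^{t_0}=1$ provides. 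Any correct argument must perform this separation; in the paper it is done via the It\^o-formula recursion \eqref{eq:recursion} for the conditional moments and an induction over $\rho$ in which the conditions \eqref{eq:chioceA} on $A$ are consumed. Your sketch names all the right ingredients but does not supply the mechanism that turns them into the required $\rho$-dependent bound, so the proof is incomplete precisely where the lemma's content lies.
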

\begin{proof}
We remark firstly that it is sufficient to consider univariate covariates, because (denote by $X_{n,u}^r$ the $r$-th entry of $X_{n,u}$ for $r=1,...,q$)
\begin{align*}
&\IP\left(\frac{1}{r_n\bar{p}_n(t_0)}\left\|\sum_{u\in L_n}\int_0^TK_{h,t_0}(s)X_{n,u}(s)dM_{n,u}(s)\right\|\geq xqc^{**}\sqrt{\frac{\log r_n\bar{p}_n(t_0)}{r_n\bar{p}_n(t_0)}}\right) \\
\leq&\sum_{r=1}^q\IP\left(\frac{1}{r_n\bar{p}_n(t_0)}\left|\sum_{u\in L_n}\int_0^TK_{h,t_0}(s)X_{n,u}^r(s)dM_{n,u}(s)\right|\geq x\sqrt{\frac{\log r_n\bar{p}_n(t_0)}{r_n\bar{p}_n(t_0)}}\right) \\
\leq&\sum_{r=1}^q\IP\left(\frac{1}{r_n\bar{p}_n(t_0)}\sum_{u\in L_n}\int_0^TK_{h,t_0}(s)X_{n,u}^r(s)dM_{n,u}(s)\geq x\sqrt{\frac{\log r_n\bar{p}_n(t_0)}{r_n\bar{p}_n(t_0)}}\right) \\
&+\IP\left(\frac{1}{r_n\bar{p}_n(t_0)}\sum_{u\in L_n}\int_0^TK_{h,t_0}(s)(-X_{n,u}^r(s))dM_{n,u}(s)\geq x\sqrt{\frac{\log r_n\bar{p}_n(t_0)}{r_n\bar{p}_n(t_0)}}\right).
\end{align*}
Since $-X_{n,u}^r$ is a covariate with the exact same properties as $X_{n,u}$ (in particular predictability with respect to $\mathcal{F}_t^n$ and boundedness by $\hat{K}$, cf. Assumption (A3, \ref{ass:boundedness}), it is sufficient to assume (for simplicity of notation) that $X_{n,u}$ is univariate and to prove that
\begin{align}
&\IP\left(\frac{1}{r_n\bar{p}_n(t_0)}\sum_{u\in L_n}\int_0^TK_{h,t_0}(s)X_{n,u}(s)dM_{n,u}(s)\geq x\sqrt{\frac{\log r_n\bar{p}_n(t_0)}{r_n\bar{p}_n(t_0)}}\right) \nonumber \\
\leq& \Kappa\left[r_n\bar{p}_n(t_0)\right]^{-\frac{c_2x^2}{2(\sigma^2+c_1c_3x)}}+\beta_{t_0}(\Delta_n)\cdot \Kappa r_n+\IP(\Gamma_n^{t_0}=0). \label{eq:aim_of_proof}
\end{align}

The main idea of the proof is to apply Lemma \ref{lem:network_exp} to the correct structured interaction network (in the sense of Definition \ref{defin:struc_interaction_net}). Define to this end
\begin{align*}
\tilde{F}_{n,u}(s):=&K_{h,t_0}(s)X_{n,u}(s)\cdot\Gamma_n^{t_0},\quad F_{n,u}^{k,m}(s):=\tilde{F}_{n,u}(s)\cdot I_{n,u}^{k,m}\textrm{ and} \\
Z_{n,u}(t):=&\int_0^t\tilde{F}_{n,u}(s)dM_{n,u}(s).
\end{align*}
Note that both, $\tilde{F}_{n,u}(s)$ and $F_{n,u}^{k,m}(s)$, are predictable processes because they are deterministically equal to zero for $s\leq t_0-h$ and the sets $t\mapsto G^{t}(k,m,\Delta_n)$ are predictable with respect to $\mathcal{F}^n_{t}$. Hence, $Z_{n,u}(t)$ is a martingale. We are going to prove that $(Z_{n,u}(T))_{u\in L_n}$ fulfils the conditions of Lemma \ref{lem:network_exp}. Condition \eqref{eq:GeneralCoverCondition} is easy to check.  Note that for $s\in[t_0-h,t_0+h]$ we have $\sup_{r\in[t_0-h,t_0+h]}C_{n,u}(r)C_{n,u}(s)=C_{n,u}(s)$ and hence
\begin{align*}
|E|_{n,t_0}=\sum_{k=1}^{\Kappa}\sum_{m=1}^{r_n}E_{k,m}^{n,t_0}=r_n\int_0^TK_{h,t_0}(s)\IE(C_{n,u}(s))ds=r_n\bar{p}_n(t_0).
\end{align*}
Thus, condition 2 of Lemma \ref{lem:network_exp} holds simply by assumption and definition of $E_k^{n,t_0}$. The main part of this proof is now to prove condition 1. Note therefore firstly that
$$\IE\left(\int_0^T\tilde{F}_{n,u}(s)dM_{n,u}(s)I_{n,u}^{k,m}\right)=\IE\left(\underbrace{\IE\left(\int_0^T\tilde{F}_{n,u}(s)dM_{n,u}(s)\Bigg|\mathcal{F}_{t_0-h}^n\right)}_{=0}I_{n,u}^{k,m}\right)=0.$$
Hence, we need to show for $t=T$
\begin{align}
&\IE\left(\left|U_{k,m}^{n,t_0}(\Delta_n)\right|^{\rho}\right) \nonumber \\
=&\IE\left(\left|\sum_{u\in L_n}\int_0^tF_{n,u}^{k,m}(s)dM_{n,u}(s)\right|^{\rho}\right) \nonumber \\
\leq&\frac{\rho!}{2}\cdot E_{k,m}^{n,t_0}\sigma^2\cdot\left(E_{k}^{n,t_0}c_1\right)^{\rho-2}. \label{eq:mombound}
\end{align}
We will show \eqref{eq:mombound} for all $t\in[0,T]$ and then it holds particularly in the case $t=T$ which is of primary interest to us. The idea of the proof is to prove a recursion inequality for the moments of stochastic integrals by applying Itô's Formula and then using induction. 
Note that $F_{n,u}^{k,m}(s)=0$ for $s\notin[t_0-h,t_0+h]$. Therefore, \eqref{eq:mombound} holds trivially for $t\leq t_0-h$ and it holds for $t\geq t_0+h$ when it holds for $t=t_0+h$. Hence, we can restrict to the case $t\in[t_0-h,t_0+h]$. 

For $\rho\geq2$ we have that the function $f_{\rho}(x):=|x|^\rho$ is twice continuously differentiable and hence also $\tilde{f}_{\rho}(x_1,...,x_m):=f_{\rho}(x_1+...+x_m)$ is twice continuously differentiable. So by the multivariate Itô Formula for semi-martingales with jumps given in Theorem \ref{thm:Ito} and the fact that with probability one no two counting processes jump at the same time, we obtain for $\rho\geq2$: Enumerate for the following computations the pairs in $L_n$, i.e., such that $L_n=\{1,...,r_n\}$. Then,
$$\left|\sum_{u\in L_n}\int_0^tF_{n,u}^{k,m}(\tau)dM_{n,u}(\tau)\right|^{\rho} \\
=\tilde{f}_{\rho}\left(\int_0^tF_{n,1}^{k,m}(\tau)dM_{n,1}(\tau),...,\int_0^tF_{n,r_n}^{k,m}(\tau)dM_{n,r_n}(\tau)\right)$$
and we can compute
\begin{eqnarray*}
&&\left|\sum_{u\in L_n}\int_0^tF_{n,u}^{k,m}(\tau)dM_{n,u}(\tau)\right|^{\rho} \\
&=&\tilde{f}_{\rho}\left(\int_0^tF_{n,1}^{k,m}(\tau)dM_{n,1}(\tau),...,\int_0^tF_{n,r_n}^{k,m}(\tau)dM_{n,r_n}(\tau)\right) \\
&=&\sum_{u\in L_n}\int_0^t\partial_u\tilde{f}_{\rho}\left(\int_0^{s-}F_{n,1}^{k,m}(\tau)dM_{n,1}(\tau),...,\int_0^{s-}F_{n,r_n}^{k,m}(\tau)dM_{n,r_n}(\tau)\right)F_{n,u}^{k,m}(s)dM_{n,u}(s) \\
&&+\frac{1}{2}\sum_{u,v\in L_n}\int_0^t\partial_{uv}\tilde{f}_{\rho}\left(\int_0^{s-}F_{n,1}^{k,m}(\tau)dM_{n,1}(\tau),...,\int_0^{s-}F_{n,r_n}^{k,m}(\tau)dM_{n,r_n}(\tau)\right) \\
&&\quad\quad\quad\quad\quad F_{n,u}^{k,m}(s)F_{n,v}^{k,m}(s)d[M_{n,u},M_{n,v}(s)](s) \\
&&+\int_0^t\tilde{f}_{\rho}\left(\int_0^{s}F_{n,1}^{k,m}(\tau)dM_{n,1}(\tau),...,\int_0^{s}F_{n,r_n}^{k,m}(\tau)dM_{n,r_n}(\tau)\right) \\
&&\quad\quad\quad-\tilde{f}_{\rho}\left(\int_0^{s-}F_{n,1}^{k,m}(\tau)dM_{n,1}(\tau),...,\int_0^{s-}F_{n,r_n}^{k,m}(\tau)dM_{n,r_n}(\tau)\right) \\
&&-\sum_{u\in L_n}\partial_u\tilde{f}_{\rho}\left(\int_0^{s-}F_{n,1}^{k,m}(\tau)dM_{n,1}(\tau),...,\int_0^{s-}F_{n,r_n}^{k,m}(\tau)dM_{n,r_n}(\tau)\right)F_{n,u}^{k,m}(s)\Delta N_{n,u}(s) \\
&&-\frac{1}{2}\sum_{u,v}^n\partial_{uv}\tilde{f}_{\rho}\left(\int_0^{s-}F_{n,1}^{k,m}(\tau)dM_{n,1}(\tau),...,\int_0^{s-}F_{n,r_n}^{k,m}(\tau)dM_{n,r_n}(\tau)\right) \\
&&\quad\quad\quad\quad \times F_{n,u}^{k,m}(s)F_{n,v}^{k,m}(s)\Delta N_{n,u}(s)\Delta N_{n,v}(s) d\left(\sum_{r\in L_n} N_{n,r}\right)(s) \\
&=&\sum_{u\in L_n}\int_0^tf'_{\rho}\left(\sum_{r\in L_n}\int_0^{s-}F_{n,r}^{k,m}(\tau)dM_{n,r}(\tau)\right)F_{n,u}^{k,m}(s)dM_{n,u}(s) \\
&&+\frac{1}{2}\sum_{u\in L_n}\int_0^tf''_{\rho}\left(\sum_{r\in L_n}\int_0^{s-}F_{n,r}^{k,m}(\tau)dM_{n,r}(\tau)\right)F_{n,u}^{k,m}(s)^2dN_{n,u}(s) \\
&&+\int_0^tf_{\rho}\left(\sum_{r\in L_n}\int_0^{s}F_{n,r}^{k,m}(\tau)dM_{n,r}(\tau)\right)-f_{\rho}\left(\sum_{r\in L_n}\int_0^{s-}F_{n,r}^{k,m}(\tau)dM_{n,r}(\tau)\right) \\
&&-\sum_{u\in L_n}f'_{\rho}\left(\sum_{r\in L_n}\int_0^{s-}F_{n,r}^{k,m}(\tau)dM_{n,r}(\tau)\right)F_{n,u}^{k,m}(s)\Delta N_{n,u}(s) \\
&&-\frac{1}{2}\sum_{u\in L_n}f''_{\rho}\left(\sum_{r\in L_n}\int_0^{s-}F_{n,r}^{k,m}(\tau)dM_{n,r}(\tau)\right)F_{n,u}^{k,m}(s)^2\Delta N_{n,u}(s)\,d\left(\sum_{r\in L_n} N_{n,r}\right)(s) \\
&=&\sum_{u\in L_n}\int_0^tf'_{\rho}\left(\sum_{r\in L_n}\int_0^{s-}F_{n,r}^{k,m}(\tau)dM_{n,r}(\tau)\right)F_{n,u}^{k,m}(s)dM_{n,u}(s) \\
&&+\int_0^tf_{\rho}\left(\sum_{r\in L_n}\int_0^{s}F_{n,r}^{k,m}(\tau)dM_{n,r}(\tau)\right)-f_{\rho}\left(\sum_{r\in L_n}\int_0^{s-}F_{n,r}^{k,m}(\tau)dM_{n,r}(\tau)\right) \\
&&-f'_{\rho}\left(\sum_{r\in L_n}\int_0^{s-}F_{n,r}^{k,m}(\tau)dM_{n,r}(\tau)\right)\sum_{u\in L_n}F_{n,u}^{k,m}(s)\Delta N_{n,u}(s)d\left(\sum_{r\in L_n} N_{n,r}\right)(s) \\
&=:& (*)
\end{eqnarray*}
Note now that
$$\sum_{r\in L_n}\int_0^sF_{n,r}^{k,m}(\tau)dM_{n,r}(\tau)-\sum_{r\in L_n}\int_0^{s-}F_{n,r}^{k,m}(\tau)dM_{n,r}(\tau)=\sum_{r\in L_n}F_{n,r}^{k,m}(s)\Delta N_{n,r}(s).$$
Hence, (*) contains a Taylor series expansion of $f_{\rho}$ around the point
$$\sum_{r\in L_n}\int_0^{s-}F_{n,r}^{k,m}(\tau)dM_{n,r}(\tau)$$
and we continue:
\begin{eqnarray*}
&&(*) \\
&=&\sum_{u\in L_n}\int_0^tf'_{\rho}\left(\sum_{r\in L_n}\int_0^{s-}F_{n,r}^{k,m}(\tau)dM_{n,r}(\tau)\right)F_{n,u}^{k,m}(s)dM_{n,u}(s) \\
&&+\int_0^t\frac{1}{2}f''_{\rho}\left(\sum_{r\in L_n}\int_0^{s-}F_{n,r}^{k,m}(\tau)dM_{n,r}(\tau)+\Delta(s)\right) \\
&&\quad\quad\quad\times\left(\sum_{r\in L_n}F_{n,r}^{k,m}(s)\Delta N_{n,r}(s)\right)^2d\left(\sum_{r\in L_n}N_{n,r}\right)(s) \\
&=:&(**),
\end{eqnarray*}
where $\Delta(s)\in\Big[0,\sum_{r\in L_n}F_{n,r}^{k,m}(s)\Delta N_{n,r}(s)\Big]$. Since only one of the counting processes jumps at a time, we obtain $|\Delta(s)|\leq K_h$ with $K_h:=\frac{1}{h}K\hat{K}$ and continue by using again that no two processes jump at the same time:
\begin{eqnarray*}
&&(**) \\
&=&\sum_{u\in L_n}\int_0^tf'_{\rho}\left(\sum_{r\in L_n}\int_0^{s-}F_{n,r}^{k,m}(\tau)dM_{n,r}(\tau)\right)F_{n,u}^{k,m}(s)dM_{n,u}(s) \\
&&+\sum_{u\in L_n}\int_0^t\frac{1}{2}f''_p\left(\sum_{r\in L_n}\int_0^{s-}F_{n,r}^{k,m}(\tau)dM_{n,r}(\tau)+\Delta(s)\right)F_{n,u}^{k,m}(s)^2dN_{n,u}(s) \\
&\leq&\sum_{u\in L_n}\int_0^tf'_{\rho}\left(\sum_{r\in L_n}\int_0^{s-}F_{n,r}^{k,m}(\tau)dM_{n,r}(\tau)\right)F_{n,u}^{k,m}(s)dM_{n,u}(s) \\
&&+\sum_{u\in L_n}\int_{t_0-h}^t\frac{1}{2}f''_{\rho}\left(\left|\sum_{r\in L_n}\int_0^{s-}F_{n,r}^{k,m}(\tau)dM_{n,r}(\tau)\right|+K_h\right) \\
&&\quad\times K_{h,t_0}(s)K_h\hat{K}I_{n,u}^{k,m}\Gamma_n^{t_0}dN_{n,u}(s),
\end{eqnarray*}
where we used in the last line that $F_{n,u}(s)=0$ when $t\leq t_0-h$. Now, the integrand is predictable and we can apply the expectation on both sides, to obtain a recursion formula: Use that for $x\geq0$ we have $f_{\rho}''(x)=\rho(\rho-1)f_{\rho-2}(x)$ to get
\begin{align*}
&\IE\left(\left|\sum_{u\in L_n}\int_0^tF_{n,u}^{k,m}(\tau)dM_{n,u}(\tau)\right|^{\rho}\Bigg|\mathcal{F}_{t_0-h}^n\right) \\
\leq&\sum_{u\in L_n}\int_{t_0-h}^t\frac{1}{2}\rho(\rho-1)\IE\Bigg(\left(\left|\sum_{r\in L_n}\int_0^{s-}F_{n,r}^{k,m}(\tau)dM_{n,r}(\tau)\right|+K_h\right)^{\rho-2} \\
&\quad\times K_{h,t_0}(s)K_h\hat{K}I_{n,u}^{k,m}\Gamma_n^{t_0}\lambda_{n,u}(s)\Bigg|\mathcal{F}_{t_0-h}^n\Bigg)ds \\
\leq&\int_{t_0-h}^t\frac{1}{2}\rho(\rho-1)K_h\hat{K}\Lambda\IE\Bigg(\left(\left|\sum_{r\in L_n}\int_0^{s-}F_{n,r}^{k,m}(s)dM_{n,r}(s)\right|+K_h\right)^{\rho-2} \\
&\quad\times\sum_{u\in L_n}K_{h,t_0}(s)C_{n,u}(s) I_{n,u}^{k,m}\Gamma_n^{t_0}\Bigg|\mathcal{F}_{t_0-h}^n\Bigg)ds.
\end{align*}
Define $Z^{k,m}(t)=\sum_{u\in L_n}\int_0^tF_{n,u}^{k,m}(\tau)dM_{n,u}(\tau)$ to summarize the previous inequality chain in the following recursion formula: For $\rho\geq2$ it holds almost surely
\begin{align}
&\IE\left(\left|Z^{k,m}(t)\right|^{\rho}\Big|\mathcal{F}_{t_0-h}^n\right) \nonumber \\
\leq& \frac{1}{2}\int_{t_0-h}^t\rho(\rho-1)K_h\hat{K}\Lambda K_{h,t_0}(s) \nonumber \\
&\quad\times\IE\left(\left(|Z^{k,m}(s-)|+K_h\right)^{\rho-2}\sum_{u\in L_n}I_{n,u}^{k,m}C_{n,u}(s)\Big|\mathcal{F}_{t_0-h}^n\right)\Gamma_n^{t_0}ds. \label{eq:recursion}
\end{align}
By uniting the (countably many) exception sets of measure zero, these inequalities hold for all $\rho\geq2$ and all $t\in[0,T]\cap\IQ$ on the same set of measure one. Since both sides are continuous from the right (cf. Corollary 5.1.9 in \citet{CE15}), we also have it for all $t\in[0,T]$ on the same set of measure one. Taking now limits from the left and repeating the same argument with continuity from the left, we obtain the same result for $Z^{k,m}(t-)$ on the left hand side also on the same set of measure one.

We are going to prove now via induction that almost surely (on the same set of measure one)
\begin{equation}
\label{eq:rec_cond}
\IE\left(|Z^{k,m}(t)|^{\rho}\Big|\mathcal{F}_{t_0-h}^n\right)\leq\frac{\rho!}{2}S_{k,m}\Lambda K_hA^{\rho}\left(S_{k}\sqrt{\frac{h\Lambda}{2}}K_h\right)^{\rho-2}\Gamma_n^{t_0}.
\end{equation}
We begin with the induction start: For $\rho=2$, \eqref{eq:recursion} gives for all $t\in[t_0-h,t_0+h]$
$$\IE\left(|Z^{k,m}(t)|^2\Big|\mathcal{F}_{t_0-h}^n\right)\leq K_h\hat{K}\Lambda S_{k,m}\cdot\Gamma_n^{t_0}\leq S_{k,m}\Lambda K_h A^2\cdot\Gamma_n^{t_0},$$
where the last inequality holds by choice of $A$ in \eqref{eq:chioceA} and because $t\in[t_0-h,t_0+h]$. Hence, the induction start is complete and we continue with the induction step. Assume that \eqref{eq:rec_cond} holds for all powers $2\leq p\leq\rho$ and all $t\in[t_0-h,t_0+h]$ and show that it holds for $\rho+1$ and all $t\in[t_0-h,t_0+h]$ as well. We use first \eqref{eq:recursion}, then the binomial theorem and finally the induction hypothesis \eqref{eq:rec_cond} for powers greater than one:
\begin{align}
&\IE\left(|Z^{k,m}(t)|^{\rho+1}\Big|\mathcal{F}_{t_0-h}^n\right) \nonumber \\
\leq&\frac{1}{2}\int_{t_0-h}^t(\rho+1)\rho K_h\hat{K}\Lambda K_{h,t_0}(s) \nonumber \\
&\quad\times\IE\left((|Z^{k,m}(s-)|+K_h)^{\rho-1}\sum_{u\in L_n}I_{n,u}^{k,m}C_{n,u}(s)\Big|\mathcal{F}^n_{t_0-h}\right)ds\cdot\Gamma_n^{t_0} \nonumber \\
\leq&\frac{1}{2}\int_{t_0-h}^t(\rho+1)\rho K_h\hat{K}\Lambda\sum_{p=0}^{\rho-1}\begin{pmatrix}\rho-1 \\ p \end{pmatrix}K_{h,t_0}(s) \nonumber \\
&\quad\times\IE\left(|Z^{k,m}(s-)|^{\rho-1-p}\sum_{u\in L_n}I_{n,u}^{k,m}C_{n,u}(s)\Big|\mathcal{F}^n_{t_0-h}\right)K_h^pds\cdot\Gamma_n^{t_0} \nonumber \\
\leq&\frac{K_h\hat{K}\Lambda}{2}(\rho+1)\rho \Bigg[\sum_{p=0}^{\rho-3}\frac{(\rho-1)!}{2p!}S_{k,m}\Lambda K_h\left(S_k\sqrt{\frac{h\Lambda}{2}}K_h\right)^{\rho-3-p}A^{\rho-1-p}K_h^pS_k \nonumber \\
&\quad\quad\quad+(\rho-1)\int_{t_0-h}^{t_0+h}K_{h,t_0}(s)\IE\left(|Z^{k,m}(s-)|\sum_{u\in L_n}I_{n,u}^{k,m}C_{n,u}(s)\Big|\mathcal{F}^n_{t_0-h}\right)K_h^{\rho-2}ds \nonumber \\
&\quad\quad\quad+K_h^{\rho-1}S_{k,m}\Bigg]\cdot\Gamma_n^{t_0} \label{eq:cfh}
\end{align}
Recall that $S_k=\max_{m=1,...,r_n}\sum_{u\in L_n}I_{n,u}^{k,m}\geq \sum_{u=1}I_{n,u}^{k,m}C_{n,u}(s)$ for all $k$ and $m$ as well as for all $s$, moreover $S_k$ is measurable with respect to $\mathcal{F}_{t_0-h}^n$. Hence, we may estimate
\begin{align*}
&\int_{t_0-h}^{t_0+h}K_{h,t_0}(s)\IE\left(\left|Z^{k,m}(s-)\right|\sum_{u\in L_n}I_{n,u}^{k,m}C_{n,u}(s)\Big|\mathcal{F}_{t_0-h}^n\right)ds \\
\leq&\int_{t_0-h}^{t_0+h}K_{h,t_0}(s)\sum_{u\in L_n}\IE\left(\int_{t_0-h}^{t_0+h}\frac{1}{h}K\left(\frac{\tau-t_0}{h}\right)I_{n,u}^{k,m}\hat{K}d|M_{n,u}|(\tau)\Big|\mathcal{F}_{t_0-h}^n\right)S_kds \\
=&\int_{t_0-h}^{t_0+h}K_{h,t_0}(s)\sum_{u\in L_n}\int_{t_0-h}^{t_0+h}\frac{1}{h}K\left(\frac{\tau-t_0}{h}\right)2\Lambda\hat{K}\IE\left(I_{n,u}^{k,m}C_{n,u}(\tau)\Big|\mathcal{F}_{t_0-h}^n\right)S_kds \\
=&2\Lambda\hat{K}S_{k,m}S_k.
\end{align*}
Using this estimation we continue with the main inequality chain
\begin{align*}
&\eqref{eq:cfh} \\
\leq&\frac{K_h\hat{K}\Lambda}{2}(\rho+1)\rho \Bigg[\sum_{p=0}^{\rho-3}\frac{(\rho-1)!}{2p!}S_{k,m}\Lambda K_h\left(S_k\sqrt{\frac{h\Lambda}{2}}K_h\right)^{\rho-3-p}A^{\rho-1-p}K_h^pS_k  \\
&\quad\quad\quad+(\rho-1)K_h^{\rho-2}2\Lambda\hat{K}S_{k,m}S_k \\
&\quad\quad\quad+K_h^{\rho-1}S_{k,m}\Bigg]\cdot\Gamma_n^{t_0} \\
=&\frac{(\rho+1)!}{2}S_{k,m}\Lambda K_hA^{\rho+1}\left(S_k\sqrt{\frac{h\Lambda}{2}}K_h\right)^{\rho-1}\Gamma_n^{t_0} \\
&\quad\times\frac{1}{A}\cdot\Bigg[\sum_{p=0}^{\rho-3}\frac{1}{2p!}\cdot\Lambda K_h\hat{K}\left(S_k\sqrt{\frac{h\Lambda}{2}}K_h\right)^{-2-p}A^{-1-p}K_h^pS_k \\
&\quad\quad\quad+\frac{1}{(\rho-2)!}K_h^{\rho-2}2\Lambda\hat{K}^2S_kA^{-\rho}\left(S_k\sqrt{\frac{h\Lambda}{2}}K_h\right)^{-\rho+1} \\
&\quad\quad\quad+\frac{1}{(\rho-1)!}K_h^{\rho-1}\hat{K}A^{-\rho}\left(S_k\sqrt{\frac{h\Lambda}{2}}K_h\right)^{-\rho+1}\Bigg]
\end{align*}
At this point, we see that we're obviously done with the induction step if $\Gamma_n^{t_0}=0$. Hence, we only need to show that the above is lesser than or equal to \eqref{eq:rec_cond} on the event $\Gamma_n^{t_0}=1$. This, in turn, we may conclude if the second part above is smaller than or equal to one (on the event $\Gamma_n^{t_0}=1$). This is the case because we have chosen $A$ appropriately and because $h\leq1$ and $S_k\sqrt{h}\geq1$ (and thus also $S_k\geq1$) on $\Gamma_n^{t_0}$:
\begin{align*}
&\frac{1}{A}\cdot\Bigg[\sum_{p=0}^{\rho-3}\frac{1}{2p!}\cdot\Lambda K_h\hat{K}\left(S_k\sqrt{\frac{h\Lambda}{2}}K_h\right)^{-2-p}A^{-1-p}K_h^pS_k \\
&\quad\quad\quad+\frac{1}{(\rho-2)!}K_h^{\rho-2}2\Lambda\hat{K}^2S_kA^{-\rho}\left(S_k\sqrt{\frac{h\Lambda}{2}}K_h\right)^{-\rho+1} \\
&\quad\quad\quad+\frac{1}{(\rho-1)!}K_h^{\rho-1}\hat{K}A^{-\rho}\left(S_k\sqrt{\frac{h\Lambda}{2}}K_h\right)^{-\rho+1}\Bigg] \\
=&\frac{1}{A}\cdot\Bigg[\sum_{p=0}^{\rho-3}\frac{1}{p!}\left(S_k\sqrt{\frac{h\Lambda}{2}}A\right)^{-p}\cdot \frac{1}{S_kK A} \\
&\quad\quad\quad+\frac{1}{(\rho-2)!}\left(S_k\sqrt{\frac{h\Lambda}{2}}A\right)^{-\rho+2} \frac{2^{\frac{3}{2}}}{KA^2}\sqrt{h\Lambda}\hat{K} \\
&\quad\quad\quad+\frac{1}{(\rho-1)!}\left(S_k\sqrt{\frac{h\Lambda}{2}}A\right)^{-\rho+1}\hat{K}A^{-1}\Bigg] \\
\leq&\frac{1}{A}\sum_{p=0}^{\infty}\frac{1}{p!}\left(S_k\sqrt{\frac{h\Lambda}{2}}A\right)^{-p} \\
=&\frac{1}{A}\exp\left(\frac{\sqrt{2}}{AS_k\sqrt{h\Lambda}}\right)\leq\frac{1}{A}\exp\left(\frac{\sqrt{2}}{A\sqrt{\Lambda}}\right)\leq1
\end{align*}
and the induction is complete. To finalize the proof, we compute the expectation of $S_{k,m}S_k^{\rho-2}$. Note that on $\Gamma_n^{t_0}=1$, $S_k\leq c_3\cdot\sqrt{\frac{r_n\bar{p}_n(t_0)}{\log r_n\bar{p}_n(t_0)}}=E_k^{n,t_0}$
$$\IE\left(S_{k,m}S_k^{\rho-2}\Gamma_n^{t_0}\right)\leq\IE(S_{k,m})\cdot\left(E_k^{n,t_0}\right)^{\rho-2}\leq E_{k,m}^{n,t_0}\cdot\left(E_k^{n,t_0}\right)^{\rho-2}.$$
Taking expectations on both sides of \eqref{eq:rec_cond} and together with the previous  line, we obtain
$$\IE\left(|Z^{k,m}(T)|^{\rho}\right)\leq\frac{\rho!}{2}E_{k,m}^{n,t_0}\tilde{K}K_hA^{\rho}\left(E_k^{n,t_0}\sqrt{\frac{h\Lambda}{2}}K_h\right)^{\rho-2}.$$
Hence, condition 1 of Lemma \ref{lem:network_exp} is fulfilled and we can apply it to get
\begin{eqnarray*}
&&\IP\left(\frac{1}{r_n\bar{p}_n(t_0)}\sum_{u\in L_n}\int_0^TK\left(\frac{s-t_0}{h}\right)X_{n,u}(s)dM_{n,u}(s)\geq x\cdot\sqrt{\frac{\log r_n\bar{p}_n(t_0)}{r_n\bar{p}_n(t_0)}}\right) \\
&\leq&\IP\left(\frac{1}{r_n\bar{p}_n(t_0)}\sum_{u\in L_n}\int_0^TK\left(\frac{s-t_0}{h}\right)X_{n,u}(s)dM_{n,u}(s)\geq x\cdot\sqrt{\frac{\log r_n\bar{p}_n(t_0)}{r_n\bar{p}_n(t_0)}},\Gamma_n^{t_0}=1\right) \\
&&\quad\quad+\IP(\Gamma_n^{t_0}=0) \\
&\leq&\IP\left(\frac{1}{r_n\bar{p}_n(t_0)}\sum_{u\in L_n}Z_{n,u}(T)\geq x\cdot\sqrt{\frac{\log r_n\bar{p}_n(t_0)}{r_n\bar{p}_n(t_0)}}\right)+\IP(\Gamma_n^{t_0}=0) \\
&\leq&\Kappa(r_n\bar{p}_n(t_0))^{-\frac{c_2\cdot x^2}{2(\sigma^2+c_1c_3x)}}+\beta_t(\Delta_n)\cdot\Kappa r_n+\IP(\Gamma_n^{t_0}=0).
\end{eqnarray*}
\end{proof}

\begin{lemma}
\label{lem:OmegasGood}
Let $\Omega_n(s)$ and $\omega_n(s)$ be defined as in the proof of Lemma \ref{lem_ass:var}. For any $\alpha>0$ there is $\xi>1$ and $C>0$ such that
\begin{align*}
\IP\left(\sup_{s\in[0,T]}\Omega_n(s)>\sqrt{h}\xi\right)<&C(r_np_n)^{-\alpha}\textrm{ and }\IP(\omega_n(s)>\xi)<C\left(r_np_n\right)^{-\alpha}.
\end{align*}
\end{lemma}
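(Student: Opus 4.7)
The plan is to prove the two tail bounds separately. The $\omega_n$ estimate falls within the scope of the Bernstein-type exponential inequality Lemma \ref{lem:sufficient_mixing}, while the $\Omega_n$ estimate is obtained after splitting $d|M_{n,ij}|$ into the counting part $dN_{n,ij}$ and the compensator part $\lambda_{n,ij}\,ds$; the jump part will be handled by the uniform maximal control of (AD, \ref{eq:AD1}) and the compensator part by the same mixing-based argument used for $\omega_n$.

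For $\omega_n$ I would first apply Cauchy--Schwarz together with (H2, \ref{eq:HubS7}) to verify that $\IE(\mathcal{C}_{n,ij}\mid C_{n,ij}(s)=1)\le c_\omega<\infty$ uniformly in $n$ and $s$, so $\IE(\omega_n(s))\le c_\omega$. For $\xi>c_\omega$ the event $\{\omega_n(s)>\xi\}$ is contained in the tail event for the centered sum $\sum_{(i,j)}\bigl(C_{n,ij}(s)\mathcal{C}_{n,ij}-\IE(\cdot)\bigr)$. Since $\mathcal{C}_{n,ij}$ is $\mathcal{F}_0^n$-measurable and deterministically bounded by $F+3H_n^2$ under the extended hub assumption, the summands are bounded by $M\lesssim H_n^2$; Assumption (D3) supplies the $\Delta_n$-partition, mixing coefficients and asymptotic-uncorrelation estimates required by Lemma \ref{lem:sufficient_mixing}. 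A routine computation using the constraint $H_n^4\log(r_np_n)/(r_np_n)\le C$ from the extended (H2, \ref{eq:HubS6}) shows that the resulting Bernstein exponent grows linearly in $\xi$ once $\xi$ is large, so it can be made to exceed any prescribed $\alpha$ by choosing $\xi$ large enough.

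For the supremum of $\Omega_n(s)$ I would use $d|M_{n,ij}|=dN_{n,ij}+\lambda_{n,ij}\,ds$ and $\lambda_{n,ij}(r)\le\Lambda C_{n,ij}(r)$ to write
\[
\Omega_n(s)\;\le\;(F+3H_n^2)\cdot\frac{1}{r_np_n(s)}\sum_{(i,j)\in L_n}\!\bigl(N_{n,ij}(s-)-N_{n,ij}(s-2h)\bigr)\;+\;2h\Lambda\,\tilde\omega_n(s),
\]
where $\tilde\omega_n(s):=\frac{1}{r_np_n(s)}\sum_{(i,j)\in L_n}\sup_{r\in[s-2h,s]}C_{n,ij}(r)\mathcal{C}_{n,ij}$. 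The first term is bounded uniformly in $s$ on the event from (AD, \ref{eq:AD1}) with $\epsilon_n=2h+n^{-k}$ and $c_n=p_n\sqrt{h}\,n^{k\alpha_c}/(H_nH_{n,c})$: on this event it is at most $(F+3H_n^2)c_n\,p_n/p_n(s)$, which by the extended (H2) and (A6) is at most $\sqrt{h}\,\xi/2$ for $\xi$ sufficiently large, and the complementary event has probability $o(n^{-k_0})=o((r_np_n)^{-\alpha})$ for $k_0$ large. The second term is $2h\Lambda$ times a sup-variant of $\omega_n$; discretising $[0,T]$ on a grid of polynomial mesh $n^{-k}$, I would use a union bound over the grid (each grid point handled by Lemma \ref{lem:sufficient_mixing} as in the previous paragraph) while controlling inter-grid fluctuation by the H\"older continuity of $p_n(\cdot)$ (Lemma \ref{lem:p_continuous}) and an application of (AD, \ref{eq:AD1}) at the finer scale $n^{-k}$.

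The main obstacle will be the rate bookkeeping: verifying that, given only the polynomial-type constraints available on $H_n$, $H_{n,c}$, $p_n$ and $h$ (Assumptions (A4, \ref{ass:bw}), (A6), extended (H2), (AD)), the Bernstein exponent produced by Lemma \ref{lem:sufficient_mixing} can be made larger than any prescribed $\alpha$ by a single choice of $\xi$ depending only on $\alpha$ and the constants, and simultaneously that the deterministic bound $(F+3H_n^2)c_n$ on the jump part really is $\le\sqrt{h}\xi/2$ rather than blowing up with $n$. The remaining ingredients---the $dM$/compensator split, the uniform bound $\lambda\le\Lambda$, the grid discretization and the union bound---are entirely routine.
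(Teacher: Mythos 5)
Your treatment of $\omega_n$ and of the compensator part of $\Omega_n$ is in line with the paper, but the way you handle the counting part of $\Omega_n$ contains a genuine gap. You pull the worst-case bound $\mathcal{C}_{n,ij}\le F+3H_n^2$ out of the sum and then try to bound the raw count $\frac{1}{r_np_n(s)}\sum_{(i,j)}\bigl(N_{n,ij}(s-)-N_{n,ij}(s-2h)\bigr)$ uniformly via (AD, \ref{eq:AD1}) with $\epsilon_n=2h+n^{-k}$. This fails for two reasons. First, the resulting deterministic bound $(F+3H_n^2)c_n\approx 3H_np_n\sqrt{h}\,n^{k\alpha_c}/H_{n,c}$ contains the factor $n^{k\alpha_c}/H_{n,c}$ with $k$ chosen large (it must be, to make the complementary probability $o(n^{-k_0})$), and nothing in (A6) or the extended (H2) ties $H_{n,c}$ to $n^{k\alpha_c}$; that particular pair $(\epsilon_n,c_n)$ is calibrated so that $c_n$ cancels against the H\"older increment $|p_n(t)-p_n(s)|\le H_{n,c}n^{-k\alpha_c}$ in the \emph{difference} $\Omega_n(t)-\Omega_n(s)$, not so that $c_n$ itself is small. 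Second, and more fundamentally, even the \emph{mean} of your first term is of order $H_n^2h$ (each increment has expectation $\lesssim 2h\Lambda p_n$), and $H_n^2\sqrt{h}$ is not bounded under the stated assumptions: \eqref{eq:HubS6} only gives $H_n^2\lesssim\sqrt{r_np_n/\log(r_np_n)}$ while (A4, \ref{ass:bw}) only gives $\sqrt h\le 1/\log r_n$, so $H_n^2 h$ can be much larger than $\sqrt{h}\xi$. Replacing $\mathcal{C}_{n,ij}$ by its uniform bound is simply too lossy here.

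The paper's proof avoids this by keeping $\mathcal{C}_{n,ij}$ inside the sum, where its conditional expectation given activity is bounded by a constant via (H2, \ref{eq:HubS7}), so that $\Omega_n(t)$ is typically $O(h)$, and by obtaining the $\sqrt h$ threshold through concentration rather than through a pathwise bound: (AD, \ref{eq:AD1}) is used only for the chaining step $\sup_{|s-t|\le n^{-k}}|\Omega_n(s)-\Omega_n(t)|\le\sqrt h$ between grid points, while at each fixed grid point one writes $d|M_{n,ij}|=dM_{n,ij}+2\lambda_{n,ij}\,dr$ (cf. \eqref{eq:Omega}) and applies the martingale exponential inequality of Lemma \ref{lem:exponential_inequality} (with the one-sided kernel and $X_{n,ij}\equiv 1$, after dividing by $H_n^2$) to the martingale part and Lemma \ref{lem:sufficient_mixing} to the compensator part. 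There the factor $H_n^2$ enters only through the deviation scale $H_n^2\sqrt{\log(r_n\bar p_n)/(r_n\bar p_n)}$, which \emph{is} controlled by \eqref{eq:HubS6}. To repair your argument you would need to restore the $dM$/compensator split in place of the $dN$/compensator split and invoke Lemma \ref{lem:exponential_inequality} for the martingale increment; the rest of your outline (grid, union bound, H\"older control of $1/p_n$, and the mixing treatment of $\omega_n$) then matches the paper.
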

\begin{proof}
The proof follows standard arguments. Let $T_{n,k}$ denote a discrete grid of $[0,T]$ with $O(n^k)$ many elements such that for any $t,s\in T_{n,k}$ with $t\neq s$ it holds that $|t-s|<n^{-k}$. Then,
\begin{align}
&\IP(\sup_{s\in[0,T]}\Omega_n(s)>\sqrt{h}\xi) \nonumber \\
\leq&\IP\left(\underset{|s-t|\leq n^{-k}}{\sup_{s,t\in[0,T],}}\Omega_n(s)-\Omega_n(t)>\sqrt{h}\right)+\sup_{t\in T_{n,k}}\#T_{n,k}\cdot\IP\left(\Omega_n(t)>\sqrt{h}(\xi-1)\right). \label{eq:ab1}
\end{align}
For the first probability we note that for $|t-s|<n^{-k}$ the intervals $[t-2h,t)$ and $[s-2h,s)$ are overlapping on a length of at most $2h$ and the area covered by only one interval is of length at most $2n^{-k}$. We get for $t<s$ when using $d|M_{n,u}|(r)=dN_{n,u}(r)+\lambda_{n,u}(r)dr$ (recall that $\mathcal{C}_{n,u}$ is bounded by $H_n$)
\begin{align}
&\Omega_n(t)-\Omega_n(s) \nonumber \\
=&\sum_{u\in L_n}\int_0^T\mathcal{C}_{n,u}\left(\frac{\Ind(r\in[t-2h,t))}{r_np_n(t)}-\frac{\Ind(r\in[s-2h,s))}{r_np_n(s)}\right)d|M_{n,u}|(r) \nonumber \\
\leq&\sum_{u\in L_n}H_n\left(\frac{N_{n,u}([t-2h,s-2h])}{r_np_n}+\frac{N_{n,u}([t,s])}{r_np_n}+N_{n,u}([s-2h,t])\frac{|p_n(t)-p_n(s)|}{r_np_n^2}\right) \nonumber \\
&+2n^{-k}\frac{\Lambda H_n}{p_n}+2h H_n\frac{|p_n(t)-p_n(s)|}{p_n^2}. \nonumber
\end{align}
The last line is deterministic and converges faster to zero than $\sqrt{h}$ by the Hoelder continuity of $p_n(t)$ (cf. Assumption (A6)) and since $H_n$ grows moderately (cf. Assumption (H2, \eqref{eq:HubS6})). For the expressions in the first line, we note that in the end it comes down to evaluating expressions of the type $\sup_{|t-s|<\epsilon_n}\sum_{u\in L_n}N_{n,u}([s,t])$ where $\epsilon_n$ equals either $n^{-k}$ or $2h+n^{-k}$. In Assumption (AD, \ref{eq:AD1}) we assume that in both cases the average behaves in such a way that the first probability in \eqref{eq:ab1} converges to zero as fast as required if $k$ is chosen large enough. We keep this choice of $k$ fixed for the remainder of the proof.

For the second part of \eqref{eq:ab1}, we rewrite
\begin{equation}
\label{eq:Omega} \Omega_n(s)=\frac{1}{r_np_n(t)}\sum_{u\in L_n}\int_{t-2h}^{t-}\mathcal{C}_{n,u}dM_{n,u}(r)+\frac{2}{r_np_n(t)}\sum_{u\in L_n}\int_{t-2h}^{t-}\mathcal{C}_{n,u}\lambda_{n,u}(r)dr.
\end{equation}
For both parts we have exponential inequalities available in Lemmas \ref{lem:exponential_inequality} and \ref{lem:sufficient_mixing}, respectively. So we just have to check that their conditions hold. Since $\mathcal{C}_{n,u}$ is bounded by $H_n^2$, we can divide by the bound and apply Lemma \ref{lem:exponential_inequality} with $X_{n,u}(s)=1$ and $K_{h,t_0}(s)=\frac{1}{2h}\Ind(s\in[t-2h,t))$. Note firstly that by Assumption (A6) we can replace the $p_n(t)$ by $\bar{p}_n(t)$ when adding a multiplicative constant which we can compensate for by choosing $\xi$ appropriately. Moreover, by Assumption (D3) there are $\Delta_n$-partitions as required and the $\beta$-mixing coefficients decay exponentially fast. Since $\Delta_n=a\log n$, the mixing coefficients decay as fast as required. Moreover, by the same assumption, $\sup_{t\in[0,T]}\IP(\Gamma_n^{t}=0)$ vanishes exponentially fast. Finally, by Assumption (H4, \ref{eq:HubS6}), the bound $H_n$ on $K_m^{L_n}$ behaves exactly such that also the leading term decays as fast we want if $\xi$ is chosen large enough. Therefore, the probability that the first part of \eqref{eq:Omega} is larger than $\frac{\xi-1}{2}\sqrt{h}$ decreases to zero faster than any given power of $r_np_n$ for large enough $\xi$.

The second term in \eqref{eq:Omega} can be bounded by analogous arguments and Lemma \ref{lem:sufficient_mixing}. Denote $Y_{n,u}=\frac{1}{h}\int_{t-2h}^{t-}\mathcal{C}_{n,u}\lambda_{n,u}(r)dr$. Then $\IE(Y_{n,u})/p_n(t)\leq c^*$ by Assumption (H2, \ref{eq:HubS7}) (note that $c^*$ is independent of $t$ and $n$). Keeping this in mind we obtain for the second term in \eqref{eq:Omega} for small enough $h$ for all $n$ and $t$
\begin{align*}
&\IP\left(\frac{2h}{r_np_n(t)}\sum_{u\in L_n}Y_{n,u}>\sqrt{h}\xi\right) \\
\leq&\IP\left(\frac{1}{r_np_n(t)}\sum_{u\in L_n}(Y_{n,u}-\IE(Y_{n,u}))>\frac{\xi}{2\sqrt{h}}-\frac{\IE(Y_{n,u})}{p_n(t)}\right) \\
\leq&\IP\left(\frac{1}{r_np_n(t)}\sum_{u\in L_n}(Y_{n,u}-\IE(Y_{n,u}))>\frac{\xi-1}{2\sqrt{h}}\right).
\end{align*}
Choose $E_{k,m}^{n,t}$ in the same way as in Lemma \ref{lem:exponential_inequality} with $K_{h,t}(s)=\frac{1}{2h}\Ind(s\in[t-2h,t))$. Then $|E|_{n,t}=r_n\bar{p}_n(t)$ and also $E_{k}^{n,t}$ is as defined in Lemma \ref{lem:exponential_inequality}. Therefore all restrictions on the $\Delta_n$-partitioning are fulfilled by Assumption (D3) and the mixing coefficients vanish exponentially fast. Moreover $\IP(\Gamma_n^t=0)$ vanishes exponentially fast. Lastly, by Assumption (D3, \ref{eq:UUM4}), the asymptotic uncorrelation conditions hold. Hence, we may apply Lemma \ref{lem:sufficient_mixing} and obtain the desired results by the same arguments as for the first part of \eqref{eq:Omega} by using again Assumption (H2, \ref{eq:HubS6}).

The proof of the concentration inequality for $\omega_n(s)$ follows from similar arguments.
\end{proof}

\subsection{Details for Example \ref{subsubsec:mixing}}
\label{subsec:addexp}
Let $(D_1,D_2)$ and $(\tilde{D}_1,\tilde{D}_2)$ be two pairs of random variables with
$$(D_1,D_2)\sim\mathcal{N}\left(0,\begin{pmatrix}
\Sigma_1 & \sigma \\ \sigma' & \Sigma_2
\end{pmatrix}\right),\quad (\tilde{D}_1,\tilde{D}_2)\sim\mathcal{N}\left(0,\begin{pmatrix}
\Sigma_1 & 0 \\ 0 & \Sigma_2
\end{pmatrix}\right).$$
We suppose that $D_1,D_2,\tilde{D}_1,\tilde{D}_2\in\IR^p$ all have the same dimension $p$. The matrix $\sigma\in\IR^{p\times p}$ contains the covariances of $D_1$ and $D_2$. Let $|.|$ denote the determinant of a matrix and $I_p$ is the $p\times p$ identity matrix. We can compute that (use formulas for the Kullback-Leibler divergence of two multivariate normals and for the determinant of block-matrices)
\begin{align*}
&KL((D_1,D_2),(\tilde{D}_1,\tilde{D}_2)) \\
=&\frac{1}{2}\left(tr\left(\begin{pmatrix}
\Sigma_1^{-1} & 0 \\ 0 & \Sigma_2^{-1}
\end{pmatrix}\begin{pmatrix}
\Sigma_1 & \sigma \\ \sigma' & \Sigma_2
\end{pmatrix}\right)-2p+\log\frac{|\Sigma_1|\cdot|\Sigma_2|}{|\Sigma_1|\cdot|\Sigma_2-\sigma'\Sigma^{-1}_1\sigma|}\right) \\
=&\frac{1}{2}\log\left|\left(I_p-\Sigma_2^{-1}\sigma'\Sigma_1^{-1}\sigma\right)^{-1}\right|.
\end{align*}
Suppose that the entries of $\sigma$ are small and that $\Sigma_1$ and $\Sigma_2$ are positive definite. In that case $\Sigma_2^{-1}\sigma'\Sigma_1^{-1}\sigma$ has small eigenvalues and $I_p-\Sigma_2^{-1}\sigma'\Sigma_1^{-1}\sigma$ is positive definite. With this we may continue the estimation by applying the bound $\log|A|\leq\textrm{tr}(A-I)$ and using the Neumann Series representation:
\begin{align*}
\leq&\frac{1}{2}\textrm{tr}\left(\left(I_p-\Sigma_2^{-1}\sigma'\Sigma_1^{-1}\sigma\right)^{-1}-I_p\right) \\
=&\frac{1}{2}\textrm{tr}\left(\sum_{k=1}^{\infty}\left(\Sigma_2^{-1}\sigma'\Sigma_1^{-1}\sigma\right)^k\right).
\end{align*}
Let every entry of $\sigma$ be bounded in absolute value by $\epsilon$. Then, there is $c>0$ such that each entry of $\Sigma_2^{-1}\sigma'\Sigma_1^{-1}\sigma$ is in absolute value bounded by $cp^3\epsilon^2\leq cp^4\epsilon^2$. And by induction each entry of $\left(\Sigma_2^{-1}\sigma'\Sigma_1^{-1}\sigma\right)^k$ is bounded by $c^k\epsilon^{2k}p^{4k}$. Then, we continue
\begin{align*}
\leq&\frac{1}{2}p\sum_{k=1}^{\infty}\left(c\epsilon^2p^4\right)^k=\frac{p}{2}\frac{c\epsilon^2p^4}{1-c\epsilon^2p^4}.
\end{align*}
We have shown in \eqref{eq:expdecay} that $\epsilon=c^*\sqrt{6\alpha_0}^\Delta$. Moreover, $p=|I_1|=|I_2|=2(\Delta-1)^2$. Recalling in addition that $6\alpha_0<1$, we conclude overall that $c\epsilon^2p^4\to0$ exponentially fast for $\Delta\to\infty$.

\subsection{Useful Results}
For the convenience of the reader, we collect some result which are needed in the proofs.

We will consider the Grouping Lemma in the following form (\citet{R17}, Lemma 5.1 therein).
\begin{lemma}
\label{lem:grouping_lemma}
Let $\mathcal{A}$ be a $\sigma$-field in $(\Omega,\mathcal{F},\IP)$ and let $X$ be a random variable with values in a Polish space $\mathcal{X}$. Let $\delta$ be a random variable with uniform distribution over $[0,1]$ which is independent of the $\sigma$-field generated by $\mathcal{A}$ and $X$. Then, there exists a random variable $X^*$ which has the same law as $X$ and which is independent of $\mathcal{A}$, such that $\IP(X\neq X^*)=\beta(\mathcal{A},\sigma(X))$. Furthermore, $X^*$ is measurable with respect to the $\sigma$-field generated by $\mathcal{A}$ and $(X,\delta)$.
\end{lemma}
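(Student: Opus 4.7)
The approach is a Berbee-style maximal coupling, which hinges on the identity
\[
\beta(\mathcal{A},\sigma(X)) = \IE\!\left[\tfrac{1}{2}\|P(\omega,\cdot) - Q\|_{TV}\right],
\]
where $P(\omega,\cdot):=\IP(X\in\cdot\mid\mathcal{A})(\omega)$ is a regular conditional distribution of $X$ given $\mathcal{A}$ (which exists because $\mathcal{X}$ is Polish) and $Q:=\IP\circ X^{-1}$ is the marginal law of $X$. This identity follows from the sup-definition of $\beta$ by conditioning on $\mathcal{A}$ and the usual duality between total variation and sup over measurable sets. Writing $R(\omega,\cdot):=P(\omega,\cdot)\wedge Q$ for the minimum measure and $r(\omega):=R(\omega,\mathcal{X})\in[0,1]$, a short density computation gives $\tfrac{1}{2}\|P(\omega,\cdot)-Q\|_{TV}=1-r(\omega)$, so that $\beta(\mathcal{A},\sigma(X))=\IE[1-r(\omega)]$.

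I would then construct the coupling. Let $f(\omega,x):=dR(\omega,\cdot)/dP(\omega,\cdot)(x)\in[0,1]$, which exists since $R(\omega,\cdot)\leq P(\omega,\cdot)$, and let $\nu(\omega,\cdot):=(Q-R(\omega,\cdot))/(1-r(\omega))$ on $\{r(\omega)<1\}$ (set arbitrarily elsewhere). The coupling rule is: conditionally on $(\mathcal{A},X)$, with probability $f(\omega,X)$ set $X^*:=X$; otherwise draw $X^*$ independently from $\nu(\omega,\cdot)$. A direct conditional computation then gives $\IP(X^*\in B\mid\mathcal{A})(\omega)=R(\omega,B)+(1-r(\omega))\nu(\omega,B)=Q(B)$, so the conditional law of $X^*$ given $\mathcal{A}$ is the deterministic measure $Q$; this single identity simultaneously yields $X^*\sim Q$ and independence of $X^*$ from $\mathcal{A}$. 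Furthermore $\IP(X=X^*\mid\mathcal{A})(\omega)=\int f(\omega,x)\,P(\omega,dx)=r(\omega)$, so $\IP(X\neq X^*)=\IE[1-r(\omega)]=\beta(\mathcal{A},\sigma(X))$ as required.

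To realise the coupling using only the given $\delta$, I would split $\delta$ into two independent uniforms $\delta_1,\delta_2$ via a Borel isomorphism $[0,1]\cong[0,1]^2$, use $\delta_1$ for the Bernoulli decision (compare with $f(\omega,X)$), and use $\delta_2$ to sample from $\nu(\omega,\cdot)$ via an inverse-CDF construction after identifying $\mathcal{X}$ with a Borel subset of $[0,1]$ (again exploiting Polishness). The resulting $X^*$ is measurable with respect to $\sigma(\mathcal{A},X,\delta)$ by construction. The main obstacle is the joint measurability in $(\omega,x)$ of the density $f$ and of the inverse-CDF sampler for $\nu(\omega,\cdot)$; these are routine consequences of parametrised Radon--Nikodym and measurable-selection theorems on Polish spaces, but they constitute the only genuinely technical step.
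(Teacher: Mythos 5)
The paper does not prove this lemma at all: it is quoted verbatim from Rio (2017, Lemma 5.1), i.e.\ Berbee's coupling lemma, and your maximal-coupling construction (regular conditional distribution $P(\omega,\cdot)$, minimum measure $R=P\wedge Q$, Bernoulli decision with success probability $dR/dP$, redraw from the normalised remainder, and a Borel-isomorphism split of $\delta$ into two independent uniforms) is precisely the standard proof of that cited result, so your argument is correct and consistent with the paper's source. The only step stated a little too quickly is the exact equality $\IP(X=X^*\mid\mathcal{A})(\omega)=r(\omega)$: the Bernoulli branch by itself only yields $\IP(X=X^*\mid\mathcal{A})\geq r(\omega)$ (the redraw could in principle return $X$), and one should invoke the generic coupling lower bound $\IP(X\neq X^*\mid\mathcal{A})\geq\sup_{B}\left(P(\omega,B)-Q(B)\right)=1-r(\omega)$, valid because the conditional laws of $X$ and $X^*$ given $\mathcal{A}$ are $P(\omega,\cdot)$ and $Q$, to conclude equality.
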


The Bernstein Inequality will be used in this form (cf. \citet{GN16}).
\begin{proposition}
\label{prop:bernstein}
Let $X_i$, $i=1,...,n$ be a sequence of independent, centred random variables such that there are numbers $c$ and $\sigma_i$ such that for all $k$ $\IE(|X_i|^k|)\leq\frac{k!}{2}\sigma_i^2c^{k-2}$. Set $\sigma^2:=\sum_{i=1}^n\sigma_i^2$, $S_n:=\sum_{i=1}^nX_i$. Then, for all $t\geq0$ $\IP(S_n\geq t)\leq\exp\left(-\frac{t^2}{2(\sigma^2+ct)}\right)$.
\end{proposition}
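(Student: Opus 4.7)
The plan is the standard Chernoff approach, pushed through with the Bernstein-type moment hypothesis. First I would fix $\lambda\in(0,1/c)$ and control the individual exponential moments. Writing out the power series and inserting $\IE(|X_i|^k)\leq \tfrac{k!}{2}\sigma_i^2 c^{k-2}$ yields
\begin{equation*}
\IE(e^{\lambda X_i})=1+\sum_{k=2}^{\infty}\frac{\lambda^k}{k!}\IE(X_i^k)\leq 1+\frac{\lambda^2\sigma_i^2}{2}\sum_{k=0}^{\infty}(\lambda c)^k=1+\frac{\lambda^2\sigma_i^2}{2(1-\lambda c)},
\end{equation*}
where centring removes the $k=1$ term. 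Using $1+x\leq e^x$ and independence of the $X_i$ to multiply across $i$, this gives
\begin{equation*}
\IE(e^{\lambda S_n})\leq\exp\!\left(\frac{\lambda^2\sigma^2}{2(1-\lambda c)}\right).
\end{equation*}

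Next I would apply Markov's inequality to $e^{\lambda S_n}$ and optimise in $\lambda$. Markov gives
\begin{equation*}
\IP(S_n\geq t)\leq\exp\!\left(\frac{\lambda^2\sigma^2}{2(1-\lambda c)}-\lambda t\right),
\end{equation*}
and the natural choice $\lambda=t/(\sigma^2+ct)\in(0,1/c)$ makes $1-\lambda c=\sigma^2/(\sigma^2+ct)$. A direct substitution gives
\begin{equation*}
\frac{\lambda^2\sigma^2}{2(1-\lambda c)}-\lambda t=\frac{\lambda^2(\sigma^2+ct)}{2}-\lambda t=\frac{t^2}{2(\sigma^2+ct)}-\frac{t^2}{\sigma^2+ct}=-\frac{t^2}{2(\sigma^2+ct)},
\end{equation*}
which is precisely the announced bound.

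There is essentially no obstacle here: the only delicate point is verifying that the series for $\IE(e^{\lambda X_i})$ converges and that termwise insertion of the moment hypothesis is justified, both of which follow from $\lambda c<1$ together with dominated convergence applied to the partial sums of the Taylor expansion of $e^{\lambda X_i}$. The centring assumption is used once, to kill the $k=1$ contribution; without it one would pick up an additional $\lambda\IE(X_i)$ per summand.
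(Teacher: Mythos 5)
Your argument is correct and complete: the termwise bound on $\IE(e^{\lambda X_i})$ for $\lambda c<1$, the product over $i$ via $1+x\leq e^x$, Markov's inequality, and the substitution $\lambda=t/(\sigma^2+ct)$ all check out, and you rightly flag that the interchange of sum and expectation is the only point needing justification. Note that the paper does not prove this proposition at all — it is stated in the appendix of auxiliary results and attributed to the literature (Gin\'e and Nickl) — so there is nothing to compare against; what you have written is precisely the standard Chernoff--Cram\'er derivation of Bernstein's inequality under the moment condition, and it would serve as a self-contained proof.
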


Lenglart's Inequality shows how a martingale may be controlled by using the quadratic variation. We state in the following a slight adaptation of the original version as it is provided in  \citet{L77}.

\begin{lemma}
\label{lem:Lenglart}
Let $X$ be a non-negative, right-continuous local sub-martingale and denote by $A$ its compensator. Then it holds for all finite stopping times $S>0$ and all $c,d>0$ that
$$\IP\left(\sup_{t\in[0,S]}X_t\geq c\right)\leq\frac{1}{c}\IE\left(A_S \wedge d\right)+\IP\left(A_S\geq d\right)\leq\frac{d}{c}+\IP(A_S\geq d).$$
\end{lemma}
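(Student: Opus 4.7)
The plan is to invoke the Doob--Meyer decomposition $X_t=M_t+A_t$, with $M$ a local martingale (WLOG $M_0=0$) and $A$ the predictable, increasing compensator, and then reduce to Doob's $L^1$ maximal inequality by stopping $A$ before it reaches the threshold $d$. Everything else is bookkeeping.

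Concretely, I introduce the stopping time $T_d:=\inf\{t\ge 0:\,A_t\ge d\}$. Since $A$ is right-continuous and increasing, $\{A_S<d\}\subseteq\{S<T_d\}$, which yields the splitting
\begin{align*}
\IP\Bigl(\sup_{t\in[0,S]}X_t\ge c\Bigr)\le\IP\Bigl(\sup_{t\in[0,S\wedge T_d]}X_t\ge c\Bigr)+\IP(A_S\ge d).
\end{align*}
For the first probability I pick a localizing sequence $\tau_n\uparrow\infty$ so that $M^{\tau_n}$ is a uniformly integrable martingale, and apply Doob's $L^1$ maximal inequality to the non-negative sub-martingale $X$ stopped at $S\wedge T_d\wedge\tau_n$, giving
\begin{align*}
\IP\Bigl(\sup_{t\le S\wedge T_d\wedge\tau_n}X_t\ge c\Bigr)\le\frac{1}{c}\,\IE\bigl(X_{S\wedge T_d\wedge\tau_n}\bigr)=\frac{1}{c}\,\IE\bigl(A_{S\wedge T_d\wedge\tau_n}\bigr),
\end{align*}
where the last equality uses $\IE(M_{S\wedge T_d\wedge\tau_n})=0$ from optional stopping on the stopped true martingale.

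Letting $n\to\infty$, monotone convergence for $A$ together with Fatou applied to the indicator inside the probability produce $\IP(\sup_{t\le S\wedge T_d}X_t\ge c)\le c^{-1}\IE(A_{S\wedge T_d})$. Finally, $A_{S\wedge T_d}\le A_S$ is trivial, and predictability of $A$ gives $A_{S\wedge T_d}\le d$ (pass to the announcing sequence $T_d^{(k)}:=\inf\{t:A_t\ge d-1/k\}$, use $A_{T_d^{(k)}}\le d-1/k$ by definition, and let $k\to\infty$, exploiting left-limits of $A$ at $T_d$). Combining yields $A_{S\wedge T_d}\le A_S\wedge d$ almost surely, and taking expectations delivers the first asserted inequality; the second inequality $\IE(A_S\wedge d)\le d$ is immediate.

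The main obstacle will be the careful treatment of possible jumps of $A$ at $T_d$: a naive use of $T_d$ only gives $A_{T_d-}\le d$, so one must either restrict to a continuous compensator, or exploit the predictability of $A$ via the announcing sequence above, or replace $T_d$ by $T_d':=\inf\{t:A_t>d\}$ and then pass to a limit in $d$. Once the splitting and this predictability subtlety are settled, the remaining steps (optional stopping along the localizing sequence, monotone convergence for $A$, Fatou for the supremum probability) are routine.
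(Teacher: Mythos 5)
First, a remark on the comparison: the paper does not prove this lemma at all; it is quoted as a known result from Lenglart (1977), so there is no in-paper argument to measure you against. Your overall strategy (Doob--Meyer decomposition, stopping $A$ before it reaches level $d$, Doob's $L^1$ maximal inequality combined with optional stopping along a localizing sequence, then monotone convergence and Fatou) is the standard route to Lenglart's inequality and is structurally sound, including the splitting $\{A_S<d\}\subseteq\{S<T_d\}$.

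There is, however, a genuine gap at exactly the point you flag as the main obstacle, and the fix you propose does not close it. The sequence $T_d^{(k)}:=\inf\{t:A_t\ge d-1/k\}$ is not an announcing sequence for $T_d$: if $A$ jumps at some time $t_0$ from a value below $d-1/k$ to a value above $d$, then $T_d^{(k)}=T_d=t_0$, so $T_d^{(k)}$ is not strictly smaller than $T_d$; and the claim $A_{T_d^{(k)}}\le d-1/k$ ``by definition'' is false for precisely the same reason that $A_{T_d}\le d$ is false --- the d\'ebut only controls the left limit, i.e. $A_{T_d^{(k)}-}\le d-1/k$. Your construction merely relocates the jump problem from level $d$ to level $d-1/k$. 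The correct use of predictability is different, and it is the one essential place where the hypothesis that $A$ is the \emph{compensator} (hence predictable) enters: since $A$ is predictable, increasing and right-continuous, the set $\{(t,\omega):A_t(\omega)\ge d\}$ is predictable and equals the stochastic interval $[[T_d,\infty[[$, so $T_d$ is a \emph{predictable stopping time} and therefore admits a true announcing sequence $R_k\uparrow T_d$ with $R_k<T_d$ on $\{T_d>0\}$ (the case $T_d=0$ is absorbed into $\{A_S\ge d\}$). For such $R_k$ one has $A_{S\wedge R_k}\le A_{R_k}\le A_{T_d-}\le d$ and $A_{S\wedge R_k}\le A_S$, hence $A_{S\wedge R_k}\le A_S\wedge d$; running your Doob/optional-stopping step with $S\wedge R_k\wedge\tau_n$ then gives $\IP(\sup_{t\le S\wedge R_k}X_t\ge c)\le c^{-1}\IE(A_S\wedge d)$, and since on $\{A_S<d\}$ one has $T_d>S$ and hence $R_k>S$ for $k$ large, Fatou recovers the bound for $\IP(\sup_{t\le S}X_t\ge c,\,A_S<d)$. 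Of your two alternatives, restricting to a continuous compensator proves a strictly weaker statement than the lemma (though it would suffice for the paper's applications, where compensators are absolutely continuous), and the variant $T_d':=\inf\{t:A_t>d\}$ suffers from the same jump-over-the-level issue.
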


In this paper, we will apply Lenglart's Inequality mostly in the following form which is close to \citet{ABGK93}. The following is an easy corollary to the previous lemma.

\begin{corollary}
\label{cor:Lenglart}
Let $M$ be a locally square integrable, right-continuous martingale and denote by $\langle M\rangle$ it's compensator.
\begin{enumerate}
\item For all $T,c,d>0$ we have
$$\IP\left(\sup_{t\in[0,T]}|M_t|\geq c\right)\leq\frac{d}{c^2}+\IP\left(\langle M\rangle_T\geq d\right).$$
\item For all $T>0$ it is true that
$$\langle M\rangle_T\overset{\IP}{\rightarrow}0\,\implies\,\sup_{t\in[0,T]}|M_t|\overset{\IP}{\rightarrow}0.$$
\end{enumerate}
\end{corollary}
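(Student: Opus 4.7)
The plan is to derive both parts as direct consequences of Lemma \ref{lem:Lenglart}. The key observation is that since $M$ is a locally square integrable right-continuous martingale, the process $X_t := M_t^2$ is a non-negative right-continuous local sub-martingale whose compensator (in the Doob--Meyer sense) is precisely $\langle M \rangle$. Moreover, $\sup_{t \in [0,T]} |M_t| \geq c$ if and only if $\sup_{t \in [0,T]} M_t^2 \geq c^2$.

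For part 1, I would apply Lemma \ref{lem:Lenglart} to $X_t = M_t^2$ with $S = T$ and with the threshold $c^2$ in place of $c$ (and the same $d$). This yields
\begin{equation*}
\IP\left(\sup_{t\in[0,T]}|M_t|\geq c\right) = \IP\left(\sup_{t\in[0,T]} M_t^2 \geq c^2\right) \leq \frac{d}{c^2} + \IP\left(\langle M\rangle_T \geq d\right),
\end{equation*}
which is exactly the claimed inequality.

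For part 2, I would deduce the convergence statement from part 1 by an $\varepsilon$--$\delta$ argument. Fix $c > 0$ and $\varepsilon > 0$. Choosing $d = \varepsilon c^2 / 2$ in part 1 gives
\begin{equation*}
\IP\left(\sup_{t\in[0,T]}|M_t|\geq c\right) \leq \frac{\varepsilon}{2} + \IP\left(\langle M\rangle_T \geq \frac{\varepsilon c^2}{2}\right).
\end{equation*}
By the hypothesis $\langle M \rangle_T \xrightarrow{\IP} 0$, the second term tends to $0$ as $n \to \infty$ (implicitly, $M = M^{(n)}$ depends on $n$), so the lim sup of the left-hand side is at most $\varepsilon / 2 < \varepsilon$. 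Since $\varepsilon > 0$ was arbitrary, $\IP(\sup_{t \in [0,T]} |M_t| \geq c) \to 0$, which is the desired convergence in probability.

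There is no real obstacle here; the only subtle point is justifying that the compensator of the sub-martingale $M^2$ coincides with the angle-bracket process $\langle M \rangle$, which is standard for locally square integrable martingales and may be invoked without further comment.
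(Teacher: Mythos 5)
Your proof is correct and is exactly the argument the paper intends (it presents the corollary as an immediate consequence of Lemma \ref{lem:Lenglart} without writing out details): apply Lenglart's inequality to the non-negative right-continuous local sub-martingale $M^2$, whose compensator is $\langle M\rangle$, with threshold $c^2$, and then deduce part 2 by choosing $d$ small. No gaps.
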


The main tool for finding the asymptotic distributions in this paper is Rebolledo's Martingale Central Limit Theorem. It is known that a Brownian Motion is the only continuous Gaussian process with a certain covariance structure. This is used to formulate a martingale central limit theorem in the following. We state here the version of the theorem as Theorem II.5.1 in \citet{ABGK93}, the original work is \citet{R80}.
 
Let $M^n=(M_1^n,...,M_k^n)$ be a vector of sequences of locally square integrable martingales on an interval $\mathcal{T}$. For $\epsilon>0$ we denote by $M_{\epsilon}^n$ a vector of locally square integrable martingales that contain all jumps of components of $M^n$ which are larger in absolute value than $\epsilon$, i.e., $M_i^n-M_{\epsilon,i}^n$ is a local square integrable martingale for all $i=1,...,k$ and $|\Delta M_i^n-\Delta M_{\epsilon,i}^n|\leq\epsilon$. Furthermore, we denote by $\langle M^n\rangle:=\left(\langle M_i^n,M_j^n\rangle\right)_{i,j=1,...,k}$ the $k\times k$ matrix of quadratic covariations. Moreover, we denote by $M$ a multivariate, continuous Gaussian martingale with $\langle M\rangle_t=V_t$, where $V:\mathcal{T}\to\IR^{k\times k}$ is a continuous deterministic $k\times k$ positive semi-definite matrix valued function on $\mathcal{T}$ such that its increments $V_t-V_s$ are also positive semi-definite for $s\leq t$, then $M_t-M_s\sim \mathcal{N}(0,V_t-V_s)$ is independent of $(M_r:\,r\leq s)$. Given such a function $V$, such a Gaussian process $M$ always exists. We can now formulate the central limit theorem for martingales.

\begin{theorem}
\label{thm:Rebolledo}
Let $\mathcal{T}_0\subseteq\mathcal{T}$. Assume that for all $t\in\mathcal{T}_0$ as $n\to\infty$
\begin{align*}
&\langle M^n\rangle_t\overset{\IP}{\rightarrow} V_t \\
&\langle M_{\epsilon}^n\rangle\overset{\IP}{\rightarrow}0.
\end{align*}
Then
$$M^n_t\overset{d}{\rightarrow} M_t$$
as $n\to\infty$ for all $t\in\mathcal{T}_0$.
\end{theorem}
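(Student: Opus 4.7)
The plan is to prove weak convergence via convergence of characteristic functions combined with a uniqueness-in-law argument: since the target process $M$ is a continuous Gaussian martingale with deterministic covariance $V_t$, its finite-dimensional distributions are completely determined by $V$, so it suffices to establish convergence of finite-dimensional distributions at points in $\mathcal{T}_0$.

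First, I would exploit the asymptotic negligibility of large jumps. The hypothesis $\langle M^n_\epsilon\rangle_t\overset{\IP}{\to}0$ combined with Corollary \ref{cor:Lenglart} gives $\sup_{t\in\mathcal{T}_0}\|M^n_{\epsilon,t}\|\overset{\IP}{\to}0$ for every $\epsilon>0$. Setting $\tilde{M}^n := M^n - M^n_\epsilon$, whose jumps are bounded by $\epsilon$ by construction, the Kunita--Watanabe inequality applied to $\langle M^n,M^n_\epsilon\rangle$ together with both hypotheses yields $\langle\tilde{M}^n\rangle_t\overset{\IP}{\to}V_t$ as well. It therefore suffices to analyse the small-jump approximation $\tilde{M}^n$ and afterwards send $\epsilon\to 0$.

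Next, I would apply It\^o's formula to $f(x)=\exp(iu^\top x)$ evaluated at $\tilde{M}^n$. The first-order term is a stochastic integral against $d\tilde{M}^n$, hence a local martingale with mean zero after localization; the second-order term involves $d\langle\tilde{M}^n\rangle$, which concentrates at $V$; and the jump-correction term is $O(\epsilon)$ uniformly because $|\Delta\tilde{M}^n|\le\epsilon$. Taking expectations, letting first $n\to\infty$ and then $\epsilon\to 0$, yields $\IE[\exp(iu^\top M^n_t)]\to\exp(-\tfrac12 u^\top V_t u)$, the characteristic function of $\mathcal{N}(0,V_t)$. For joint convergence at $t_1<\cdots<t_k$ and the independent-increment structure, I would rerun the same argument for the conditional characteristic function of $M^n_{t_j}-M^n_{t_{j-1}}$ given $\mathcal{F}_{t_{j-1}}$, using $\langle M^n\rangle_{t_j}-\langle M^n\rangle_{t_{j-1}}\overset{\IP}{\to}V_{t_j}-V_{t_{j-1}}$.

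The main obstacle is handling the It\^o remainder rigorously: the passage from ``$\langle M^n\rangle$ converges in probability'' to ``the exponential martingale identity holds in expectation'' requires a localization. I would introduce $\tau^n_K := \inf\{t\in\mathcal{T}_0 : \|M^n_t\|\vee\|\langle M^n\rangle_t\|>K\}$, establish convergence up to $\tau^n_K$ where the integrands are uniformly bounded and dominated convergence applies, and then let $K\to\infty$ exploiting continuity (and therefore boundedness on the finite set of time points in question) of $V$ together with the $L^2$-boundedness of the stopped martingales. If one ultimately wanted functional (not just pointwise) convergence on a Skorokhod space, Aldous's tightness criterion, fed with the convergence of $\langle M^n\rangle$ and the vanishing of $M^n_\epsilon$, would close the argument; but the statement as formulated only asks for marginal convergence at each $t\in\mathcal{T}_0$, so this additional step is optional.
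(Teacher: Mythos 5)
This theorem is not proved in the paper at all: it appears in the ``Useful Results'' subsection of the appendix and is quoted as Theorem II.5.1 of \citet{ABGK93}, with the original reference \citet{R80}. There is therefore no internal proof to compare your proposal against; what you have written is a sketch of a proof of Rebolledo's theorem itself. As such a sketch it follows the classical route (essentially Helland's proof, or Jacod--Shiryaev, Ch.~VIII): remove the large jumps via Lenglart's inequality, pass to the small-jump martingale $\tilde M^n$, apply It\^o's formula to the complex exponential, and identify the limit of the (conditional) characteristic functions, sending $n\to\infty$ before $\epsilon\to0$. Note also that the statement as given only asserts marginal convergence at each fixed $t$, so your finite-dimensional-distribution and tightness remarks are indeed optional.

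One step in your outline is genuinely incomplete. It\^o's formula (Theorem~\ref{thm:Ito} in the paper) produces the \emph{optional} covariation $[\tilde M^n]$ in the second-order term, whereas your hypothesis concerns the \emph{predictable} bracket $\langle \tilde M^n\rangle$; you cannot simply say the second-order term ``involves $d\langle\tilde M^n\rangle$, which concentrates at $V$.'' You must additionally control $[\tilde M^n]_t-\langle\tilde M^n\rangle_t$. This difference is a purely discontinuous local martingale whose predictable bracket is bounded by a constant times $\sum_{s\le t}(\Delta\tilde M^n_s)^4\le\epsilon^2[\tilde M^n]_t$, so by another application of Lenglart it is $O_P(\epsilon)$ uniformly on compacts once $[\tilde M^n]_t$ is shown to be stochastically bounded; this error term then disappears in your final $\epsilon\to0$ limit, alongside the third-order jump correction. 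With that step inserted (and with the localization you already describe to justify taking expectations of the stochastic-integral term), the argument closes.
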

We remark that the predictable quadratic variation may be replaced by the optional quadratic variation. But we do not use that in this paper.

Finally, the theorem by Kantorovich gives a relation between the solution of an equation system and its derivative at the solution (see e.g. \citet{D85}):
\begin{theorem}
\label{thm:kantorovich} (Newton-Kantorovich Theorem)
Let $R(x)=0$ be a system of equations where $R:D_0\subseteq\IR^p\to\IR$ is a function defined on $D_0$. Let $R$ be differentiable and denote by $R'$ its first derivative. Assume that there is  an $x_0$ such that all expressions in the following statements exist and such that the following statements are true
\begin{enumerate}
\item $||R'(x_0)^{-1}||\leq B$,
\item $||R'(x_0)^{-1}R(x_0)||\leq\eta$,
\item $||R'(x)-R'(y)||\leq K||x-y||$ for all $x,y\in D_0$,
\item $r:=BK\eta\leq\frac{1}{2}$ and $\Omega_*:=\{x:||x-x_0||<2\eta\}\subseteq D_0$.
\end{enumerate}
Then there is $x^*\in\Omega_*$ with $R(x^*)=0$ and
$$||x^*-x_0||\leq2\eta\textrm{ and }||x^*-(x_0-R'(x_0)^{-1}R(x_0))||\leq2r\eta.$$
\end{theorem}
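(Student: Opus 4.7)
The plan is to analyse Newton's iteration $x_{n+1} := x_n - R'(x_n)^{-1} R(x_n)$ started from $x_0$, show it is well-defined, stays inside $\Omega_*$, and converges quadratically to a root of $R$. First observe $\|x_1 - x_0\| = \|R'(x_0)^{-1} R(x_0)\| \leq \eta < 2\eta$, so $x_1 \in \Omega_*$, and by the second assumption the required inverse at $x_0$ exists.

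The inductive step rests on the Banach lemma. If $x_n \in \Omega_*$, then by hypotheses 1 and 3,
$$\|I - R'(x_0)^{-1} R'(x_n)\| \leq \|R'(x_0)^{-1}\| \cdot \|R'(x_0) - R'(x_n)\| \leq B K \|x_n - x_0\| < 2 B K \eta = 2 r \leq 1,$$
so $R'(x_n)$ is invertible with $\|R'(x_n)^{-1}\| \leq B/(1 - 2r)$. Next, applying the integral form of Taylor's theorem to $R$ between $x_n$ and $x_{n+1}$, the Newton step $R(x_n) + R'(x_n)(x_{n+1}-x_n) = 0$ cancels the linear part, leaving
$$\|R(x_{n+1})\| = \Bigl\|\int_0^1 \bigl(R'(x_n + s(x_{n+1}-x_n)) - R'(x_n)\bigr)(x_{n+1}-x_n)\, ds\Bigr\| \leq \tfrac{K}{2}\|x_{n+1}-x_n\|^2.$$
Combining these two estimates yields the quadratic contraction
$$\|x_{n+2} - x_{n+1}\| \leq \tfrac{BK}{2(1-2r)} \|x_{n+1} - x_n\|^2.$$

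The main obstacle, and the true heart of the Kantorovich argument, is to show that the iterates never leave $\Omega_*$ so that the inductive bound on $\|R'(x_n)^{-1}\|$ persists. The standard device is to compare $(\|x_{n+1}-x_n\|)_n$ to the scalar Newton sequence $(t_n)$ for the majorizing polynomial $p(t) = \tfrac{K}{2} t^2 - t/B + \eta/B$, whose two roots $t^* \leq t^{**}$ are real precisely because $r \leq 1/2$. An induction shows $\|x_{n+1} - x_n\| \leq t_{n+1} - t_n$ and $\|x_n - x_0\| \leq t_n \leq t^*$, where $t^* = (1 - \sqrt{1 - 2r})/(BK) \leq 2 \eta$ (from $1 - \sqrt{1-2r} \leq 2r$ and $r = BK\eta$). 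This simultaneously secures membership in $\Omega_*$ and shows $(x_n)$ is Cauchy.

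Finally, completeness of $\mathbb{R}^p$ gives a limit $x^* \in \overline{\Omega_*}$; since $\|x^*-x_0\| \leq t^* \leq 2\eta$ we have $x^* \in \Omega_*$ (the closed ball, matching the statement). The bound $\|R(x_n)\| \leq (K/2)\|x_n - x_{n-1}\|^2 \to 0$ together with continuity of $R$ yields $R(x^*) = 0$. The second bound $\|x^* - x_1\| \leq 2 r \eta$ follows by summing $\|x_{n+1} - x_n\| \leq t_{n+1} - t_n$ from $n \geq 1$, noting $t^* - t_1 \leq 2 r \eta$ from the explicit form of $t^*$ and $t_1 = \eta$.
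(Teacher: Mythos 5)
The paper does not prove this statement at all: Theorem \ref{thm:kantorovich} is quoted in the ``Useful Results'' appendix as a classical theorem with a citation to the literature, so there is no in-paper argument to compare against. Your proposal is the standard Kantorovich majorization proof and its skeleton is sound: the Banach-lemma invertibility of $R'(x_n)$, the Taylor-remainder estimate $\|R(x_{n+1})\|\le\tfrac{K}{2}\|x_{n+1}-x_n\|^2$, the majorizing polynomial $p(t)=\tfrac{K}{2}t^2-t/B+\eta/B$ with smaller root $t^*=(1-\sqrt{1-2r})/(BK)\le 2\eta$, and the final arithmetic $t^*-t_1\le 2r\eta$ (which reduces to $1-\sqrt{1-2r}\le r+2r^2$, valid for $r\le 1/2$) are all correct.

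Two points deserve tightening. First, the uniform contraction constant $\tfrac{BK}{2(1-2r)}$ you display is useless in the borderline case $r=\tfrac12$ that the hypothesis explicitly permits; the induction must instead use the step-dependent bound $\|R'(x_n)^{-1}\|\le B/(1-BKt_n)$, which follows from $\|x_n-x_0\|\le t_n<t^*$ and $BKt^*=1-\sqrt{1-2r}<1$ for every finite $n$, and which is exactly what makes the comparison $\|x_{n+1}-x_n\|\le t_{n+1}-t_n$ close. You gesture at this but should make clear that the crude bound is discarded in favor of the sharp one. Second, as stated in the paper $\Omega_*$ is the \emph{open} ball, and when $r=\tfrac12$ one can have $\|x^*-x_0\|=t^*=2\eta$, so membership in the open ball can fail; this is a defect of the quoted statement (most formulations use the closed ball), and your parenthetical acknowledgement is the right way to handle it, but you should not claim $x^*\in\Omega_*$ for the open ball without flagging that the inclusion is only guaranteed for the closure.
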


We also use Itô's Formula for semi-martingales with jumps (Theorem 14.2.4 in \citet{CE15}). Here $X$ is to be understood as the cadlag modification of $X$ (cf. Corollary 5.1.9 in \citet{CE15}).
\begin{theorem}
\label{thm:Ito}
Let $X$ be a $n$-dimensional vector of semi-martingales $X=(X^1,...,X^n)$ and let $f:\IR^n\to\IR$ be a twice continuously differentiable function. Then,
\begin{align*}
f(X_t)&=f(X_0)+\sum_{i=1}^n\int_{(0,t]}\partial_{i}f(X_{s-})dX^i_s+\frac{1}{2}\sum_{i,j=1}^n\int_{(0,t]}\partial_{ij}f(X_{s-})d[X^i,X^j]_s \\
&+\sum_{0<s\leq t}\left(f(X_s)-f(X_{s-})-\sum_{i=1}^n\partial_if(X_{s-})\Delta X^i_s-\frac{1}{2}\sum_{i,j=1}^n\partial_{ij}f(X_{s-})\Delta X^i_s\Delta X^j_s\right).
\end{align*}
The above equality means that the processes to the left and to the right are indistinguishable and $[X^i,X^j]$ denotes the optional covariation of $X^i$ and $X^j$ (see below).
\end{theorem}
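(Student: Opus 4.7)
The plan is to establish the identity by a localization-then-approximation argument combined with a Riemann-sum reduction that uses the second-order Taylor expansion of $f$. First I would localize: set $T_K := \inf\{s\ge 0 : |X_s|\vee|X_{s-}| \geq K\}$ and prove the formula on $[0, T_K\wedge t]$, where $f$ and its derivatives up to order two are uniformly continuous; the global identity then follows by letting $K\to\infty$, since both sides are indistinguishable on each $[0, T_K\wedge t]$.

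The core step is Riemann-sum telescoping. Choose partitions $\pi_n = \{0 = t_0^n < \cdots < t_{k_n}^n = t\}$ with mesh going to zero, refined so that each jump of $X$ with $|\Delta X_s|>1/n$ lies at a partition point. Then
\[
f(X_t) - f(X_0) \;=\; \sum_{k}\bigl(f(X_{t_k^n}) - f(X_{t_{k-1}^n})\bigr),
\]
and a second-order Taylor expansion of each summand around $X_{t_{k-1}^n}$ decomposes the right-hand side into a first-order part and a second-order part (plus a remainder). The first-order part $\sum_{i,k}\partial_i f(X_{t_{k-1}^n})\,\Delta_k X^i$ converges in probability to $\sum_i\int_{(0,t]}\partial_i f(X_{s-})\,dX^i_s$ by the very definition of the semi-martingale integral, which uses left-endpoint evaluation of left-continuous integrands. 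The second-order part $\tfrac12\sum_{i,j,k}\partial_{ij} f(X_{t_{k-1}^n})\,\Delta_k X^i\,\Delta_k X^j$ converges to $\tfrac12\sum_{i,j}\int_{(0,t]}\partial_{ij} f(X_{s-})\,d[X^i,X^j]_s$ by the definition of the optional covariation, whose pure-jump part contributes atoms $\Delta X^i_s\Delta X^j_s$ and whose continuous part is the usual quadratic covariation.

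The jump-correction sum in the third line of the statement appears because at each jump time $s$ the telescoped increment equals exactly $f(X_s)-f(X_{s-})$, whereas the Taylor expansion contributes only $\sum_i\partial_i f(X_{s-})\Delta X^i_s + \tfrac12\sum_{i,j}\partial_{ij} f(X_{s-})\Delta X^i_s\Delta X^j_s$. Those Taylor pieces are already absorbed into the stochastic integral and the optional-covariation integral respectively, so one must add the true jump $f(X_s)-f(X_{s-})$ and subtract both Taylor pieces at every jump time in $(0,t]$ to recover the correct jumps. Summing these corrections over $0<s\le t$ produces exactly the third term in the formula.

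The main obstacle will be the quantitative error analysis: controlling the third-order Taylor remainder uniformly along the partition and justifying the interchange of the partition limit with the (possibly countably infinite) sum over jumps. The remainder on interval $k$ is $O(|\Delta_k X|^3)$, so its total is bounded by $\max_k|\Delta_k X|\cdot \sum_k|\Delta_k X|^2$; the first factor tends to zero in probability by right-continuity of $X$ combined with the localization and the explicit isolation of large jumps into their own partition subintervals, while the second is bounded in probability because $[X,X]_t<\infty$ almost surely. For the jump sum, one uses $\sum_{0<s\le t}|\Delta X_s|^2<\infty$ almost surely, which makes the correction series absolutely convergent and licenses a dominated-convergence argument to push the partition limit inside the jump series. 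These ingredients together upgrade the pointwise partition identity to indistinguishability of the two processes on $[0,t]$.
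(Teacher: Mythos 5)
The paper does not prove this statement at all: Theorem \ref{thm:Ito} sits in the ``Useful Results'' subsection and is imported verbatim as Theorem 14.2.4 of \citet{CE15}, so there is no in-paper argument to compare against. Your sketch is the standard textbook proof (localization, partition telescoping, second-order Taylor expansion, identification of the first- and second-order sums, jump corrections), and its overall architecture is sound. Three points deserve tightening. First, for an $f$ that is only $C^2$ the Taylor remainder on a subinterval is not $O(|\Delta_k X|^3)$ --- that would need $C^3$ or Lipschitz second derivatives; what you actually get, after localization, is $\omega\bigl(|\Delta_k X|\bigr)\,|\Delta_k X|^2$ with $\omega$ the uniform modulus of continuity of the $\partial_{ij}f$ on the relevant compact set, i.e.\ a bound of the form $o(|\Delta_k X|^2)$. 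This still suffices: the total remainder is at most $\max_k\omega(|\Delta_k X|)\cdot\sum_k|\Delta_k X|^2$, the first factor vanishes and the second is tight because $[X,X]_t<\infty$, and the same $o(|h|^2)$ bound (not $O(|h|^3)$) is what makes the jump-correction series absolutely convergent against $\sum_{0<s\le t}|\Delta X_s|^2<\infty$. Second, the convergence of left-endpoint Riemann sums to $\int\partial_if(X_{s-})\,dX^i$ and of $\sum_k g(X_{t_{k-1}^n})\Delta_kX^i\Delta_kX^j$ to $\int g(X_{s-})\,d[X^i,X^j]$ are theorems about c\`agl\`ad integrands and about the quadratic covariation, not consequences of the definitions; you should cite or prove them rather than appeal to ``the very definition.'' Third, when a jump time $s$ is forced to be a partition point $t_k^n$, the telescoped increment is $f(X_{t_k^n})-f(X_{t_{k-1}^n})$, not $f(X_s)-f(X_{s-})$; you must split it further into $f(X_{s-})-f(X_{t_{k-1}^n})$ (handled by Taylor, with $X_{t_{k-1}^n}\to X_{s-}$ as the mesh shrinks) plus the genuine jump $f(X_s)-f(X_{s-})$. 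With these repairs the argument is the classical one and is correct.
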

The optional quadratic variation of a cadlag square integrable local martingale $M$ is given by $[M]_t:=M_t^2-\int_0^tM_{s-}dM_s$. The optional covariation for two such martingales $M$ and $N$ is given by $[M,N]:=\frac{1}{2}([M+n]-[M]-[N])$.

\newpage

\textbf{Acknowledgement} This work is part of my PhD Thesis which I have written under the supervision of Prof. Dr. Enno Mammen (Heidelberg University) and Prof. Dr. Wolfgang Polonik (UC Davis). I am thankful for many discussions and helpful remarks. My work was supported by Deutsche Forschungsgemeinschaft through the Research Training Group RTG 1953.

I am also very thankful for the comments of the associate editor and two referees, their suggestions significantly contributed to a great improvement the paper.

\bibliography{mybib}{}
\bibliographystyle{plainnat}

\end{document}